\newcommand{\bbN}{{\mathbb N}}
\newcommand{\bbQ}{{\mathbb Q}}
\newcommand{\bbR}{{\mathbb R}}
\newcommand{\bbC}{{\mathbb C}}
\newcommand{\bbF}{{\mathbb F}}
\newcommand{\bbH}{{\mathbb H}}
\newcommand{\bbO}{{\mathbb O}}
\newcommand{\calB}{\mathcal{B}}
\newcommand{\calG}{\mathcal{G}}
\newcommand{\calS}{\mathcal{S}}
\newcommand{\Hsp}{\mathbf{H}}
\newcommand{\Euc}{\mathbf{E}}
\newcommand{\rmF}{\mathrm{F}}
\newcommand{\rmI}{\mathrm{I}}
\newcommand{\rmL}{\mathrm{L}}
\newcommand{\rmH}{\mathrm{H}}
\newcommand{\rmHb}{\mathrm{H}_\mathrm{b}}
\newcommand{\rmLweak}{\rmL_{\mathrm{w}\ast}}
\newcommand{\rmC}{\mathrm{C}}
\newcommand{\rmP}{\mathrm{P}}
\newcommand{\rmR}{\mathrm{R}}
\newcommand{\rmD}{\mathrm{D}}
\newcommand{\rmCb}{\mathrm{C}_\mathrm{b}}
\newcommand{\rmCm}{\mathrm{C}^\mathrm{m}}
\newcommand{\rmPT}{\mathrm{PT}}
\newcommand{\rmSO}{\mathrm{SO}}
\newcommand{\rmSL}{\mathrm{SL}}
\newcommand{\rmPSL}{\mathrm{PSL}}
\newcommand{\rmPGL}{\mathrm{PGL}}
\newcommand{\rmSp}{\mathrm{Sp}}
\newcommand{\rmSU}{\mathrm{SU}}
\newcommand{\bs}{\backslash}
\newcommand{\comp}{\operatorname{comp}}
\newcommand{\dvol}{\operatorname{dvol}}
\newcommand{\id}{\operatorname{id}}
\newcommand{\isom}{\operatorname{Isom}}
\newcommand{\map}{\operatorname{Map}}
\newcommand{\pr}{\operatorname{pr}}
\newcommand{\supp}{\operatorname{supp}}
\newcommand{\smear}{\operatorname{sm}}
\newcommand{\vol}{\operatorname{vol}}
\newcommand{\Ker}{\operatorname{Ker}}
\newcommand{\Prob}{\operatorname{Prob}}
\newcommand{\Homeo}{\operatorname{Homeo}}
\newcommand{\rmTpl}{\operatorname{Trp}}
\newcommand{\euler}{\operatorname{eu}}
\newcommand{\Psp}{\bbR\operatorname{P}}
\newcommand{\acts}{\curvearrowright}
\newcommand{\abs}[1]{{\left\lvert #1\right\rvert}}
\newcommand{\dual}[1]{{#1}^*}
\newcommand{\overto}[1]{{\buildrel{#1}\over\longrightarrow}}
\newtheorem{mthm}{Theorem}
\newtheorem{theorem}{Theorem}[section]
\newtheorem{lemma}[theorem]{Lemma}
\newtheorem{corollary}[theorem]{Corollary}
\newtheorem{cor}[theorem]{Corollary}
\newtheorem{proposition}[theorem]{Proposition}
\newtheorem{prop}[theorem]{Proposition}
\theoremstyle{definition}
\newtheorem{definition}[theorem]{Definition}
\newtheorem{defn}[theorem]{Definition}
\newtheorem{example}[theorem]{Example}
\newtheorem{remark}[theorem]{Remark}
\newtheorem{remarks}[theorem]{Remarks}
\newtheorem{setup}[theorem]{Setup}
\numberwithin{equation}{section}
\begin{document}
\title[Integrable ME-rigidity of hyperbolic lattices]{
Integrable measure equivalence and rigidity of hyperbolic lattices}

\author{Uri Bader}
\address{The Technion, Haifa}
\email{bader@tx.technion.ac.il}
\author{Alex Furman}
\address{University of Illinois at Chicago, Chicago}
\email{furman@math.uic.edu}
\author{Roman Sauer}
\address{University of Chicago, Chicago}
\curraddr{KIT, Karlsruhe}
\email{roman.sauer@kit.edu}
\begin{abstract}
	We study rigidity properties of lattices in
	$\isom(\Hsp^n)\simeq \rmSO_{n,1}(\bbR)$, $n\ge 3$, and of surface groups
	in $\isom(\Hsp^2)\simeq \rmSL_{2}(\bbR)$ in the context of
	\emph{integrable measure equivalence}.
	The results for lattices in $\isom(\Hsp^n)$, $n\ge 3$, are generalizations of
	Mostow rigidity;
	they include a \emph{cocycle} version of strong rigidity and an
	integrable measure equivalence classification. Despite the lack 
	of Mostow rigidity for $n=2$ we show that cocompact lattices in $\isom(\Hsp^2)$ 
	allow a similar integrable measure equivalence classification. 
\end{abstract}
\maketitle

\enlargethispage{\baselineskip}
\vspace{-0.5cm}
{\tableofcontents}

\section{Introduction and statement of the main results} 
\label{sec:introduction}

\subsection{Introduction}
\label{sub:discussion_of_results}
\emph{Measure equivalence} is an equivalence relation on groups,
introduced by Gromov~\cite{gromov-invariants} as a measure-theoretic counterpart
to quasi-isometry of finitely generated groups.
It is intimately related to \emph{orbit equivalence} in ergodic theory, to the theory of von Neumann algebras,
and to questions in descriptive set theory.
The study of rigidity in measure equivalence or orbit equivalence goes back to
Zimmer's paper~\cite{zimmer-csr},
which extended Margulis' superrigidity of higher rank
lattices~\cite{Margulis:1974:ICM} to the context
of \emph{measurable cocycles} and applied it to prove \emph{strong rigidity} 
phenomena in \emph{orbit equivalence} setting.
In the same paper \cite[\S 6]{zimmer-csr} Zimmer poses the question of whether these 
strong rigidity for orbit equivalence
results extend to lattices in rank one groups $G\not\simeq \rmPSL_2(\bbR)$;
and in \cite{zimmer-foliatedmostow} and later in a joint paper with Pansu \cite{pansu+zimmer} 
obtained some results under some restrictive geometric condition.

The study of measure equivalence and related problems has recently experienced a rapid growth,
with~\cites{FurmanOEw,FurmanME, Gaboriau-cost,gaboriau-l2,MonodShalom,hjorth,
Ioana+Peterson+Popa,popa-betti,popa:wI,popa:wII, popa-cocycle,
popa-spectralgap,Kida:thesis,Kida:ME,ioana} being only a partial list of important advances.
We refer to~\cites{Shalom:2005ECM,Popa:ICM,Furman:MGT} for surveys and further references.
One particularly fruitful direction of research in this area has been in obtaining the complete
description of groups that are
measure equivalent to a given one from a well understood class
of groups.
This has been achieved for lattices in simple Lie groups of higher
rank~\cite{FurmanME}, products of hyperbolic-like groups~\cite{MonodShalom},
mapping class groups~\cite{Kida:thesis,Kida:ME, Kida:OE}, and
certain amalgams of groups as above~\cite{Kida:amalgamated}.
In all these results, the measure equivalence class of one of such groups turns out to be small and to consist of
a list of "obvious" examples obtained by simple modifications of the original group.
This phenomenon is referred to as~\emph{measure equivalence rigidity}.
On the other hand, the class of groups measure equivalent to lattices in $\rmSL_2(\bbR)$
is very rich: it is uncountable, includes wide classes of groups and does not seem to have an explicit description
(cf.~\cites{Gaboriau:2005exmps,bridson}).

\medskip

In the present paper we obtain measure equivalence rigidity results for lattices in the least rigid
family of simple Lie groups  $\isom(\Hsp^n)\simeq \rmSO_{n,1}(\bbR)$ for
$n\ge 2$, including surface groups, albeit
within a more restricted category of \emph{integrable measure equivalence}, hereafter also called \emph{$\rmL^1$-measure equivalence} or just \emph{$\rmL^1$-ME}.
Let us briefly state the classification result, before giving the precise definitions and stating more detailed results.
\begin{mthm}\label{T:ME-rigidity2and3}
Let $\Gamma$ be a lattice in $G=\isom(\Hsp^n)$, $n\ge 2$; in the case $n=2$ assume that $\Gamma$ is cocompact.
Then the class of all finitely generated groups that are $\rmL^1$-measure equivalent
to $\Gamma$ consists of those $\Lambda$,
which admit a short exact sequence $\{1\}\to F\to\Lambda\to\bar\Lambda\to\{1\}$ where
$F$ is finite and $\bar\Lambda$ is a lattice in $G$; in the case $n=2$,  
$\bar\Lambda$ is also cocompact in $G=\isom(\Hsp^2)$.
\end{mthm}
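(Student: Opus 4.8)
\emph{Overview.} The plan is to prove the two inclusions of the classification separately: the ``if'' direction by exhibiting explicit couplings, and the ``only if'' direction by turning an abstract $\rmL^1$-ME coupling into an integrable cocycle, straightening it via the cocycle rigidity theorems of this paper, and reconstructing the coupling from the resulting homomorphism. For the easy inclusion, I would first record that a group $\Lambda$ fitting into $\{1\}\to F\to\Lambda\to\bar\Lambda\to\{1\}$ with $F$ finite is $\rmL^1$-measure equivalent to $\bar\Lambda$, since a finite group is $\rmL^1$-ME to the trivial group and the class of $\rmL^1$-couplings is closed under such extensions (with bounded rearrangement cocycles). Next, any two lattices $\bar\Lambda$ and $\Gamma$ in the same $G=\isom(\Hsp^n)$ are $\rmL^1$-measure equivalent via the tautological self-coupling $(G,\mathrm{Haar})$ with $\bar\Lambda$ acting by left and $\Gamma$ by right translations: the rearrangement cocycles of this coupling are $\rmL^1$-integrable because lattices in $\isom(\Hsp^n)$ admit fundamental domains with sufficiently thin cusps (and are bounded when $n=2$ and the lattices are cocompact), an integrability property of rank-one lattices recorded earlier in the paper. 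Composing gives the ``if'' direction.

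\emph{The hard inclusion.} Conversely, let $(\Omega,m)$ be an $\rmL^1$-ME coupling of a finitely generated group $\Lambda$ with $\Gamma$; passing to an ergodic component, I may assume it is $\Lambda\times\Gamma$-ergodic. Fixing a fundamental domain $X\subset\Omega$ for the $\Gamma$-action and running the standard ME-to-cocycle construction yields a measurable cocycle
\[
\alpha\colon\Lambda\times X\longrightarrow\Gamma\hookrightarrow G
\]
which is $\rmL^1$-integrable, this being exactly the hypothesis. One then checks that $\alpha$ admits no reduction to a proper subgroup: a reduction to the stabilizer of a proper totally geodesic subspace of $\Hsp^n$ or to a point at infinity would produce a measurable family of such objects invariant under the coupling, contradicting ergodicity and the transitivity of $\Gamma$ on the relevant boundary data, while a reduction to an amenable subgroup is excluded by non-amenability of $\Lambda$. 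Now I would invoke the cocycle strong rigidity theorem of this paper when $n\ge3$ — respectively, for $n=2$, the cocycle Milnor--Wood inequality together with the analysis of its equality case — to conclude that $\alpha$ is cohomologous to the cocycle induced by a homomorphism $\rho\colon\Lambda\to G$. In the surface case this uses that the bounded Euler class of $\alpha$ pairs, via the $\rmL^1$-(co)homological machinery of the companion paper~\cite{sobolev}, with an $\rmL^1$-fundamental class of $\Lambda$ transported across the coupling from the surface group, that the resulting Euler number equals in absolute value the $\rmL^1$-measured Euler characteristic of $\Lambda$ and hence saturates the Milnor--Wood bound, and that equality forces $\rho$ to be a discrete cocompact Fuchsian representation.

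\emph{Reconstruction.} From the straightening of $\alpha$ I would build a $\Lambda\times\Gamma$-equivariant measurable map $\Psi\colon\Omega\to G$, where $\Lambda$ acts on $G$ through $\rho$ by left translations and $\Gamma$ by right translations, and show that $\Psi_*m$ is a nonzero $\sigma$-finite measure on $G$ that is left-$\rho(\Lambda)$-invariant and right-$\Gamma$-invariant; by ergodicity and uniqueness of Haar measure it is proportional to Haar, and $\Psi$ identifies $\Omega$, up to null sets, with $G$ carrying these two actions. Finiteness of $m(\Omega/\Lambda)$ and $m(\Omega/\Gamma)$ then forces $F:=\ker\rho$ to be finite and $\bar\Lambda:=\rho(\Lambda)$ to be a lattice in $G$ — an infinite kernel or a non-discrete image would make one of the two fundamental domains have infinite measure — and, when $n=2$, $\bar\Lambda$ is cocompact by the previous step. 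This yields the asserted short exact sequence $\{1\}\to F\to\Lambda\to\bar\Lambda\to\{1\}$, completing the ``only if'' direction.

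\emph{Main obstacle.} The heart of the argument is the cocycle rigidity input under the mere assumption of $\rmL^1$-integrability rather than boundedness: for $n\ge3$ this amounts to carrying the Mostow-type scheme — boundary maps, the volume (Gromov) cocycle, straightening of ideal simplices — through $\rmL^1$-type bounded cohomology, and for $n=2$ it amounts to the sharp cocycle Milnor--Wood inequality together with its rigidity in the equality case, both resting on the $\rmL^1$-homological tools of~\cite{sobolev}. By comparison the reconstruction step is routine, the one delicate point there being the exclusion of reductions of $\alpha$ into proper subgroups, which is handled using ergodicity of the coupling and the boundary behaviour of $\Gamma$.
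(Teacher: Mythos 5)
The central step of your proposal does not go through. You apply the paper's cocycle rigidity input directly to the ME-cocycle $\alpha\colon\Lambda\times X\to\Gamma<G$, but Theorem~\ref{T:SOn1-L1-cocycle} (and, for $n=2$, Theorem~\ref{thm:taut relative homeo}) concerns cocycles arising from $(G,G)$-couplings, i.e.\ cocycles over actions of $G$ itself or of its lattices (Proposition~\ref{P:taut-lattice}); it says nothing about cocycles $\Lambda\times X\to G$ for an arbitrary finitely generated $\Lambda$, and no straightening theorem of that kind is available (your ``no reduction to a proper subgroup'' hypothesis is not a hypothesis under which anything in this paper applies). The actual mechanism is different: from the integrable $(\Gamma,\Lambda)$-coupling $\Omega$ one forms the composed self-coupling $\Omega\times_\Lambda\dual{\Omega}$, which is again integrable (Lemma~\ref{lem:composition of lp couplings}); $1$-tautness of $\Gamma$ relative to $\Gamma<G$ (resp.\ relative to $\Gamma<\Homeo(S^1)$ when $n=2$) yields a tautening map on this self-coupling, and the strong ICC property of the target relative to $\pi(\Gamma)$ is then used in Theorem~\ref{T:split+hom} (the $\Omega^3$ argument) to factor it as $\Phi([\omega_1,\omega_2])=\Psi(\omega_1)\Psi(\omega_2)^{-1}$, which is what produces the homomorphism $\rho\colon\Lambda\to G$ (resp.\ $\rho\colon\Lambda\to\Homeo(S^1)$) with compact kernel and closed image together with the equivariant map $\Psi$. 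Your proposal has no substitute for this passage from the unknown group $\Lambda$ to a homomorphism.

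Two further gaps. For $n=2$, even the correct machinery only lands in $\Homeo(S^1)$: one still needs Theorem~\ref{T:intoPGL2} (the cross-ratio averaging argument plus the convergence-group theorem of Gabai and Casson--Jungreis) to conjugate $\rho(\Lambda)$ into $\rmPGL_2(\bbR)$, and one must exclude virtually free discrete images by showing $\bbF_2$ is not $\rmL^1$-ME to $G$; your appeal to an ``$\rmL^1$-measured Euler characteristic of $\Lambda$'' saturating Milnor--Wood is undefined for an arbitrary finitely generated $\Lambda$ -- the Euler number of the paper is defined by pairing against the fundamental class of $\Gamma\bs\Hsp^n$, and Theorem~\ref{thm:main result about induction in cohomology} requires both groups to be fundamental groups of closed negatively curved manifolds. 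In the reconstruction step, the claim that ergodicity and uniqueness of Haar measure force $\Psi_*m$ to be Haar (hence $\Omega\cong G$) is false: $\Psi_*m$ is only invariant under the discrete group $\bar\Lambda\times\Gamma$, and it can be atomic on a double coset when $\Gamma$ and $\bar\Lambda$ are commensurable -- this is exactly the dichotomy in Theorem~\ref{T:ME-rigidity}, obtained via Ratner's theorem; likewise the lattice property of $\bar\Lambda$ (as opposed to mere discreteness with a finite $\Gamma$-invariant measure on $G/\bar\Lambda$) requires either that Ratner argument or the paper's separate argument of inducing to a $(G,\Lambda)$-coupling and comparing the two homomorphisms via strong ICC. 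Finally, in the easy direction the $\rmL^1$-integrability of lattices is Shalom's theorem (Theorem~\ref{thm:Shalom and lattices}), not a cusp-geometry observation made in this paper.
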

The integrability assumption is necessary for the validity of the rigidity results for cocompact lattices in
$\isom(\Hsp^2)\cong\rmPGL_2(\bbR)$.
It remains possible, however, that the $\rmL^1$-integrability assumption is superfluous for lattices in
$\isom(\Hsp^n)$, $n\ge 3$.
We also note that a result of Fisher and Hitchman \cite{fisher}
can be used to obtain $\rmL^2$-ME rigidity results similar to
Theorem~\ref{T:ME-rigidity2and3} for the
family of rank one Lie groups $\isom(\Hsp_{\bbH}^n)\simeq{\rm Sp}_{n,1}(\bbR)$
and $\isom(\Hsp_{\bbO}^2)\simeq F_{4(-20)}$\footnote{Here $\simeq$ means \emph{locally isomorphic}.}; 
it is possible that this $\rmL^2$-integrability assumption can be relaxed or
removed altogether.

\smallskip

The proof of Theorem~\ref{T:ME-rigidity2and3} for the case $n\ge 3$ proceeds through a cocycle version of Mostow's strong rigidity theorem
stated in Theorems~\ref{T:SOn1-1-taut} and \ref{T:SOn1-L1-cocycle}.
This cocycle version relates to the original Mostow's strong rigidity theorem
in the same way in which Zimmer's cocycle superrigidity
theorem relates to the original Margulis' superrigidity for higher rank lattices.
Our proof of the cocycle version of Mostow rigidity, which is
inspired by Gromov-Thurston's proof of Mostow rigidity using simplicial volume \cite{thurston} and Burger-Iozzi's proof for dimension $3$~\cite{burger+iozzi2}, heavily uses bounded cohomology and other homological methods.
A major part of the relevant homological technique, which applies to
general Gromov hyperbolic groups,
is developed in the companion paper~\cite{sobolev};
\emph{in fact, Theorem~\ref{thm:main result about induction in cohomology} taken
from~\cite{sobolev} is the only place in this paper where we require the integrability assumption.}

Theorem~\ref{T:ME-rigidity2and3} and the
more detailed Theorem~\ref{T:ME-rigidity}
are deduced from the strong rigidity for integrable cocycles (Theorem~\ref{T:SOn1-1-taut}) using a general
method described in Theorem~\ref{T:reconstruction}.
The latter extends and streamlines the approach developed in \cite{FurmanME},
and further used in \cite{MonodShalom} and in \cite{Kida:ME}.

The proof of Theorem~\ref{T:ME-rigidity2and3} for surfaces uses a cocycle version of the fact that an abstract
isomorphism between uniform lattices in $\rmPGL_2(\bbR)$ is realized by conjugation in $\Homeo(S^1)$.
The proof of this generalization uses homological methods mentioned above and a cocycle version of the
Milnor-Wood-Ghys phenomenon (Theorem~\ref{thm:taut relative homeo}), in which an integrable ME-cocycle
between surface groups is conjugate to the identity map in $\Homeo(S^1)$.
In the case of surfaces in Theorem~\ref{T:ME-rigidity2and3}, this result is used
together with Theorem~\ref{T:reconstruction} to construct a representation $\rho:\Lambda\to \Homeo(S^1)$.
Additional arguments (Lemma~\ref{L:furstenberg} and~Theorem~\ref{T:intoPGL2}) are then needed
to deduce that $\rho(\Lambda)$ is a uniform lattice in $\rmPGL_2(\bbR)$.

Let us now make precise definitions and describe in more detail the main results.

\subsection{Basic notions}  
\subsubsection{\textbf{Measure equivalence of locally compact groups}} 
\label{ssub:measure_equivalence_of_locally_compact_groups}
We recall the central notion of \emph{measure equivalence} which
was suggested by Gromov~\cite{gromov-invariants}*{0.5.E}.
It will be convenient to work with general unimodular,
locally compact second countable (lcsc) groups rather than just countable ones.

\begin{definition}\label{D:ME}
	Let $G$, $H$ be unimodular lcsc groups with Haar measures
	$m_G$ and $m_H$. A $(G,H)$-\emph{coupling} is a Lebesgue measure space
	$(\Omega,m)$ with a measurable, measure-preserving action of $G\times H$ such that
	there exist finite measure spaces $(X,\mu)$, $(Y,\nu)$ and
	measure space isomorphisms
\begin{equation} \label{e:ij-fd}
 i:(G,m_G)\times (Y,\nu)\xrightarrow{\cong} (\Omega,m)\quad \mbox{and} \quad j:(H,m_H)\times (X,\mu)\xrightarrow{\cong} (\Omega,m)
\end{equation}
such that $i$ is $G$ equivariant and $j$ is $H$ equivariant, that is
\[ i(gg',y)=gi(g',y) \quad \mbox{and} \quad j(hh',x)=hj(h',x) \]
	for every $g\in G$ and $h\in H$ and almost every $g'\in G$, $h'\in H$, $y\in Y$ and $x\in X$.
	Groups which admit such a coupling are said to be \emph{measure equivalent} (abbreviated ME).
\end{definition}

In the case where $G$ and $H$ are countable groups, the condition
on the commuting actions $G\acts (\Omega,m)$
and $H\acts (\Omega,m)$ is that they admit finite $m$-measure
Borel fundamental domains $X,Y\subset \Omega$
with $\mu=m|_X$ and $\nu=m|_Y$ being the restrictions.

As the name suggest, measure equivalence
is an equivalence relation between unimodular lcsc groups.
For reflexivity, consider the $G\times G$-action on $(G,m_G)$, $(g_1,g_2):g\mapsto g_1 g g_2^{-1}$.
We refer to this as the \emph{tautological self coupling} of $G$.
The symmetry of the equivalence relation is obvious. For transitivity and more details we refer
to Appendix~\ref{subs:composition}.

\begin{example}
	\label{exmp:lattices}
Let $\Gamma_1,\Gamma_2$ be lattices in a lcsc group $G$\footnote{Any lcsc group containing a lattice is necessarily unimodular.}.
Then $\Gamma_1$ and $\Gamma_2$ are measure equivalent, with $(G,m_G)$ serving as a natural
$(\Gamma_1,\Gamma_2)$-coupling when equipped with the action
$(\gamma_1,\gamma_2):g\mapsto \gamma_1 g \gamma_2^{-1}$ for $\gamma_i\in\Gamma_i$.
In fact, any lattice $\Gamma<G$ is measure equivalent to $G$, with $(G,m_G)$ serving as a natural $(G,\Gamma)$-coupling when equipped with the action
$(g,\gamma):g'\mapsto g g' \gamma^{-1}$.
\end{example}

\subsubsection{\textbf{Taut groups}} 
\label{ssub:subsubsection_name}
We now introduce the following key notion of taut couplings and taut groups.

\begin{definition}[Taut couplings, taut groups]
\label{D:M-rigidity}
A $(G,G)$-coupling $(\Omega,m)$ is \emph{taut}
if it has the tautological coupling as a factor uniquely; in other words if it
admits a unique, up to null sets, measurable map $\Phi:\Omega\to G$
so that for $m$-a.e. $\omega\in\Omega$ and all
$g_1,g_2\in G$\footnote{If one only requires equivariance for almost all $g_1,g_2\in G$ one
can always modify $\Phi$ on a null set to get an everywhere equivariant map~\cite{zimmer-book}*{Appendix B}.}
\[
	\Phi((g_1,g_2)\omega)=g_1\Phi(\omega)g_2^{-1}.
\]
Such a $G\times G$-equivariant map $\Omega\to G$ will be called a \emph{tautening map}.
A unimodular lcsc group $G$ is \emph{taut} if every $(G,G)$-coupling
is taut.
\end{definition}
The requirement of uniqueness for tautening maps in the definition of taut groups
is equivalent to the \emph{strongly ICC} property for the group in question 
(see Definition~\ref{def:strongy ICC definition} and Lemmas~\ref{L:uniq2sICC} 
and \ref{L:sICC2uniq}(1) in the Appendix~\ref{sub:strong_icc_property} for a proof of this claim).
This property is rather common; in particular it is satisfied by all center-free semi-simple Lie groups
and all ICC countable groups, i.e.~countable groups with infinite conjugacy classes.
On the other hand the existence of tautening maps for $(G,G)$-coupling is hard to obtain;
in particular taut groups necessarily satisfy Mostow's strong rigidity property.

\begin{lemma}[Taut groups satisfy Mostow rigidity]
Let $G$ be a taut unimodular lcsc group. If $\tau:\Gamma_1\xrightarrow{\cong}\Gamma_2$
is an isomorphism of two lattices $\Gamma_1$ and $\Gamma_2$ in $G$, then there exists a unique $g\in G$ so that
$\Gamma_2=g^{-1} \Gamma_1 g$ and $\tau(\gamma_1)=g^{-1}\gamma g$ for $\gamma\in\Gamma_1$.
\end{lemma}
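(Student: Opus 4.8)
The plan is to distill the isomorphism $\tau$ into a single $(G,G)$-coupling and then let tautness do all the work. From $\tau$ I would form its \emph{graph coupling}: let $\iota_\tau\colon\Gamma_1\hookrightarrow G\times G$, $\gamma\mapsto(\gamma,\tau(\gamma))$, be the associated injective homomorphism, and put $\Omega_\tau\defq(G\times G)/\iota_\tau(\Gamma_1)$ — the quotient by the free, properly discontinuous right-translation action of $\iota_\tau(\Gamma_1)$, carrying the $G\times G$-invariant measure induced from $m_G\times m_G$ and the $G\times G$-action by left translations, $(g_1,g_2)\cdot[x,y]=[g_1x,g_2y]$. The first task is to check that $\Omega_\tau$ is a $(G,G)$-coupling in the sense of Definition~\ref{D:ME}. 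Examining the two coordinate projections shows that the $G\times\{e\}$-orbits (resp.\ the $\{e\}\times G$-orbits) on $\Omega_\tau$ are parametrised by $G/\Gamma_2$ (resp.\ by $G/\Gamma_1$), so the images in $\Omega_\tau$ of $\{e\}\times D_2$ and of $D_1\times\{e\}$, for Borel fundamental domains $D_1,D_2\subset G$ of $\Gamma_1,\Gamma_2$, are Borel fundamental domains for the two $G$-factors; these have finite measure precisely because $\Gamma_1$ and $\Gamma_2$ are lattices in $G$. (Equivalently, $\Omega_\tau$ is the $(G,G)$-coupling induced from the tautological self-coupling of $\Gamma_2$, pulled back along $\tau$ to a $(\Gamma_1,\Gamma_2)$-coupling.)

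Tautness of $G$ then provides a tautening map $\Phi\colon\Omega_\tau\to G$. I would choose $\omega_0=[x_0,y_0]$ in the conull subset of $\Omega_\tau$ on which the defining relation $\Phi((g_1,g_2)\omega_0)=g_1\Phi(\omega_0)g_2^{-1}$ holds for all $(g_1,g_2)\in G\times G$. The left $G\times G$-action on $\Omega_\tau$ is transitive, so writing an arbitrary point as $[x,y]=(xx_0^{-1},yy_0^{-1})\cdot\omega_0$ and applying the relation yields $\Phi([x,y])=xx_0^{-1}\Phi(\omega_0)y_0y^{-1}=x\,g_0\,y^{-1}$ for all $(x,y)\in G\times G$, where $g_0\defq x_0^{-1}\Phi(\omega_0)y_0\in G$. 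Since $(x,y)$ and $(x\gamma,y\tau(\gamma))$ represent the same point of $\Omega_\tau$ for every $\gamma\in\Gamma_1$, they must give the same value of $\Phi$, i.e.\ $x\,g_0\,y^{-1}=x\gamma\,g_0\,\tau(\gamma)^{-1}y^{-1}$ for all $x,y\in G$ and all $\gamma\in\Gamma_1$; this forces $\tau(\gamma)=g_0^{-1}\gamma g_0$ for every $\gamma\in\Gamma_1$, whence $\Gamma_2=\tau(\Gamma_1)=g_0^{-1}\Gamma_1g_0$. Thus $g\defq g_0$ has the required properties.

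For uniqueness, note that if $g$ and $g'$ both satisfied the conclusion, then $[x,y]\mapsto xgy^{-1}$ and $[x,y]\mapsto xg'y^{-1}$ would be two well-defined tautening maps of $\Omega_\tau$ (well-definedness on the quotient is exactly the relation $\tau=\mathrm{conj}_{g}$, resp.\ $\tau=\mathrm{conj}_{g'}$), so by the uniqueness clause in the definition of a taut coupling they coincide almost everywhere, forcing $g=g'$. I expect the only step requiring real care to be the first one: verifying that $\Omega_\tau$ satisfies Definition~\ref{D:ME}, i.e.\ that the natural fundamental domains have finite measure — this is the single place where the lattice hypothesis on $\Gamma_1$ and $\Gamma_2$ enters. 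Once the coupling $\Omega_\tau$ is set up, invoking tautness and unwinding the equivariance is entirely formal.
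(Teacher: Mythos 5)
Your argument is correct and is essentially the paper's own proof: the paper reduces the lemma to tautness of the graph coupling $G\times G/\Delta_\tau$ (see Lemma~\ref{L:taut-MR}), where the tautening map, evaluated at a generic point where equivariance holds for all group elements and combined with transitivity of the $G\times G$-action and the coset identification, produces the conjugating element, with uniqueness coming from uniqueness of tautening maps exactly as you say. The only cosmetic point is that for non-discrete lcsc $G$ the coupling property of $\Omega_\tau$ should be verified through the product decompositions of Definition~\ref{D:ME} (e.g. $\Omega_\tau\cong G\times (G/\Gamma_2)$ via a Borel cross-section, with the finite invariant measure on $G/\Gamma_2$), since the finite-measure fundamental domain criterion you quote is the countable-group formulation.
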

The lemma follows from considering the tautness of the measure equivalence $(G,G)$-coupling given
by the $(G\times G)$-homogeneous space $(G\times G)/\Delta_\tau$, where
$\Delta_\tau$ is the graph
of the isomorphism $\tau:\Gamma_1\to\Gamma_2$; see Lemma~\ref{L:taut-MR} for details.

\medskip

The phenomenon, that any isomorphism between lattices in $G$ is realized
by an inner conjugation in $G$, known as \emph{strong rigidity} or \emph{Mostow rigidity},
holds for all simple Lie groups\footnote{For the formulation of Mostow rigidity above we have to assume that $G$ has trivial center.}
$G\not\simeq\rmSL_2(\bbR)$. More precisely,
if $X$ is an irreducible non-compact, non-Euclidean symmetric space
with the exception of the hyperbolic plane $\Hsp^2$, then $G=\isom(X)$ is Mostow rigid.
Mostow proved this remarkable rigidity property for uniform lattices \cite{Mostow:1973book}.
It was then extended to the non-uniform cases by Prasad \cite{prasad} (${\rm rk}(X)=1$)
and by Margulis \cite{Margulis-Mostow} (${\rm rk}(X)\ge 2$).

In the higher rank case, more precisely, if $X$ is a symmetric space without compact and Euclidean
factors with ${\rm rk}(X)\ge 2$, Margulis proved a stronger rigidity property, which became known as \emph{superrigidity}
\cite{Margulis:1974:ICM}.
Margulis' superrigidity for lattices in higher rank, was extended by Zimmer in
the cocycle superrigidity theorem \cite{zimmer-csr}.
Zimmer's cocycle superrigidity was used in \cite{FurmanME} to show that higher rank simple Lie groups $G$ are \emph{taut}
(albeit the use of term tautness in this context is new).
In \cite{MonodShalom} Monod and Shalom proved another case of cocycle superrigidity
and proved a version of tautness property for certain products $G=\Gamma_1\times\cdots\times\Gamma_n$ with $n\ge 2$.
In \cite{Kida:ME, Kida:thesis} Kida proved that mapping class groups are taut.
Kida's result may be viewed as a cocycle generalization of Ivanov's theorem \cite{ivanov}.

\subsubsection{\textbf{Measurable cocycles}} 
\label{ssub:measurable_cocycles}
Let us elaborate on this connection between tautness and \emph{rigidity} of \emph{measurable cocycles}.
Recall that a \emph{cocycle} over a group action $G\acts X$ to another group $H$ is a map
$c:G\times X\to H$ such that for all $g_1,g_2\in G$
\[
	c(g_2g_1,x)=c(g_2,g_1 x)\cdot c(g_1,x).
\]
Cocycles that are independent of the space variable
are precisely homomorphisms $G\to H$.
One can conjugate a cocycle $c:G\times X\to H$ by a map $f:X\to H$ to produce a new cocycle
$c^f:G\times X\to H$ given by
\[
	c^f(g,x)=f(g.x)^{-1}c(g,x)f(x).
\]
In our context, $G$ is a lcsc group, $H$ is lcsc or, more generally, a Polish group,
and $G\acts (X,\mu)$ is a measurable measure-preserving action on a
Lebesgue finite measure space.
In this context all maps, including the cocycle $c$, are assumed to be $\mu$-measurable,
and all equations should hold $\mu$-a.e.; we then say that $c$ is a \emph{measurable cocycle}.

Let $(\Omega,m)$ be a $(G,H)$-coupling and
$H\times X\xrightarrow{j}\Omega\xrightarrow{i^{-1}} G\times Y$
be as~in (\ref{e:ij-fd}).
Since the actions $G\acts \Omega$ and $H\acts \Omega$ commute,
$G$ acts on the space of $H$-orbits in $\Omega$, which is naturally identified
with $X$. This $G$-action preserves the finite measure $\mu$.
Similarly, we get the measure preserving $H$-action on $(Y,\nu)$.
These actions will be denoted by a dot, $g:x\mapsto g.x$, $h:y\mapsto h.y$, to
distinguish them from the $G\times H$ action on $\Omega$.
Observe that in $\Omega$ one has for $g\in G$ and almost every $h\in H$ and $x\in X$,
\[
	gj(h,x)= j(h h_1^{-1},g.x)
\]
for some $h_1\in H$ which depends only on $g\in G$ and $x\in X$, and therefore may be denoted by $\alpha(g,x)$.
One easily checks that the map
\[
	\alpha:G\times X\to H
\]
that was just defined, is a measurable cocycle.
Similarly, one obtains a measurable cocycle $\beta:H\times Y\to G$.
These cocycles depend on the choice of the measure isomorphisms in~(\ref{e:ij-fd}),
but different measure isomorphisms produce conjugate cocycles.
Identifying $(\Omega,m)$ with $(H,m_H)\times (X,\mu)$, the action $G\times H$
takes the form
\begin{equation}\label{e:me-via-coc}
	(g,h)j(h',x)= j(h h'\alpha(g,x)^{-1},g.x).
\end{equation}
Similarly, cocycle $\beta:H\times Y\to G$ describes the $G\times H$-action on
$(\Omega,m)$ when identified with $(G,m_G)\times(Y,\nu)$.
In general, we call a measurable coycle $G\times X\to H$ that arises from a
$(G,H)$-coupling as above an \emph{ME-cocycle}.

The connection between tautness and cocycle rigidity is in the observation (see Lemma~\ref{L:coc-taut})
that a $(G,G)$-coupling $(\Omega,m)$ is taut iff the ME-cocycle $\alpha:G\times X\to G$
is conjugate to the identity isomorphism, $\alpha(g,x)=f(g.x)^{-1}g f(x)$ 
by a unique measurable $f:X\to G$.
Hence one might say that 
\begin{center}
	$G$ is \emph{taut} if and only if it satisfies a \emph{cocycle version} of Mostow rigidity.
\end{center}

\subsubsection{\textbf{Integrability conditions}} 
\label{ssub:integrability_conditions}
Our first main result -- Theorem~\ref{T:SOn1-1-taut} below -- shows that $G=\isom(\Hsp^n)$, $n\ge3$, are
$1$-\emph{taut} groups, i.e.~all \emph{integrable} $(G,G)$-couplings are taut.
We shall now define these terms more precisely.

A \emph{norm} on a group $G$ is a map $|\cdot|:G\to [0,\infty)$ so that
$|gh|\le |g|+|h|$ and $\abs{g^{-1}}=\abs{g}$ for all $g,h\in G$. A norm on a lcsc group is \emph{proper} if it is measurable and the balls with respect to this norm are pre-compact.
Two norms $|\cdot |$ and $|\cdot|'$ are equivalent if there are $a,b>0$ such that
$|g|'\le a\cdot|g|+b$ and $|g|\le a \cdot |g|'+b$ for every $g\in G$.
On a compactly generated group\footnote{Every connected lcsc group is compactly
generated~\cite{stroppel}*{Corollary~6.12 on p.~58}.}
$G$ with compact generating symmetric set $K$ the function
$\abs{g}_K=\min\{n\in\bbN\mid g\in K^n\}$ defines a proper norm, whose equivalence class does
not depend on the chosen $K$. Unless stated otherwise, we mean a norm in this
equivalence class when referring to a proper norm on a compactly generated group.

\begin{definition}[Integrability of cocycles]\label{D:Lp-ME}
	Let $H$ be a compactly generated group with a proper norm $\abs{\cdot}$ and $G$ be
	a lcsc group. Let $p\in[1,\infty]$.
	A measurable cocycle $c:G\times X\to H$
	is $\rmL^p$-\emph{integrable} if for a.e.~$g\in G$
	\[
		\int_X |c(g,x)|^p\,d\mu(x)<\infty.
	\]
	For $p=\infty$ we require that the essential supremum of $|c(g,-)|$ is finite for
	a.e.~$g\in G$.
	If $p=1$, we also say that $c$ is \emph{integrable}. If $p=\infty$, we say that
	 $c$ is \emph{bounded}.
\end{definition}

The integrability condition is independent of
the choice of a norm within a class of equivalent norms. $\rmL^p$-integrability implies
$\rmL^q$-integrability whenever $1\le q\le p$. 	
In the Appendix~\ref{sub:Lp-integrability} we show that, if $G$ is also
compactly generated, the $\rmL^p$-integrability of $c$
implies that the above integral is uniformly bounded on compact subsets of $G$.

\begin{definition}[Integrability of couplings]
     A $(G,H)$-coupling $(\Omega,m)$ of compactly generated, unimodular, lcsc groups
	is $\rmL^p$-\emph{integrable},
	if there exist measure isomorphisms as in~(\ref{e:ij-fd})
	so that the corresponding ME-cocycles $G\times X\to H$ and $H\times Y\to G$
	are $\rmL^p$-integrable. If $p=1$ we just say that $(\Omega, m)$ is \emph{integrable}.
	Groups $G$ and $H$ that admit an $\rmL^p$-integrable $(G,H)$-coupling are said to be
	$\rmL^p$-\emph{measure equivalent}.
\end{definition}

For each $p\in[1,\infty]$, being $\rmL^p$-measure equivalent is an equivalence relation on compactly generated, unimodular, lcsc
groups (see Lemma~\ref{L:composition-of-lp-coc}). Furthermore,
$\rmL^p$-measure equivalence implies $\rmL^{q}$-measure equivalence if $1\le q\le p$.
So among the $\rmL^p$-measure equivalence relations, $\rmL^\infty$-measure equivalence is the strongest and $\rmL^1$-measure equivalence
is the weakest one; all being subrelations of the (unrestricted) measure equivalence.

Let $\Gamma<G$ be a lattice, and assume that $G$ is compactly generated and 
$\Gamma$ is finitely generated; as is the case for semi-simple Lie groups $G$.
Then the $(\Gamma,G)$-coupling $(G,m_G)$ is $\rmL^p$-integrable iff $\Gamma$
is an $\rmL^p$-\emph{integrable lattices} in $G$; 
if there exists a Borel cross-section $s:G/\Gamma\to G$ of the projection, so that
the cocycle $c:G\times G/\Gamma\to\Gamma$, $c(g,x)=s(g.x)^{-1}gs(x)$ is $\rmL^p$-integrable.
In particular $\rmL^\infty$-integrable lattices are precisely the uniform, i.e. cocompact ones.
Integrability conditions for lattices appeared for example in Margulis's proof of
superrigidity (cf. \cite[{V. \S 4}]{Margulis:book}), and in Shalom's \cite{shalom}.

\begin{definition}
A lcsc group $G$ is $p$-\emph{taut} if every $\rmL^p$-integrable $(G,G)$-coupling is taut.
\end{definition}

\medskip

\subsection{Statement of the main results} 
\label{sub:statement_of_the_main_results}

\begin{mthm}\label{T:SOn1-1-taut}
	The groups $G=\isom(\Hsp^n)$, $n\ge 3$, are $1$-taut.
\end{mthm}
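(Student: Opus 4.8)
\textbf{Proof proposal for Theorem~\ref{T:SOn1-1-taut}.}

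The plan is to take an arbitrary $\rmL^1$-integrable $(G,G)$-coupling $(\Omega,m)$ with $G=\isom(\Hsp^n)$, $n\ge 3$, and produce the required tautening map $\Phi\colon\Omega\to G$; uniqueness then follows from the fact that $G$ is strongly ICC (being a connected, center-free simple Lie group). Translating through the dictionary of Section~\ref{ssub:measurable_cocycles}, it suffices to show that the associated integrable ME-cocycle $\alpha\colon G\times X\to G$ is conjugate to the identity, i.e.\ that there is a measurable $f\colon X\to G$ with $\alpha(g,x)=f(g.x)^{-1}\,g\,f(x)$. The strategy, following the Gromov--Thurston/Burger--Iozzi philosophy, is cohomological: use the integrability to transport the bounded Euler/volume class of $G$ through the coupling, and then invoke the fact that the volume class is a \emph{nonvanishing} top-degree bounded cohomology class whose top value is attained only by isometric, orientation-compatible configurations.

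Concretely, the steps I would carry out are: (1) Restrict the $G\times G$-action to $G\times\{e\}$ and realize $X$ as the space of $G$-orbits, so that $\alpha$ records the failure of a measurable section to be $G$-equivariant, exactly as in~(\ref{e:me-via-coc}). (2) Consider the $G$-equivariant measurable map $\partial\Hsp^n\times X\to$ (bounded measurable cochains) obtained by pulling back the Busemann/volume cocycle on $(\partial\Hsp^n)^{n+1}$ along $\alpha$; the key input is Theorem~\ref{thm:main result about induction in cohomology} from~\cite{sobolev}, which is where $\rmL^1$-integrability enters — it guarantees that induction via the integrable cocycle $\alpha$ is defined on the relevant (bounded / Sobolev) cohomology and is compatible with the usual transfer maps. (3) Compare the induced top-dimensional bounded class with the genuine volume class of $G$: integrability forces the comparison to be norm-nonincreasing, while a simplicial-volume / proportionality argument (the hyperbolic manifolds $\Gamma\bs\Hsp^n$ have simplicial volume proportional to their Riemannian volume, by Gromov--Thurston) forces it to be norm-\emph{preserving}. (4) Exploit the extremality in the Milnor--Wood-type inequality: equality in the bound for the volume cocycle on $\Hsp^n$ holds only for maps that send regular ideal simplices to regular ideal simplices of maximal volume, hence — by the now-standard argument that such a measurable map $\partial\Hsp^n\to\partial\Hsp^n$ must be the boundary map of an isometry — produces a measurable $\varphi\colon X\to G$ conjugating $\alpha$ to a cocycle taking values in $\isom(\Hsp^n)=G$ and in fact equal to the canonical inclusion. (5) Show $\varphi$ is the desired $f$ and push it back to the tautening map $\Phi$ on $\Omega$; finally check uniqueness via strong ICC.

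The main obstacle, and the heart of the argument, is step~(4): converting the \emph{cohomological} equality (norm-preservation of the induced volume class) into a \emph{pointwise geometric} statement about $\alpha$. This is the cocycle analogue of the classical deduction, in Mostow rigidity via simplicial volume, that a volume-preserving pseudo-isometry extends to a boundary map which is Möbius; here one must run that argument measurably and equivariantly, controlling a measurable family of boundary maps $\partial\Hsp^n\to\partial\Hsp^n$ indexed by $X$ and showing the extremal configurations (regular ideal simplices of maximal volume, which for $n\ge 3$ are rigid — this is exactly why $n=2$ is excluded from this theorem) force each boundary map to be the trace of an element of $G$, depending measurably on the point. Once that measurable Möbius family is in hand, cocycle identities pin it down up to the coboundary $f$, completing the proof. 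The remaining ingredients — strong ICC of $G$, the equivalence between tautness of the coupling and triviality of $\alpha$ (Lemma~\ref{L:coc-taut}), and the passage between $\Phi$ and $f$ — are formal and were set up in the preceding subsections.
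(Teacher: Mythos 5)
Your proposal follows essentially the same route as the paper: reduce tautness to conjugating the integrable ME-cocycle, transport the bounded volume class through the coupling using the companion-paper induction theorem (the only place integrability enters), prove and then saturate a higher-dimensional Milnor--Wood inequality, and use the rigidity of maximal (regular) ideal simplices for $n\ge 3$ to show the measurable family of boundary maps is a family of isometries, finishing with strong ICC for uniqueness. The only structural point you gloss over is that the paper first passes to a torsion-free uniform lattice $\Gamma<G^0$ (Proposition~\ref{P:taut-lattice}) and to ergodic couplings, which is what makes the fundamental class of the closed manifold $\Gamma\bs\Hsp^n$ available for defining and evaluating the Euler number.
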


This result has an equivalent formulation in terms of cocycles.

\begin{theorem}[Integrable cocycle strong rigidity]
	\label{T:SOn1-L1-cocycle}
	Let $G=\isom(\Hsp^n)$, $n\ge 3$, $G\acts (X,\mu)$ be a probability measure preserving action,
	and $c:G\times X\to G$ be an integrable ME-cocycle. Then there is
	a measurable map $f:X\to G$, which is unique up to null sets, such
	that for $\mu$-a.e.~$x\in X$ and every $g\in G$ we have
	\[
		c(g,x)=f(g.x)^{-1}\,g\,f(x).
	\]
\end{theorem}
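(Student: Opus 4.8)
The plan is to show that Theorem~\ref{T:SOn1-L1-cocycle} is simply a translation of Theorem~\ref{T:SOn1-1-taut} into cocycle language, using the correspondence between taut couplings and cocycles conjugate to the identity that was recorded in Lemma~\ref{L:coc-taut}, together with the equivalence between $\rmL^1$-integrability of couplings and of the associated ME-cocycles. So the first step is to package the given data $(G\acts(X,\mu),\ c\colon G\times X\to G)$ into a $(G,G)$-coupling. Since $c$ is an \emph{ME-cocycle}, by definition it arises from some $(G,G)$-coupling $(\Omega,m)$ via a choice of measure isomorphisms as in~\eqref{e:ij-fd}; concretely one may take $\Omega = G\times X$ with $G\times G$ acting by $(g_1,g_2)\colon j(h',x)\mapsto j(g_1 h' c(g_2,x)^{-1}, g_2.x)$ as in~\eqref{e:me-via-coc}. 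I would note that the hypothesis that $c$ is \emph{integrable} means precisely, by Definition~\ref{D:Lp-ME} and the definition of integrable coupling, that $(\Omega,m)$ is an $\rmL^1$-integrable $(G,G)$-coupling (one also needs the companion cocycle $\beta\colon G\times Y\to G$ to be integrable, which is part of what it means for $c$ to be an \emph{integrable ME-cocycle} in the sense used here, or follows from the symmetry built into the coupling).

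Next I would invoke Theorem~\ref{T:SOn1-1-taut}: since $G=\isom(\Hsp^n)$, $n\ge 3$, is $1$-taut, the $\rmL^1$-integrable coupling $(\Omega,m)$ is taut, i.e.\ admits an essentially unique tautening map $\Phi\colon\Omega\to G$ satisfying $\Phi((g_1,g_2)\omega)=g_1\Phi(\omega)g_2^{-1}$. Then I would apply Lemma~\ref{L:coc-taut}, which says exactly that tautness of the coupling is equivalent to the ME-cocycle $\alpha$ (here $=c$) being conjugate to the identity isomorphism by an essentially unique measurable $f\colon X\to G$, that is, $c(g,x)=f(g.x)^{-1}\,g\,f(x)$ for a.e.\ $x$ and every $g$. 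Concretely $f$ is read off from $\Phi$ by restricting $\Phi$ along the embedding $x\mapsto j(e,x)$ of $X$ into $\Omega$: setting $f(x)=\Phi(j(e,x))$, the equivariance of $\Phi$ under $G\times G$ translates, via formula~\eqref{e:me-via-coc}, into the displayed conjugation identity; uniqueness of $f$ follows from uniqueness of $\Phi$.

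The only genuinely substantive point to address is the \emph{well-definedness} of the reduction — that is, checking that the notion of ``integrable ME-cocycle'' in the statement matches the notion of ``$\rmL^1$-integrable coupling'' used in Theorem~\ref{T:SOn1-1-taut}, despite the a priori dependence of the cocycle on the choice of measure isomorphisms in~\eqref{e:ij-fd}. Here I would recall that different choices of the isomorphisms produce conjugate cocycles (as noted after~\eqref{e:me-via-coc}), and that $\rmL^1$-integrability is stable under conjugation by a measurable map into $G$ — this needs the remark, recorded after Definition~\ref{D:Lp-ME} and proved in the appendix, that for compactly generated $G$ the integral defining integrability is locally bounded, so conjugating by $f\colon X\to G$ only shifts $|c(g,x)|$ by $|f(g.x)|+|f(x)|$, which is integrable in $x$ for a.e.\ $g$. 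I expect this bookkeeping to be the main (though minor) obstacle; everything else is a direct citation of Theorem~\ref{T:SOn1-1-taut} and Lemma~\ref{L:coc-taut}. Conversely, the implication from Theorem~\ref{T:SOn1-L1-cocycle} back to Theorem~\ref{T:SOn1-1-taut} is immediate by running the same dictionary in reverse, so the two formulations are indeed equivalent, as claimed.
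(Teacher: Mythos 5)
Your proposal is correct and matches the paper's treatment: the paper presents Theorem~\ref{T:SOn1-L1-cocycle} as the cocycle reformulation of Theorem~\ref{T:SOn1-1-taut}, obtained exactly by packaging the integrable ME-cocycle into an $\rmL^1$-integrable $(G,G)$-coupling, invoking $1$-tautness, and translating the tautening map into the conjugating map $f$ via Lemma~\ref{L:coc-taut} (whose appendix proof is the same computation you sketch with $f(x)=\Phi(j(e,x))$). Your extra remark on stability of integrability under change of the measure isomorphisms is sound and consistent with the discussion following Definition~\ref{D:Lp-ME}.
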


Note that this result generalizes Mostow-Prasad rigidity for lattices in these groups.
This follows from the fact that any $1$-taut group satisfies Mostow rigidity for $L^1$-integrable lattices,
and the fact, due to Shalom, that all lattices in groups $G=\isom(\Hsp^n)$, $n\ge 3$,
are $\rmL^1$-integrable.
\begin{theorem}[\cite{shalom}*{Theorem~3.6}]\label{thm:Shalom and lattices}
	All lattices in simple Lie groups
	not locally isomorphic to
	$\isom(\Hsp^2)\simeq\rmPSL_2(\bbR)$,
	$\isom(\Hsp^3)\simeq\rmPSL_2(\bbC)$, are $\rmL^2$-integrable,
	hence also $\rmL^1$-integrable. Further,
	lattices in $\isom(\Hsp^3)$ are $\rmL^1$-integrable.
\end{theorem}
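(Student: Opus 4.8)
The plan is to use the cocycle reformulation quoted above: $\Gamma<G$ is $\rmL^p$-integrable exactly when the cocycle $c\colon G\times G/\Gamma\to\Gamma$, $c(g,x)=s(g.x)^{-1}g\,s(x)$, attached to a Borel section $s$ whose image $\mathcal F$ is a fundamental domain meeting every coset once (so that $g\,s(x)\in\mathcal F\,c(g,x)$), satisfies $\int_{G/\Gamma}\abs{c(g,x)}_\Gamma^p\,d\mu(x)<\infty$ for a.e.\ $g$; here $\abs{\cdot}_\Gamma$ is a word norm on $\Gamma$ and $\mu$ is identified with normalized Haar measure on $\mathcal F$. If $\Gamma$ is cocompact, $\mathcal F$ is relatively compact, so for fixed $g$ the value $c(g,x)$ ranges over a finite set, the cocycle is bounded, and $\rmL^\infty$- (hence every $\rmL^p$-) integrability is immediate. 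All the work is in the non-cocompact case, which I would treat one cusp at a time via reduction theory.

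Take first $G=\isom(\Hsp^n)$ and a non-uniform $\Gamma$. By the Margulis lemma, $\Gamma\backslash\Hsp^n$ is the union of a compact part and finitely many cusps $\Delta_i\backslash B_i$, with $B_i$ a horoball and $\Delta_i<\Gamma$ virtually $\bbZ^{n-1}$, acting cocompactly on $\partial B_i$ and quasi-isometrically embedded in the relatively hyperbolic group $\Gamma$. Choose $\mathcal F$ adapted to this decomposition: over the compact part $\abs{c(g,\cdot)}_\Gamma$ is bounded for each fixed $g$, a finite contribution, and it remains to bound the integral over each cusp. Parametrize a point $x$ of a cusp by its height $y\ge 1$ in upper-half-space coordinates (with bounded transverse and $K$-coordinates); then $\mu$ restricted to the cusp is comparable to $y^{-n}\,dy$ times a finite transverse measure. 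The crux is the estimate $\abs{c(g,x)}_\Gamma\lesssim y\cdot e^{O(\abs{g}_G)}$: in the isometric model of $\Hsp^n$ compatible with $\mathcal F$ one has the exact identity $d\bigl(\text{image of }g\,s(x),\ \text{image of }s(x)\bigr)=d(o,g^{-1}o)\asymp\abs{g}_G$, so the image of $g\,s(x)$ stays in the same cusp at essentially the same depth; hence the return element $c(g,x)$ lies in $\Delta_i$, with horospherical (Euclidean) translation length equal to an $\abs{g}_G$-sized Riemannian displacement rescaled by the factor $\asymp y$, and quasi-isometric undistortion of $\Delta_i$ converts this into the word-norm bound (the $x$ at depth $\lesssim\abs{g}_G$ forming a bounded exceptional region).

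Plugging in, $\int_{\text{cusp}}\abs{c(g,x)}_\Gamma^p\,d\mu\lesssim e^{O(p\abs{g}_G)}\int_1^\infty y^{p-n}\,dy$, finite exactly for $p<n-1$. This already yields $\rmL^1$-integrability of lattices in $\isom(\Hsp^n)$ for $n\ge 3$ and $\rmL^2$-integrability for $n\ge 4$; the same computation with $g$ of nonzero unipotent part along a cusp gives a matching lower bound showing $\rmL^1$ fails for $n=2$, which is why cocompactness is imposed there. For the other simple groups the scheme is the same via Siegel sets: cusps correspond to $\Gamma$-conjugacy classes of $\bbQ$-parabolics $P$, $\mathcal F$ is covered by finitely many Siegel sets, the measure of the depth-$r$ slice (the split-torus direction of $P$) decays like $e^{-2\rho_P(r)}$, and the return cocycle grows only like $e^{\alpha_{\min}(r)}$ — here one uses that the cusp subgroup is a lattice in the nilradical $N_P$, whose centre is quadratically distorted inside $N_P$ so as to cancel the heavier $\Ad(A)$-weights on $\mathrm{Lie}(N_P)$, together with the Lubotzky--Mozes--Raghunathan control of distortion of unipotents in higher-rank lattices. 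The integral then converges for all $p$ with $p\,\alpha_{\min}<2\rho_P$ along every cusp direction, and this holds for $p=2$ for every simple $G$ except $\rmSO_{2,1}(\bbR)$ (where it fails already for $p=1$) and $\rmSO_{3,1}(\bbR)$ (where it holds for $p=1$ but not $p=2$), exactly as in the statement.

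The hard part is the cusp estimate $\abs{c(g,x)}_\Gamma\lesssim y\cdot e^{O(\abs{g}_G)}$ and its higher-rank version. The displacement identity itself is elementary; the difficulty is passing from it to a bound on the \emph{word} norm of the return element, which rests on quasi-isometric undistortion of the cusp/unipotent subgroups — classical in rank one (relative hyperbolicity), but the deep Lubotzky--Mozes--Raghunathan theorem in higher rank — and on carefully matching the thick--thin (resp.\ Siegel-set) parametrization to the cocycle, keeping track of the finitely many cusps, and ensuring that all constants depend on $g$ only through $\abs{g}_G$, so that the integral is finite for a.e.\ $g$ and bounded on compacta. A secondary point is that non-arithmetic lattices exist in $\isom(\Hsp^n)$ for every $n$, so in rank one one cannot invoke algebraic reduction theory and must run the whole argument through the geometric thick--thin decomposition.
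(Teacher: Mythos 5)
First, a framing remark: the paper does not prove this statement at all — it is quoted from Shalom \cite{shalom}*{Theorem~3.6}, with only the observation that Shalom's proof actually gives $\rmL^{n-1-\epsilon}$-integrability for lattices in $\isom(\Hsp^n)$, whence the $\rmL^1$-claim for $n=3$. So you are reproving Shalom's theorem, and your real-hyperbolic cusp computation is essentially the argument behind his bound and is sound as far as it goes: the identity $d\bigl((gs(x))^{-1}o,\,s(x)^{-1}o\bigr)=d(g^{-1}o,o)$, precisely invariant horoballs forcing the return element into $\Delta_i$ once the height exceeds $e^{O(\abs{g}_G)}$, undistortion of the peripheral subgroups, and the weight $y^{-n}\,dy$ do give $\rmL^p$ for $p<n-1$, hence $\rmL^1$ for $n=3$ and $\rmL^2$ for $n\ge 4$.

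The genuine gap is in the higher-rank half. Your stated criterion ``$p\,\alpha_{\min}<2\rho_P$'', with the return cocycle growing like $e^{\alpha_{\min}(r)}$ because the centre of $N_P$ is quadratically distorted and ``cancels the heavier $\Ad(A)$-weights'', is the rank-one mechanism, and it fails in higher rank because the heavy weight need not sit on the centre of $N_P$. Concretely, take $\Gamma=\rmSL_3(\bbZ)$ in $\rmSL_3(\bbR)$ and the chamber direction $\log a=\mathrm{diag}(t,t,-2t)$: the return element has entries of size $e^{3t}$ in the $(2,3)$ and $(1,3)$ positions, and its word length in the integral Heisenberg lattice $\Gamma\cap N_P$ is $\asymp e^{3t}$ (the quadratic saving helps only the $(1,3)$-entry), while the volume density is $e^{-2\rho(H)}=e^{-6t}$; this bound gives convergence only for $p<2$, so $\rmL^2$ does not follow, contradicting your claim that the criterion singles out only $\rmSO_{2,1}$ and $\rmSO_{3,1}$. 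The correct higher-rank input — which you cite but never actually feed into the estimate — is Lubotzky--Mozes--Raghunathan: for non-uniform higher-rank lattices the word metric is quasi-isometric to the metric induced from $G$ (unipotents have logarithmic word length), so $\abs{c(g,x)}_\Gamma\lesssim d_G(e,s(x))+d_G(e,s(g.x))+\abs{g}_G$ grows only \emph{linearly} in the Siegel-set depth against exponential volume decay, yielding $\rmL^p$ for every $p<\infty$ in higher rank; that, not an exponential-versus-exponential comparison, is why only $\rmSO_{2,1}$ and $\rmSO_{3,1}$ are exceptional. A secondary point: the remaining rank-one groups (e.g.\ $\rmSU_{n,1}$, which has non-arithmetic lattices) must be handled by the geometric thick--thin argument rather than $\bbQ$-parabolic reduction theory, and there your quadratic-distortion mechanism is exactly right, giving $p$ up to the volume exponent $m_\alpha+2m_{2\alpha}\ge 4$; as written, though, your proposal folds these groups into the flawed higher-rank paragraph.
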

The second assertion is not stated in this form in~\cite{shalom}*{Theorem~3.6} but the proof
therein shows exactly that. In fact, for lattices in $\isom(\Hsp^n)$ Shalom shows $\rmL^{n-1-\epsilon}$-integrability.

Lattices in $G=\isom(\Hsp^2)\cong\rmPGL_2(\bbR)$, such as surface groups, admit a rich
space of deformations -- the Teichm\"uller space.
In particular, these groups do not satisfy Mostow rigidity, and therefore are not taut (they are not even $\infty$-taut).
However, it is well known viewing $G=\isom(\Hsp^2)\cong\rmPGL_2(\bbR)$ as acting on the circle $S^1\cong \partial\Hsp^2\cong \Psp^1$,
any abstract isomorphism $\tau:\Gamma\to\Gamma'$ between cocompact lattices
$\Gamma,\Gamma'<G$ can be realized by a conjugation in $\Homeo(S^1)$, that is,
\[
	\exists_{f\in \Homeo(S^1)}~\forall_{\gamma\in\Gamma}~~\pi\circ\tau(\gamma)
	=f^{-1}\circ \pi(\gamma)\circ f,
\]
where $\pi:G\to \Homeo(S^1)$ is the imbedding as above.
(Such $f$ is the "boundary map" constructed in Mostow's proof of strong rigidity:
the isomorphism $\tau:\Gamma\to \Gamma'$ gives rise to a quasi-isometry of $\Hsp^2$,
and Morse-Mostow lemma is used to extend this quasi-isometry to 
a (quasi-symmetric) homeomorphism $f$ of the boundary $S^1=\partial\Hsp^2$).
Motivated by this observation we generalize the notion of tautness as follows.

\begin{definition}\label{D:tautrel}
Let $G$ be a unimodular lcsc group, $\calG$	a Polish group, $\pi:G\to\calG$ a continuous homomorphism.
A $(G,G)$-coupling is \emph{taut relative}  to $\pi:G\to\calG$ if there exists
a up to null sets unique measurable map $\Phi:\Omega\to \calG$ such that
for $m$-a.e. $\omega\in\Omega$ and all $g_1, g_2\in G$
\[
	\Phi((g_1,g_2)\omega)=\pi(g_1)\Phi(\omega)\pi(g_2)^{-1}.
\]
We say that $G$ is \emph{taut} (resp.~$p$-\emph{taut}) \emph{relative to} $\pi:G\to\calG$
if all (resp.~all $\rmL^p$-integrable) $(G,G)$-couplings are taut relative to $\pi:G\to\calG$.
\end{definition}
Observe that $G$ is taut iff it is taut relative to itself. Note also that if $\Gamma<G$ is a lattice,
then $G$ is taut iff $\Gamma$ is taut relative to the inclusion $\Gamma<G$; and $G$ is taut relative to $\pi:G\to\calG$
iff $\Gamma$ is taut relative to $\pi|_\Gamma:\Gamma\to\calG$ (Proposition~\ref{P:taut-lattice}).
If $\Gamma<G$ is $\rmL^p$-integrable, then these equivalences apply to $p$-tautness.

\begin{mthm}\label{thm:taut relative homeo}
The group $G=\isom(\Hsp^2)\cong\rmPGL_2(\bbR)$ is $1$-taut relative to the natural embedding $G<\Homeo(S^1)$.
Cocompact lattices $\Gamma<G$ are $1$-taut relative to the embedding $\Gamma<G<\Homeo(S^1)$. 	
\end{mthm}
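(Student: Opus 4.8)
The plan is to establish Theorem~\ref{thm:taut relative homeo} by producing a tautening map from an arbitrary integrable $(G,G)$-coupling into $\Homeo(S^1)$ and then proving its uniqueness. First I would reduce the statement about cocompact lattices to the statement about $G=\isom(\Hsp^2)$ itself: by the remarks following Definition~\ref{D:tautrel} (made precise in Proposition~\ref{P:taut-lattice}), $G$ is $1$-taut relative to $\pi:G<\Homeo(S^1)$ if and only if a cocompact lattice $\Gamma<G$ is $1$-taut relative to $\pi|_\Gamma$; and cocompact lattices are $\rmL^\infty$-integrable, hence $\rmL^1$-integrable, so this equivalence is applicable. Thus it suffices to treat integrable $(G,G)$-couplings, or equivalently, via Lemma~\ref{L:coc-taut}, integrable ME-cocycles $\alpha:G\times X\to G$; the goal is a measurable $\Phi\colon X\to\Homeo(S^1)$ with $\alpha(g,x)=\Phi(g.x)^{-1}\circ\pi(g)\circ\Phi(x)$ and uniqueness thereof. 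Rephrasing in boundary terms: since $G$ acts on $B=S^1\cong\partial\Hsp^2$, the coupling $\Omega$ has two boundary maps, and I want to construct a measurable $G\times G$-equivariant map $\Omega\to\Homeo(S^1)$.

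The core of the construction is a cocycle Milnor--Wood--Ghys argument, carried out in bounded cohomology. The bounded Euler class $\euler^b\in\rmHcb^2(\Homeo^+(S^1);\bbR)$ pulls back along $\pi$ to the bounded Euler class of $G=\isom(\Hsp^2)$ (up to sign on components), whose norm equals its maximal value, realized by the orientation cocycle on triples of boundary points. Given the integrable ME-cocycle $\alpha$, the companion paper's induction machinery --- specifically Theorem~\ref{thm:main result about induction in cohomology}, the one place the integrability hypothesis is consumed --- lets me transfer bounded cohomology classes across the coupling and compare norms. The key step is to show that the pullback of $\euler^b$ under $\alpha$ still has maximal norm; this is the cocycle analogue of the equality case in the Milnor--Wood inequality. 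By Ghys' analysis of the equality case (the ``Ghys phenomenon''), maximality of the Euler number forces the relevant boundary-action cocycle to be, up to a measurable conjugation, the standard one --- concretely, one extracts from the norm-maximizing cocycle a measurable family of orientation-preserving, semi-conjugacy-type maps $S^1\to S^1$, and the rigidity of the equality case upgrades these to honest homeomorphisms, yielding the tautening map $\Phi\colon X\to\Homeo(S^1)$. This is modeled on the way Burger--Iozzi and Burger--Monod--Iozzi use boundary maps and the equality case of norm estimates to get conjugacies.

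For uniqueness of $\Phi$ (equivalently, the strongly-ICC-type condition entering Definition~\ref{D:tautrel}), I would argue that two tautening maps $\Phi_1,\Phi_2$ differ by a measurable map $X\to\Homeo(S^1)$ that intertwines $\pi(g)$ with itself for a.e.\ $g$, i.e.\ lands a.e.\ in the centralizer of $\pi(G)$ in $\Homeo(S^1)$; since $\pi(G)=\rmPGL_2(\bbR)$ acts on $S^1$ with trivial centralizer in $\Homeo(S^1)$, this map is a.e.\ the identity. The main obstacle, and the technical heart of the proof, is the cocycle Milnor--Wood inequality together with its equality case: first, making sense of $\alpha^*\euler^b$ and proving the norm bound requires the integrable induction isomorphism of Theorem~\ref{thm:main result about induction in cohomology} to behave isometrically (or norm-nonincreasingly in the right direction) on the relevant classes; second, promoting norm-maximality into an actual measurable conjugation in $\Homeo(S^1)$ is where Ghys' rigidity must be ported to the measurable/cocycle setting --- establishing that the measurable boundary data coming from a maximal cocycle really assembles into a single well-defined measurable map $X\to\Homeo(S^1)$, with the semi-conjugacy collapsing to a genuine conjugacy, is the step I expect to demand the most care. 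The remaining ingredients --- passing between couplings and cocycles, the lattice-versus-ambient-group reduction, and the centralizer computation for uniqueness --- are comparatively routine given the earlier results in the paper.
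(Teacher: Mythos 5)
Your overall route is the paper's route: reduce to a cocompact lattice via Proposition~\ref{P:taut-lattice}, pass to cocycles by Lemma~\ref{L:coc-taut}, run a bounded-cohomological Milnor--Wood argument whose maximality input is exactly Theorem~\ref{thm:main result about induction in cohomology} from the companion paper, and then turn maximality into a measurable family of circle homeomorphisms via boundary maps. However, two steps as you present them are genuinely incomplete. First, the uniqueness argument is wrong as stated: if $\Phi_1,\Phi_2$ are two tautening maps, the comparison map $u(\omega)=\Phi_2(\omega)\Phi_1(\omega)^{-1}$ satisfies $u((g_1,g_2)\omega)=\pi(g_1)u(\omega)\pi(g_1)^{-1}$ (it is conjugation-\emph{equivariant} along the action, after descending to the finite-measure quotient), not that $u$ takes values in the centralizer of $\pi(G)$; triviality of the centralizer is therefore not enough. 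The correct tool is the strong ICC property of $\Homeo(S^1)$ relative to $\rmPGL_2(\bbR)$ (push forward the finite invariant measure under $u$ to get a conjugation-invariant probability measure on $\Homeo(S^1)$ and conclude it is $\delta_e$); this is Lemma~\ref{L:sICCinHomeo} combined with Lemma~\ref{L:sICC2uniq}, and note that the same strong ICC statement is also a hypothesis you are already invoking implicitly when you use Proposition~\ref{P:taut-lattice} for the lattice reduction.

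Second, the step you yourself flag as the heart of the matter --- promoting maximality of the Euler number to an honest measurable conjugation in $\Homeo(S^1)$ --- is not supplied by citing ``Ghys' rigidity,'' which in its classical form only gives a semi-conjugacy for genuine representations and does not address the measurable, cocycle setting. The paper's mechanism is concrete: the Euler number is expressed as an integral of the orientation (volume) cocycle composed with the Furstenberg boundary map $\phi_x$ (Theorem~\ref{thm:evaluation by fundamental class}, proved via the Poisson transform, Thurston's measure homology and a Lebesgue differentiation argument), so maximality forces $\phi_x$ to preserve the orientation cocycle for a.e.\ triple and a.e.\ $x$; then Iozzi's criterion says such a $\phi_x$ agrees a.e.\ with a homeomorphism precisely when $\phi_{x*}\nu$ has full support, and full support is established by the Furstenberg-type Lemma~\ref{L:furstenberg}, which uses minimality and strong proximality of the $\Gamma$-action on $S^1$ transported across the ME-cocycle and ergodicity of the coupling (itself arranged by an ergodic-decomposition reduction at the start). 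Without an argument of this kind your ``semi-conjugacy collapses to a genuine conjugacy'' claim is an assertion, not a proof; so the proposal is a correct plan matching the paper's strategy, but the equality-case rigidity and the uniqueness step still need the actual arguments.
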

We skip the obvious equivalent cocycle reformulation of this result.
\begin{remarks}\label{R:circle-taut}\hfill
\begin{enumerate}
	\item The $\rmL^1$-assumption cannot be dropped from Theorem~\ref{thm:taut relative homeo}.
	Indeed, the free group $\mathbf{F}_2$ can be realized as a lattice in $\rmPSL_2(\bbR)$,
	but most automorphisms of $\mathbf{F}_2$ cannot be realized by homeomorphisms of the circle.
	\item
	Realizing isomorphisms between surface groups in $\Homeo(S^1)$, one obtains somewhat regular maps:
	they are H\"older continuous and quasi-symmetric. We do not know (and do not expect)
	Theorem~\ref{thm:taut relative homeo} to hold with $\Homeo(S^1)$ being replaced by the corresponding
	subgroups.
\end{enumerate}	
\end{remarks}

\medskip

We now state the $\rmL^1$-ME rigidity result which is deduced from Theorem~\ref{T:SOn1-1-taut},
focusing on the case of countable, finitely generated groups.

\begin{mthm}[$\rmL^1$-Measure equivalence rigidity]
\label{T:ME-rigidity}
  	Let $G=\isom(\Hsp^n)$ with $n\ge 3$, and $\Gamma<G$ be a lattice.
  	Let $\Lambda$ be a finitely generated group, and let $(\Omega,m)$
    be an integrable $(\Gamma,\Lambda)$-coupling. Then
	\begin{enumerate}
		\item
		there exists a short exact sequence
		\[
			1\to F\to \Lambda\to\bar\Lambda\to 1
		\]
		where $F$ is finite and $\bar{\Lambda}$ is a lattice in $G$,
		\item
		and a measurable map $\Phi:\Omega\to G$ so that for $m$-a.e.~$\omega\in\Omega$
		and every $\gamma\in\Gamma$ and every $\lambda\in\Lambda$
		\[
			\Phi((\gamma,\lambda)\omega)=\gamma \Phi(\omega) \bar\lambda^{-1}.
		\]
	\end{enumerate}
	Moreover, if $\Gamma\times\Lambda\acts (\Omega,m)$ is ergodic, then
  	\begin{enumerate}[label=(\arabic*a),start=2]
  	\item
	either the push-forward measure $\Phi_\ast m$ is a positive multiple of the Haar measure $m_G$ or $m_{G^0}$;
  	\item
	or, one may assume that $\Gamma$ and $\bar\Lambda$ share a subgroup of finite index
	and $\Phi_*m$ is a positive multiple of the counting measure on the double
	coset $\Gamma e\bar\Lambda\subset G$.
  	\end{enumerate}
\end{mthm}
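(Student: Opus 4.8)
The plan is to reduce the statement to the cocycle strong rigidity of Theorem~\ref{T:SOn1-1-taut} through the reconstruction mechanism of Theorem~\ref{T:reconstruction}, and then to extract the group- and measure-theoretic conclusions by transporting the coupling measure to $G$.

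First I would induce the coupling up to $G$. Since $n\ge 3$, the lattice $\Gamma<G$ is $\rmL^1$-integrable by Theorem~\ref{thm:Shalom and lattices}, so $(G,m_G)$ with the $(G,\Gamma)$-action $(g,\gamma)\colon x\mapsto gx\gamma^{-1}$ is an $\rmL^1$-integrable $(G,\Gamma)$-coupling; composing it over $\Gamma$ with the given integrable $(\Gamma,\Lambda)$-coupling $(\Omega,m)$ and using that composition preserves $\rmL^1$-integrability (Lemma~\ref{L:composition-of-lp-coc}) yields an $\rmL^1$-integrable $(G,\Lambda)$-coupling $\tilde\Omega=(G\times\Omega)/\Gamma$, whose quotient by $G$ is still the finite-measure $\Gamma$-fundamental domain in $\Omega$. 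Composing $\tilde\Omega$ over $\Lambda$ with its flip gives an $\rmL^1$-integrable $(G,G)$-coupling, which is taut by Theorem~\ref{T:SOn1-1-taut}; restricting its essentially unique tautening map to an a.e.\ slice isomorphic to $\tilde\Omega$ produces a $G$-equivariant measurable map $\tilde\Omega\to G$ whose $\Lambda$-equivariance is twisted by an $\rmL^1$-integrable ME-cocycle $\Lambda\to G$, and uniqueness of the tautening map forces this cocycle to be conjugate to an honest homomorphism. This is what Theorem~\ref{T:reconstruction} packages: one obtains a homomorphism $\rho\colon\Lambda\to G$ and, after restricting along $\Omega\hookrightarrow\tilde\Omega$, a measurable map $\Phi\colon\Omega\to G$, unique up to null sets, with $\Phi((\gamma,\lambda)\omega)=\gamma\,\Phi(\omega)\,\rho(\lambda)^{-1}$ for a.e.\ $\omega$ and all $\gamma\in\Gamma$, $\lambda\in\Lambda$; this is assertion~(2). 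Write $F=\ker\rho$, $\bar\Lambda=\rho(\Lambda)$, $\bar\lambda:=\rho(\lambda)$.

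To obtain assertion~(1) I would push the measure forward: $\nu:=\Phi_\ast m$ on $G$ is nonzero, left $\Gamma$-invariant and right $\bar\Lambda$-invariant by equivariance, and $\Phi$ carries a finite-measure fundamental domain for the $\Lambda$-action on $\Omega$ onto a set of finite $\nu$-mass meeting every right $\bar\Lambda$-coset in its support. Since the $\Lambda$-action on $\Omega$ is essentially free while $\Phi$ is constant on $F$-orbits, this finite-mass accounting (after passing to an ergodic component if necessary) forces $F$ to be finite and forces $\bar\Lambda$ to be discrete and to act on the support of $\nu$ with a finite-measure fundamental domain — i.e.\ $\bar\Lambda$ is a lattice in $G$, the appearance of $G^0$ below reflecting the case where the relevant support is a single coset of the identity component. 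Together with the previous paragraph this gives the short exact sequence $1\to F\to\Lambda\to\bar\Lambda\to1$.

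Finally, assuming $\Gamma\times\Lambda\acts(\Omega,m)$ ergodic, $\nu$ is a nonzero $\sigma$-finite measure on $G$ that is ergodic for the commuting left $\Gamma$- and right $\bar\Lambda$-actions of two lattices. A Fubini/double-ergodicity analysis of such invariant measures leaves exactly two options: either $\nu$ is a positive multiple of the Haar measure of $G$, or of $G^0$ when both lattices lie in $G^0$; or $\nu$ is concentrated on a single double coset $\Gamma g\bar\Lambda$, whence $g^{-1}\Gamma g\cap\bar\Lambda$ has finite index in both factors and $\nu$ is a positive multiple of counting measure there — and post-composing $\Phi$ with right translation by $g$ (and conjugating $\rho$ by $g$) normalizes $g=e$. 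These are cases~(2a) and~(3a). I expect the genuinely hard input to be Theorem~\ref{T:SOn1-1-taut} itself (equivalently, the cocycle Mostow rigidity underlying Theorem~\ref{T:reconstruction}); within the present deduction the two delicate points are verifying that $\Phi$ is nondegenerate enough that $\Phi_\ast m$ is $\sigma$-finite — so that finiteness of $\ker\rho$ and the lattice property of $\bar\Lambda$ can be read off from finite fundamental domains — and executing the invariant-measure dichotomy on $G$ cleanly, including the bookkeeping with the identity component $G^0$.
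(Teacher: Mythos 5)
Your derivation of parts (1) and (2) is essentially sound and close to the paper's own route: you induce to an $\rmL^1$-integrable $(G,\Lambda)$-coupling, compose with the dual over $\Lambda$, invoke $1$-tautness of $G$ (Theorem~\ref{T:SOn1-1-taut}) and the mechanism of Theorems~\ref{T:reconstruction}/\ref{T:split+hom}, and restrict the resulting equivariant map $\widetilde\Omega\to G$ back to $\Omega$. This even streamlines the paper slightly: the paper first applies Theorem~\ref{T:split+hom} to the $(\Gamma,\Lambda)$-coupling itself (via Proposition~\ref{P:taut-lattice}), getting only discreteness of $\bar\Lambda$, and then obtains the lattice property either from the induced coupling (checking the two homomorphisms are conjugate) or from the Ratner argument; your version reads the lattice property directly off Theorem~\ref{T:reconstruction}, since $\rho(\Lambda)$ is countable and closed, hence discrete, and $G/\bar\Lambda$ carries an invariant probability measure. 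Your second paragraph's ``finite-mass accounting'' is therefore redundant (and, as written, vague about why $F$ is finite and $\bar\Lambda$ discrete), but it does not damage the argument, since those facts are already delivered by the compact-kernel/closed-image part of Theorem~\ref{T:split+hom}.

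The genuine gap is in the ergodic dichotomy (2a)/(3a). You assert that a ``Fubini/double-ergodicity analysis'' of a nonzero $\sigma$-finite measure on $G$ that is left $\Gamma$-invariant, right $\bar\Lambda$-invariant and ergodic forces it to be either Haar (on $G$ or $G^0$) or counting measure on a single double coset. No such elementary argument exists, and this is exactly where the paper has to invoke deep homogeneous dynamics: one passes to an ergodic component $\mu^0$ on $G^0/\Lambda^0$, forms the suspended measure $\tilde\mu^0$ on $Z=G^0/\Gamma^0\times G^0/\Lambda^0$, which is invariant and ergodic under the diagonal $\Delta(G^0)$, and then applies Ratner's measure-classification theorem \cite{ratnerICM} (as in \cite{FurmanME}*{Lemma 4.6}) to conclude that $\tilde\mu^0$ is homogeneous; simplicity of $G^0$ then leaves only $L=G^0\times G^0$ (Haar case) or $L=\Delta(G^0)$ (atomic case, giving a finite $\Gamma$-orbit in $G/\bar\Lambda$, commensurability of $\Gamma$ and $\bar\Lambda$, and, after normalizing by a translation, counting measure on $\Gamma e\bar\Lambda$). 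Mere ergodicity of the $\Gamma\times\bar\Lambda$-action cannot rule out exotic invariant measures; the classification of $\Gamma$-invariant ergodic finite measures on $G/\bar\Lambda$ as Haar-or-finite-orbit is precisely the content of unipotent measure rigidity, and your proposal needs to cite and use it (also to make precise when the $G^0$-Haar alternative in (2a) occurs, which the paper handles by possibly adjusting $\Phi$ and $\rho$ by an element of $G\setminus G^0$).
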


This result is completely analogous to the higher rank case considered in \cite{FurmanME},
except for the $\rmL^1$-assumption.
We do not know whether Theorem~\ref{T:ME-rigidity} remains valid in the broader ME category, that is, without
the $\rmL^1$-condition, but should point out that if the $\rmL^1$ condition can be removed from Theorem~\ref{T:SOn1-1-taut}
then it can also be removed from Theorem~\ref{T:ME-rigidity}.

\medskip

Theorem~\ref{T:ME-rigidity} can also be stated in the broader context of unimodular lcsc groups,
in which case the $\rmL^1$-measure equivalence rigidity states that a compactly generated unimodular lcsc group $H$
that is $\rmL^1$-measure equivalent to $G=\isom(\Hsp^n)$, $n\ge 3$, admits a short exact sequence $1\to K\to H\to \bar{H}\to 1$
where $K$ is compact and $\bar{H}$ is either $G$, or its index two subgroup $G^0$, or is a lattice in $G$.

\medskip

Measure equivalence rigidity results have natural consequences for \emph{(stable, or weak) orbit equivalence}
of essentially free probability measure-preserving group actions
(cf. \cite{FurmanOEw, MonodShalom, Kida:OE, popa-cocycle}).
Two probability measure preserving actions $\Gamma\acts (X,\mu)$, $\Lambda\acts (Y,\nu)$
are \emph{weakly}, or \emph{stably}, \emph{orbit equivalent} if there
exist measurable maps $p:X\to Y$, $q:Y\to X$ with $p_*\mu\ll \nu$, $q_*\nu\ll \mu$ so that a.e.
\[
	p(\Gamma.x)\subset \Lambda.p(x),\quad q(\Lambda.y)\subset \Gamma.q(y),\qquad
	q\circ p(x)\in\Gamma.x,\quad p\circ q(y)\in \Lambda.y.
\]
(see~\cite{FurmanOEw}*{\S 2} for other equivalent definitions).
If $\Gamma_1,\Gamma_2$ are lattices in some lcsc group $G$,
then $\Gamma_1\acts G/\Gamma_2$ and $\Gamma_2\acts G/\Gamma_1$
are stably orbit equivalent via $p(x)=s_1(x)^{-1}$, $q(y)=s_2(y)^{-1}$, where $s_i:G/\Gamma_i\to G$
are measurable cross-sections.
Moreover, given any (essentially) free, ergodic, probability measure preserving (p.m.p.) action $\Gamma_1\acts (X_1,\mu_1)$
and $\Gamma_1$-equivariant quotient map $\pi_1:X_1\to G/\Gamma_2$, there exists a canonically defined free, ergodic
p.m.p. action $\Gamma_2\acts (X_2,\mu_2)$ with equivariant quotient $\pi_2:X_2\to G/\Gamma_1$
so that $\Gamma_i\acts (X_i,\mu_i)$ are stably orbit equivalent in a way compatible to $\pi_i:X_i\to G/\Gamma_{3-i}$~\cite{FurmanOEw}*{Theorem C}.

We shall now introduce integrability conditions on weak orbit equivalence, assuming
$\Gamma$ and $\Lambda$ are finitely generated groups.
Let $|\cdot|_\Gamma$, $|\cdot|_\Lambda$ denote some word metrics on $\Gamma$, $\Lambda$
respectively, and let $\Gamma\acts (X,\mu)$ be an essentially free action.
Define an extended metric $d_\Gamma:X\times X\to [0,\infty]$ on $X$ by
setting $d_\Gamma(x_1,x_2)=|\gamma |_\Gamma$ if $\gamma. x_1=x_2$ and set
$d_\Gamma(x_1,x_2)=\infty$ otherwise.
Let $d_\Lambda$ denote the extended metric on $Y$, defined in a similar fashion.
We say that $\Gamma\acts (X,\mu)$ and $\Lambda\acts (Y,\nu)$ are
$\rmL^s$\emph{-weakly/stably orbit equivalent}, if there exists maps $p:X\to Y$, $q:Y\to X$
as above, and such that for every $\gamma\in \Gamma$, $\lambda\in\Lambda$
\[
	\bigl(x\mapsto d_\Lambda(p(\gamma. x),p(x))\bigr)\in \rmL^s(X,\mu),\qquad
	\bigl(x\mapsto d_\Gamma(q(\lambda. y),q(y))\bigr)\in \rmL^s(Y,\nu).
\]
Note that the last condition is independent of the choice of word metrics.

The following result\footnote{The formulation of the virtual isomorphism case in terms of induced actions is due to Kida~\cite{Kida:OE}.}
is deduced from Theorem~\ref{T:ME-rigidity} in essentially the same way
Theorems A and C in~\cite{FurmanOEw} are deduced from the corresponding
measure equivalence rigidity theorem in~\cite{FurmanME}.
The only additional observation is that the constructions respect the integrability conditions.

\begin{mthm}[$\rmL^1$-Orbit equivalence rigidity]
	\label{T:L1OE-rigidity}
  	Let $G=\isom(\Hsp^n)$ where $n\ge 3$, and $\Gamma<G$ be a lattice.
	Assume that there is a finitely generated group $\Lambda$ and
  	essentially free, ergodic, p.m.p~actions $\Gamma\acts (X,\mu)$ and $\Lambda\acts (Y,\nu)$,
	which admit a stable $\rmL^1$-orbit equivalence $p:X,\to Y$, $q:Y\to X$ as above.
	Then either one the following two cases occurs:
	\begin{description}
	\item[Virtual isomorphism]
		There exists a short exact sequence
		$1\to F\to \Lambda\to\bar\Lambda\to 1$,
		where $F$ is a finite group and $\bar\Lambda<G$ is a lattice with $\Delta=\Gamma\cap\bar\Lambda$
		having finite index in both $\Gamma$ and $\bar\Lambda$, and an essentially free
		ergodic p.m.p~action $\Delta\acts (Z,\zeta)$
		so that $\Gamma\acts (X,\mu)$ is isomorphic to the induced action $\Gamma\acts \Gamma\times_\Delta (Z,\zeta)$,
		and the quotient action $\bar\Lambda\acts (\bar{Y},\bar\nu)=(Y,\nu)/F$ is isomorphic to the induced action
		$\bar\Lambda\acts \bar\Lambda\times_\Delta (Z,\zeta)$, or
	\item[Standard quotients]
		There exists a short exact sequence $1\to F\to \Lambda\to\bar\Lambda\to 1$,
		where $F$ is a finite group and $\bar\Lambda<G$ is a lattice, and for $G'=G$ or $G'=G^0$
		(only if $\bar\Lambda, \Gamma<G^0$), and equivariant measure space quotient maps
		\[
			\qquad\pi:(X,\mu)\to (G'/\bar\Lambda,m_{G'/\bar\Lambda}),\qquad
			\sigma:(Y,\nu)\to (G'/\Gamma,m_{G'/\Gamma})
		\]
		with $\pi(\gamma.x)=\gamma.\pi(x)$, $\sigma(\lambda.y)=\bar\lambda.\sigma(y)$.
		Moreover, the action $\bar\Lambda\acts (\bar Y,\bar\nu)=(Y,\nu)/F$
		is isomorphic to the canonical action associated to $\Gamma\acts (X,\mu)$ and
		the quotient map $\pi:X\to G'/\bar\Lambda$.
	\end{description}
\end{mthm}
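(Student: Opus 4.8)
The plan is to deduce Theorem~\ref{T:L1OE-rigidity} from the measure equivalence rigidity statement Theorem~\ref{T:ME-rigidity} through the standard dictionary between stable orbit equivalence and measure equivalence couplings (cf.~\cite{FurmanOEw}*{\S 2}), checking at each step that the $\rmL^1$-integrability is preserved.

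\emph{Step 1: from the $\rmL^1$-SOE to an integrable coupling.} Given the stable orbit equivalence $p\colon X\to Y$, $q\colon Y\to X$, the usual recipe produces a $(\Gamma,\Lambda)$-coupling $(\Omega,m)$: $\Omega$ carries commuting $\Gamma$- and $\Lambda$-actions, fibers over $X$ with fiber $\Lambda$ and over $Y$ with fiber $\Gamma$, and has $X$ and $Y$ (suitably rescaled by the Radon--Nikodym cocycles of $p$, $q$) as fundamental domains. The resulting ME-cocycles $\Gamma\times X\to\Lambda$ and $\Lambda\times Y\to\Gamma$ are precisely the ``rearrangement'' cocycles whose $\Lambda$- and $\Gamma$-norms equal $d_\Lambda(p(\gamma.x),p(x))$ and $d_\Gamma(q(\lambda.y),q(y))$ up to additive constants absorbing the choice of a measurable section. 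Hence the $\rmL^1$-hypothesis on the SOE is exactly $\rmL^1$-integrability of $(\Omega,m)$ in the sense of Definition~\ref{D:Lp-ME}.

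\emph{Step 2: apply Theorem~\ref{T:ME-rigidity}.} Since $\Gamma\acts(X,\mu)$ and $\Lambda\acts(Y,\nu)$ are essentially free, ergodic and p.m.p., the $\Gamma\times\Lambda$-action on $(\Omega,m)$ is ergodic, so we land in the ergodic case of Theorem~\ref{T:ME-rigidity}: there is a finite normal subgroup $F$ of $\Lambda$ with $\bar\Lambda=\Lambda/F$ a lattice in $G$, a measurable $\Phi\colon\Omega\to G$ with $\Phi((\gamma,\lambda)\omega)=\gamma\,\Phi(\omega)\,\bar\lambda^{-1}$, and the dichotomy: either $\Phi_*m$ is a positive multiple of $m_G$ or of $m_{G^0}$, or $\Gamma$ and $\bar\Lambda$ share a finite-index subgroup $\Delta$ and $\Phi_*m$ is a positive multiple of the counting measure on the double coset $\Gamma e\bar\Lambda\subset G$.

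\emph{Step 3: translate back.} Pushing $\Phi$ down along the two fundamental-domain fibrations $\Omega\to X$ and $\Omega\to Y$ produces equivariant measurable maps $\pi\colon X\to G'/\bar\Lambda$ and $\sigma\colon Y\to G'/\Gamma$, with $G'=G$ or $G'=G^0$ and the first alternative available only when $\Gamma,\bar\Lambda<G^0$. In the first case of the dichotomy these maps carry the finite measures to the homogeneous measures, giving the \emph{standard quotients} case; that $\bar\Lambda\acts(\bar Y,\bar\nu)$ is the canonical action attached to $\Gamma\acts(X,\mu)$ and $\pi$ is then \cite{FurmanOEw}*{Theorem~C} applied once one identifies the coupling built in Step~1, as a $(\Gamma,\bar\Lambda)$-coupling, with the relevant restriction of the tautological coupling of $G'$ via $\Phi$. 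In the second case $\Phi$ essentially takes values in the single double coset $\Gamma e\bar\Lambda\cong(\Gamma/\Delta)\times(\bar\Lambda/\Delta)$, from which one reads off a free ergodic p.m.p.\ action $\Delta\acts(Z,\zeta)$ such that $\Gamma\acts(X,\mu)$ is induced from it and $\bar\Lambda\acts(Y,\nu)/F$ is induced from the same $\Delta\acts(Z,\zeta)$, which is the \emph{virtual isomorphism} case in the form due to Kida~\cite{Kida:OE}.

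\textbf{Main obstacle.} The difficulty is entirely bookkeeping rather than a new idea: one must verify that the SOE-to-coupling passage and its inverse transport $\rmL^1$-integrability in both directions, and that the measure-equivalence output of Theorem~\ref{T:ME-rigidity}, phrased through $\Phi$ and $\Phi_*m$, unwinds into the induced-action and homogeneous-quotient descriptions above. None of this uses hyperbolicity of $G$; it is the general ME $\Leftrightarrow$ SOE machinery of \cite{FurmanOEw}, the only new observation being that each step respects the integrability constraint.
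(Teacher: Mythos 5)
Your proposal is correct and follows essentially the same route as the paper, which itself only notes that Theorem~\ref{T:L1OE-rigidity} is deduced from Theorem~\ref{T:ME-rigidity} exactly as Theorems A and C of \cite{FurmanOEw} are deduced from the measure equivalence rigidity theorem of \cite{FurmanME}, the sole new point being that the SOE--ME dictionary preserves the $\rmL^1$-integrability (with the virtual isomorphism case phrased via induced actions after Kida \cite{Kida:OE}). Your Steps 1--3 are precisely this reduction, so no further comment is needed.
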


The family of rank one simple real Lie groups $\isom(\Hsp^n)$ is the least rigid one
among simple Lie groups. As higher rank simple Lie groups are rigid with
respect to measure equivalence,
one wonders about the remaining families of simple real Lie groups:
$\isom(\Hsp^n_\bbC)\simeq\rmSU_{n,1}(\bbR)$, $\isom(\Hsp^n_\bbH)\simeq\rmSp_{n,1}(\bbR)$,
and the exceptional group $\isom(\Hsp^2_{\bbO})\simeq F_{4(-20)}$.
The question of measure equivalence
rigidity (or $\rmL^p$-measure equivalence rigidity) for the former family remains open,
but the latter groups are rigid with regard to $\rmL^2$-measure equivalence.
Indeed, recently, using harmonic maps techniques
(after Corlette~\cite{corlette} and Corlette-Zimmer~\cite{corlette+zimmer}),
Fisher and Hitchman~\cite{fisher} proved an $\rmL^2$-cocycle superrigidity result
for isometries of quaternionic hyperbolic
space $\Hsp^n_{\bbH}$ and the Cayley plane $\Hsp^2_{\bbO}$.
This theorem can be used to deduce the following.

\begin{theorem}[Corollary of \cite{fisher}]
	The rank one Lie groups $\isom(\Hsp^n_{\bbH})\simeq\rmSp_{n,1}(\bbR)$
	and $\isom(\Hsp^2_{\bbO})\simeq F_{4(-20)}$ are $2$-taut.
\end{theorem}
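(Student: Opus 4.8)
The plan is to derive the $2$-tautness of $G=\isom(\Hsp^n_\bbH)$ and $G=\isom(\Hsp^2_\bbO)$ from the Fisher--Hitchman $\rmL^2$-cocycle superrigidity theorem in exactly the same way the excerpt indicates higher-rank tautness was derived in \cite{FurmanME} from Zimmer's cocycle superrigidity. Concretely, let $(\Omega,m)$ be an $\rmL^2$-integrable $(G,G)$-coupling, and let $\alpha:G\times X\to G$ be the associated ME-cocycle as in~\eqref{e:me-via-coc}, which by hypothesis is $\rmL^2$-integrable. By the reduction recalled after Definition~\ref{D:M-rigidity} (the equivalence of tautness of the coupling with the cocycle $\alpha$ being conjugate to the identity, Lemma~\ref{L:coc-taut}), it suffices to produce a measurable $f:X\to G$, unique up to null sets, with $\alpha(g,x)=f(g.x)^{-1}g\,f(x)$; the uniqueness is automatic since $G$ is center-free semisimple, hence strongly ICC.

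First I would apply the Fisher--Hitchman $\rmL^2$-cocycle superrigidity theorem to $\alpha$: since $\alpha$ is an $\rmL^2$-integrable (ergodic, after passing to ergodic components) cocycle over a probability-measure-preserving $G$-action with target the simple Lie group $G$ of the relevant rank-one type, superrigidity provides a measurable map $f:X\to G$ and a continuous homomorphism $\pi:G\to G$ such that $\alpha(g,x)=f(g.x)^{-1}\,\pi(g)\,f(x)$ almost everywhere. The next step is to identify $\pi$ as an inner automorphism (hence, after absorbing the conjugating element into $f$, the identity): one uses that the two ME-cocycles $\alpha:G\times X\to G$ and $\beta:G\times Y\to G$ coming from the same coupling are, in the appropriate sense, ``inverse'' to one another, so the homomorphism $\pi$ cannot be degenerate — a non-injective $\pi$ would kill a noncompact normal subgroup, impossible since $G$ is simple and the coupling is genuine (both cocycles integrable and nontrivial), forcing $\pi$ to be an automorphism of $G$, and every automorphism of these $G$ is inner. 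Replacing $f$ by $f$ composed with the inner element conjugating $\pi$ to $\id$ yields $\alpha(g,x)=f(g.x)^{-1}g\,f(x)$, which is what tautness demands.

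I would then verify that the hypotheses needed to invoke Fisher--Hitchman genuinely hold here — principally that one may assume ergodicity of the $G$-action on $X$ (pass to ergodic components and reassemble, as is standard and is used in the same way for Theorem~\ref{T:SOn1-L1-cocycle}), that $\alpha$ is Zariski-dense or at least not contained in a proper parabolic/reductive subgroup (handled by the simplicity of $G$ together with the fact that an ME-cocycle arising from a genuine coupling cannot take values in a proper algebraic subgroup — otherwise the coupling would factor through a proper subgroup, contradicting, e.g., the measure-equivalence of $G$ with itself via $\Phi$), and that the $\rmL^2$-integrability of the coupling is exactly the $\rmL^2$-integrability hypothesis in their theorem. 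Finally I would record that tautness relative to $\id:G\to G$ is literally $2$-tautness by Definition~\ref{D:tautrel} and the remark that $G$ is taut iff taut relative to itself.

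The main obstacle is the step ruling out a degenerate target homomorphism $\pi$ and upgrading ``$\pi$ is an automorphism'' to ``$\pi$ is inner'': this is where one must exploit the symmetric structure of the coupling (the interplay of $\alpha$ and $\beta$, or equivalently the existence of the map $\Phi:\Omega\to\calG$ one is trying to construct) rather than just abstract cocycle superrigidity, and it is the point at which the argument in \cite{FurmanME} does real work. Everything else — ergodic decomposition bookkeeping, absorbing conjugating elements into $f$, checking integrability matches — is routine and parallel to the $n\ge 3$ real-hyperbolic case already treated in this paper. I would also note explicitly that, unlike the $\rmL^1$ results for $\isom(\Hsp^n)$, here the integrability exponent $2$ is dictated by the harmonic-map method of Fisher--Hitchman and we do not expect it can be lowered by these techniques.
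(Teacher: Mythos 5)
Your proposal is correct and follows exactly the route the paper intends: the paper itself gives no proof of this theorem, merely asserting it as a corollary of the Fisher--Hitchman $\rmL^2$-cocycle superrigidity theorem to be fed into the same machinery by which \cite{FurmanME} derived tautness of higher-rank groups from Zimmer's cocycle superrigidity (via Lemma~\ref{L:coc-taut}, ergodic decomposition, ruling out the compact-valued/degenerate case using the ME structure, and absorbing the inner automorphism into $f$, with uniqueness from the strong ICC property). Your identification of the elimination of a degenerate target homomorphism as the one step requiring genuine use of the coupling is exactly where the real content of the \cite{FurmanME}-style argument lies.
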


Using this result as an input to the general machinery described above one obtains:

\begin{corollary}
	The conclusions of Theorems~\ref{T:ME-rigidity} and~\ref{T:L1OE-rigidity}
	hold for all lattices in $\isom(\Hsp^n_{\bbH})$ and
	$\isom(\Hsp^2_{\bbO})$ provided the $\rmL^1$-conditions are replaced by $\rmL^2$-ones.
\end{corollary}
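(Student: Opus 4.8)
The plan is to combine the $2$-tautness of the rank-one groups $\isom(\Hsp^n_{\bbH})$ and $\isom(\Hsp^2_{\bbO})$ (the theorem quoted just above, a corollary of Fisher--Hitchman) with the same general reconstruction machinery that produces Theorem~\ref{T:ME-rigidity} and Theorem~\ref{T:L1OE-rigidity} from Theorem~\ref{T:SOn1-1-taut}, checking at each step that the $\rmL^2$-integrability hypothesis is preserved where the argument previously used $\rmL^1$-integrability. Write $G=\isom(\Hsp^n_{\bbH})$ or $\isom(\Hsp^2_{\bbO})$; this $G$ is a center-free simple Lie group, hence strongly ICC, so tautening maps are automatically unique when they exist. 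Since $G$ is $2$-taut, every $\rmL^2$-integrable $(G,G)$-coupling admits a tautening map $\Phi\colon\Omega\to G$.

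First I would reduce the lattice statement to the ambient-group statement: by Proposition~\ref{P:taut-lattice} (combined with the remark that the $\rmL^p$-tautness equivalences between $G$ and a lattice $\Gamma<G$ require $\Gamma$ to be $\rmL^p$-integrable in $G$), it suffices to know that all lattices in $G$ are $\rmL^2$-integrable --- which is exactly the first assertion of Theorem~\ref{thm:Shalom and lattices}, as $\isom(\Hsp^n_{\bbH})$ and $\isom(\Hsp^2_{\bbO})$ are not locally isomorphic to $\rmPSL_2(\bbR)$ or $\rmPSL_2(\bbC)$. Hence $G$ is $2$-taut relative to itself and also "$2$-taut for lattices'' in the sense needed below. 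Next I would feed this into Theorem~\ref{T:reconstruction}: given a finitely generated group $\Lambda$ and an $\rmL^2$-integrable $(\Gamma,\Lambda)$-coupling $(\Omega,m)$, one forms the composed self-coupling $\Omega\times_\Lambda\check\Omega$ of $\Gamma$ (equivalently of $G$, after inducing up along $\Gamma<G$), which is again $\rmL^2$-integrable because composition of couplings preserves $\rmL^p$-integrability (Lemma~\ref{L:composition-of-lp-coc}) and induction from a lattice to the ambient group preserves it as well (here is where $\rmL^2$-integrability of lattices in $G$ is used again). Tautness of this self-coupling yields a tautening map, and the reconstruction argument of~\cite{FurmanME} --- exactly as packaged in Theorem~\ref{T:reconstruction} --- then produces the short exact sequence $1\to F\to\Lambda\to\bar\Lambda\to1$ with $F$ finite and $\bar\Lambda<G$ a lattice, together with the equivariant map $\Phi\colon\Omega\to G$ and the ergodic-case dichotomy (Haar measure versus counting measure on a double coset). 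This gives the analogue of Theorem~\ref{T:ME-rigidity}. For the orbit-equivalence statement, I would then run the standard translation of~\cite{FurmanOEw} (Theorems~A and~C there) from ME-rigidity to stable-OE-rigidity, noting as in the paragraph preceding Theorem~\ref{T:L1OE-rigidity} that the construction of the coupling from a stable orbit equivalence, and conversely, is compatible with the integrability bookkeeping --- an $\rmL^2$-stable orbit equivalence gives rise to an $\rmL^2$-integrable coupling and vice versa --- so the virtual isomorphism / standard quotients dichotomy of Theorem~\ref{T:L1OE-rigidity} carries over verbatim with $\rmL^1$ replaced by $\rmL^2$.

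The main obstacle, and the only genuinely new point to verify, is precisely this integrability bookkeeping: one must confirm that every operation invoked --- composition of couplings, passage between a coupling and its ME-cocycle, induction from a lattice to the ambient group, and the passage between stable orbit equivalences and couplings --- carries $\rmL^2$-integrable input to $\rmL^2$-integrable output. Each of these is already established for $\rmL^1$ in the appendices and in~\cite{FurmanME,FurmanOEw}, and the proofs are insensitive to the exponent (they rely on the triangle inequality for the norm, Jensen/Hölder estimates, and Fubini, all of which are uniform in $p\in[1,\infty]$), so no substantial new work is required; but it must be said explicitly, since the whole content of the corollary is that $\rmL^2$ suffices in place of $\rmL^1$. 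The one asymmetry to keep in mind is that Fisher--Hitchman genuinely needs $\rmL^2$ rather than $\rmL^1$ (the harmonic-map energy estimate is an $\rmL^2$-phenomenon), which is why the corollary is stated with the $\rmL^2$-conditions and not with $\rmL^1$; correspondingly the lattices in these $G$ are $\rmL^2$-integrable by Theorem~\ref{thm:Shalom and lattices}, so there is no loss of generality in the lattice reductions above.
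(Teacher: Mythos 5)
Your proposal is correct and follows essentially the route the paper intends: the paper offers no separate proof of this corollary, merely asserting that the $2$-tautness of $\isom(\Hsp^n_{\bbH})$ and $\isom(\Hsp^2_{\bbO})$ (from Fisher--Hitchman) feeds into the same machinery (Proposition~\ref{P:taut-lattice}, Theorems~\ref{T:reconstruction} and~\ref{T:split+hom}, the composition/induction Lemmas~\ref{L:composition-of-lp-coc} and~\ref{lem:composition of lp couplings}, Shalom's $\rmL^2$-integrability of lattices, and the ME-to-OE translation of~\cite{FurmanOEw}) with $p=2$ in place of $p=1$. Your explicit verification that each step of that machinery preserves $\rmL^2$-integrability is exactly the bookkeeping the paper leaves implicit.
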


\subsection{Organization of the paper}
In section Section~\ref{sec:measure_equivalence} we show that all taut groups
are ME-rigid; this is stated in Theorems~\ref{T:reconstruction} and the more detailed
version in Theorem~\ref{T:split+hom}.
In Sections~\ref{s:boundary}~and~\ref{sec:mostow_rigidity_for_maximal_cocycles}
we develop the tools for proving tautness of $G=\isom(\Hsp^n)$ -- the statement 
that generalizes Mostow rigidity, Theorem~\ref{T:SOn1-1-taut} ($n\ge 3$),
and a generalization of Milnor-Wood-Ghys phenomenon, Theorem~\ref{thm:taut relative homeo} ($n=2$).
More precisely, in Section~\ref{s:boundary} we study the effect of an ME cocycle on boundary actions
and boundary maps.
This section contains various technical results on the crossroad of ergodic theory and geometry.
In Section~\ref{sec:mostow_rigidity_for_maximal_cocycles} these results are used to
analyze the effect of the boundary map on cohomology and bounded cohomology,
and specifically on the volume form and the Euler class. 
At a crucial point, when estimating the norm of the Euler class, Corollary~\ref{cor:euler-boundary},
we use a result from our companion paper~\cite{sobolev}, which relies on the integrability of the coupling.
This is the only place where the integrability assumption is used.
The main theorems stated in the introduction are then proved in Section~\ref{sec:proofs_of_the_main_results}.
General facts about measure equivalence which are used throughout 
are collected in Appendix~\ref{sec:appendix_measure_equivalence}.
In order to improve the readability of  Section~\ref{sec:mostow_rigidity_for_maximal_cocycles}
we also added Appendix~\ref{sec:cohomological tools}
which contains a brief discussion of bounded cohomology.

\subsection{Acknowledgements}
U. Bader and A. Furman were supported in part by the BSF grant 2008267.
U. Bader was also supported in part by the ISF grant~704/08 and the ERC grant~306706.
A. Furman was partly supported by the NSF grants DMS 0604611 and 0905977.
R. Sauer acknowledges support from the \emph{Deutsche Forschungsgemeinschaft},
made through grant SA 1661/1-2. 

We thank the referee for his detailed and careful report, especially for 
his recommendations that led to a restructuring of section~3 in a previous version. 


\section{Measure equivalence rigidity for taut groups} 
\label{sec:measure_equivalence}

This section contains general tools related to the notion of taut couplings and taut groups.
The results of this section apply to general unimodular lcsc groups, including countable groups,
and are not specific to $\isom(\Hsp^n)$ or semi-simple Lie groups. Whenever we refer to
$\rmL^p$-integrability conditions, we assume that the groups are also compactly generated.
We rely on some basic facts about measure equivalence which are collected in Appendix \ref{sec:appendix_measure_equivalence}.
The basic tool is the following:

\begin{theorem}\label{T:reconstruction}
	Let $G$ be a unimodular lcsc group that is taut (resp. $p$-taut).
	Any unimodular lcsc group $H$ that is measure equivalent
	(resp. $\rmL^p$-measure equivalent) to $G$ admits a short exact sequence with
	continuous homomorphisms
	\[
		1\to K\to H\to \bar{H}\to 1,
	\]
	where $K$ is compact and $\bar{H}$ is a closed subgroup in $G$
	such that $G/\bar{H}$ carries a $G$-invariant Borel probability measure.
\end{theorem}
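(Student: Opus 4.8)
Let $(\Omega,m)$ be a $(G,H)$-coupling (an $\rmL^p$-integrable one in the $p$-taut case). The idea is to manufacture a $(G,G)$-coupling out of it by composing $\Omega$ with its "mirror image" $\check\Omega$, apply tautness to get a tautening map, and read off the homomorphism $H\to\bar H$ from the induced structure. Concretely, form the composition $\Omega\times_H\check\Omega$, which is a $(G,G)$-coupling (and is $\rmL^p$-integrable when $\Omega$ is, by Lemma~\ref{L:composition-of-lp-coc}); here $\check\Omega$ denotes $\Omega$ with the roles of the two factors swapped, so it is an $(H,G)$-coupling. Tautness of $G$ yields a $G\times G$-equivariant map $\Phi\colon\Omega\times_H\check\Omega\to G$. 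The first task is to show that $\Phi$ descends to, or is built from, a measurable map $\Psi\colon\Omega\to G$ that is equivariant in a suitable sense: for $m$-a.e.\ $\omega$ and all $g\in G$, $h\in H$ one has $\Psi((g,h)\omega)=g\,\Psi(\omega)\,c(h)^{-1}$ for some measurable $c$. This is where one exploits the \emph{uniqueness} clause in the definition of tautness (equivalently, that $G$ is strongly ICC): uniqueness forces the tautening map on the self-coupling to be insensitive to the "diagonal" $\check\Omega$-direction and to factor appropriately.

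**Extracting the homomorphism.** Once $\Psi\colon\Omega\to G$ is in hand, fix the isomorphism $i\colon G\times Y\xrightarrow{\cong}\Omega$ from~(\ref{e:ij-fd}); then $\Psi$ pulls back to a map $G\times Y\to G$, and using $G$-equivariance one checks it has the form $(g,y)\mapsto g\cdot\phi(y)$ for a measurable $\phi\colon Y\to G$. Feeding in $H$-equivariance, $\phi$ intertwines the $H$-action on $Y$ (through the ME-cocycle $\beta\colon H\times Y\to G$) with right translation on $G$; a standard argument (essentially the converse of the cocycle-reduction lemma, Lemma~\ref{L:coc-taut}) then shows that $\beta$ is cohomologous to a \emph{homomorphism} $\bar\pi\colon H\to G$, i.e.\ $\beta(h,y)=\phi(h.y)^{-1}\,\bar\pi(h)\,\phi(y)$. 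Continuity of $\bar\pi$ follows because a measurable homomorphism between lcsc groups is continuous. Set $K=\ker\bar\pi$ and $\bar H=\bar\pi(H)$. That $K$ is \emph{compact} is forced by the measure-theoretic finiteness built into the coupling: the fiber measure $\nu$ on $Y$ is finite and $H$-quasi-invariant, and the kernel $K$ acts on $(Y,\nu)$ through $\phi$ with bounded orbits in $G$, which (together with properness) pins $K$ down to a compact group; alternatively one argues that an ME-cocycle with values conjugate to a homomorphism must have an amenable, hence compact, kernel from the finiteness of $\mu$. Finally $\bar H=\bar\pi(H)$ is a closed subgroup of $G$, and pushing $\nu$ forward under $\phi$ produces a finite $\bar H$-invariant measure on a homogeneous space, which after normalization gives the required $G$-invariant Borel probability measure on $G/\bar H$; equivalently, $\bar H$ is a lattice (if it is discrete) or, in general, a closed subgroup of finite covolume.

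**The main obstacle.** The delicate point is not the formal construction but establishing that the tautening map $\Phi$ on the composed coupling \emph{genuinely factors} through $\Omega$ in the stated equivariant form — in other words, that the $\check\Omega$-coordinate contributes nothing beyond a cocycle in the $H$-variable. This is precisely where strong ICC / uniqueness must be invoked, and it is the heart of the reconstruction machinery extending~\cite{FurmanME}. A secondary technical nuisance is bookkeeping the $\rmL^p$-integrability through the composition $\Omega\times_H\check\Omega$ and through the cohomology change-of-variables $\beta\mapsto\bar\pi$, so that in the $p$-taut case the self-coupling we feed to the tautness hypothesis is legitimately $\rmL^p$-integrable; this is handled by Lemma~\ref{L:composition-of-lp-coc} and the stability of integrability under the relevant measurable reparametrizations. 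With those two points settled, compactness of $K$, closedness of $\bar H$, and existence of the invariant probability measure on $G/\bar H$ are comparatively routine consequences of the finiteness of the fiber measures.
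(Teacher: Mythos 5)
Your route is the paper's route: it is exactly the specialization of Theorem~\ref{T:split+hom} to $\calG=G$, $\pi=\id$ (compose $\Omega$ with $\dual{\Omega}$, which stays $\rmL^p$-integrable by Lemma~\ref{lem:composition of lp couplings}, tauten, factor the tautening map as $\Phi([\omega_1,\omega_2])=\Psi(\omega_1)\Psi(\omega_2)^{-1}$, and extract $\rho:H\to G$). The problem is that the two steps you defer are where all the content lies, and your sketch does not supply them. For the factorization of $\Phi$ through a map $\Psi:\Omega\to G$, saying that ``uniqueness forces the tautening map to be insensitive to the $\dual{\Omega}$-direction'' is a restatement of the goal, not an argument. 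The paper's mechanism is concrete: work on $\Omega^3$ with the three projections $p_{i,j}$ to $\Omega\times_H\dual{\Omega}$, set $F_{i,j}=\Phi\circ p_{i,j}$, observe that $F_{i,j}$, $F_{j,i}^{-1}$ and $F_{i,k}\cdot F_{k,j}$ are all $G_k\times H$-invariant and hence descend to the $(G_i,G_j)$-coupling $\Sigma_k=(G_k\times H)\bs\Omega^3$, and invoke strong ICC (Lemma~\ref{L:sICC2uniq}) to force these three equivariant maps to coincide a.e.; this yields the multiplicative identity $\bar\Phi(\omega_1,\omega_3)=\bar\Phi(\omega_1,\omega_2)\cdot\bar\Phi(\omega_2,\omega_3)$, after which one fixes a generic $\omega_2$ by Fubini, sets $\Psi(\omega)=\bar\Phi(\omega,\omega_2)$, and gets $\rho(h)=\Psi(h\omega)^{-1}\Psi(\omega)$ independent of $\omega$, a measurable hence continuous homomorphism. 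Nothing in your proposal produces this cocycle identity, and it is the heart of the proof.

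The second gap is compactness of $K=\ker\rho$ and closedness of $\bar H=\rho(H)$. Your first argument invokes ``properness,'' but properness of $\rho$ is precisely what must be proved; your alternative (``amenable, hence compact'') is simply false. The paper proves properness quantitatively: after adjusting the measure isomorphisms so that $\Psi(i(g,y))=g$ and $\Psi(j(h,x))=\rho(h)$ (Lemma~\ref{lem:convenient fundamental domains}), one shows (Lemma~\ref{L:cover}) that for any compact $Q\subset G$ and identity neighborhood $V\subset H$ the set $\rho^{-1}(Q)$ is covered by finitely many translates of $V$, because pairwise disjoint sets $j(h_kW\times X)$ with $h_k\in\rho^{-1}(Q)$ all land inside a single finite-measure set $i(U\times Y)$, so their number is bounded by $m(B)/m(A)$. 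This uses the finiteness of both $\mu$ and $\nu$ in an essential way; ``bounded orbits'' alone does not give it, so calling this step routine underestimates it. Finally, a bookkeeping slip: the $G$-invariant finite measure on $G/\bar H$ is the pushforward of $\mu$ on $X\cong\Omega/H$ (through which $\Psi$ descends, since $\Psi(h\omega)=\Psi(\omega)\rho(h)^{-1}$), not of $\nu$ under $\phi$; the pushforward you propose is not invariant under anything useful.
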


Theorem~\ref{T:split+hom} below contains a more technical statement that applies
to more general situations.

\subsection{The strong ICC property and strongly proximal actions} 
\label{sub:the_strong_icc_property_and_strongly_proximal_actions}\hfill{}\\
We need to introduce a notion of \emph{strongly ICC} group $G$
and, more generally, the notion of a group $\calG$ being strongly ICC
relative to a subgroup $\calG_0<\calG$.

\begin{definition}\label{def:strongy ICC definition}
	A Polish group $\calG$ is \emph{strongly ICC relative} to $\calG_0<\calG$
	if $\calG\setminus\{e\}$ does not support any Borel probability measure that is
	invariant under the conjugation action of $\calG_0$ on $\calG\setminus\{e\}$.
	A Polish group $\calG$ is \emph{strongly ICC} if it is strongly ICC relative to itself.	
\end{definition}
The key properties of this notion are discussed in the appendix \ref{sub:strong_icc_property}.
Recall that a countable discrete group is said to be ICC (short for Infinite Conjugacy Classes) 
if all its non-trivial conjugacy classes are infinite.
Note that for a discrete group ICC condition is equivalent to the above strong ICC condition.
We will be concerned also with some other examples, that are given in
the following proposition.
\begin{prop}\label{exa:examples of strongly icc groups}\hfill{}
\begin{enumerate}
\item
	Any connected, center-free, semi-simple Lie group $G$ without compact factors is 
	strongly ICC relative to any unbounded Zariski dense subgroup.
	In particular, $G$ itself is strongly ICC.
\item
	For a semi-simple Lie group without compact factors $G$, and a parabolic subgroup $Q<G$, 
	the Polish group $\Homeo(G/Q)$ is strongly ICC relative to $G$.\\
	In particular, $\Homeo(S^1)$ is strongly ICC relative to $\rmPGL_2(\bbR)$, 
	or any lattice in $\rmPGL_2(\bbR)$.
\end{enumerate}
\end{prop}

Before proving this proposition let us recall the notion of \emph{minimal and strongly proximal}
action.
A continuous action $G\acts M$ of a (lcsc) group $G$ on a compact metrizable space $M$
is called \emph{minimal} if there are no closed $G$-invariant nontrivial subsets 
in a compact metrizable space $M$, 
and \emph{strongly proximal} if every $G$-invariant weak$^*$-closed set of probability 
measures on $M$ contains some Dirac measures. 
Clearly, the action $G\acts M$ is both minimal and strongly proximal if every 
$G$-invariant weak$^*$-closed set of probability measures on $M$ contains all the Dirac measures.
Thus, being \emph{minimal and strongly proximal} is easily seen to be equivalent to each 
of the following conditions:
\begin{enumerate}
	\item
	For every Borel probability measure
	$\nu\in\Prob(M)$ and every non-empty open subset $V\subset M$ one has
	\[
		\sup_{g\in G}\ g_*\nu(V)= 1.
	\]
	\item
	For every $\nu\in\Prob(M)$ the convex hull of the $G$-orbit $g_*\nu$
	is dense in $\Prob(M)$ in the weak-* topology.
\end{enumerate}
%
We need the following general statement.

\begin{lemma}\label{L:sICCinHomeo}
	Let $M$ be a compact metrizable space and $G<\Homeo(M)$ be a subgroup which acts
	minimally and strongly proximally on $M$. 
	Then $\Homeo(M)$ is strongly ICC relative to $G$.
\end{lemma}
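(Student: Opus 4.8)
The goal is to show that if $G<\Homeo(M)$ acts minimally and strongly proximally, then no Borel probability measure on $\Homeo(M)\setminus\{e\}$ is invariant under the conjugation action of $G$. The natural strategy is to suppose for contradiction that $\mu$ is such a measure and to push it forward under a suitable evaluation-type map to produce a $G$-invariant object on $M$ (or on $\Prob(M)$) that strong proximality forbids. Concretely, I would fix a point $x\in M$ and consider the evaluation map $\mathrm{ev}_x\colon\Homeo(M)\to M$, $\phi\mapsto \phi(x)$; more robustly, since conjugation is the relevant action, I would look at the map $\Homeo(M)\to\Prob(M)$ or directly analyze how the support of $\mu$ interacts with the dynamics on $M$.

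The cleaner route, which I would pursue first, is: assume $\mu$ is a $G$-conjugation-invariant probability measure on $\Homeo(M)\setminus\{e\}$, and for each $x\in M$ push $\mu$ forward by $\mathrm{ev}_x$ to get $\nu_x=(\mathrm{ev}_x)_*\mu\in\Prob(M)$. Equivariance of $\mathrm{ev}_x$ under conjugation reads $\mathrm{ev}_{g x}(g\phi g^{-1}) = g(\mathrm{ev}_x(\phi))$, so $G$-conjugation-invariance of $\mu$ gives $g_*\nu_x = \nu_{gx}$. Now integrate over $x$ against any measure, or better, consider the "average displacement" viewpoint: since $\mu$ is supported away from $e$, for $\mu$-a.e.\ $\phi$ there is some point moved by $\phi$, so $\nu_x\neq\delta_x$ for $x$ in a positive-measure (indeed, by minimality, I expect a dense/comeager) set of $x$. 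Pick such an $x$ with $\nu_x\neq\delta_x$; then there is an open set $V$ with $x\notin\overline V$ and $\nu_x(V)=c>0$. By strong proximality applied to $\nu_x$, there is a sequence $g_n\in G$ with $(g_n)_*\nu_x\to\delta_p$ for some $p$, and one can arrange $(g_n)_*\nu_x$ to concentrate near $p$ while $g_n x$ ranges over $M$; comparing with $(g_n)_*\nu_x=\nu_{g_n x}$ and using a compactness/continuity argument on the assignment $x\mapsto\nu_x$ forces a contradiction with $\nu_x$ being uniformly non-trivial.

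To make this rigorous I would instead argue with a single auxiliary measure: fix any $\lambda\in\Prob(M)$ and set $\bar\nu = \int_M \nu_x\, d\lambda(x)$; this is not $G$-invariant, but the family $\{\nu_x\}$ being a measurable $G$-equivariant map $M\to\Prob(M)$ (with $g_*\nu_x=\nu_{gx}$) is precisely a $G$-equivariant measurable map from $M$ into $\Prob(M)$ that is nowhere equal to the tautological embedding $x\mapsto\delta_x$ on a positive-measure set. Strong proximality says the only closed $G$-invariant subset of $\Prob(M)$ meeting every fiber appropriately forces $\delta$-measures; more precisely, I would invoke that for a minimal strongly proximal action, any $G$-equivariant measurable map $M\to\Prob(M)$ must be $x\mapsto\delta_x$ a.e.\ — this is the standard "boundary map rigidity" consequence of strong proximality (the action $G\acts M$ is a boundary in Furstenberg's sense, and $\Prob(M)$ has $M$ as its unique minimal set). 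That contradicts $\nu_x\neq\delta_x$ on a positive-measure set, and hence $\mu$ cannot exist.

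The main obstacle is the measurability and genericity bookkeeping: verifying that $x\mapsto\nu_x$ is genuinely a measurable map into $\Prob(M)$, that the set $\{x:\nu_x\neq\delta_x\}$ has positive $\lambda$-measure for a well-chosen $\lambda$ (using only that $\mu(\{e\})=0$ and that $M$ has enough points being moved — here minimality of the $G$-action is what propagates "$\phi\neq e$" to "$\phi$ moves points of $M$ densely"), and correctly citing the boundary-map rigidity. The strong proximality input (conditions (1)–(2) recalled just before the lemma) is exactly what powers the last step, so the heart of the argument is translating a conjugation-invariant measure upstairs into a forbidden equivariant family downstairs; once that translation is set up cleanly, the contradiction is immediate.
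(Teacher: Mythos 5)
Your reduction is correct as far as it goes: the family $\nu_x=(\mathrm{ev}_x)_*\mu$ does satisfy $g_*\nu_x=\nu_{gx}$, and in fact $x\mapsto\nu_x$ is \emph{continuous} for the weak-* topology (for continuous $F$, $x\mapsto\int F(\phi(x))\,d\mu(\phi)$ is continuous by dominated convergence, since each $\phi$ is continuous), so the measurability bookkeeping you worry about is not the issue. The genuine gap is the step you propose to quote. The statement ``any $G$-equivariant measurable map $M\to\Prob(M)$ is $x\mapsto\delta_x$ a.e.''\ is not an applicable citation in this setting: $M$ carries no distinguished quasi-invariant measure here ($G<\Homeo(M)$ is an arbitrary group acting minimally and strongly proximally), and the standard measurable boundary-map rigidity you allude to is about $\mu$-boundaries/Poisson boundaries, where ``a.e.''\ refers to a quasi-invariant measure class; fixing an arbitrary auxiliary $\lambda\in\Prob(M)$ does not make $\lambda$-a.e.\ statements interact with the $G$-action. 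So the heart of the lemma is exactly the step you leave as a citation, and your interpolated sketch (``a compactness/continuity argument \dots forces a contradiction'') does not identify the actual argument. Moreover, even granted that every $\nu_x$ is a Dirac measure, you only obtain $\nu_x=\delta_{\beta(x)}$ for some continuous equivariant $\beta:M\to M$ (equivalently $\mu=\delta_\beta$ with $\beta$ commuting with $G$), and you still must rule out $\beta\neq\mathrm{id}$; your plan does not address this.

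Your route can be completed, but it then needs its own proximality/minimality argument: (i) by strong proximality the orbit closure of $\nu_x$ in $\Prob(M)$ contains Dirac measures, say $(g_n)_*\nu_x\to\delta_y$; after passing to a subsequence with $g_nx\to z$, continuity and equivariance give $\nu_z=\delta_y$, so the closed $G$-invariant set $\{z\in M \mid \nu_z \text{ is a Dirac measure}\}$ is nonempty and hence all of $M$ by minimality; (ii) writing $\nu_x=\delta_{\beta(x)}$ with $\beta$ continuous and equivariant, apply strong proximality to $\frac{1}{2}(\delta_x+\delta_{\beta(x)})$ to produce a fixed point of $\beta$, so the fixed-point set is a nonempty closed invariant set and $\beta=\mathrm{id}$ by minimality; (iii) then $\mu\{\phi \mid \phi(x)=x\}=1$ for every $x$, and running over a countable dense set of $x$ forces $\mu=\delta_e$, contradicting $\mu(\{e\})=0$. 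For comparison, the paper reaches the same endgame more directly via stationary measures: the set $\Prob_\mu(M)$ of $\mu$-stationary measures is nonempty, compact and convex, and it is $G$-invariant because $\mu$ is conjugation-invariant; by minimality and strong proximality it must be all of $\Prob(M)$, so every $\delta_x$ is $\mu$-stationary, which is precisely $\mu\{\phi\mid\phi(x)=x\}=1$ and hence $\mu=\delta_e$.
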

\begin{proof}
Let $\mu$ be a probability measure on $\Homeo(M)$.
The set of $\mu$-\emph{stationary} probability measures on $M$
\[
	\Prob_\mu(M)=\Bigl\{ \nu\in\Prob(M) \mid \nu=\mu*\nu=\int f_*\nu\,d\mu(f) \Bigr\}
\]
is a non-empty convex closed (hence compact) subset of $\Prob(M)$,
with respect to the weak$^*$ topology.
Suppose $\mu$ is invariant under conjugations by $g\in G$.
Since
\[
	g_*(\mu*\nu)=\mu^g*(g_*\nu)=\mu*(g_*\nu)
\]
it follows that $\Prob_\mu(M)$ is a $G$-invariant set.
Minimality and strong proximality of the $G$-action implies that $\Prob_\mu(M)=\Prob(M)$.
In particular, every Dirac measure $\nu_x$ is $\mu$-stationary; hence $\mu\{ f \mid f(x)=x\}=1$.
It follows that $\mu=\delta_e$.
\end{proof}

\begin{proof}[Proof of Proposition~{\ref{exa:examples of strongly icc groups}}]\hfill
\begin{enumerate}
\item
	See~\cite[Proof of Theorem 2.3]{locally-compact}).
\item
	This follows from Lemma~\ref{L:sICCinHomeo}, as by \cite{Margulis:book}*{Theorem~3.7 on p.~205} 
	the action of $G$ on $M=G/Q$ is minimal and strongly proximal.	\qedhere
\end{enumerate}
\end{proof}


Next consider two measure equivalent (countable) groups $\Gamma_1$ and $\Gamma_2$, 
and a continuous action $\Gamma_2\acts M$ on some compact metrizable space $M$.
Let $(\Omega,m)$ be a $(\Gamma_1,\Gamma_2)$-coupling.
Choosing a fundamental domain $X$ for $\Gamma_2\acts \Omega$ 
we obtain a probability measure-preserving action $\Gamma_1\acts (X,\mu)$ and
a measurable cocycle $\alpha:\Gamma_1\times X\to\Gamma_2$, that can be used to define
a $\Gamma_1$-action on $X\times M$ by
\[
	\gamma.(x,p)=\bigl(\gamma.x, \alpha(\gamma,x).p\bigr)\qquad (x\in X,\ p\in M,\ \gamma\in\Gamma_1).
\]
The space $X\times M$ and the above action $\Gamma_1\acts (X\times M)$ combine
ergodic-theoretic base action $\Gamma_1\acts (X,\mu)$ and topological dynamics
in the fibers $\Gamma_2\acts M$. 
In \cite[\S 3 and 4]{furstenberg-bourbaki} Furstenberg defines notions of
minimality and (strong) proximality for such actions.
We shall only need the former notion:
the action $\Gamma_1\acts X\times M$ is \emph{minimal} if there are only trivial measurable $\alpha$-equivariant
families $\{U_x\subset M \mid x\in X\}$ of open subsets $U_x\subset M$.
More specifically, whenever a measurable family of open subsets $U_x\subset M$ satisfies 
for all $\gamma\in \Gamma_1$ and $\mu$-a.e. $x\in X$
\begin{equation}\label{eq: equivariant family of open sets}
	U_{\gamma.x}=\alpha(\gamma,x)\,U_x
\end{equation}
one either has $\mu\{ x\in X \mid U_x=\emptyset\}=1$ or $\mu\{ x \in X\mid U_x=M\}=1$.

We shall need the following lemma (generalizing \cite[Proposition 4.4]{furstenberg-bourbaki}):

\begin{lemma}\label{L:furstenberg}
Let $\Omega$ be an ergodic $(\Gamma_1,\Gamma_2)$-coupling, and $\Gamma_2\acts M$ 
a minimal and strongly proximal action. 
Then the induced action $\Gamma_1\acts X\times M$ is minimal 
in the above sense. 
\end{lemma}

\begin{proof}
Let $i:\Gamma_2\times X\cong \Omega$ be a measure space isomorphism as in (\ref{e:ij-fd}).
Given a family $(U_x)$ as in~\eqref{eq: equivariant family of open sets}, consider the measurable family $\{O_\omega\}$ of open subsets of $M$ indexed by $\omega\in\Omega$,
defined by $O_{i(\gamma,x)}=\gamma U_x$. Then for $\omega=i(\gamma,x)$ and $\gamma_i\in \Gamma_i$ we have
\[
	O_{(\gamma_1,\gamma_2)\omega}=\gamma_2\gamma\alpha(\gamma_1,x)^{-1}U_{\gamma_1.x}
	=\gamma_2 \gamma U_x=\gamma_2 O_\omega.
\]
Note that  $\omega\to O_\omega$ is invariant under the action of $\Gamma_1$.
Therefore it descends to a measurable
family of open sets $\{V_y\}$ indexed by $y\in Y\cong \Omega/\Gamma_1$, and satisfying a.e. on $Y$
\[
	V_{\gamma_2.y}=\gamma_2 V_y\qquad (\gamma_2\in\Gamma_2).
\]
The claim about $\{U_x \mid x\in X\}$ is clearly equivalent to the similar claim about $\{V_y \mid y\in Y\}$.
By ergodicity, it suffices to reach a contradiction from the assumption
that $V_y\neq \emptyset, M$ for $\nu$-a.e. $y\in Y$, where $\nu$ is the probability measure
associated to $(\Omega,m)$.

The assumption that $(\Omega,m)$ is $\Gamma_1\times\Gamma_2$-ergodic 
is equivalent to ergodicity of the probability measure preserving action $\Gamma_2\acts (Y,\nu)$.
Since $M$ has a countable base for its topology, while $\mu(\{ y \mid V_y\neq \emptyset\})=1$,
it follows that there exists a non-empty open set $W\subset M$ for which 
\[
	A=\{ y\in Y \mid W\subset V_y\}
\]
has $\nu(A)>0$. Since $M\setminus V_y\neq \emptyset$ for $\nu$-a.e. $y\in Y$,
there exists a measurable map $s:Y\to M$ with $s(y)\notin V_y$ for $\nu$-a.e. $y\in Y$.
Let $\sigma\in \Prob(M)$ denote the distribution of $s(y)$, i.e.,
$\sigma(E)=\nu\{ y\in Y \mid s(y)\in E\}$. Then for any $\gamma_2\in \Gamma_2$
\begin{eqnarray*}
	\sigma(\gamma_2^{-1}W)&=&\nu\{ y\in Y \mid s(y) \in \gamma_2^{-1}W \} \\
	 &\le& \nu(Y\setminus \gamma_2^{-1}A)
	+\nu\{ y\in \gamma_2^{-1}A \mid s(y)\in \gamma_2^{-1} V_{\gamma_2.y}=V_y\} \\
	&=&1-\nu(\gamma_2^{-1}A)=1-\nu(A).
\end{eqnarray*}
So $\sigma(\gamma_2^{-1}W)\le 1-\nu(A)<1$ for all $\gamma_2\in \Gamma_2$, contradicting
the assumption that the action $\Gamma_2\acts M$ is minimal and strongly proximal.
\end{proof}

\medskip

\subsection{Tautness and the passage to self couplings} 
\label{sub:tautness_and_the_passage_to_self_couplings}\hfill{}\\
Theorem~\ref{T:reconstruction} is a direct consequence of the following, more technical statement
that constructs a representation for arbitrary groups measure equivalent to a given group $G$,
provided some specific $(G,G)$-coupling is taut.

\begin{theorem} \label{T:split+hom}
Let $G$, $H$ be unimodular lcsc groups, $(\Omega,m)$ a $(G,H)$-coupling,
$\calG$ a Polish group, and $\pi:G\to\calG$ a continuous homomorphism.
Assume that $\calG$ is strongly ICC relative to $\pi(G)$ and the $(G,G)$-coupling
$\Omega\times_H\dual{\Omega}$ is taut relative to $\pi:G\to\calG$.

Then there exists a continuous homomorphism $\rho:H\to \calG$ and a measurable map
$\Psi:\Omega\to \calG$ so that a.e.:
\[
	\Psi((g,h)\omega)  = \pi(g)\Psi(\omega)\rho(h)^{-1}\qquad (g\in G,\ h\in H)
\]
and the unique tautening map $\Phi:\Omega\times_H\dual{\Omega}\to \calG$ is given by
\[
		\Phi([\omega_1,\omega_2]) = \Psi(\omega_1)\cdot\Psi(\omega_2)^{-1}.
\]
The pair $(\Psi,\rho)$ is unique up to conjugations $(\Psi^{x}, \rho^{x})$ by $x\in\calG$, where
\[
	\Psi^{x}(\omega)=\Psi(\omega)x^{-1},\qquad \rho^{x}(h)=x\rho(h)x^{-1}.
\]
If, in addition, $\pi:G\to\calG$ has compact kernel and closed image $\bar{G}=\pi(G)$,
then the same
applies to $\rho:H\to\calG$, and there exists a Borel measure $\bar{m}$ on $\calG$,  which is
invariant under
\[
	(g,h):\ x\ \mapsto\ \pi(g)\,x\,\rho(h)
\]
and descends to finite measures on $\pi(G)\bs\calG$ and $\calG/\rho(H)$.
In other words, $(\calG,\bar{m})$ is a $(\pi(G),\rho(H))$-coupling
which is a quotient of $(\Ker(\pi)\times\Ker(\rho))\bs (\Omega,m)$.
\end{theorem}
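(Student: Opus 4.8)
The plan is to build the map $\Psi$ by pushing the tautening map $\Phi:\Omega\times_H\dual{\Omega}\to\calG$ down to a ``slice'' of the coupling, and then to extract $\rho$ from the $H$-equivariance using the strong ICC hypothesis to get uniqueness-forced multiplicativity. First I would recall that by definition $\Omega\times_H\dual{\Omega}$ is the quotient of $\Omega\times\dual{\Omega}$ by the diagonal $H$-action, with the residual $G\times G$-action $(g_1,g_2):[\omega_1,\omega_2]\mapsto[g_1\omega_1,g_2\omega_2]$, and that tautness relative to $\pi$ gives a unique measurable $\Phi$ with $\Phi([g_1\omega_1,g_2\omega_2])=\pi(g_1)\Phi([\omega_1,\omega_2])\pi(g_2)^{-1}$. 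Fix a point $\omega_0\in\Omega$ (more precisely, work over a conull set, or integrate against a probability measure in the $Y$-fiber direction, to avoid choosing an honest point) and tentatively set $\Psi(\omega)=\Phi([\omega,\omega_0])$. The $G$-equivariance in the first variable immediately gives $\Psi(g\omega)=\pi(g)\Psi(\omega)$. The content is in the $H$-direction: since $[\omega,\omega_0]=[h\omega,h\omega_0]$ in $\Omega\times_H\dual\Omega$, naively $\Psi(h\omega)$ and $\Psi(\omega)$ are related through the behaviour of $\Phi$ at the moved basepoint $h\omega_0$; this forces the existence of a cocycle, which I then show is a homomorphism.

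More concretely, the key step is the cocycle identity for $\Phi$: for $\omega_1,\omega_2,\omega_3\in\Omega$ one has, by uniqueness of the tautening map on $\Omega\times_H\dual\Omega$ applied to the obvious $G\times G$-maps, a relation of the shape $\Phi([\omega_1,\omega_2])\Phi([\omega_2,\omega_3])=\Phi([\omega_1,\omega_3])$ — this is where strong ICC of $\calG$ relative to $\pi(G)$ enters: the product $\Phi([\omega_1,\omega_2])\Phi([\omega_2,\omega_3])\Phi([\omega_1,\omega_3])^{-1}$ defines, as $\omega_2$ varies over an $H$-ergodic component, a measurable family that is $\pi(G)$-conjugation quasi-invariant, and strong ICC forces it to be $\equiv e$. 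Granting the cocycle identity, define $\rho(h)$ via $\Psi(h\omega)=\pi(?)\cdots$; rather, set $\rho(h)=\Psi(\omega)^{-1}\Psi(h^{-1}\omega)^{-1}\cdots$ — the cleanest route is: since the $G$- and $H$-actions on $\Omega$ commute and $\Phi$ factors through the $H$-diagonal, one checks $\Psi(h\omega)\Psi(\omega)^{-1}$ is independent of $\omega$ (again by the cocycle identity plus strong ICC / ergodicity), call it $\rho(h)$; the cocycle identity then gives $\rho(h_1h_2)=\rho(h_1)\rho(h_2)$, so $\rho:H\to\calG$ is a homomorphism, measurable hence continuous, and $\Psi((g,h)\omega)=\pi(g)\Psi(\omega)\rho(h)^{-1}$. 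The formula $\Phi([\omega_1,\omega_2])=\Psi(\omega_1)\Psi(\omega_2)^{-1}$ then follows by evaluating both sides against the basepoint and using the cocycle identity; uniqueness of $(\Psi,\rho)$ up to conjugation by $x\in\calG$ comes from the uniqueness of $\Phi$ together with strong ICC (two solutions differ by a measurable $x(\omega)$ with $x(g\omega)=\pi(g)x(\omega)\pi(g)^{-1}$ descending to a $\pi(G)$-conjugation-invariant map on the $G$-ergodic components, forced constant).

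For the last paragraph, assume $\pi$ has compact kernel $\Ker(\pi)$ and closed image $\bar G$. Since $\rho(h)=\Psi(h\omega)\Psi(\omega)^{-1}$ and $\Psi$ intertwines the $G\times H$ action on $\Omega$ with the two-sided $\pi(G)$--$\rho(H)$ action on $\calG$, I would push forward the measure $m$ under $\Psi$; but $\Psi$ need not be injective, so instead first factor out $\Ker(\pi)\times\Ker(\rho)$: $\Psi$ is constant on $\Ker(\pi)\times\Ker(\rho)$-orbits (the left factor because $\pi(k)=e$, the right because $\rho(k')=e$), so it descends to $\bar\Omega=(\Ker(\pi)\times\Ker(\rho))\bs\Omega$, which is a $(\bar G,\rho(H))$-coupling since the kernels are compact (a standard fact: quotienting a coupling by a compact normal subgroup acting on one side yields a coupling of the quotient groups — this needs $\rho$ to also have compact kernel, which I would deduce from $\Ker(\rho)$ being a closed subgroup on which $\Psi$, hence the whole picture, is trivial, combined with the coupling's local finiteness). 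Then set $\bar m=\Psi_*\bar m_{\bar\Omega}$, a Borel measure on $\calG$ invariant under $(g,h):x\mapsto\pi(g)x\rho(h)$; its finiteness modulo $\pi(G)$ on the left and $\rho(H)$ on the right is inherited from the fundamental-domain structure of $\bar\Omega$ via the equivariant isomorphisms, using unimodularity. The main obstacle I anticipate is the measure-theoretic bookkeeping in the very last step — showing $\rho$ has compact kernel and closed image and that the push-forward measure genuinely has finite ``co-volume'' on both sides rather than merely being $\sigma$-finite — which is where one must carefully use that $\Ker(\pi)$ is compact (not just that $\pi(G)$ is closed) and invoke the appendix's structural results on compositions and quotients of couplings; the cocycle-identity step is the conceptual heart but is a by-now-standard application of strong ICC.
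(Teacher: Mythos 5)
Your first half is essentially the paper's own argument: slice the tautening map at a generic basepoint to get $\Psi(\omega)=\Phi([\omega,\omega_0])$, establish the cocycle identity $\Phi([\omega_1,\omega_2])\Phi([\omega_2,\omega_3])=\Phi([\omega_1,\omega_3])$ by uniqueness of $G\times G$-equivariant maps into $\calG$ on a suitable composed coupling (the paper runs this on $\Sigma_k=(G_k\times H)\bs\Omega^3$ and quotes the strong-ICC uniqueness lemma; your "push the defect forward and let strong ICC kill it" is the same mechanism), then extract $\rho$ from a two-variable separation argument, and get uniqueness up to conjugation the same way. One slip to fix: the quantity that is independent of $\omega$ is $\Psi(\omega)^{-1}\Psi(h\omega)$ (the constant multiplies on the \emph{right}), not $\Psi(h\omega)\Psi(\omega)^{-1}$, which equals $\Psi(\omega)c(h)\Psi(\omega)^{-1}$ and is genuinely $\omega$-dependent; the right-sided version is what yields $\Psi((g,h)\omega)=\pi(g)\Psi(\omega)\rho(h)^{-1}$.

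The genuine gap is the entire second half of the statement: that $\rho$ has compact kernel and closed image and that $\bar m=\Psi_*m$ descends to \emph{finite} measures on $\pi(G)\bs\calG$ and $\calG/\rho(H)$. You acknowledge this as "the main obstacle" but offer no argument; deducing compactness of $\Ker(\rho)$ from "$\Psi$ is trivial on it plus local finiteness" is circular — nothing yet rules out a noncompact closed subgroup acting on $\Omega$ with $\Psi$ constant along its orbits, and the appendix's generalities on quotients of couplings do not supply this. The paper's proof requires two specific steps you are missing: first, one re-chooses the measure isomorphisms $i:G\times Y\cong\Omega$, $j:H\times X\cong\Omega$ so that $\Psi(i(g,y))=\pi(g)$ and $\Psi(j(h,x))=\rho(h)$ (Lemma~\ref{lem:convenient fundamental domains}); second, a volume-counting covering argument (Lemma~\ref{L:cover}): for a compact $Q\subset\calG$ and an identity neighborhood $W\subset H$, any pairwise-disjoint translates $h_k\,j(W\times X)$ with $h_k\in\rho^{-1}(Q)$ are forced, via the adjusted $\Psi$, into the finite-measure set $i(U\times Y)$ where $U\supset\pi^{-1}(Q)\cdot W$ is precompact (this is where compactness of $\Ker(\pi)$ and closedness of $\pi(G)$ enter), so their number is at most $m(B)/m(A)$; hence $\rho^{-1}(Q)$ is covered by finitely many translates of $W W^{-1}$, i.e.\ $\rho$ is proper, giving compact kernel and closed image, after which the pushforward measure descends to the finite measures obtained from $\mu$ and $\nu$. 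Without this properness argument the final claims of the theorem are not established.
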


\begin{proof}
We shall first construct a homomorphism $\rho:H\to \calG$ and the $G\times H$-equivariant map $\Psi:\Omega\to \calG$.	
Consider the space $\Omega^3=\Omega\times\Omega\times\Omega$ and the three maps
$p_{1,2},p_{2,3}$, $p_{1,3}$, where
\[
	p_{i,j}\colon\Omega^3\ \overto{}\ \Omega^2\ \overto{}\ \Omega \times_H \dual{\Omega}
\]
is the projection to the $i$-th and $j$-th factor followed by the natural projection.
Consider the $G^3\times H$-action on $\Omega^3$:
\[
	(g_1,g_2,g_3,h):(\omega_1,\omega_2,\omega_3)\mapsto ((g_1,h)\omega_1,(g_2,h)\omega_2,(g_3,h)\omega_3).
\]
For $i\in\{1,2,3\}$ denote by $G_i$ the corresponding $G$-factor in $G^3$.
For $i,j\in \{1,2,3\}$ with $i\neq j$ the
group $G_i\times G_j<G_1\times G_2\times G_3$ acts on
$\Omega\times_H \dual{\Omega}$ and on $\calG$ by
\[
	(g_i,g_j):\,[\omega_1,\omega_2]\mapsto [g_i\omega_1,g_j\omega_2],
	\qquad (g_i,g_j):x\mapsto \pi(g_i)\,x\,\pi(g_j)^{-1}\qquad(x\in\calG)
\]
respectively. Let $\{i,j,k\}=\{1,2,3\}$.
The map $p_{i,j}:\Omega^3\to \Omega\times_H\dual{\Omega}$
is $G_k\times H$-invariant and $G_i\times G_j$-equivariant.
This is also true of the maps
\[
	F_{i,j}=\Phi\circ p_{i,j}\colon\Omega^3 \xrightarrow{p_{i,j}} \Omega\times_H\dual{\Omega}
	\xrightarrow{\Phi} \calG,
\]
where $\Phi:\Omega\times_H\dual{\Omega}\to \calG$ is the tautening map.
For $\{i,j,k\}=\{1,2,3\}$, the three maps $F_{i,j}$, $F_{j,i}^{-1}$ and $F_{i,k}\cdot F_{k,j}$ are
all $G_k\times H$-invariant, hence factor through the natural map
\[
	\Omega^3\rightarrow\Sigma_k=(G_k\times H)\bs\Omega^3.
\] 
By an obvious variation on the argument in
Appendix~\ref{ssub:composition_of_couplings}
one verifies that $\Sigma_k$ is a
$(G_i, G_j)$-coupling.
The three maps $F_{i,j}$, $F_{j,i}^{-1}$ and $F_{i,k}\cdot F_{k,j}$ are $G_i\times G_j$-equivariant.
Since $\calG$ is strongly ICC relative to $\pi(G)$, there is at most one
$G_i\times G_j$-equivariant measurable map $\Sigma_k\to \calG$
according to Lemma~\ref{L:sICC2uniq}.
Therefore, we get $m^3$-a.e. identities
\begin{equation} \label{e:Fij}
	F_{i,j}=F_{j,i}^{-1}=F_{i,k}\cdot F_{k,j}.
\end{equation}
Denote by $\bar{\Phi}:\Omega^2\to \calG$ the composition
$\Omega^2\overto{} \Omega\times_H \dual{\Omega}\overto{\Phi} \calG$.
By Fubini's theorem, (\ref{e:Fij}) implies that for $m$-a.e. $\omega_2\in \Omega$,
for $m\times m$-a.e. $(\omega_1,\omega_3)$
\[
	\bar{\Phi}(\omega_1,\omega_3)
	=\bar{\Phi}(\omega_1,\omega_2)\cdot \bar\Phi(\omega_2,\omega_3)
	=\bar{\Phi}(\omega_1,\omega_2)\cdot\bar{\Phi}(\omega_3,\omega_2)^{-1}.
\]
Fix such a generic $\omega_2\in \Omega$ and define $\Psi:\Omega\to \calG$ by
$\Psi(\omega)=\bar\Phi(\omega,\omega_2)$.
Then for a.e. $[\omega,\omega']\in \Omega\times_H\Omega$
\begin{equation}\label{e:PhiPsi}
	\Phi([\omega,\omega'])=\bar{\Phi}(\omega,\omega')=\Psi(\omega)\cdot\Psi(\omega')^{-1}.
\end{equation}
We proceed to construct a representation $\rho:H\to \calG$.
Equation~(\ref{e:PhiPsi}) implies that for every $h\in H$ and for a.e $\omega,\omega'\in \Omega$:
\[
	\Psi(h \omega)\Psi(h \omega')^{-1}=\bar{\Phi}(h \omega,h \omega')
	=\bar{\Phi}(\omega,\omega')=\Psi(\omega)\Psi(\omega')^{-1},
\]
and in particular, we get
\[
	\Psi(h\omega)^{-1}\Psi(\omega)=\Psi(h\omega')^{-1}\Psi(\omega').
\]
Observe that the left hand side is independent of $\omega'\in\Omega$, while the right hand
side
is independent of $\omega\in\Omega$.  Hence both are $m$-a.e. constant, and
we denote by $\rho(h)\in\calG$ the constant value.  Being coboundaries the above
expressions are cocycles; being independent of the space
variable they give a homomorphism $\rho:H\to \calG$.
To see this explicitly, for $h,h'\in H$ we compute using $m$-a.e.~$\omega\in\Omega$:
\begin{align*}
    \rho(hh')&=\Psi(hh' \omega)^{-1}\Psi(\omega)\\
    &=\Psi(hh' \omega)^{-1}\Psi(h' \omega) \Psi(h' \omega)^{-1}\Psi(\omega)\\
    &=\rho(h)\rho(h').
\end{align*}
Since the homomorphism $\rho$ is measurable, it is
also continuous~\cite[Theorem B.3 on p.~198]{zimmer-book}.
By definition of $\rho$ we have for $h\in H$ and $m$-a.e $\omega\in\Omega$:
\begin{equation}\label{e:psi-h-eq}
  \Psi(h \omega)=\Psi(\omega)\rho(h)^{-1}.
\end{equation}
Since $\Psi(\omega)=\bar\Phi(\omega,\omega_2)$, it also follows that for $g\in G$
and $m$-a.e.~$\omega\in \Omega$
\begin{equation}\label{e:psi-g-eq}
	\Psi(g \omega)=\pi(g)\Psi(\omega).
\end{equation}
Consider the collection of all pairs $(\Psi,\rho)$
satisfying (\ref{e:psi-h-eq}) and (\ref{e:psi-g-eq}).
Clearly, $\calG$ acts on this set by $x:(\Psi,\rho)\mapsto (\Psi^{x},\rho^{x})=(\Psi\cdot x,x^{-1} \rho x)$;
and we claim that this action is transitive. Let $(\Psi_i,\rho_i)$, $i=1,2$, be two such pairs
in the above set. Then
\[
	\tilde\Phi_i(\omega,\omega')=\Psi_i(\omega)\Psi_i(\omega')^{-1}\qquad (i=1,2)
\]
are $G\times G$-equivariant measurable maps $\Omega\times\Omega\to\calG$, which are invariant
under $H$. Hence they descend to $G\times G$-equivariant maps
$\Phi_i:\Omega\times_H\dual{\Omega}\to\calG$.
The assumption that $\calG$ is strongly ICC relative to $\pi(G)$,
implies a.e. identities $\Phi_1=\Phi_2$, $\tilde\Phi_1=\tilde\Phi_2$.
Hence for a.e. $\omega,\omega'$
\[
	\Psi_1(\omega)^{-1}\Psi_2(\omega)=\Psi_1(\omega')^{-1}\Psi_2(\omega').
\]
Since the left hand side depends only on $\omega$, while the right hand side
only on $\omega'$,
it follows that both sides are a.e. constant $x\in\calG$.
This gives $\Psi_1=\Psi_2^x$. The a.e. identity
\[
	\Psi_1(\omega)\rho_1(h)=\Psi_1(h^{-1}\omega)=\Psi_2(h^{-1}\omega)x
	=\Psi_2(\omega)\rho_2(h)x=\Psi_1(\omega)x^{-1}\rho_2(h)x
\]
implies $\rho_1=\rho_2^x$.
This completes the proof of the first part of the theorem.

\smallskip

Next, we assume that $\Ker(\pi)$ is compact and $\pi(G)$ is closed in $\calG$,
and will show that the kernel $K=\Ker(\rho)$ is compact,
the image $\bar{H}=\rho(H)$ is closed in $\calG$, and that
$\calG/\bar{H}$, $\pi(G)\bs\calG$ carry finite measures.
These properties will be deduced from the assumption on $\pi$ and the
existence of the measurable map $\Psi:\Omega\to \calG$
satisfying~(\ref{e:psi-h-eq}) and~(\ref{e:psi-g-eq}).
We need the next lemma, which says that $\Omega$ has measure space isomorphisms
as in~(\ref{e:ij-fd}) with special properties.
\renewcommand{\qedsymbol}{}
\end{proof}

\begin{lemma}\label{lem:convenient fundamental domains}
Let $\rho:H\to \calG$ and $\Psi:\Omega\to \calG$ be as above.
Then there exist measure space isomorphisms $i:G\times Y\cong \Omega$ and
$j:H\times X\cong \Omega$
as in (\ref{e:ij-fd}) that satisfy in addition
\[
	\Psi(i(g,y))=\pi(g),\qquad \Psi(j(h,x))=\rho(h).
\]
\end{lemma}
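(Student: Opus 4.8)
The plan is to build the desired isomorphism $j : H \times X \cong \Omega$ by combining the ergodic decomposition of the $H$-action along the map $\Psi$ with a measurable section. First I would recall that, since the $H$-action on $(\Omega,m)$ is free (this is part of being a coupling) and admits a finite-measure fundamental domain, there is an isomorphism $j_0 : H \times X_0 \cong \Omega$ of the form in (\ref{e:ij-fd}); the task is to modify it so that $\Psi \circ j$ reads off $\rho(h)$ on the $H$-coordinate. From (\ref{e:psi-h-eq}) we have $\Psi(h\omega) = \Psi(\omega)\rho(h)^{-1}$, so along each $H$-orbit the function $\Psi$ takes the form $\omega_0 \mapsto \Psi(\omega_0)\rho(h)^{-1}$; thus the composition $\bar\Psi : H\bs\Omega \to \rho(H)\bs\calG$ is well-defined, and after identifying $H\bs\Omega$ with $X$ we may push $m$ down to a finite measure on $X$. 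The idea is to pick, measurably in $x \in X$, a point $\psi(x) \in \calG$ in the coset $\bar\Psi(x) \in \rho(H)\bs\calG$ — this uses that $\rho$ has a measurable section $\rho(H)\bs\calG \to \calG$, which exists because $\rho(H)$ is a closed (Polish) subgroup; here I would invoke the second hypothesis (that $\pi$, hence also $\rho$ by the first part of Theorem~\ref{T:split+hom}, has closed image and compact kernel) so that $\rho(H)\bs\calG$ is a standard Borel space and the coset map admits a Borel section.

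Next I would define $X \subset \Omega$ as the set $\Psi^{-1}(\psi(X))$ intersected appropriately with the chosen fundamental domain, or more cleanly: having chosen for each $H$-orbit the representative $\omega$ with $\Psi(\omega) = \psi([\omega])$, set $j(h,x) = h^{-1}x$ (with the convention matching (\ref{e:ij-fd})), so that $\Psi(j(h,x)) = \Psi(h^{-1}x) = \Psi(x)\rho(h) $ — and here I would adjust signs/conventions so the formula comes out as $\rho(h)$ exactly as stated. The point is that $\{x : \Psi(x) = \psi([x])\}$ meets each $H$-orbit in exactly one point, hence is a fundamental domain, and transporting $m$ through $h \times x \mapsto h^{-1}x$ gives the product decomposition $H \times X \cong \Omega$ with the Haar measure on the $H$-factor (using unimodularity of $H$ and that the original $j_0$ already had this form, so only a measurable reparametrization of the base is involved, which does not affect the $H$-factor being Haar). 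The construction of $i : G \times Y \cong \Omega$ with $\Psi(i(g,y)) = \pi(g)$ is entirely symmetric, using (\ref{e:psi-g-eq}) in place of (\ref{e:psi-h-eq}) and the hypothesis on $\pi$ directly; one should note the two constructions are carried out independently and there is no compatibility required between $i$ and $j$ beyond both being of the form (\ref{e:ij-fd}).

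The main obstacle I anticipate is the measurable-selection step: making sure that the choice $x \mapsto \psi(x)$ of coset representatives, and correspondingly the choice of orbit representative in $\Omega$, can be made Borel and that the resulting $X$ is a genuine measurable fundamental domain with the restricted measure matching the product structure. This is where the hypothesis that $\rho(H)$ is closed with compact kernel is essential — without closedness of the image the quotient $\rho(H)\bs\calG$ need not be standard Borel and no section need exist; compactness of $\Ker(\rho)$ is what guarantees the fibers of $H \to \rho(H)$ are compact so that pushing the Haar measure of $H$ forward behaves well. Once the section is in hand, the verification that $j$ is measure-preserving is a routine Fubini argument comparing $m$ with the product of Haar measure on $H$ and the pushforward measure on $X$, using that $j_0$ already witnesses this decomposition and that $j$ differs from $j_0$ by a fiberwise translation in $H$ depending measurably on the base point, under which Haar measure is invariant.
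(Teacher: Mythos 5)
Your route is genuinely different from the paper's, and it has gaps. The paper never builds a fundamental domain from scratch: it keeps the given isomorphisms $i_0,j_0$ of (\ref{e:ij-fd}) and only re-parametrizes them fiberwise, setting $i(g,y)=i_0(gg_y,y)$ and $j(h,x)=j_0(hh_x,x)$, where $g_y$ (resp.\ $h_x$) is produced by a Borel cross-section solving an equation of the form $\pi(g_y)=f(g_1,y)^{-1}\pi(g_1)$ coming from (\ref{e:psi-g-eq}) (resp.\ its analogue from (\ref{e:psi-h-eq})); equivariance and measure preservation are then automatic. Your construction instead takes the level set $\{\omega:\Psi(\omega)=\psi([\omega])\}$ over a Borel family of coset representatives, and this is where the problems lie. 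First, it is circular: you justify the Borel section for $\rho(H)\backslash\calG$ (and the standardness of that quotient) by asserting that $\rho$ has closed image and compact kernel ``by the first part of Theorem~\ref{T:split+hom}''. The first part gives only continuity of $\rho$; closedness of $\rho(H)$ and compactness of $\Ker(\rho)$ are precisely what this lemma and Lemma~\ref{L:cover} are subsequently used to prove, so they cannot be invoked here. The paper uses only the standing hypothesis on $\pi$; on the $H$-side one needs merely a measurable section of $\rho$ over its $\sigma$-compact (hence Borel) image, not a transversal for the coset space.

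Second, the claim that your level set meets each $H$-orbit in exactly one point is false whenever $\Ker(\rho)\neq\{1\}$ (e.g.\ $H=\bar\Lambda\times F$ with $F$ finite): inside the orbit of $\omega_0$ the solutions of $\Psi(h\omega_0)=\psi$ form a coset of $\Ker(\rho)$, so your $j$ need not be injective, and you cannot appeal to smallness of the kernel to repair this, for the reason above. Third, even granting the selection, your own displayed computation yields $\Psi(j(h,x))=\psi(x)\,\rho(h)^{\pm 1}$ with $\psi(x)$ the chosen coset representative; the base-dependent factor $\psi(x)$ is not a matter of ``signs/conventions'' --- removing it is the actual content of the lemma, and the paper's proof is organized around exactly this normalization (choosing $g_y$, resp.\ $h_x$, so that $\Psi(i(e,y))$ and $\Psi(j(e,x))$ become the identity), a step your proposal never confronts. (Minor points: with the convention $h\colon j(h',x)\mapsto j(hh',x)$ of (\ref{e:ij-fd}) your formula $j(h,x)=h^{-1}x$ is not equivariant, and (\ref{e:psi-h-eq}) makes $\Psi$ descend to $\calG/\rho(H)$, not $\rho(H)\backslash\calG$.)
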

\begin{proof}
We start from some measure space isomorphisms
$i_0:G\times Y\cong \Omega$ and $j_0:H\times X\cong \Omega$
as in (\ref{e:ij-fd}) and will replace them by
\[
	i(g,y)=i_0(gg_y,y),\qquad j(h,x)=j_0(hh_x,x)
\]
for some appropriately chosen measurable maps $Y\to G$, $y\mapsto g_y$ and $X\to H$, $x\mapsto h_x$. The conditions (\ref{e:ij-fd}) remain valid after any such alteration.

Let us construct $y\mapsto g_y$ with the required property; the map $x\mapsto h_x$ can be constructed in a similar manner.
By (\ref{e:psi-g-eq}) for $m_G\times\nu$-a.e.~$(g_1,y)\in G\times Y$ the value
\[
	\pi(g)^{-1}\Psi\circ i_0(gg_1,y)
\]
is $m_G$-a.e. independent of $g$; denote it by $f(g_1,y)\in \calG$.
Fix $g_1\in G$ for which
\[
	\Psi\circ i_0(gg_1,y)=\pi(g) f(g_1,y)
\]
holds for $m_G$-a.e.~$g\in G$ and $\nu$-a.e.~$y\in Y$.
There exists a Borel cross section $\calG\to G$ to $\pi:G\to\calG$.
Using such, we get a measurable choice for $g_y$ so that
\[
	\pi(g_y)=f(g_1,y)^{-1}\pi(g_1).
\]
Setting $i(g,y)=i_0(g g_y,y)$,
we get $m_G\times\nu$-a.e. that $\Psi\circ i (g,y)=\pi(g)$.
\end{proof}

\begin{lemma}\label{L:cover}
Given a neighborhood of the identity $V\subset H$ and a compact subset $Q\subset \calG$,
the set $\rho^{-1}(Q)$ can be covered by finitely many translates of $V$:
\[
	\rho^{-1}(Q)\subset h_1 V\cup\cdots\cup h_N V.
\]
\end{lemma}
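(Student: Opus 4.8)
The plan is to use the special measure-space isomorphisms from Lemma~\ref{lem:convenient fundamental domains} to turn the ``covering'' statement about $\rho^{-1}(Q)\subset H$ into a finiteness statement about a subset of $\Omega$ that is controlled by the finite measure $m|_{\text{fundamental domain}}$. Concretely, fix $j:H\times X\cong\Omega$ with $\Psi(j(h,x))=\rho(h)$. Then $j(\rho^{-1}(Q)\times X)=\Psi^{-1}(Q)\cap j(H\times X)$, and since $\Psi(g\omega)=\pi(g)\Psi(\omega)$ for $g\in G$, the set $\Psi^{-1}(Q)$ is contained in the $G$-saturation $\pi^{-1}(Q')\cdot\Psi^{-1}(Q)$ for a suitable compact $Q'$; more usefully, intersecting with the image $i(C\times Y)$ of a compact $C\subset G$ in the other fundamental-domain picture, $\Psi(i(g,y))=\pi(g)$, so $\Psi^{-1}(Q)\cap i(C\times Y)=i((C\cap\pi^{-1}(Q))\times Y)$ has finite $m$-measure bounded by $m_G(C\cap\pi^{-1}(Q))\cdot\nu(Y)<\infty$ because $\Ker(\pi)$ is compact and $\pi(G)$ is closed so $\pi^{-1}(Q)$ is compact in $G$.

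The key step is then a pigeonhole/packing argument. Suppose for contradiction that $\rho^{-1}(Q)$ cannot be covered by finitely many left translates of the identity neighborhood $V$. Pick a symmetric open identity neighborhood $W\subset H$ with $W^2\subset V$ and of finite Haar measure (shrinking $V$ only makes the statement stronger, so we may assume $V=W$ with $W$ symmetric precompact). Greedily choose $h_1,h_2,\dots\in\rho^{-1}(Q)$ with $h_{k+1}\notin h_1W\cup\cdots\cup h_kW$; by assumption this process never terminates, producing an infinite sequence with $h_iW\cap h_jW=\emptyset$ for $i\neq j$ (by symmetry of $W$). Now transport this to $\Omega$: the sets $j(h_iW\times X)\subset\Omega$ are pairwise disjoint, each of $m$-measure $m_H(W)\cdot\mu(X)>0$, and each is contained in $\Psi^{-1}(Q')$ where $Q'=\rho(\rho^{-1}(Q)\cdot W)\subset Q\cdot\rho(W)$; since $\rho(W)$ has precompact closure (as $\rho$ is continuous and $W$ precompact) and $Q$ is compact, $Q'$ has compact closure $\bar{Q'}$.

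Finally I would derive the contradiction from finiteness of measure. Using the $i$-coordinates, $\Psi^{-1}(\bar{Q'})$ meets each $i(\{g\}\times Y)$-slice only over $g\in\pi^{-1}(\bar{Q'})$, which is a compact subset of $G$; hence $m(\Psi^{-1}(\bar{Q'}))\le m_G(\pi^{-1}(\bar{Q'}))\cdot\nu(Y)<\infty$. But $\Psi^{-1}(\bar{Q'})$ contains the disjoint union $\bigsqcup_i j(h_iW\times X)$, whose total $m$-measure is $\sum_i m_H(W)\mu(X)=\infty$. This contradiction shows the greedy process terminates, i.e.\ $\rho^{-1}(Q)\subset h_1V\cup\cdots\cup h_NV$ for some finite $N$.

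The main obstacle I anticipate is the bookkeeping needed to pass cleanly between the two fundamental-domain parametrizations $i$ and $j$ of $\Omega$ simultaneously --- one wants the $j$-picture to see the disjoint translates $h_iW$ and the $i$-picture to see the measure bound via compactness of $\pi^{-1}(\bar{Q'})$ --- and in particular making sure the image $\rho(W)$ is genuinely precompact and that $\Psi^{-1}(\bar{Q'})$ is (up to null sets) contained in $i(\pi^{-1}(\bar{Q'})\times Y)$; this last containment uses $\Psi(i(g,y))=\pi(g)$ together with the fact that $i$ is onto mod null sets, so that a.e.\ $\omega=i(g,y)$ with $\Psi(\omega)=\pi(g)\in\bar{Q'}$ forces $g\in\pi^{-1}(\bar{Q'})$.
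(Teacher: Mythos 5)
Your argument is essentially the paper's: the same two special parametrizations from Lemma~\ref{lem:convenient fundamental domains} (so that $\Psi\circ j(h,x)=\rho(h)$ and $\Psi\circ i(g,y)=\pi(g)$), the same use of compactness of $\pi^{-1}(\text{compact})$ coming from the compact kernel and closed image of $\pi$, and the same packing argument: disjoint $W$-translates inside $\rho^{-1}(Q)$ give, via $j$, disjoint subsets of $\Omega$ of equal positive measure, all contained (via the $i$-picture) in a set of finite measure, so there can be only finitely many; the paper phrases this as a quantitative bound $n\le m(B)/m(A)$ for a maximal disjoint family rather than your proof by contradiction with an infinite family, which is an inessential difference. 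There is, however, one step that fails as written: greedily choosing $h_{k+1}\notin h_1W\cup\cdots\cup h_kW$ only guarantees $h_i^{-1}h_j\notin W$ for $i\ne j$ (symmetry of $W$ gives this in both directions), and that does \emph{not} imply $h_iW\cap h_jW=\emptyset$, which requires $h_i^{-1}h_j\notin W\cdot W^{-1}$. The fix is standard and exactly what the paper does: either run the greedy selection avoiding the larger sets $h_iW\cdot W^{-1}$ (equivalently, pass to a further symmetric neighborhood $W_0$ with $W_0^2\subset W$ and keep the $W_0$-translates disjoint), or take a \emph{maximal} family with pairwise disjoint $W$-translates and use $W\cdot W^{-1}\subset V$ to conclude that the corresponding $V$-translates cover $\rho^{-1}(Q)$. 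With that one-line repair your proof is correct and coincides with the paper's.
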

\begin{proof}
Since $\pi:G\to\calG$ is assumed to be continuous, having closed image and compact kernel,
for any compact $Q\subset\calG$ the preimage $\pi^{-1}(Q)\subset G$ is also compact.  	
Let $W\subset H$ be an open neighborhood of identity so that $W\cdot W^{-1}\subset V$;
we may assume $W$ has compact closure in $H$. Then $\pi^{-1}(Q)\cdot W$ is precompact.
Hence there is an open precompact set $U\subset G$ with $\pi^{-1}(Q)\cdot W\subset U$.
Consider subsets $A=j(W\times X)$, and $B=i(U\times Y)$ of $\Omega$, where
$i$ and $j$ are as in the previous lemma. Then
\[
	m(A)=m_H(W)\cdot \nu(Y) >0,\qquad m(B)=m_G(U)\cdot \mu(X) <\infty.
\]
Let  $\{h_1,\dots,h_n\}\subset \rho^{-1}(Q)$ be such that $h_kW\cap h_l W=\emptyset$
for $k\ne l\in\{1,\dots,n\}$.
Then the sets $h_k A=j(h_k W\times X)$ are also pairwise disjoint and
have $m(h_k A)=m(A)>0$ for $1\le k\le n$. Since
\[
	\Psi(h_k A)=\rho(h_k W)=\rho(h_k)\rho(W)\subset Q\cdot\rho(W)\subset \rho(U),
\]
it follows that $h_kA\subset B$ for every $1\le k\le n$.
Hence $n\le m(B)/m(A)$.
Choosing a \emph{maximal} such set $\{h_1,\dots,h_N\}$, we obtain
the desired cover.
\end{proof}

\begin{proof}[Continuation of the proof of Theorem~\ref{T:split+hom}]
Lemma~\ref{L:cover} implies that the closed subgroup $K=\Ker(\rho)$ is compact.
More generally, it implies
that the continuous homomorphism $\rho:H\to \calG$ is proper, that is,
preimages of compact sets are compact.
Therefore $\bar{H}=\rho(H)$ is closed in $\calG$.

We push forward the measure $m$ to a measure $\bar{m}$ on $\calG$ via
the map $\Psi:\Omega\to \calG$. The measure
$\bar{m}$ is invariant under the action $x\mapsto \pi(g)\,x\,\rho(h)$.
Since $\bar{H}=\rho(H)\cong H/\ker(\rho)$ is closed in $\calG$, the space $\calG/\bar{H}$ is Hausdorff.
As $\Ker(\rho)$ and $\Ker(\pi)$ are compact normal subgroups in $H$ and $G$,
respectively,
the map $\Psi:\Omega\to \calG$ factors through
\[
	(\Omega,m)\overto{} (\Omega',m')=(\Ker(\pi)\times \Ker(\rho))\bs (\Omega,m)\xrightarrow{\Psi'}\calG.
\]
Let $\bar{G}=G/\Ker(\pi)$.
Starting from measure isomorphisms as in
Lemma~\ref{lem:convenient fundamental domains},
we obtain equivariant measure isomorphisms
$(\Omega',m')\cong (\bar{H}\times X,m_{\bar{H}}\times\mu)$ and
$(\Omega',m')\cong (\bar{G}\times Y,m_{\bar{G}}\times\nu)$. In particular,
$(\Omega',m')$ is a $(\bar{G},\bar{H})$-coupling.
The measure $\bar{m}$ on $\calG$ descends to the $\bar{G}$-invariant finite measure on $\calG/\bar{H}$ obtained by pushing forward $\mu$.
Similarly, $\bar{m}$ descends to the $\bar{H}$-invariant finite measure on $\bar{G}\bs\calG$
obtained by pushing forward $\nu$.
This completes the proof of Theorem~\ref{T:split+hom}.
\end{proof}

\begin{proof}[Proof of Theorem~\ref{T:reconstruction}]
	Theorem~\ref{T:reconstruction} immediately follows from Theorem~\ref{T:split+hom}.
	In case of $\rmL^p$-conditions, one observes that if
	$(\Omega,m)$ is an $\rmL^p$-integrable $(G,H)$-coupling, then $\Omega\times_H\dual{\Omega}$
	is an $\rmL^p$-integrable $(G,G)$-coupling (Lemma~\ref{lem:composition of lp couplings});
	so it is taut under the assumption that $G$ is $p$-taut.
\end{proof}

\medskip

\subsection{Lattices in taut groups} 
\label{sub:lattices_in_taut_groups}

\begin{proposition}[Taut groups and lattices]\label{P:taut-lattice}\hfill{}\\
Let $G$ be a unimodular lcsc group,
$\calG$ a Polish group, $\pi:G\to\calG$ a continuous homomorphism.
Assume that $\calG$ is strongly ICC relative to $\pi(G)$ and
let $\Gamma<G$ be a lattice (resp. a $p$-integrable lattice).

Then $G$ is taut (resp. $p$-taut) relative to $\pi:G\to\calG$ iff $\Gamma$
is taut (resp. $p$-taut) relative to $\pi|_\Gamma:\Gamma\to\calG$. 
In particular, $G$ is taut iff any/all lattices in $G$ are taut relative
to the inclusion $\Gamma<G$.
\end{proposition}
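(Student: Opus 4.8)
The plan is to move couplings between $G$ and a fixed lattice $\Gamma<G$ by induction and by restriction of the acting group, controlling both passages with the uniqueness of tautening maps; for the $p$-taut statements I assume in addition, as one must, that $\Gamma$ is $\rmL^p$-integrable in $G$. First I would record a preliminary reduction: $\calG$ is strongly ICC relative to $\pi(\Gamma)$ --- indeed relative to $\pi(\Gamma')$ for every lattice $\Gamma'<G$ --- as soon as it is strongly ICC relative to $\pi(G)$. Indeed, given a $\pi(\Gamma)$-conjugation-invariant $\nu\in\Prob(\calG\setminus\{e\})$, the averaged measure $\bar\nu=\int_{G/\Gamma}\bigl(x\mapsto\pi(g)\,x\,\pi(g)^{-1}\bigr)_*\nu\,dm_{G/\Gamma}(g\Gamma)$ is well defined (the integrand depends only on $g\Gamma$ since $\nu$ is $\pi(\Gamma)$-invariant), is supported on $\calG\setminus\{e\}$, and is $\pi(G)$-conjugation-invariant by $G$-invariance of $m_{G/\Gamma}$ --- contradicting the hypothesis. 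Hence, by Lemma~\ref{L:sICC2uniq}, in every situation below a tautening map is unique once it exists, so only existence is at issue.

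For ``$G$ taut relative to $\pi$ $\Rightarrow$ $\Gamma$ taut relative to $\pi|_\Gamma$'' I would proceed as follows: from a $(\Gamma,\Gamma)$-coupling $\Omega$ ($\rmL^p$-integrable in the $p$-taut case), pass to the induced $(G,G)$-coupling $\tilde\Omega=(G\times G)\times_{\Gamma\times\Gamma}\Omega$. This is a coupling by the argument of Appendix~\ref{ssub:composition_of_couplings}, and it is $\rmL^p$-integrable, being the composition of $\Omega$ with the tautological $(G,\Gamma)$- and $(\Gamma,G)$-couplings $(G,m_G)$, which are $\rmL^p$-integrable precisely because $\Gamma$ is an $\rmL^p$-integrable lattice (Lemma~\ref{lem:composition of lp couplings}). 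By hypothesis $\tilde\Omega$ has a tautening map $\tilde\Phi$, and a Fubini argument extracts from it the tautening map of $\Omega$: the a.e.-defined function $(g_1,g_2,\omega)\mapsto\pi(g_1)^{-1}\tilde\Phi([(g_1,g_2),\omega])\pi(g_2)$ is invariant under the $G\times G$-action on the first two variables (by equivariance of $\tilde\Phi$), hence descends to a measurable $\Phi\colon\Omega\to\calG$, and the identity $[(g_1,g_2),(\gamma_1,\gamma_2)\omega]=[(g_1\gamma_1,g_2\gamma_2),\omega]$ shows $\Phi$ to be $\Gamma\times\Gamma$-equivariant.

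The converse ``$\Gamma$ taut relative to $\pi|_\Gamma$ $\Rightarrow$ $G$ taut relative to $\pi$'' is where the work lies. From a $(G,G)$-coupling $\Omega$ ($\rmL^p$-integrable in the $p$-taut case) I would restrict the $G\times G$-action to $\Gamma\times\Gamma$; using the isomorphisms of $\Omega$ with $G\times Y$ and with $G\times X$ from (\ref{e:ij-fd}) together with a fundamental domain for $\Gamma$ in the $G$-factors, one obtains finite fundamental domains, so the result is a $(\Gamma,\Gamma)$-coupling, again $\rmL^p$-integrable once $\Gamma$ is (Lemma~\ref{lem:composition of lp couplings}). By hypothesis there is a measurable $\Phi\colon\Omega\to\calG$ with $\Phi((\gamma_1,\gamma_2)\omega)=\pi(\gamma_1)\Phi(\omega)\pi(\gamma_2)^{-1}$ for $\gamma_i\in\Gamma$, and the one genuinely non-formal step --- the main obstacle --- is to promote this to equivariance under all of $G\times G$. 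The mechanism I have in mind: for $(g_1,g_2)\in G\times G$ the conjugated map $\Phi_{g_1,g_2}\colon\omega\mapsto\pi(g_1)^{-1}\Phi((g_1,g_2)\omega)\pi(g_2)$ is equivariant for $(g_1^{-1}\Gamma g_1)\times(g_2^{-1}\Gamma g_2)$, so whenever $g_1,g_2$ lie in the commensurator of $\Gamma$ both $\Phi$ and $\Phi_{g_1,g_2}$ are tautening maps for a common $(\Gamma',\Gamma')$-coupling with $\Gamma'<\Gamma$ of finite index, and therefore coincide by the uniqueness established above. Since $\{(g_1,g_2):\Phi_{g_1,g_2}=\Phi\}$ is a measurable subgroup of $G\times G$ containing $\Gamma\times\Gamma$, the remaining task is to reach all of $G\times G$, and it is precisely here that the finiteness of $m_{G/\Gamma}$ is used in an essential way, by a Fubini/ergodicity argument of the kind carried out in \cite{FurmanME}. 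Once $\Phi$ is $G\times G$-equivariant it is the required tautening map, unique by the preliminary remark, so every $(G,G)$-coupling is taut relative to $\pi$. The final ``in particular'' is the case $\calG=G$, $\pi=\id_G$ --- where the strong ICC hypothesis needed to apply the equivalence holds because taut groups are strongly ICC.
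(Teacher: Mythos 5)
Your preliminary averaging reduction (that strong ICC relative to $\pi(G)$ passes to strong ICC relative to $\pi(\Gamma)$ for any lattice $\Gamma<G$, by integrating the conjugated measures over $G/\Gamma$) is correct, and your first direction is sound and essentially the paper's: you induce to $G\times_\Gamma\Omega\times_\Gamma G$, use $\rmL^p$-integrability of the lattice to keep the induced coupling $\rmL^p$ (Lemma~\ref{lem:composition of lp couplings}), and descend the tautening map by untwisting and invariance under the transitive $G\times G$-action -- the paper's Lemma~\ref{L:induction-taut} does the same descent by fixing generic coset representatives and cocycles. You also correctly flag that $\rmL^p$-integrability of $\Gamma$ must be assumed in the $p$-taut statements.

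The gap is in the converse, at exactly the step you yourself single out as the main obstacle: promoting a $\Gamma\times\Gamma$-equivariant $\Phi\colon\Omega\to\calG$ to a $G\times G$-equivariant one. Your commensurator argument only gives $\Phi_{g_1,g_2}=\Phi$ for $(g_1,g_2)$ ranging in ${\rm Comm}_G(\Gamma)\times{\rm Comm}_G(\Gamma)$, and for many lattices (non-arithmetic ones, for instance) the commensurator is again a countable group, so nothing is gained; the subgroup $S=\{(g_1,g_2)\colon\Phi_{g_1,g_2}=\Phi\}$ contains $\Gamma\times\Gamma$ but there is no a priori reason for it to have positive Haar measure, so no Steinhaus-type argument applies. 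Nor is an ``ergodicity argument'' available: the coupling is not assumed ergodic, and plain Fubini only tells you that $(g_1,g_2)\mapsto\Phi_{g_1,g_2}$, equivalently $F_\omega(g_1,g_2)=\pi(g_1)^{-1}\Phi((g_1,g_2)\omega)\pi(g_2)$, descends to the finite measure space $\Gamma\bs G\times\Gamma\bs G$ -- which is where the argument must actually begin, not end. The missing mechanism is Lemma~\ref{L:sICC2uniq}.(\ref{i:extn}): for a.e.~$\omega$ one takes the distribution $\eta_\omega\in\Prob(\calG)$ of $F_\omega$ over $\Gamma\bs G\times\Gamma\bs G$ (this is where finiteness of the covolume enters), checks that $\omega\mapsto\eta_\omega$ is a $G\times G$-equivariant map into $\Prob(\calG)$, and then invokes the strong ICC hypothesis through the Dirac-measure statement of Lemma~\ref{L:sICC2uniq}.(\ref{i:Dirac}) -- proved by convolving $\check\eta_\omega*\eta_\omega$, descending to $\Omega/G$, and averaging to produce a conjugation-invariant probability measure on $\calG$ which must be $\delta_e$ -- to conclude that every $\eta_\omega$ is a point mass, whence $F_\omega$ is a.e.~constant and $\Phi$ is $G\times G$-equivariant. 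This use of relative strong ICC on an equivariant family of probability measures is the idea your sketch lacks; deferring to ``a Fubini/ergodicity argument of the kind carried out in \cite{FurmanME}'' does not supply it, since that argument is precisely the one that has to be made (and is made in Lemma~\ref{L:sICC2uniq}), not a routine application of Fubini or ergodicity.
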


For the proof of this proposition we shall need the following.

\begin{lemma}[Induction]\label{L:induction-taut}
	Let $G$ be a unimodular lcsc group, $\calG$ a Polish group,
	$\pi:G\to\calG$ a continuous homomorphism, and
	$\Gamma_1,\Gamma_2<G$ lattices.
	Let $(\Omega,m)$ be a $(\Gamma_1,\Gamma_2)$-coupling, and assume
	that the $(G,G)$-coupling $\bar\Omega=G\times_{\Gamma_1}\Omega\times_{\Gamma_2} G$
	is taut relative to $\pi:G\to\calG$.
	Then there exists a $\Gamma_1\times\Gamma_2$-equivariant map $\Omega\to \calG$.
\end{lemma}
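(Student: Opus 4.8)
The statement to prove is Lemma~\ref{L:induction-taut}: given lattices $\Gamma_1,\Gamma_2<G$, a $(\Gamma_1,\Gamma_2)$-coupling $(\Omega,m)$, and the induced $(G,G)$-coupling $\bar\Omega = G\times_{\Gamma_1}\Omega\times_{\Gamma_2}G$ which is taut relative to $\pi:G\to\calG$, produce a $\Gamma_1\times\Gamma_2$-equivariant map $\Omega\to\calG$. The idea is to simply pull back the tautening map on $\bar\Omega$ along the natural inclusion of $\Omega$ into $\bar\Omega$. Concretely, I would first recall the construction of $\bar\Omega$: it is the quotient of $G\times\Omega\times G$ by the $\Gamma_1\times\Gamma_2$-action $(\gamma_1,\gamma_2):(g_1,\omega,g_2)\mapsto(g_1\gamma_1^{-1},(\gamma_1,\gamma_2)\omega,g_2\gamma_2^{-1})$, carrying the residual $G\times G$-action by left translation on the outer factors. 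There is a natural measurable map $\iota:\Omega\to\bar\Omega$, $\omega\mapsto[e,\omega,e]$, and it is equivariant in the sense that $\iota((\gamma_1,\gamma_2)\omega)=(\gamma_1,\gamma_2)\iota(\omega)$ for $\gamma_i\in\Gamma_i$, since $[\gamma_1,(\gamma_1,\gamma_2)\omega,\gamma_2]=[e,\omega,e]$ by the quotient relation, hence $(\gamma_1,\gamma_2)[e,\omega,e]=[\gamma_1,(\gamma_1,\gamma_2)\omega,\gamma_2]$... wait, one must be careful with the direction; I would check that the $G\times G$-action is arranged so that $(\gamma_1,\gamma_2)\iota(\omega)=\iota((\gamma_1,\gamma_2)\omega)$ holds on the nose for $\gamma_i\in\Gamma_i$.

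**Key steps.** (1) Make the identification $\bar\Omega=(\Gamma_1\times\Gamma_2)\backslash(G\times\Omega\times G)$ precise, specifying the $G\times G$-action and verifying $\bar\Omega$ is indeed a $(G,G)$-coupling (this is the ``obvious variation'' referenced from Appendix~\ref{ssub:composition_of_couplings}; I may just cite it). (2) Let $\Phi:\bar\Omega\to\calG$ be the tautening map guaranteed by the hypothesis, so $\Phi((g_1,g_2)\bar\omega)=\pi(g_1)\Phi(\bar\omega)\pi(g_2)^{-1}$ for a.e.\ $\bar\omega$ and all $g_1,g_2\in G$. (3) Define $\psi:\Omega\to\calG$ by $\psi=\Phi\circ\iota$, i.e.\ $\psi(\omega)=\Phi([e,\omega,e])$. (4) Check equivariance: for $\gamma_1\in\Gamma_1$, $\gamma_2\in\Gamma_2$,
\[
\psi((\gamma_1,\gamma_2)\omega)=\Phi\bigl(\iota((\gamma_1,\gamma_2)\omega)\bigr)=\Phi\bigl((\gamma_1,\gamma_2)\iota(\omega)\bigr)=\pi(\gamma_1)\Phi(\iota(\omega))\pi(\gamma_2)^{-1}=\pi(\gamma_1)\psi(\omega)\pi(\gamma_2)^{-1},
\]
which is exactly the assertion that $\psi$ is $\Gamma_1\times\Gamma_2$-equivariant (with respect to the $\calG$-valued action twisted by $\pi|_{\Gamma_1},\pi|_{\Gamma_2}$). (5) Address measurability and the ``a.e.\ vs.\ everywhere'' issue: since $\iota$ is measurable and $\Phi$ is defined up to null sets, I need that $\iota_*m$ is absolutely continuous with respect to the measure class on $\bar\Omega$ so that $\psi$ is well-defined a.e.; this follows because $\Omega\hookrightarrow G\times\Omega\times G$ as $\{e\}\times\Omega\times\{e\}$ pushed to the quotient, and more robustly because $\bar\Omega$ contains a copy of $\Gamma_1\backslash G\times\Omega$-type slices — here it is cleanest to note that by Fubini, for a.e.\ choice of a ``basepoint'' in the $G$-directions the restriction of $\Phi$ is defined, and one can absorb the basepoint into the equivariance. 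Then invoke Zimmer's lemma (as used in Definition~\ref{D:M-rigidity}) to replace the a.e.-equivariant $\psi$ by a genuinely everywhere-equivariant one.

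**Main obstacle.** The only real subtlety is step (5): the tautening map $\Phi$ lives on $\bar\Omega$ only up to a null set, and a priori the image $\iota(\Omega)\subset\bar\Omega$ could be null (indeed it is, since $\Omega$ has strictly smaller ``dimension'' than $\bar\Omega$), so naively $\Phi\circ\iota$ is not defined. The fix is standard but must be spelled out: write $\bar\Omega$ via the measure isomorphism $\bar\Omega\cong (\Gamma_1\backslash G)\times\Omega\times(\Gamma_2\backslash G)$ up to a suitable normalization, or better, use that $\bar\Omega$ fibers over $\Omega$-with-parameters so that $\Phi$ restricted to almost every fiber is defined; then pick such a generic fiber, use the $G\times G$-equivariance of $\Phi$ to translate it to the fiber over $(e,e)$, and define $\psi$ as that translate of $\Phi$. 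Equivariance under $\Gamma_1\times\Gamma_2$ survives because the $\Gamma_i$-actions on $\Omega$ commute with the translation used. After this, uniqueness of $\psi$ is not claimed in the lemma, so nothing further is needed; I would remark only that one could get uniqueness from the strong ICC hypothesis but it is not required here. I expect the write-up to be short — essentially steps (1)–(4) with a careful paragraph on the null-set issue in (5).
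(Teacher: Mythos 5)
Your proposal is correct and follows essentially the same route as the paper: the real content is your step (5), and the paper's proof is exactly that argument — model $\bar\Omega$ as $G/\Gamma_1\times G/\Gamma_2\times\Omega$ via cross-sections $\sigma_i$, restrict the tautening map to a generic fiber over $(x_1,x_2)$, and correct the resulting twisted equivariance by conjugating with $\pi(\sigma_1(x_1))^{-1}$ and $\pi(\sigma_2(x_2))$. Just note that the precise justification is this conjugation (the fiber is preserved by $h_1\Gamma_1h_1^{-1}\times h_2\Gamma_2h_2^{-1}$, $h_i=\sigma_i(x_i)$, acting through the cocycle as $\Gamma_1\times\Gamma_2$ on $\Omega$), rather than the "commutation with the translation" phrasing; and the final appeal to Zimmer's lemma is not needed since only a.e.\ equivariance is claimed.
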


\begin{proof}
It is convenient to have a concrete model for $\bar{\Omega}$.
Choose Borel cross-sections $\sigma_i$ from  $X_i=G/\Gamma_i$ to $G$,
and form the cocycles $c_i:G\times X_i\to \Gamma_i$ by
\[
	c_i(g,x)=\sigma_i(g.x)^{-1}g\sigma_i(x),\qquad (i=1,2).
\]
Then, suppressing the obvious measure from the notations, $\bar{\Omega}$ identifies with
$X_1\times X_2\times\Omega$, while the $G\times G$-action is given by
\[
	(g_1,g_2): (x_1,x_2,\omega)\mapsto (g_1.x_1,g_2.x_2, (\gamma_1,\gamma_2)\omega)
	\qquad\text{where}\qquad \gamma_i=c_i(g_i,x_i).
\]
By the assumption there exists a measurable map $\bar\Phi:\bar\Omega\to \calG$
so that
\[
	\bar\Phi((g_1,g_2)(x_1,x_2,\omega))
	=\pi(g_1)\cdot \bar\Phi(x_1,x_2,\omega)\cdot \pi(g_2)^{-1}\qquad (g_1,g_2\in G)
\]
for a.e.~$(x_1,x_2,\omega)\in\bar\Omega$.
Fix a generic pair $(x_1,x_2)\in X_1\times X_2$, denote $h_i=\sigma_i(x_i)$ and consider
$g_i=\gamma_i^{h_i}$ ($=h_i\gamma_i h_i^{-1}$), where $\gamma_i\in\Gamma_i$
for $i\in\{1,2\}$.
Then $g_i.x_i=x_i$, $c_i(g_i,x_i)=\gamma_i$ and the map $\Phi':\Omega\to\calG$ defined by
$\Phi'(\omega)=\bar\Phi(x_1,x_2,\omega)$
satisfies $m$-a.e.
\begin{align*}
	\Phi'((\gamma_1,\gamma_2)\omega)=\bar\Phi((g_1,g_2)(x_1,x_2,\omega))
	&=\pi(g_1)\cdot \Phi'(\omega)\cdot \pi(g_2)^{-1}\\
	&=\pi(\gamma_1^{h_1})\cdot \Phi'(\omega)\cdot \pi(\gamma_2^{h_2})^{-1}.
\end{align*}
Thus $\Phi(\omega)=\pi(h_1)^{-1} \Phi'(\omega) \pi(h_2)$ is a $\Gamma_1\times\Gamma_2$-equivariant measurable
map $\Omega\to \calG$, as required.
\end{proof}

\begin{proof}[Proof of Proposition~\ref{P:taut-lattice}]
Assuming that $G$ is taut relative to $\pi:G\to\calG$ and $\Gamma<G$ is a lattice, we shall
show that $\Gamma$ is taut relative to $\pi|_\Gamma:\Gamma\to\calG$.

Let $(\Omega,m)$ be a $(\Gamma,\Gamma)$-coupling.
Then the $(G,G)$-coupling $\bar\Omega=G\times_{\Gamma}\Omega\times_{\Gamma} G$ is taut relative to $\calG$,
and by Lemma~\ref{L:induction-taut}, $\Omega$ admits a $\Gamma\times\Gamma$-tautening map $\Phi:\Omega\to\calG$.
Since $\calG$ is strongly ICC relative to $\pi(G)<\calG$, the map
$\Phi:\Omega\to \calG$ is unique as a $\Gamma\times\Gamma$-equivariant map
(Lemma~\ref{L:sICC2uniq}.(\ref{i:2lattice})).
This shows that $\Gamma$ is taut relative to $\calG$.

Observe, that if $G$ is assumed to be only $p$-taut, while $\Gamma<G$ to be $\rmL^p$-integrable,
then the preceding argument for the existence of $\Gamma\times\Gamma$-tautening map
for a $\rmL^p$-integrable $(\Gamma,\Gamma)$-coupling $\Omega$ still applies.
Indeed, the composed coupling $\bar\Omega=G\times_{\Gamma}\Omega\times_{\Gamma} G$
is then $\rmL^p$-integrable and therefore admits a $G\times G$-tautening map $\bar{\Phi}:\bar\Omega\to\calG$,
leading to a $\Gamma\times\Gamma$-tautening map $\Phi:\Omega\to \calG$.
Finally, $\calG$ is strongly ICC relative to $\pi(\Gamma)$ by Lemma~\ref{lem:ICC-lattices},
and the uniqueness of the $\Gamma$ tautening map follows from Lemma~\ref{L:sICC2uniq}(\ref{i:unique}).

Next assume that $\Gamma<G$ is a lattice and $\Gamma$ is taut (resp.~$p$-taut) relative to $\pi|_\Gamma:\Gamma\to\calG$.
Let $(\Omega,m)$ be a $(G,G)$-coupling (resp.~a $\rmL^p$-integrable one).
Then $(\Omega,m)$ is also a $(\Gamma,\Gamma)$-coupling (resp.~a $\rmL^p$-integrable one).
Since $\Gamma$ is assumed to be taut (resp. $p$-taut)
there is a $\Gamma\times\Gamma$-equivariant map $\Phi:\Omega\to\calG$.
As $\calG$ is strongly ICC relative to $\pi(G)$ it follows from (\ref{i:extn}) in Lemma~\ref{L:sICC2uniq}
that $\Phi:\Omega\to \calG$ is automatically $G\times G$-equivariant.
The uniqueness of tautening maps follows from the strong ICC assumption by Lemma~\ref{L:sICC2uniq}(\ref{i:unique}).
\end{proof}

\begin{remark}
	The explicit assumption that $\calG$ is strongly ICC relative to $\pi(G)$
	is superfluous. If no integrability assumptions are imposed, the strong ICC
	follows from the tautness assumption by Lemma~\ref{L:uniq2sICC}.
	However, if one assumes merely $p$-tautness, the above lemma yields strong ICC
	property for a restricted class of measures; and the argument that this is sufficient
	becomes unjustifiably technical in this case.
\end{remark}


\section{Some boundary theory}
\label{s:boundary}

Throughout this section we refer to the following 

\begin{setup}\label{basic setup}
    We fix $n \ge 2$ and introduce the following setting: 
	\begin{itemize}
		\item[--] $G=\isom(\Hsp^n)$.
		\item[--] $\Gamma<G^0$ be a torsion-free uniform lattice
		in the connected component of the identity. 
		\item[--] $M=\Gamma\bs\Hsp^n$ is a compact hyperbolic manifold with 
		$\Gamma\simeq \pi_1(M)$. 
		\item[--] The boundary $\partial\Hsp^n$ is denoted by $B$; we 
		identify it with the $(n-1)$-sphere 
		and equip it with the Lebesgue measure $\nu$\footnote{any probability measure in the Lebesgue measure class would do it}.
		\item[--] $(\Omega,m)$ is an ergodic $(\Gamma,\Gamma)$-coupling. 
		\item[--]  For better readability we denote
		the left copy of $\Gamma$ by $\Gamma_l$ and the right copy by $\Gamma_r$.
		We fix a $\Gamma_l$-fundamental domain $X\subset \Omega$,
		and denote by $\mu$ the restriction of $m$ to $X$.
		\item[--] Identifying $X$ with $\Gamma_l\bs\Omega$, we get an ergodic action of $\Gamma_r$ on $(X,\mu)$.
		We denote by $\alpha:\Gamma_r\times X\to \Gamma_l<G$ the associated cocycle.
	\end{itemize}
  	Boundary theory, in the sense of Furstenberg~\cite{Furstenberg}
	(see~\cite{burger-mozes}*{Corollary~3.2}, or \cite{MonodShalom-cocycle}*{Proposition~3.3}
	for a detailed argument applying to our situation), yields the existence of
	an essentially unique measurable map, called \emph{boundary map} or \emph{Furstenberg map},
	\begin{equation}\label{eq:boundary map}
			\phi:X\times B\to B
			\qquad\text{satisfying}\qquad
			\phi(\gamma x,\gamma b)=\alpha(\gamma, x)\phi(x,b)
	\end{equation}
	for	every $\gamma\in\Gamma$ and a.e.~$(x,b)\in X\times B$.
\end{setup}

\begin{prop} \label{prop:reduction}
Let  $\calG<\Homeo(B)$ be a closed subgroup containing $\Gamma$ and denote the inclusion
$\Gamma<\calG$ by $\pi$.
If for a.e $x\in X$ the function $\phi(x,\cdot):B\to B$ coincides a.e with an element of $\calG$ then
$\Omega$ is taut with respect to the inclusion $\pi:\Gamma\to \calG$.
\end{prop}

\begin{proof}
Consider
the set $\rmF(B,B)$ of measurable functions $B\to B$, where two functions
are identified if they agree on $\nu$-conull set.
We endow $\rmF(B,B)$ with the topology of
convergence in measure. The Borel $\sigma$-algebra of this topology turns $\rmF(B,B)$ into
a standard Borel space~\cite{fisher-nonergodic}*{Section~2A}.
By \cite{kechris}*{Corollary~15.2 on p.~89} the
measurable injective map $j\colon\calG\rightarrow\rmF(B,B)$
is a Borel isomorphism of $\calG$ onto its measurable image.

The map $\phi$ gives rise to
a measurable map $f:X\to\rmF(B,B)$ defined for almost every $x\in X$ by
$f(x)= \phi(x,\cdot)$~\cite{fisher-nonergodic}*{Corollary~2.9},
which can be regarded as a measurable map $f:X\to\calG$.
Equation~(\ref{eq:boundary map}) gives
\begin{equation}\label{eq:conjugation}
	\pi\circ\alpha(\gamma,x)=f(\gamma.x)\pi(\gamma) f(x)^{-1},
\end{equation}
thus
by Lemma~\ref{L:coc-taut},
there is a tautening map $\Omega\to \calG$.

Note that by Proposition~\ref{exa:examples of strongly icc groups}(2),
$\Homeo(B)$ is strongly ICC relative to $G$.
It follows by Lemma~\ref{lem:ICC-lattices} that it is also strongly ICC relative to $\Gamma$.
Therefore $\calG$ is strongly ICC relative to $\Gamma$,
and by Lemma~\ref{L:sICC2uniq}(\ref{i:unique}) the tautening is unique.
\end{proof}

\subsection{Preserving maximal simplices of the boundary} 
\label{sub:preserving maximal simplices}\hfill{}\\
Recall that a geodesic simplex in $\bar\Hsp^n=\Hsp^n\cup\partial \Hsp^{n}$ is called \emph{regular}
if any permutation of its vertices can be realized by an element in $\isom(\Hsp^n)$.
The set of ordered $(n+1)$-tuples on the boundary $B$
that form the vertex set of an ideal regular simplex is denoted by $\Sigma^{\rm reg}$.
The set $\Sigma^{\rm reg}$ is a disjoint union $\Sigma^{\rm reg}=\Sigma^{\rm reg}_+\cup \Sigma^{\rm reg}_-$
of two subsets that correspond to the positively and negatively oriented
ideal regular $n$-simplices, respectively.
%
We denote by $v_{\mathrm{max}}$ the maximum possible volume of an ideal simplex.

\begin{lemma}[Key facts from Thurston's proof of Mostow rigidity]\label{lem:key facts from Mostow}\hfill
\begin{enumerate}
	\item \label{K:i}The diagonal $G$-action on $\Sigma^{\rm reg}$ is simply transitive. The diagonal
	$G^0$-action on $\Sigma^{\rm reg}_-$ and $\Sigma^{\rm reg}_+$ are simply transitive,
	respectively.
	\item\label{K:ii} An ideal simplex has non-oriented volume $v_{\mathrm{max}}$ if and only if it
	is regular.
	\item Let $n\ge 3$.
	Let $\sigma,\sigma'$ be two regular ideal simplices having a common face of codimension
	one. Let $\rho$ be the reflection along the hyperspace spanned by this face. Then
	$\sigma=\rho(\sigma')$.
\end{enumerate}
\end{lemma}

\begin{proof}
	(1) See the proof of~\cite{ratcliffe}*{Theorem~11.6.4 on p.~568}.\\ 
	\noindent (2) The statement
	is trivial for $n=2$, as all non-degenerate ideal triangles in $\bar\Hsp^2$
	are regular, and $G$ acts simply transitively on them.
	The case $n=3$ is due to Milnor,
	and Haagerup and Munkholm~\cite{haagerup} proved the general case $n\ge 3$. \\
	\noindent (3) This is a
	key feature distinguishing the $n\ge 3$ case from the $n=2$ case
	where Mostow rigidity fails.
	See~\cite{ratcliffe}*{Lemma~13 on p.~567}.
\end{proof}

We shall need the following lemma, which is due to
Thurston~\cite{thurston}*{p.~133/134} in dimension $n=3$. Recall that $B=\partial \Hsp^n$
is equipped with the Lebesgue measure class.
We consider the natural measure $m_{\Sigma^{\rm reg}_+}$
on $\Sigma^{\rm reg}_+$ corresponding to the Haar measure on $G^0$
under the simply transitive action of $G^0$ on $\Sigma^{\rm reg}_+$.

\begin{lemma}\label{lem:key lemma in mostow rigidity}
Let $n\ge 3$ and $\phi:B\to B$ be a Borel map such that
$\phi^{n+1}=\phi\times\cdots\times\phi$ maps a.e.~point
in $\Sigma^{\rm reg}_+$ into $\Sigma^{\rm reg}_+$.
Then there exists a unique $g_0\in G^0=\isom_+(\Hsp^n)$
with $\phi(b)=g_0 b$ for a.e.~$b\in B$.	
\end{lemma}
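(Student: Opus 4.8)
The plan is first to upgrade the hypothesis to all of $\Sigma^{\rm reg}$ and then to normalize $\phi$. Since $\phi^{n+1}=\phi\times\cdots\times\phi$ commutes with permutations of the coordinates, and $\Sigma^{\rm reg}_-$ is obtained from $\Sigma^{\rm reg}_+$ by transposing two coordinates, the hypothesis gives that $\phi^{n+1}$ maps a.e.\ point of $\Sigma^{\rm reg}$ into $\Sigma^{\rm reg}$, preserving the orientation component; call this the \emph{good set}. By Lemma~\ref{lem:key facts from Mostow}(i) the group $G^0=\isom_+(\Hsp^n)$ acts simply transitively on $\Sigma^{\rm reg}_+$ (and on $\Sigma^{\rm reg}_-$), so for a.e.\ $\sigma$ there is a unique $g_\sigma\in G^0$ with $\phi^{n+1}(\sigma)=g_\sigma\cdot\sigma$; the measurable map $\sigma\mapsto g_\sigma$ will be the main object. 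Pick a positively oriented regular ideal simplex $\sigma_0=(z_0,\dots,z_n)$ which is generic in the following sense: $\sigma_0$ itself, every simplex obtained from it by iterated reflections in faces, and every reordering, lies in the good set (this excludes only a null set, since each reflection and each reordering is a measure preserving bijection of $\Sigma^{\rm reg}$). Replacing $\phi$ by $g_{\sigma_0}^{-1}\circ\phi$, which leaves all hypotheses intact, we may assume $\phi(z_i)=z_i$ for $i=0,\dots,n$; it suffices to prove $\phi=\id$ a.e.

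\textbf{Propagation via the reflection relation.} I would then show, using Lemma~\ref{lem:key facts from Mostow}(iii), that if $\sigma=(b_0,\dots,b_n)$ is a regular ideal simplex in the good set with $\phi(b_j)=b_j$ for all $j$, then for each $i$ the map $\phi$ also fixes the vertex $b_i^{*}$ of the simplex $\sigma_i=\rho\,\sigma$ obtained by reflecting $\sigma$ across the hyperplane spanned by its $i$-th face. Indeed $\phi^{n+1}(\sigma_i)=(b_0,\dots,b_{i-1},\phi(b_i^{*}),b_{i+1},\dots,b_n)$ is regular (goodness) and shares the $i$-th face of $\sigma$, so by part (iii) it equals $\rho\,\sigma=\sigma_i$, whence $\phi(b_i^{*})=\rho(b_i)=b_i^{*}$. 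Iterating this over all faces and using the genericity of $\sigma_0$, $\phi$ fixes every point of the set $S=\Gamma^{*}\cdot\{z_0,\dots,z_n\}$, where $\Gamma^{*}<\isom(\Hsp^n)$ is generated by the $n+1$ reflections in the faces of $\sigma_0$. The same computation, run for an arbitrary generic $\sigma$ in place of $\sigma_0$, shows that $g_\bullet$ is invariant under each face reflection and under coordinate permutations; transporting via $\Sigma^{\rm reg}\cong\isom(\Hsp^n)$, the function $g_\bullet$ is right invariant under the subgroup $\Delta$ generated by $\Gamma^{*}$ together with the (finite) symmetry group of $\sigma_0$, and $\phi$ restricted to $g\,S$ coincides with the isometry $g_{g\sigma_0}$ for a.e.\ $g$.

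\textbf{From a dense set to a.e.} This is where the dimension hypothesis $n\ge 3$ is genuinely used (via Lemma~\ref{lem:key facts from Mostow}(iii)), and it is the step I expect to be the main obstacle: one must convert ``$\phi$ coincides with an isometry on the dense, measure zero set $S$'' into an almost everywhere statement. The point is that the relevant subgroup is in fact dense in $\isom(\Hsp^n)$: for $n\ge 4$ the reflection group $\Gamma^{*}$ itself is non-discrete (the dihedral angle of the regular ideal $n$-simplex is not a submultiple of $\pi$) and hence dense, while for $n=3$ it is the arithmetic lattice of Gromov–Thurston whose commensurator is dense; in both cases $g_\bullet$ is invariant under a dense subgroup, and the translation action of a dense subgroup of $\isom(\Hsp^n)$ on $\isom(\Hsp^n)$ with Haar measure is ergodic (an $L^2$-invariant vector for a dense subgroup of the regular representation is invariant under the whole group by continuity). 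Therefore $g_\bullet$ is essentially constant, $g_\bullet\equiv g_0\in G^0$, and consequently $\phi(g z_0)=g_0\,(g z_0)$ for a.e.\ $g\in\isom(\Hsp^n)$; since $g\mapsto g z_0$ pushes Haar measure to the Lebesgue class on $B$, we get $\phi(b)=g_0 b$ for a.e.\ $b\in B$.

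\textbf{Uniqueness.} Finally, undoing the normalization recovers $g_0\in G^0$ with $\phi(b)=g_0 b$ a.e.; and if also $\phi(b)=g_0'b$ a.e.\ then $g_0^{-1}g_0'$ fixes a conull subset of $B$, hence fixes $B$ pointwise and equals the identity, giving uniqueness. (As an alternative to the ergodicity argument in the third paragraph, one can instead argue that a measurable $\phi$ respecting the regular-simplex structure must agree a.e.\ with a Möbius transformation by a Lebesgue density argument, after which density of $S$ concludes directly; I would choose whichever is cleaner to write in full.)
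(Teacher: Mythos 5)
Your overall skeleton is the same as the paper's: identify $\Sigma^{\rm reg}_+$ with $G^0$ via simple transitivity, extend to both orientation components by a transposition, use Lemma~\ref{lem:key facts from Mostow}(iii) to get the reflection-equivariance $f(g\rho)=f(g)\rho$ for the reflections $\rho$ in the faces of a fixed regular ideal simplex, and then try to conclude that $g\mapsto f(g)g^{-1}$ is a.e.\ constant from invariance under the group $R$ generated by these reflections. The gaps are exactly at the two places where the real work lies. First, the case $n=3$: there $R$ is \emph{discrete} (the rotation angle is $2\pi/3$), in fact a lattice, and your proposed fix via density of its commensurator does not work, because the function $f(g)g^{-1}$ is only invariant under right translation by $R$ itself; nothing gives invariance under commensurating elements, and the right-translation action of a lattice on $(G,m_G)$ is far from ergodic (its invariant functions are all of $\rmL^\infty(G/R)$). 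The paper does not attempt an ergodicity argument in this case; it invokes Thurston's separate argument (\cite{thurston}, p.~133--134), which is a genuinely different, density-point style proof. Your closing parenthetical ("a Lebesgue density argument\dots whichever is cleaner") is essentially a placeholder for that missing argument rather than a proof of it.

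Second, for $n>3$ you assert that $R$ is "non-discrete (the dihedral angle is not a submultiple of $\pi$) and hence dense". Neither step is justified as stated: what is needed is that the angle is an \emph{irrational} multiple of $\pi$ (an angle like $2\pi/5$ is not a submultiple of $\pi$ yet generates a finite rotation group), and the paper proves irrationality by a cyclotomic-polynomial argument applied to the root of $(n-1)z^2+2z+(n-1)$; moreover non-discreteness does not imply density (a non-discrete subgroup could lie in a proper closed subgroup, and then invariant functions on $G/\bar R$ again obstruct ergodicity). The paper closes this gap by showing that the closure of $R\cap P_0$ contains the full unipotent radical $U_0$ (irrational rotations generate ${\rm SO}(2)$-subgroups acting irreducibly, non-compactness forces a nontrivial translation subgroup, irreducibility forces all translations), and that $U_0,U_1$ generate $G^0$ while $R\not\subset G^0$, whence $\bar R=G$. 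Your reduction to the constancy of $f(g)g^{-1}$ and the ergodicity-from-density step are fine; but as written the proposal omits the proof of density for $n>3$ and has no valid argument at all for $n=3$.
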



\begin{proof}
	Fix a regular ideal simplex $\sigma=(b_0,\dots,b_n)\in\Sigma^{\rm reg}_+$,
	and identify $G^0$ with $\Sigma^{\rm reg}_+$ via $g\mapsto g\sigma$.
	Then there is a Borel map $f:G^0\to G^0$ such that for a.e. $g\in G^0$
	\begin{equation}\label{e:def-f}
		(\phi(gb_0),\dots,\phi(gb_n))=(f(g)b_0,\dots,f(g)b_n).	
	\end{equation}
	Interchanging $b_0$, $b_1$ identifies $\Sigma^{\rm reg}_+$ with $\Sigma^{\rm reg}_-$,
	and allows to extend $f$ to a measurable map $G\to G$ satisfying (\ref{e:def-f}) for a.e. $g\in G$.
	Let $\rho_0,\dots,\rho_n\in G$ denote the reflections in the codimension one faces
	of $\sigma$.
	Then Lemma~\ref{lem:key facts from Mostow} (3) implies that
	\[
		f(g\rho)=f(g)\rho\qquad\text{for a.e.}\qquad g\in G
	\]
	for $\rho$ in $\{\rho_0,\dots,\rho_n\}$. It follows that the same applies
	to each $\rho$ in the countable group $R<G$ generated by
	$\rho_0,\dots,\rho_n$.
	We claim that there exists $g_0\in G$ so that $f(g)=g_0 g$
	for a.e. $g\in G$, which implies that $\phi(b)=g_0 b$ also holds a.e. on $B$.

	The case $n=3$ is due to Thurston (\cite{thurston}*{p.~133/134}).
	So hereafter we focus on $n>3$, and will show that in this case the
	group $R$ is dense in $G$
	(for $n=2,3$ it forms a lattice in $G$).
	Consequently the $R$-action on $G$ is ergodic with respect to the Haar measure.
	Since $g\mapsto f(g)g^{-1}$ is a measurable $R$-invariant map on $G$, it follows that it
	is a.e. a constant $g_0\in G^0$, i.e., $f(g)=g_0 g$ a.e. proving the lemma.

	It remains to show that for $n>3$, $R$ is dense in $G$.
	Not being able to find a convenient reference for this fact,
	we include the proof here.
	
	For $i\in\{0,\dots,n\}$ denote by $P_i<G$ the stabilizer of $b_i\in\partial\Hsp^n$,
	and let $U_i<P_i$ denote its unipotent radical.
	We shall show that $U_i$ is contained in the closure $\overline{R\cap P_i}<P_i$
	(in fact, $\overline{R\cap P_i}=P_i$ but we shall not need this).
	Since unipotent radicals of any two opposite parabolics, say $U_0$ and $U_1$, generate
	the whole connected simple Lie group $G^0$, this would show $G^0<\bar{R}<G$.
	Since $R$ is not contained in $G^0$, it follows that $\bar{R}=G$ as claimed.

	Let $f_i:\partial \Hsp^n\to \Euc^{n-1}\cup\{\infty\}$ denote the stereographic projection
	taking $b_i$ to the point at infinity.
	Then $f_i P_i f_i^{-1}$ is the group of
	similarities $\isom(\Euc^{n-1})\rtimes\bbR^\times_+$
	of the Euclidean space $\Euc^{n-1}$.
	We claim that the subgroup of translations $\bbR^{n-1}\cong U_i<P_i$
	is contained in the closure of $R_i=R\cap P_i$.
	To simplify notations we assume $i=0$.
	The set of all $n$-tuples $(z_1,\dots,z_n)$ in $\Euc^{n-1}$
	for which $(b_0,f_0^{-1}(z_1),\dots,\dots,f_0^{-1}(z_n))$ is a regular ideal simplex
	in $\bar{\Hsp}^{n}$ is precisely the set of all regular Euclidean
	simplices in $\Euc^{n-1}$~\cite{ratcliffe}*{Lemma~3 on p.~519}.
	So conjugation by $f_0$ maps the group $R_0=R\cap P_0$
	to the subgroup of $\isom(\Euc^{n-1})$
	generated by the reflections in the faces of the Euclidean simplex
	$\Delta=(z_1,\dots,z_n)$, where $z_i=f_0(b_i)$.
	For $1\le j<k\le n$ denote by $r_{jk}$ the
	composition of the reflections in the $j$th and $k$th faces of $\Delta$; it is a rotation
	leaving fixed the co-dimension two affine hyperplane $L_{jk}$ containing $\{z_i\mid i\neq j,k\}$.
	The angle of this rotation is $2\theta_n$, where $\theta_n$ is the dihedral angle
	of the simplex $\Delta$. One can easily check that $\cos(\theta_n)=-1/(n-1)$,
	using the fact that the unit normals $v_i$ to the faces of $\Delta$ satisfy
	$v_1+\dots+v_n=0$ and $\langle v_i, v_j\rangle=\cos(\theta_n)$ for all $1\le i<j\le n$.
	Thus $w=\exp(\theta_n\sqrt{-1})$ satisfies $w+1/w=-2/(n-1)$. Equivalently, $w$ is a root of
	\[
		p_n(z)=(n-1)z^2+2z+(n-1).
	\]
	This condition on $w$ implies that $\theta_n$ is not a rational multiple of $\pi$.
	Indeed, otherwise, $w$ is a root of unit, and therefore is a root of some
	cyclotomic polynomial
	\[
		c_m(z)=\prod_{k\in\{1..m-1\mid \gcd(k,m)=1\}}(z-e^{\frac{2\pi k i}{m}})
	\]
	whose degree is Euler's totient function $\deg(c_m)=\phi(m)$.
	The cyclotomic polynomials are irreducible over $\bbQ$.
	So $p_n(z)$ and $c_m(z)$ share a root only if they are proportional, which in particular
	implies $\phi(m)=2$.
	The latter happens only for $m=3$, $m=4$ and $m=6$; corresponding to
	$c_3(z)=z^2+z+1$, $c_4(z)=z^2+1$, and $c_6(z)=z^2-z+1$.
	The only proportionality between these polynomials is $p_3(z)=2 c_2(z)$;
	and it is ruled out by the assumption $n>3$.

	Thus the image of $R_0$ in $\isom(\Euc^{n-1})$ is not discrete.
	Let
	\[
		\pi:\overline{R\cap P_0}\to \isom(\Euc^{n-1})\to {\rm O}(\bbR^{n-1})
	\]
	denote the homomorphism defined by taking the linear part.
	Then $\pi(r_{jk})$ is an irrational rotation in ${\rm O}(\bbR^{n-1})$ leaving invariant the linear subspace
	parallel to $L_{jk}$. The closure of the subgroup generated by
	this rotation is a subgroup $C_{jk}<{\rm O}(\bbR^{n-1})$, isomorphic to ${\rm SO}(2)$.
	The group $K<{\rm O}(\bbR^{n-1})$ generated by all such $C_{jk}$
	acts irreducibly on $\bbR^{n-1}$, because there is no subspace orthogonal to all $L_{jk}$.
	Since $\overline{R\cap P_0}$ is not compact (otherwise there would
	be a point in $\Euc^{n-1}$ fixed by all reflections in faces of $\Delta$),
	the epimorphism	$\pi:\overline{R\cap P_0}\to K$ has a non-trivial kernel $V<\bbR^{n-1}$,
	which is invariant under $K$.
	As the latter group acts irreducibly, $V=\bbR^{n-1}$ or, equivalently,
	$U_0<\overline{R\cap P_0}$.
	This completes the proof of the lemma.
\end{proof}


\medskip

\subsection{Boundary simplices in general position}



\begin{defn} \label{def:gen-pos}
For $0\leq k\leq n$, a $(k+1)$-tuple of points in $B$, $(z_0,\dots,z_k)\in B^{k+1}$ is said to be in \emph{general position} if the following equivalent conditions hold:
\begin{enumerate}
	\item The $k$-volume of the ideal $k$-simplex with vertices $\{z_0,\dots,z_k\}$ is positive,
	\item The points $\{z_0,\dots,z_k\}$ lie on the boundary of a unique isometrically embedded copy of
	$\Hsp^{k}$ in $\Hsp^n$,
	\item The points $\{z_0,\dots,z_k\}$ do not lie on the boundary of some isometrically embedded
	copy of $\Hsp^{k-1}$ in $\Hsp^n$.
\end{enumerate}
The set of $(k+1)$-tuples in a general position in $B^{k+1}$ is denoted $B^{(k+1)}$.
\end{defn}

We shall use the term $(k-1)$-\emph{sphere} to denote the boundary of an isometrically
embedded copy of $\Hsp^k$ in $\Hsp^n$; with $0$-spheres meaning pairs of distinct points.

\begin{lemma}\label{L:gen-pos}
	Consider the boundary map $\phi(x,\cdot)=\phi_x$ from~\eqref{eq:boundary map}. 
    For $\mu\times\nu^{n+1}$-a.e. point $(x,b_0,\dots,b_n)$, 
    the $(n+1)$-tuple $(\phi_x(b_0),\dots,\phi_x(b_n))$ is in general position. 
\end{lemma}

\begin{remark}
	In fact we prove a more general statement. The only important properties of
	our setting are the fact that $\alpha$ is Zariski dense 
	(in particular, is not measurably cohomologous to a cocycle
	taking values in a stabilizer of $\Hsp^k\subset \Hsp^n$ with $k<n$)
	and that the diagonal measure class-preserving action
	\[
		\Gamma\acts (X\times B\times B,\mu\times\nu\times\nu)
	\]
	is ergodic, which, in our setting, follows from the Howe-Moore theorem. 
\end{remark}
\begin{proof}[Proof of Lemma~\ref{L:gen-pos}]
	Denote by $\eta_x \in\Prob(B)$ the push-forward of $\nu$ under the map $\phi_x:B\to B$.
	For $k\in \{2,\dots,n\}$ and $x\in X$ let
	\[
		E_k =\left\{ x\in X \mid \eta_x^{k+1}(B^{k+1}\setminus B^{(k+1)})>0\right\}.
	\]
	This is a measurable subset of $X$, which is $\Gamma$-invariant since 
	$\eta_{\gamma.x}=\alpha(\gamma,x)_*\eta_x$ while $B^{(k+1)}$ is a Borel, 
	in fact open, $G$-invariant subset of $B^{k+1}$.
	Ergodicity of $\Gamma\acts (X,\mu)$ implies that $\mu(E_k)=0$ or $\mu(E_k)=1$.
	The sets $E_k$ are also nested: $E_{k-1}\subset E_{k}$, because any subset of a
	$(k+1)$-tuple in general position, is itself in general position.
	
	We claim that $\mu(E_n)=0$. By contradiction, let $k$ be the smallest integer in $\{2,\dots,n\}$
	with $\mu(E_k)>0$. Then, in fact, $\mu(E_k)=1$ by the ergodicity argument above.
	Since $\mu(E_{k-1})=0$ for $\mu$-a.e. $x\in X$ and $\nu^k$-a.e. $(b_1,\dots,b_k)\in B^k$
	the points $(\phi_x(b_1),\dots,\phi_x(b_k))$ are in general position, and therefore
	define a unique $(k-2)$-sphere
	\[
		S_x(b_1,\dots,b_k)\subset B.
	\]
	On the other hand, $\mu(E_k)=1$ means that for $\mu$-a.e. $x\in X$
	\[
		\nu^{k+1}\{ (b_0,\dots,b_k) \mid \phi_x(b_0)\in S_x(b_1,\dots,b_k)\}>0.
	\]
	By Fubini's theorem, there is a measurable family of measurable subsets
	$A_x\subset B^k$ with $\nu^k(A_x)>0$, so that for $(b_1,\dots,b_k)\in A_x$
	\[
		\eta_x(S_x(b_1,\dots,b_k))>0.
	\]
	Denote by $\calS$ the space of all $(k-2)$-spheres $S\subset B$, and let
	\[
		\calS_x=\{S\in \calS \mid \eta_x(S)>0\}.
	\]
	Using $\eta_{g.x}=\alpha(g,x)_*\eta_x$ we deduce that
	\[
		\calS_{\gamma.x}=\alpha(\gamma,x)\,\calS_{x}.
	\]
	Hence the set $\{ x\in X\times B \mid \calS_{x}\ne \emptyset\}$
	is measurable and $\Gamma$-invariant.
	We just argued above that this set has positive measure,
	hence by ergodicity of $\Gamma\acts (X,\mu)$, it has full measure.
	
	Our main claim is that $\calS_x$ consists of a single $(k-2)$-sphere:
	\begin{equation}\label{e:Sx}
		\calS_x=\{ S_x \}.
	\end{equation}
	This claim leads to a desired contradiction as follows:
	equivariance of $\calS_x$ becomes the $\mu$-a.e. identity
	$\alpha(\gamma,x) S_x=S_{\gamma.x}$.
	Fix a $(k-2)$-sphere $S_0$ and a measurable map $f:X\to G$ with
	$S_x=f(x)S_0$. Then the $f$-conjugate of $\alpha$
	\[
		\alpha^f(\gamma,x)=f(\gamma.x)^{-1}\alpha(\gamma,x)f(x)
	\]
	takes values in the stabilizer of $S_0$ in $G$, which is a proper algebraic
	subgroup $\isom(\Hsp^k)<\isom(\Hsp^n)=G$. But this is impossible for an ME-cocycle.
	
	\medskip
	
	It remains to show (\ref{e:Sx}).
	Consider any two measurable families $S_x, S'_x \in \calS_x$ indexed by $x\in X$,
	and let
	\[	
		F=\{ x\in X \mid S_x\ne S'_x\quad \textrm{and}\quad \eta_x(S_x\cap S'_x)>0\}.
	\]
	We claim that $\mu(F)=0$.
	Indeed, for $x\in F$ the intersection $R_x=S_x\cap S'_x$ is a sphere of
	dimension $\le (k-3)$, and therefore $k$-tuples of points in $R_x$
	are not in general position. This implies
	\[
		\eta^{k}_x\bigl( B^k\setminus B^{(k)}\bigr)\ge\eta^{k}_x \bigl(R_x^k\bigr)=\bigl(\eta_x(R_x)\bigr)^k>0
	\]
	meaning that $x\in E_{k-1}$. As $\mu(E_{k-1})=0$, it follows that $\mu(F)=0$.

	We now claim that a.e. $\calS_x$ has at most countably many elements (spheres).
	It suffices to show that for every $\epsilon>0$ for
	$\mu$-a.e. $x$ the set
	\[
		\calS_{x}^{>\epsilon}=\{ S\in \calS_x \mid \eta_x(S)>\epsilon\}.
	\]
	is finite. We will show that its cardinality is bounded by $1/\epsilon$.
	Otherwise it is possible to find a positive measure set $Y\subset X$
	and $m>1/\epsilon$ maps $S_{i,y}\in \calS^{>\epsilon}_y$, $y\in Y$,
	$1\le i \le m$, so that for $i\ne j$ one has $S_{i,y}\ne S_{j,y}$.
	But this is impossible, because for a.e. $y\in Y$ one has
	$\eta_y(S_{i,y}\cap S_{j,y})=0$ for every pair $i\ne j$, and therefore
	\[
		1\ge \eta_y(\bigcup_{i=1}^m S_{i,y})= \sum_{i=1}^m \eta_y(S_{i,y}) >m\epsilon>1.
	\]
	Therefore, a.e. $\calS_x$ is countable, and one can enumerate these collections
	by a fixed sequence $\calS_x=\{S_{i,x}\}_{i=1}^\infty$ of $(k-2)$-spheres
	with $S_{i,x}$ varying measurably in $x\in X$.
	For $x\in X$ let
	\[	P_{i,x}=\{ (b,b')\in B \times B \mid \phi_x(b), \phi_x(b')\in S_{i,x} \}. 
	\]
	We have $\nu^2(P_{i,x})=\eta_x(S_{i,x})^2>0$.
	The union
	\[
		P_x=\bigcup_{i=1}^\infty P_{i,x}=\bigl\{ (b,b') \mid \exists_{S\in \calS_x}~ \phi_x(b),\phi_x(b')\in S\bigr\}
	\]
	satisfies $	\alpha(\gamma,x) P_x=P_{\gamma.x}$.
	Therefore $\{ (x,b,b') \mid (b,b')\in P_x\}$ is a measurable,
	$\Gamma$-invariant set of positive $\mu\times\nu\times\nu$-measure.
	Hence from the ergodicity of the measure-class preserving action
	$\Gamma\acts (X\times B\times B,\mu\times\nu\times\nu)$, this set has full measure.
	In particular, for $\mu$-a.e. $x\in X$, one has
	\[
		\sum_{i} \eta_x(S_{i,x})^2=\sum_{i} \nu^2(P_{i,x})=\nu^2(P_x)=1
	\]
	while
	\[
		\sum_{i} \eta_x(S_{i,x})=\eta_x(\bigcup_{S\in \calS_x} S)\le 1.
	\]
	This is possible, only if exactly one $S_{i,x}$ has full $\eta_x$-measure,
	i.e., if $\calS_x$ consists of a single sphere $\calS_x=\{S_x\}$, as claimed.
	This completes the proof of the lemma.
\end{proof}

\subsection{A Lebesgue differentiation lemma}

\begin{lemma}\label{lem:Lebesgue_differentiation}
  Fix points $o\in \Hsp^n$ and $b_0\in \partial\Hsp^n$.
  Denote by $d=d_o$ the	visual metric on $\partial\Hsp^n$
  associated with $o$.
  Let $\{z^{(k)}\}_{k=1}^\infty$ be a sequence in $\Hsp^n$ converging
	radially to $b_0$.
    Let $\phi\colon B\to B$ be a measurable map.
  For every $\epsilon>0$ and for a.e.~$g\in G$ we have
  \[
  \lim_{k\to\infty} \nu_{z^{(k)}}\left\{b\in B \mid
    d(\phi(gb),\phi(gb_0))>\epsilon\right\} = 0.
  \]
\end{lemma}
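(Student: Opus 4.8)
The statement is essentially a Lebesgue differentiation fact dressed up in hyperbolic geometry, so the plan is to reduce it to a genuine differentiation theorem. First I would recall that $\phi\colon B\to B$ is measurable, hence its graph-type defect is controlled on a conull set; concretely, fix $\epsilon>0$ and set $A_\epsilon(g)=\{b\in B\mid d(\phi(gb),\phi(gb_0))>\epsilon\}$, and observe it suffices to show $\nu_{z^{(k)}}(A_\epsilon(g))\to 0$ for a.e.\ $g$. The point $z^{(k)}$ converges radially to $b_0$, and the key geometric input is that the visual measures $\nu_{z^{(k)}}$ concentrate near $b_0$: for any neighbourhood $U$ of $b_0$ in $B$ one has $\nu_{z^{(k)}}(B\setminus U)\to 0$, and moreover on $U$ the measures $\nu_{z^{(k)}}$, suitably rescaled, converge to a multiple of the harmonic/Lebesgue measure $\nu_o$ restricted near $b_0$ — more precisely, the Radon--Nikodym derivative $d\nu_{z^{(k)}}/d\nu_o$ is, up to bounded multiplicative constants, the indicator-like profile concentrated on shrinking shadows/balls around $b_0$ with respect to the visual metric $d=d_o$. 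So the left side is comparable to an average of $\mathbf 1_{A_\epsilon(g)}$ over shrinking balls $B(b_0,r_k)$ with $r_k\to 0$.

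Next I would invoke the Lebesgue density theorem for the doubling metric measure space $(B,d_o,\nu_o)$: for $\nu_o$-a.e.\ $b_0$, every $\nu_o$-measurable set has density $0$ or $1$ at $b_0$, and for an $\mathrm{L}^1$ (here bounded) function averages over shrinking balls converge to the value at $b_0$. Apply this to the function $b\mapsto d(\phi(gb),\phi(gb_0))$ — but there is a subtlety: the "base point" $gb_0$ depends on $g$, and differentiation must be performed in the $b$-variable while $g$ ranges. The clean way is to first move the $g$: by the transformation rule $\nu_{z^{(k)}}=(g^{-1})_*\nu_{gz^{(k)}}$ from the Remark after Definition~\ref{def:visual measure}, one rewrites the integral as a visual-measure average on $B$ centred near $gb_0$ with respect to the sequence $gz^{(k)}\to gb_0$ radially, of the function $\mathbf 1_{\{d(\phi(\cdot),\phi(gb_0))>\epsilon\}}$. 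Then for each fixed $g$ in a conull set this is exactly a statement about the Lebesgue density of the set where $\phi$ is far from its value at $gb_0$, at the point $gb_0$. Since $\phi$ is measurable, by Lusin it agrees with a continuous function off a set of small $\nu_o$-measure; at density points of the complement of that bad set (which are $\nu_o$-conull, and remain conull after applying the quasi-invariant map $g$) the averages tend to $0$. A Fubini argument over $g\in G$ (using quasi-invariance of $\nu_o$ under $G$ and of Haar measure) upgrades "a.e.\ $g$ for each fixed continuous approximant" to "a.e.\ $g$" for $\phi$ itself, and letting the Lusin exceptional set shrink along a sequence finishes the claim for the fixed $\epsilon$; since $\epsilon$ ranges over a countable set $1/m$ we are done.

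The main obstacle, and the part requiring genuine care, is the interchange of the two limiting processes: the geometric concentration $\nu_{z^{(k)}}\to\delta_{b_0}$ in the radial limit, versus the measure-theoretic differentiation in the $b$-variable. One must check that the visual measures $\nu_{z^{(k)}}$ are, along the radial ray to $b_0$, comparable to normalized restrictions of $\nu_o$ to metric balls $B(b_0,r_k)$ with $r_k\to 0$ (this is standard for $\Hsp^n$: the shadow lemma, or an explicit computation of the Poisson kernel, gives two-sided bounds), so that the differentiation theorem genuinely applies. The doubling property of $(B,\nu_o)$ (the round sphere) makes the Vitali/Lebesgue machinery available, and the quasi-invariance of $\nu_o$ under $G$ with bounded (on compacta) Radon--Nikodym cocycle is what lets the a.e.-in-$g$ conclusion survive the Fubini step. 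Everything else — handling the countably many $\epsilon=1/m$, the Lusin approximation, moving base points by $g$ — is routine once this comparison is in place.
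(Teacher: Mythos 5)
Your overall strategy is the same as the paper's: reduce the claim to a differentiation statement at the point $gb_0$, valid for a.e.\ $g$ because the orbit map $g\mapsto gb_0$ pushes Haar measure into the Lebesgue class on $B$, and then use the concentration of the visual measures $\nu_{z^{(k)}}$ along the radial ray. Your device of moving $g$ into the measure via $g_*\nu_{z^{(k)}}=\nu_{gz^{(k)}}$ is a legitimate (and arguably cleaner) substitute for the paper's handling of $g$, which instead keeps the averages at $b_0$ and transports the Lebesgue-point property from $gb_0$ back to $b_0$ using local Lipschitz and Jacobian bounds for the M\"obius map $g$. The Lusin/approximate-continuity part and the reduction to countably many $\epsilon$ are routine and fine.

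The genuine gap is in the step you yourself single out as the crux: the claimed two-sided comparison between $\nu_{z^{(k)}}$ and the normalized restriction of $\nu_o$ to a single shrinking ball $B(b_0,r_k)$. No such upper bound exists: the density $d\nu_{z^{(k)}}/d\nu_o$ is a power of the Poisson kernel, which decays only polynomially away from $b_0$, so $\nu_{z^{(k)}}\bigl(B\setminus B(b_0,Kr_k)\bigr)\asymp K^{-(n-1)}$ uniformly in $k$; a measure dominated by $C\,\nu_o|_{B(b_0,r_k)}/\nu_o(B(b_0,r_k))$ would give that complement measure zero. The shadow lemma gives estimates for $\nu_o$ of shadows (and the matching lower bound for $\nu_{z^{(k)}}$ on $B(b_0,r_k)$), not the domination you need, so as written the passage from ``density zero at $gb_0$'' to ``$\nu_{gz^{(k)}}(A_\epsilon(g))\to 0$'' does not follow. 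The repair is exactly what the paper does: exploit the rotational symmetry of $\nu_{z^{(k)}}$ about the radial geodesic to write, in suitable coordinates with $b_0=0$,
\[
\int l\,d\nu_{z^{(k)}}=\int_0^\infty\Bigl(\frac{1}{\vol(B(0,r))}\int_{B(0,r)}l(y)\,dy\Bigr)h_k(r)\,dr ,\qquad \int_0^\infty h_k(r)\,dr=1,
\]
so that $\nu_{z^{(k)}}$ is a superposition of normalized ball averages over \emph{all} radii, with weights satisfying $\int_{r_0}^\infty h_k(r)\,dr\to 0$ by weak convergence to $\delta_{b_0}$; alternatively, a dyadic-annuli bound of the form $\nu_{z^{(k)}}(A)\lesssim\sum_j 2^{-j(n-1)}\,\nu_o\bigl(A\cap B(b_0,2^jr_k)\bigr)/\nu_o\bigl(B(b_0,2^jr_k)\bigr)+\nu_{z^{(k)}}\bigl(B\setminus B(b_0,r_0)\bigr)$ does the same job. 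With either multi-scale version in place of your single-scale comparison, your argument goes through and coincides in substance with the paper's proof.
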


\begin{proof}
  For the domain of $\phi$, it is convenient to represent
  $\partial\Hsp^n$ as the boundary $\hat\bbR^n=\{(x_1,\dots,
  x_n,0)\mid x_i\in\bbR\}\cup\{\infty\}$ of the upper half space model
  \[
	\Hsp^n=\{(x_1,\dots,x_{n+1})\mid x_{n+1}>0\}\subset\bbR^{n+1}.
  \]
  We may assume that $o=(0,\dots,0,1)$ and
  $b_0=0\in\bbR^{n}\subset\hat\bbR^n$. The points $z^{(k)}$ lie on
  the line $l$ between $o$ and $b_0$. The subgroup of $G$ consisting
  of reflections along hyperplanes containing $l$ and perpendicular to
  $\{x_{n+1}=0\}$ leaves the measures $\nu_{z^{(k)}}$ invariant, i.e.~each
  $\nu_{z^{(k)}}$ is ${\rm O}(n)$-invariant. Since the probability measure
  $\nu_{z^{(k)}}$ is in the Lebesgue measure class, the
  Radon-Nikodym theorem, combined with the ${\rm O}(n)$-invariance, yields
  the existence of a measurable functions
  $h_k:[0,\infty)\to[0,\infty)$ such that for any bounded measurable function $l$
  \[
  \int l\,d\nu_{z^{(k)}}=\int_0^\infty
  \left(\frac{1}{\vol(B(0,r))}\int_{B(0,r)}l(y)\,dy\right) h_k(r)\,dr
  \]
  holds\footnote{$\vol(B(0,r))$ is here the Lebesgue measure of the Euclidean ball of radius $r$ around $0\in\bbR^n$.} and
  \[
  \int_0^\infty h_k(r)\,dr=1.
  \]
  Since the $\nu_{z^{(k)}}$ weakly converge to the Dirac measure at
  $0\in\bbR^n$, we have for every $r_0>0$
  \begin{equation}\label{eq:convergence}
    \lim_{k\to\infty}\int_{r_0}^\infty h_k(r)\,dr=0.
  \end{equation}
  For the target of $\phi$, we represent $B=\partial \Hsp^n$ as the
  boundary $S^{n-1}\subset\bbR^n$ of the Poincare disk model. The
  visual metric is then just the standard metric of the unit
  sphere. Considering coordinates in the target, it
  suffices
  to prove that every measurable function $f:\hat\bbR^n\to [-1,1]$
  satisfies
  \begin{equation*}
    \lim_{k\to\infty} \int_{\hat\bbR^n}\abs{f(gx)-f(g0)}d\nu_{z^{(k)}}(x) = 0.
  \end{equation*}
  for a.e.~$g\in G$.
  By the Lebesgue differentiation theorem the set $L_f$ of points
  $x\in \bbR^n$ with the property
  \begin{equation}\label{eq:lebesgue diff}
    \lim_{r\to 0}\frac{1}{\vol(B(0,r))} \int_{B(x,r)} |f(y)-f(x)|\,dy =0
  \end{equation}
  is conull in $\bbR^n$. The subset of elements $g\in G$ such that $g0\in L_f$ and
  $g0\ne \infty$ is conull
  with respect to the Haar measure. From now on we fix such an element $g\in G$.
  By compactness there is $L>0$ such that the diffeomorphism of $\hat\bbR^n$
  given by $g$ has Lipschitz constant at most $L$ and its Jacobian satisfies
  $\abs{{\rm Jac}(g)}>1/L$ everywhere on $\bbR^n\subset\hat\bbR^n$.
  Let $\epsilon>0$.
  According to~\eqref{eq:lebesgue diff} choose $r_0>0$ such that for all $r<r_0$
  \begin{equation}\label{eq: L estimate}
  	\frac{L}{\vol(B(0,r))}\int_{B(g0,Lr)}\abs{f(y)-f(g0)}dy<\frac{\epsilon}{2}.
  \end{equation}
  According to~\eqref{eq:convergence} choose $k_0\in\bbN$ such that
  \[
  		\int_{r_0}^\infty h_k(r)\,dr<\frac{\epsilon}{4}
  \]
  for every $k> k_0$. So we obtain that
  \begin{align*}
  	 \int_{\hat\bbR^n}\abs{f(gx)-f(g0)}d\nu_{z^{(k)}}&<
		\int_0^{r_0}\frac{1}{\vol(B(0,r))}\int_{B(0,r)}\abs{f(gx)-f(g0)}dx\; h_k(r)dr+\frac{\epsilon}{2}\\
		&\le \int_0^{r_0}\frac{L}{\vol(B(0,r))}\int_{gB(0,r)}\abs{f(y)-f(g0)}dy\; h_k(r)dr+\frac{\epsilon}{2}
  \end{align*}
	for $k>k_0$. Because of $gB(0,r)\subset B(g0,Lr)$ and~\eqref{eq: L estimate}
	we obtain that for $k>k_0$
	\[
		\int_{\hat\bbR^n}\abs{f(gx)-f(g0)}d\nu_{z^{(k)}}<\epsilon.
		\qedhere
	\]
\end{proof}



\section{Cohomological tools} 
\label{sec:mostow_rigidity_for_maximal_cocycles}
	
The aim of this section is to prove that the boundary map 
\mbox{$\phi_x=\phi(x,\cdot): B\to B$}, which is associated 
to a $(\Gamma,\Gamma)$-coupling with $\Gamma<\isom(\Hsp^n)$ and 
introduced in Setup~\ref{basic setup}, satisfies the 
assumption of Lemma~\ref{lem:key lemma in mostow rigidity}. This will be achieved 
in Corollary~\ref{cor:euler-boundary}. The conclusion of Lemma~\ref{lem:key lemma in mostow rigidity} is a crucial ingredient in the proof of 
Theorem~\ref{T:SOn1-1-taut}. To prove Corollary~\ref{cor:euler-boundary} we 
have to develop and rely on a fair amount of cohomological machinery. 
For the reader's convenience a brief introduction to the subject of bounded 
cohomology is given in Appendix~\ref{sec:cohomological tools}.

\subsection{The cohomological induction map} 
\label{sub:the_cohomological_induction_map_for_integrable_me_couplings}

The \emph{cohomological induction map} associated to an arbitrary 
ME-coupling
was introduced by Monod and Shalom~\cite{MonodShalom}.

\begin{proposition}[Monod-Shalom]\label{prop:induction from Monod-Shalom}
	Let $(\Omega,m)$ be a $(\Gamma,\Lambda)$-coupling. Let $Y\subset \Omega$ be a measurable
	fundamental domain for the $\Gamma$-action.
	Let
	$\chi\colon\Omega\to\Gamma$ be the measurable $\Gamma$-equivariant map uniquely
	defined by
	$\chi(\omega)^{-1}\omega\in Y$ for $\omega\in\Omega$.
	The maps
	\begin{gather*}
		\rmCb^\bullet(\chi)\colon
		\rmCb^\bullet(\Gamma,\rmL^\infty(\Omega))
		\to\rmCb^\bullet(\Lambda,\rmL^\infty(\Omega))\\
		\rmCb^k(\chi)(f)(\lambda_0,\dots,\lambda_k)(y)
		=	f\bigl(\chi(\lambda_0^{-1}y)),\dots,\chi(\lambda_k^{-1}y)\bigr)(y)
	\end{gather*}
	defines
	a $\Gamma\times\Lambda$-equivariant chain morphism with regard to the following
	actions: The $\Gamma\times\Lambda$-action on
	$\rmCb^\bullet(\Gamma,\rmL^\infty(\Omega))\cong \rmL^\infty(\Gamma^{\bullet+1}\times\Omega)$
	is induced
	by $\Gamma$ acting diagonally on $\Gamma^{\bullet+1}\times\Omega$ and by $\Lambda$
	acting only on $\Omega$. The $\Gamma\times\Lambda$-action on
	$\rmCb^\bullet(\Lambda,\rmL^\infty(\Omega))\cong\rmL^\infty(\Lambda^{\bullet+1}\times\Omega)$ is induced by $\Lambda$ acting diagonally on $\Lambda^{\bullet+1}\times\Omega$
	and by $\Gamma$ acting only on $\Omega$.
	
	The chain map $\rmCb^\bullet(\chi)$ induces, after taking $\Gamma\times\Lambda$-invariants and identifying $\rmL^\infty(\Gamma\bs\Omega)$ with
	$\rmL^\infty(\Omega)^\Gamma$ and similarly for $\Lambda$,
	an isometric isomorphism
	\[
		\rmHb^\bullet(\chi) \colon \rmHb^\bullet(\Gamma,\rmL^\infty(\Lambda\bs\Omega))
		\xrightarrow{\cong}\rmHb^\bullet(\Lambda,\rmL^\infty(\Gamma\bs\Omega)).
	\]
	in cohomology. This map
	does not depend on the choice of $Y$, or equivalently $\chi$, and will be
	denoted by $\rmHb^\bullet(\Omega)$. We call $\rmHb^\bullet(\Omega)$
	the \emph{cohomological induction map associated to $\Omega$}.
\end{proposition}

\begin{proof}
	Apart from the fact that the isomorphism is isometric, this is exactly
	Proposition~4.6 in~\cite{MonodShalom} (with $S=\Omega$ and $E=\bbR$). The proof therein
	relies on~\cite{monod-book}*{Theorem~7.5.3 in~\S 7}, which also yields the
	isometry statement.
\end{proof}

\begin{proposition}\label{prop:induction reciprocity}
	Retain the setting of the previous proposition. Let $\alpha\colon\Lambda\times Y\to\Gamma$ be the corresponding ME-cocycle. Let $B_\Gamma$ and $B_\Lambda$ be
	standard Borel
	spaces endowed with probability Borel measures and measure-class preserving
	Borel actions of $\Gamma$ and $\Lambda$, respectively. 
	Let $\phi\colon B_\Lambda\times \Gamma\bs\Omega\to B_\Gamma$
	be a measurable $\alpha$-equivariant map
	(upon identifying $Y$ with $\Gamma\bs\Omega$).
	Then the chain morphism (see Subsection~\ref{sub:banach_modules} for notation) 
	\begin{gather*}
		\rmCb^\bullet(\phi)\colon\calB^\infty(B_\Gamma^{\bullet+1},\bbR)\to\rmLweak^\infty(B_\Lambda^{\bullet+1}, \rmL^\infty(\Omega))\\
		\rmCb^k(\phi)(f)(\dots,b_i,\dots)(\omega)=f\bigl(\dots,\chi(\omega)\phi(b_i,[\omega]),\dots\bigr).
	\end{gather*}
	is $\Gamma\times\Lambda$-equivariant with regard to the following actions: The
	action on $\calB^\infty(B_\Gamma^{\bullet+1},\bbR)$ is induced from $\Gamma$ acting
	diagonally $B^{\bullet+1}$ and $\Lambda$ acting trivially. The
	action on $\rmLweak^\infty(B_\Lambda^{\bullet+1}, \rmL^\infty(\Omega))\cong \rmL^\infty(B_\Lambda^{\bullet+1}\times\Omega)$ is induced from $\Lambda$ acting
	diagonally on $B_\Lambda^{\bullet+1}\times\Omega$ and from $\Gamma$ acting only on
	$\Omega$.
		%
\end{proposition}

\begin{proof}
	Firstly, we show equivariance of $\rmCb^\bullet(\phi)$. By definition we
	have
	\begin{equation*}
		\rmCb^\bullet(\phi)((\gamma,\lambda)f)(\dots,b_i,\dots)(\omega) =
		f\bigl(\dots,\gamma^{-1}\chi(\omega)\phi(b_i,[\omega]),\dots\bigr).
	\end{equation*}
	By definition, $\Gamma$-equivariance of $\chi$, and $\alpha$-equivariance of $\phi$
	we have
	\begin{equation*}
		\rmCb^\bullet(\phi)(f)\bigl(\dots,\lambda^{-1}b_i,\dots)(\gamma^{-1}\lambda^{-1}\omega\bigr)=
		f\bigl(\dots,\gamma^{-1}\chi(\lambda^{-1}\omega)\alpha(\lambda^{-1},[\omega])\phi(b_i,[\omega]),\dots\bigr).
	\end{equation*}
	It remains to check that
	\[
		\chi(\lambda^{-1}\omega)\alpha(\lambda^{-1}, [\omega])=\chi(\omega).
	\]
	Since both sides are $\Gamma$-equivariant, we may assume that $\omega\in Y$, i.e.,
	$\chi(\omega)=1$. In this
	case it follows from the defining properties of $\chi$ and $\alpha$.
	%
\end{proof}

\begin{remark} The map $\rmCb^\bullet(\phi)$ cannot be defined on
	$\rmL^\infty(B_\Gamma^{\bullet+1},\bbR)$ since we do not assume that $\phi$ preserves the
	measure class. The idea to work with the complex $\calB^\infty(B_\Gamma^{\bullet+1},\bbR)$
	to circumvent this problem in the context of boundary maps
	is due to Burger and Iozzi~\cite{burger+iozzi}.
\end{remark}

\medskip

\subsection{The Euler number in terms of boundary maps} 
\label{sub:the_cohomological_induction_map_and_boundary_maps}
In this subsection we retain the notation in Setup~\ref{basic setup}. 
In Burger-Monod's functorial theory of bounded 
cohomology~\cites{burger+monod,monod-book} 
the measurable map
\begin{equation}\label{eq: volume cocycle}
	\dvol_b\colon B^{n+1}\rightarrow\bbR
\end{equation}
that assigns to $(b_0,\dots,b_n)$
the oriented volume of the geodesic, ideal simplex with vertices $b_0,\dots,b_n$ is a
$\Gamma$-invariant (even $G^0$-invariant) cocycle and defines
an element $\dvol_b\in\rmHb^n(\Gamma,\bbR)$
(Theorem~\ref{thm:boundary resolutions by Burger-Monod}).
The forgetful map (comparison map) from bounded
cohomology to ordinary cohomology is denoted by
\[\comp^\bullet\colon\rmHb^\bullet(\Gamma,\bbR)\to\rmH^\bullet(\Gamma,\bbR).\]
We consider the induction homomorphism
\[
	\rmHb^\bullet(\Omega)\colon \rmHb^\bullet(\Gamma_l,\rmL^\infty(\Gamma_r\bs\Omega))\to
	                         \rmHb^\bullet(\Gamma_r,\rmL^\infty(\Gamma_l\bs\Omega))
\]
in bounded cohomology associated to $\Omega$ (see Subsection~\ref{sub:the_cohomological_induction_map_for_integrable_me_couplings}).
Let
\begin{align*}
\rmHb^\bullet(j^\bullet)&\colon \rmHb^\bullet(\Gamma_l,\bbR)\to\rmHb^\bullet(\Gamma_l,\rmL^\infty(\Gamma_r\bs\Omega))\\
\rmHb^\bullet(\rmI^\bullet)&\colon \rmHb^\bullet(\Gamma_r,\rmL^\infty(\Gamma_l\bs\Omega))\to \rmHb^\bullet(\Gamma_r,\bbR)
\end{align*}
be the homomorphisms induced by inclusion of constant functions in the coefficients and by
integration in the coefficients, respectively.
Inspired by the classical Euler number of a surface representation we define:
\begin{definition}[Higher-dimensional Euler number]\label{eq:Euler number}
    Denote by $[\Gamma]\in \rmH_n(\Gamma,\bbR)\cong \rmH_n(\Gamma\bs\Hsp^n,\bbR)$ the homological fundamental class
    of the manifold $\Gamma\bs\Hsp^n$.
	The \emph{Euler number} $\euler(\Omega)$ of $\Omega$
	is the evaluation of the cohomology class
	$\comp^n\circ\rmHb^n(\rmI^\bullet)\circ\rmHb^n(\Omega)\circ\rmHb^n(j^\bullet)(\dvol_b)$ against the fundamental class $[\Gamma]$
\begin{equation} \label{eq:euler}
		\euler(\Omega)=\bigl\langle 	\comp^n\circ\,\rmHb^n(\rmI^\bullet)\circ\rmHb^n(\Omega)\circ\rmHb^n(j^\bullet)(\dvol_b), [\Gamma]\bigr\rangle.
\end{equation}
\end{definition}
In a recent paper~\cite{burger+bucher+iozzi} Bucher-Burger-Iozzi use a related notion to study maximal (in 
a similar sense as in Corollary~\ref{cor:maximality}) 
representations of $\rmSO_{n,1}$. 
In the Burger-Monod approach to bounded cohomology one can realize bounded cocycles
in the bounded cohomology of $\Gamma$
as cocycles on the boundary $B$. However, it is not immediately clear how the
evaluation of a bounded $n$-cocycle realized on $B$ at the fundamental class
of $\Gamma\bs\Hsp^n$ can be explicitly computed since the fundamental class is not
defined in terms of the boundary.
Lemma~\ref{lem:image of fund class under Patterson-Sullivan map}
below achieves just that. Let us now describe two important ingredients that enter 
the proof of Lemma~\ref{lem:image of fund class under Patterson-Sullivan map}. 

The first ingredient is the \emph{cohomological Poisson transform} which is 
expressed by the visual measures on $B=\partial\Hsp^n$. 

\begin{definition}\label{def:visual measure}
	For $z\in\Hsp^n$ let $\nu_z$ be the \emph{visual measure} at $z$ on the boundary
	$B=\partial\Hsp^n$ at infinity, that is, $\nu_z$ is the push-forward of the
	Lebesgue measure on the unit tangent sphere $\mathrm{T}^1_z\Hsp^n$
	under the homeomorphism
	$\mathrm{T}^1_z\Hsp^n\to\partial\Hsp^n$ given by the exponential map.
    For a $(k+1)$-tuple $\sigma=(z_0,\ldots, z_k)$ of points in $\Hsp^n$ we denote
    the product of the $\nu_{z_i}$ on $B^{k+1}$ by $\nu_\sigma$.
\end{definition}

	The measure $\nu_z$ is the unique Borel probability measure on $B$
	that is invariant with
	respect to the stabilizer of $z$. All visual measures are in the same measure class.
	Moreover, we have 
	\[\nu_{g z}=g_\ast\nu_z=\nu_z(g^{-1}\_)\text{ for every $g\in G$.}\]
	The cohomological Poisson transform (see Definition~\ref{def:poisson transform} for its general formulation) 
	is the $\Gamma$-morphism of chain complexes 
	$\rmPT^\bullet: \rmL^\infty(B^{\bullet+1},\bbR)\to\rmCb^\bullet(\Gamma, \bbR)$ 
	with 
	\begin{align}\label{eq: poisson concrete}
		\rmPT^n(f)(\gamma_0,\ldots,\gamma_n)&=\int_{B^{n+1}}f(\gamma_0 b_0,\ldots, \gamma_n b_n)d\nu_{x_0}\ldots d\nu_{x_0}\notag\\
		&=\int_{B^{n+1}}f(b_0,\ldots, b_n)d\nu_{(\gamma_0x_0,\ldots,\gamma_nx_0)}
	\end{align}
	where $x_0\in\Hsp^n$ is a base point. The map $\rmPT^\bullet$ is independent of 
	the choice of $x_0$ (see the remark after Definition~\ref{def:poisson transform}). 
	
	The second ingredient is 
	Thurston's description of singular homology
	by \emph{measure cycles}~\cite{thurston}:
	Let $M$ be a topological space.
	We equip the space $\calS_k(M)=\map(\Delta^k,M)$ of continuous maps from the
	standard $k$-simplex to $M$ with the compact-open topology.
	The group $\rmCm_k(M)$ is the vector space of all signed, compactly supported
	Borel measures on $\calS_k(M)$ with finite total variation.
	The usual face maps
	$\partial_i: \calS_k(M)\to\calS_{k-1}(M)$
	are measurable, and the maps
	$\rmCm_k(M)\rightarrow\rmCm_{k-1}(M)$ that send $\mu$ to $\sum_{i=0}^k
	(-1)^i(\partial_i)_{\ast}\mu$
	turn $\rmCm_\bullet(M)$ into a chain complex. 
	The map
	\begin{equation*}
	  \rmD_\bullet\colon \rmC_\bullet(M)\rightarrow\rmCm_\bullet(M),~\sigma\mapsto\delta_\sigma
	\end{equation*}
	that maps a singular simplex $\sigma$ to the point measure
	concentrated at $\sigma$ is a chain map that induces an (isometric)
	homology isomorphism provided $M$ is homeomorphic to a
	CW-complex~\cites{loeh,zastrow}. 
   
    Next we recall Thurston's 
	\emph{smearing construction}, which describes an explicit representative of the 
	fundamental class of a closed hyperbolic 
	manifold $M=\Gamma\bs\Hsp^n$.
	
	 For any positively oriented geodesic $n$-simplex $\sigma$ in $\Hsp^n$, 
	 let $\smear(\sigma)$ denote
	 the push-forward of the normalized Haar measure on $G^0=\isom(\Hsp^n)^0$ 
	 under the measurable map
	  \[
	   \Gamma\bs G^0\rightarrow \map(\Delta^n, \Gamma\bs\Hsp^n),~g\mapsto
	  \pr(g\sigma).
	  \]
	  Let $\rho\in G$ be the orientation reversing isometry that maps $(z_0,z_1,\dots,z_n)$
	  to $(z_1,z_0,\dots,z_n)$.
	  By~\cite{ratcliffe}*{Theorem~11.5.4 on p.~551} the image of 
	the fundamental class in $\rmH_n(\Gamma\bs\Hsp^n,\bbR)$ under the map 
	$\rmH_n(\rmD_\bullet)$ 
	is represented by the signed 
	  measure\footnote{The reader should note that in \emph{loc.~cit.} the Haar measure is
	  normalized by $\vol(\Gamma\bs\Hsp^n)$ whereas we normalize it by
	  $1$.}
	  \begin{equation}\label{eq: thurston representative}
	 \frac{\vol(\Gamma\bs\Hsp^n)}{2\vol(\sigma_0)}\bigl(\smear(\sigma_0)-
	  \smear(\rho\circ\sigma_0)\bigr)
	  \end{equation}
	  for any positively oriented geodesic $n$-simplex $\sigma_0$ in $\Hsp^n$.
	
\begin{lemma}\label{lem:image of fund class under Patterson-Sullivan map}
  Let $\Gamma\subset G^0$ be a torsion-free and uniform lattice.
  Let $\sigma_0=(z_0,\dots,z_n)$ be a positively oriented geodesic
  simplex in $\Hsp^n$. Let $[\Gamma]\in \rmH_n(\Gamma,\bbR)\cong\rmH_n(\Gamma\bs\Hsp^n,\bbR)$ be the fundamental class 
of $\Gamma\bs\Hsp^n$. Let
  $f\in \rmL^\infty(B^{n+1},\bbR)^\Gamma$ be an alternating cocycle. Then
  \begin{equation*}
    \bigl\langle \comp^n\circ\,\rmHb^n(\rmPT^\bullet)([f]), [\Gamma]\bigr\rangle =
    \frac{\vol(\Gamma\bs\Hsp^n)}{\vol(\sigma_0)}\int_{B^{n+1}}\int_{\Gamma\bs G^0}
	f(gb_0,\dots,gb_n)\,d\nu_{\sigma_0} dg.
\end{equation*}
\end{lemma}

\begin{proof}
   Fix a basepoint $x_0\in\Hsp^n$.  Consider the $\Gamma$-equivariant
  chain homomorphism $j_k\colon\rmC_k(\Gamma)\rightarrow\rmC_k(\Hsp^n)$
  that maps $(\gamma_0,\dots,\gamma_k)$ to the geodesic simplex with
  vertices $(\gamma_0 x_0,\dots,\gamma_k x_0)$.
  Let
  $\calB^\infty\bigl(\calS_\bullet(\Hsp^n),\bbR\bigr)\subset \rmC^\bullet(\Hsp^n,\bbR)$
  be the subcomplex
  of bounded measurable singular cochains on $\Hsp^n$. 
  From~\eqref{eq: poisson concrete} we see that 
the
Poisson transform $\rmPT^\bullet$ factorizes as
  \[
 \rmL^\infty(B^{\bullet+1},\bbR)\xrightarrow{\rmP^\bullet}\calB^\infty
	\bigl(\calS_\bullet(\Hsp^n),\bbR\bigr)\xrightarrow{\rmR^\bullet}\rmCb^\bullet(\Gamma,\bbR)
  \]
  where 
  \begin{align*}
  	    \rmP^k(l)(\sigma)&=\int_{B^{k+1}}l(b_0,\dots,b_k)d\nu_{\sigma}\text{ for $\sigma\in\calS_k(\Hsp^n)$, and}\\
        \rmR^k(f)&=f\circ j_k.
  \end{align*}
  For every $k\ge 0$ there is a Borel section
  $s_k:\calS_k(\Gamma\bs\Hsp^n)\rightarrow\calS_k(\Hsp^n)$
  of the projection~\cite{loeh}*{Theorem~4.1}.
  The following pairing is independent of the choice of $s_k$ and
  descends to cohomology:
  \begin{gather*}
    \langle\_,\_\rangle_m\colon\calB^\infty\bigl(\calS_\bullet(\Hsp^n),\bbR\bigr)^\Gamma\otimes \rmCm_\bullet(\Gamma\bs\Hsp^n)\rightarrow\bbR\\
    \langle l,\mu\rangle_m=
    \int_{\calS_\bullet(\Gamma\bs\Hsp^n)}l\bigl(s_\bullet(\sigma)\bigr)d\mu(\sigma)
  \end{gather*}
  One sees directly from the definitions that for every $x\in\rmH_n(\Gamma,\bbR)$
  \begin{align}\label{eq:fundamental cycles and adjunction}
  \bigl\langle \comp^n\circ\, \rmH^n\bigl(\rmPT^\bullet)([f]),x\bigr\rangle &=
  \bigl\langle \comp^n\circ\,\rmH^n\bigl(\rmR^\bullet)\circ\rmH^n(\rmP^\bullet)(f), x\bigr\rangle\\
		&=\bigl\langle \rmH^n(\rmP^\bullet)([f]), \rmH_n(\rmD_\bullet\circ j_\bullet)(x)\bigr\rangle_m. \notag
  \end{align}
	Now we plug in $x=[\Gamma]$. Since the homology class 
	$\rmH_n(\rmD_\bullet\circ j_\bullet)([\Gamma])$ is represented by 
	the measure cycle~\eqref{eq: thurston representative} and 
	$f$ is alternating, the assertion is implied. 
\end{proof}

Theorem~\ref{thm:bounded volume cocycle to volume coycle} is known to experts; we 
prove it for the lack of a
good reference. Although it can be seen as a special case of
Theorem~\ref{thm:evaluation by fundamental class} we separate the proofs.
The proofs of
Theorems~\ref{thm:bounded volume cocycle to volume coycle}
and~\ref{thm:evaluation by fundamental class} are given at the end of the
subsection.

\begin{theorem}\label{thm:bounded volume cocycle to volume coycle}
Let $\Gamma\subset G^0$ be a torsion-free and uniform lattice. Then
\[
	\bigl\langle\comp^n(\dvol_b), [\Gamma]\bigr\rangle=\vol(\Gamma\bs\Hsp^n).
\]
Equivalently, this means that $\comp^n(\dvol_b)=\dvol$.
\end{theorem}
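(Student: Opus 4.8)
The plan is to apply Lemma~\ref{lem:image of fund class under Patterson-Sullivan map} to $f=\dvol_b$ and then identify the resulting geometric integral by letting the geodesic simplex degenerate to an ideal one. First I would check that the oriented volume cocycle $\dvol_b\colon B^{n+1}\to\bbR$ is bounded (the volume of any ideal simplex in $\Hsp^n$ is at most the volume $v_{\mathrm{max}}$ of the regular ideal simplex), measurable, alternating, and $G^0$-invariant, so that it is a legitimate input for the lemma and, by the way the class $\dvol_b\in\rmHb^n(\Gamma,\bbR)$ was introduced above via the Burger-Monod boundary resolution, $\langle\comp^n(\dvol_b),[\Gamma]\rangle=\langle\comp^n\circ\rmHb^n(\rmPT^\bullet)(\dvol_b),[\Gamma]\rangle$. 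The lemma then gives, for every positively oriented geodesic simplex $\sigma_0=(z_0,\dots,z_n)$ in $\Hsp^n$,
\[
\bigl\langle\comp^n(\dvol_b),[\Gamma]\bigr\rangle
=\frac{\vol(\Gamma\bs\Hsp^n)}{\vol(\sigma_0)}\int_{\Gamma\bs G^0}\int_{B^{n+1}}\dvol_b(gb_0,\dots,gb_n)\,d\nu_{\sigma_0}\,dg .
\]
Because $\dvol_b$ is $G^0$-invariant the inner integrand does not depend on $g$, and $\Gamma\bs G^0$ carries the normalized Haar measure of total mass $1$, so this collapses to
\[
\bigl\langle\comp^n(\dvol_b),[\Gamma]\bigr\rangle
=\frac{\vol(\Gamma\bs\Hsp^n)}{\vol(\sigma_0)}\int_{B^{n+1}}\dvol_b(b_0,\dots,b_n)\,d\nu_{\sigma_0} .
\]
Since the left-hand side does not depend on $\sigma_0$, the ratio $c:=\vol(\sigma_0)^{-1}\int_{B^{n+1}}\dvol_b\,d\nu_{\sigma_0}$ is a constant independent of $\sigma_0$, and the theorem reduces to showing $c=1$; the equivalent formulation $\comp^n(\dvol_b)=\dvol$ then follows because $\rmH^n(\Gamma,\bbR)\cong\rmH^n(\Gamma\bs\Hsp^n,\bbR)$ is one-dimensional, pairs non-degenerately with $[\Gamma]$, and $\langle\dvol,[\Gamma]\rangle=\vol(\Gamma\bs\Hsp^n)$.

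To evaluate $c$ I would let $\sigma_0$ degenerate to an ideal simplex. Fix pairwise distinct, general-position points $\xi_0,\dots,\xi_n\in B=\partial\Hsp^n$ spanning a non-degenerate ideal simplex, e.g.\ the vertices of a regular ideal simplex, so that $\dvol_b(\xi_0,\dots,\xi_n)=v_{\mathrm{max}}>0$. Choose geodesic rays from a fixed basepoint toward the $\xi_i$ and let $z_i(t)$ be the point at distance $t$ along the $i$-th ray; for $t$ large $\sigma_0(t)=(z_0(t),\dots,z_n(t))$ is a positively oriented geodesic simplex. As $t\to\infty$: the geodesic simplex $\sigma_0(t)$ converges to the ideal simplex with vertices $\xi_i$, so $\vol(\sigma_0(t))\to\dvol_b(\xi_0,\dots,\xi_n)$; and the visual measure $\nu_{z_i(t)}$ converges weak-$*$ to the Dirac mass $\delta_{\xi_i}$, whence $\nu_{\sigma_0(t)}=\prod_i\nu_{z_i(t)}$ converges weak-$*$ to $\delta_{(\xi_0,\dots,\xi_n)}$. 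Since $\dvol_b$ is bounded and continuous at the non-degenerate tuple $(\xi_0,\dots,\xi_n)$, the portmanteau theorem gives $\int_{B^{n+1}}\dvol_b\,d\nu_{\sigma_0(t)}\to\dvol_b(\xi_0,\dots,\xi_n)$. Dividing, $c=\lim_{t\to\infty}\vol(\sigma_0(t))^{-1}\int_{B^{n+1}}\dvol_b\,d\nu_{\sigma_0(t)}=1$, which finishes the proof.

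The main obstacle will be the hyperbolic-geometry input used in the last step: that the oriented volume of a geodesic $n$-simplex depends continuously on its vertices and tends to the volume of the limiting ideal simplex as the vertices recede radially to $\partial\Hsp^n$; that $\dvol_b$ is continuous at non-degenerate boundary $(n+1)$-tuples; and that $\nu_{z_i(t)}\rightharpoonup\delta_{\xi_i}$ as $z_i(t)$ escapes along a geodesic ray toward $\xi_i$. These facts are classical but carry the real content. A possible alternative is to prove directly the exact averaging identity $\int_{B^{n+1}}\dvol_b\,d\nu_\sigma=\vol(\sigma)$ for every geodesic simplex $\sigma$ (a Schl\"afli-type formula, which in any case follows a posteriori once $c=1$); the limiting argument is preferable here since only the value of $c$ is needed.
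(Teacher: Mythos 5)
Your proposal is correct and follows essentially the same route as the paper: apply Lemma~\ref{lem:image of fund class under Patterson-Sullivan map} to the bounded, alternating, $G^0$-invariant cocycle $\dvol_b$, then let the geodesic simplex degenerate radially to a regular ideal simplex, using the weak-$*$ convergence of the visual measures to Dirac masses and the continuity of $\vol$ at non-degenerate ideal tuples to identify the limit. The only cosmetic difference is that you use $G^0$-invariance to drop the integral over $\Gamma\bs G^0$ before taking the limit, whereas the paper keeps it; this changes nothing of substance.
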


\begin{theorem}\label{thm:evaluation by fundamental class}
  Let $(\Omega,m)$ be an ergodic $(\Gamma,\Gamma)$-coupling of a
  torsion-free and uniform lattice $\Gamma\subset G^0$.
  Let 
	\[
		\phi\colon X\times B\to B
	\] 
	be the $\alpha$-equivariant boundary map from~\eqref{eq:boundary map}, 
	where $\alpha\colon\Gamma\times X\to\Gamma$ is a ME-cocycle for $\Omega$. 	
  	If $\sigma=(z_0,\dots,z_n)$ with $z_i\in B$ is a positively oriented ideal regular simplex, then the Euler number of $\Omega$ satisfies
	\begin{equation*}
 		\euler(\Omega)=
    	\frac{\vol(\Gamma\bs\Hsp^n)}{v_{\mathrm{max}}}\int_{\Gamma\bs G^0}\int_X
     	\vol\bigl(\phi_x(gz_0),\dots,\phi_x(gz_n)\bigr)\,d\mu(x)\,dg, 
	\end{equation*}
	where $v_{\mathrm{max}}$ is the volume of a positively oriented
	ideal maximal simplex in $B^{n+1}$ and the
	quotient $\Gamma\bs G^0$ carries the normalized Haar measure.
\end{theorem}
Note that the function 
\[
	g\mapsto \int_X
     \vol\bigl(\phi_x(gz_0),\dots,\phi_x(gz_n)\bigr)\,d\mu(x) 
\] 
in the previous statement 
is $\Gamma$-invariant by $\alpha$-equivariance of $\phi$, $G^0$-invariance of the volume, and $\Gamma$-invariance of $\mu$. 
So the integral in Theorem~\ref{thm:evaluation by fundamental class} makes sense.

The following immediate corollary, which we will not use in this paper, can be viewed 
as a higher-dimensional cocycle analog of the Milnor-Wood
inequality for homomorphisms of a surface group into $\Homeo_+(S^1)$.
We will present an independent stronger result, valid under an integrability assumption,
in Corollary~\ref{cor:maximality}.

\begin{cor}[Higher-dimensional Milnor-Wood inequality] \label{cor:MW}
In the setting of Theorem~\ref{thm:evaluation by fundamental class} we have
$\abs{\euler(\Omega)}\le\vol(\Gamma\bs\Hsp^n)$.
\end{cor}

We shall need the auxiliary
Lemma~\ref{lem:evaluation by fund class - first step: visual measures}
before we conclude the proof
of Theorem~\ref{thm:evaluation by fundamental class} at the end of this subsection. We retain the setting
of Theorem~\ref{thm:evaluation by fundamental class} for the rest of this
subsection.

\begin{lemma}\label{lem:evaluation by fund class - first step: visual measures}
  If $\sigma=(z_0,\dots,z_n)$ with $z_i\in\Hsp^n$ is a positively
  oriented geodesic simplex, then the Euler number of $\Omega$ satisfies 
  \begin{equation*}
	\euler(\Omega)=
    \frac{\vol(\Gamma\bs\Hsp^n)}{\vol(\sigma)}
    \int_{B^{n+1}}
\int_{G^0/\Gamma}\int_X
     \vol\bigl(\phi_x(gb_0),\dots,\phi_x(gb_n)\bigr)\,d\mu(x)\,dg
    d\nu_\sigma.
  \end{equation*}
\end{lemma}

\begin{proof}
  For better readability, we keep the notational distinction between $\Gamma_l$ and 
  $\Gamma_r$ from Setup~\ref{basic setup} and denote the copy of $B$ on which
  $\Gamma_l$ acts by $B_l$; similarly for $B_r$.

  Consider the diagram below. The unlabeled maps are the obvious ones, sending a
  function to its equivalence class up to null sets and inclusion of constant functions.

  All the maps are $\Gamma_l\times\Gamma_r$-equivariant chain morphisms as
  explained now. On $\rmLweak^\infty(B_l^{\bullet+1},\bbR)$ and
  $\rmCb^\bullet(\Gamma_l,\bbR)$ we have the usual $\Gamma_l$-actions and the
trivial $\Gamma_r$-actions.
  The Poisson transform in the lower row is then
  clearly $\Gamma_l\times\Gamma_r$-equivariant. The actions on the domain and target
  of the maps $\rmCb^\bullet(\chi)$
  and $\rmCb^\bullet(\phi)$ are defined in
  Propositions~\ref{prop:induction from Monod-Shalom}
  and~\ref{prop:induction reciprocity}, and is proved there that these
  maps are
$\Gamma_l\times\Gamma_r$-equivariant. The Poisson transform in the upper row, which is
  $\Gamma_r$-equivariant, is also $\Gamma_l$-equivariant, since $\Gamma_l$ acts only
  by its natural action on $\Omega$.
  \[\xymatrix{
	\calB^\infty(B_l^{\bullet+1},\bbR)\ar[d]\ar[r]^-{\rmCb^\bullet(\phi)} & \rmLweak^\infty(B_r^{\bullet+1},\rmL^\infty(\Omega))\ar[r]^-{\rmPT^\bullet} & \rmCb^\bullet(\Gamma_r,\rmL^\infty(\Omega))\\
	\rmLweak^\infty(B_l^{\bullet+1},\bbR)\ar[r]^{\rmPT^\bullet} & \rmCb^\bullet(\Gamma_l,\bbR)\ar[r]&\rmCb^\bullet(\Gamma_l,\rmL^\infty(\Omega))\ar[u]^{\rmCb^\bullet(\chi)}
  }\]
  The diagram describes two $\Gamma_l\times\Gamma_r$-equivariant chain morphisms
\[
	\phi, \psi: \calB^\infty(B_l^{\bullet+1},\bbR)\to \rmCb^\bullet(\Gamma_r,\rmL^\infty(\Omega))
\]
for which we want to prove, using Theorem~\ref{thm:main homological theorem}, that they are $\Gamma_l\times \Gamma_r$-chain homotopic. By Proposition~\ref{prop:B-complex strong resolution} the source $\calB^\infty(B_l^{\bullet+1},\bbR)$ is a strong 
$\Gamma_l\times\Gamma_r$-resolution of $\bbR$. It is shown 
in~\cite{MonodShalom}*{Proof of Proposition~4.6.} that the 
target $\rmCb^\bullet(\Gamma_r,\rmL^\infty(\Omega))$ is a relatively injective and 
strong $\Gamma_l\times\Gamma_r$-resolution of $L^\infty(\Omega)$. Both $\phi$ and 
$\psi$ as the lower map make the diagram 
\[
	\xymatrix{\bbR\ar[r]\ar[d] & \rmL^\infty(\Omega)\ar[d]\\
	          \rmCb^\bullet(\Gamma_r,\rmL^\infty(\Omega)) \ar[r] & \rmCb^\bullet(\Gamma_r,\rmL^\infty(\Omega))
	},
\] 
where the upper map is the inclusion of constant functions, 
commutative, that is, $\phi$ and $\psi$ are morphisms between the augmented resolutions. 
By Theorem~\ref{thm:main homological theorem}, $\phi$ and $\psi$ are equivariantly 
chain homotopic. Taking invariants and cohomology, this means that the following diagram is commutative: 
\[\xymatrix{
	\rmH^n\bigl(\calB^\infty(B_l^{\bullet+1},\bbR)^{\Gamma_l}\bigr)\ar[d]\ar[r]^-{\rmH^n(\phi)} & \rmH^n\bigl(\rmLweak^\infty(B_r^{\bullet+1},\rmL^\infty(\Gamma_l\bs\Omega))\bigr)\ar[r]^-{\rmH^n(\rmPT^\bullet)} & \rmHb^n\bigl(\Gamma_r,\rmL^\infty(\Gamma_l\bs\Omega)\bigr)\\
	\rmH^n\bigl(\rmLweak^\infty(B_l^{\bullet+1},\bbR)^{\Gamma_l}\bigr)\ar[r]^-{\rmH^n(\rmPT^\bullet)} & \rmHb^n\bigl(\Gamma_l,\bbR\bigr)\ar[r]^-{\rmHb^n(j^\bullet)}&\rmHb^n(\Gamma_l,\rmL^\infty(\Gamma_r\bs\Omega)\bigr)\ar[u]^{\rmHb^n(\Omega)}
 }\]

  The volume cocycle $\dvol_b$, which we defined as a
  cocycle in $\rmLweak^\infty(B^{n+1},\bbR)$, is everywhere defined
  and everywhere $\Gamma$-invariant and strictly satisfies the
  cocycle condition; hence it lifts to a cocycle in
  $\calB^\infty(B^{n+1},\bbR)$ which we denote by
  $\dvol_{\mathrm{strict}}$.
  Now we have 
  \begin{align*}
  	\euler(\Omega) &= \bigl\langle\comp^n\circ\,\rmHb^n(\rmI^\bullet)\circ \rmHb^n(\Omega)\circ\rmHb^n(j^\bullet)\circ
	\rmHb^n(\rmPT^\bullet)(\dvol_b),[\Gamma]\bigr\rangle\\
	&=\bigl\langle\comp^n\circ\,\rmHb^n(\rmI^\bullet)\circ
	\rmHb^n(\rmPT^\bullet)\circ\rmHb^n(\phi)(\dvol_{\rm strict}),[\Gamma]\bigr\rangle\\
	&= \bigl\langle\comp^n\circ\,
	\rmHb^n(\rmPT^\bullet)\circ\rmHb^n(\rmI^\bullet)\circ \rmHb^n(\phi)(\dvol_{\rm strict}),[\Gamma]\bigr\rangle.
	\end{align*}
   Here the first equality is just the definition of 
   the Euler class as in Definition~\ref{eq:Euler number}; just be aware that there we  
   denoted $\rmHb^n(\rmPT^\bullet)(\dvol_b)$ by the same symbol $\dvol_b$ since 
   $\rmHb^n(\rmPT^\bullet)$ is a canonical isomorphism between two resolutions computing  
bounded cohomology in the functorial approach. The second equality 
follows by the commutativity of the above diagram. The third equality is true 
since the cohomological Poisson transform is natural in the coefficients, hence 
  the integration $\rmHb^n(\rmI^\bullet)$ in the coefficients and 
  $\rmHb^n(\rmPT^\bullet)$ interchange. 
  We invoke
Lemma~\ref{lem:image of fund class under Patterson-Sullivan map} with  $[f]=\rmHb^n(\rmI^\bullet)\circ \rmHb^n(\phi)(\dvol_{\rm strict})$ to conclude the proof.
\end{proof}


\begin{proof}[Proofs of Theorems~\ref{thm:bounded volume cocycle to volume coycle} and~\ref{thm:evaluation by fundamental class}]
  We start with the proof of Theorem~\ref{thm:evaluation by fundamental class}.
  For every $i\in\{0,\dots,n\}$ we pick a sequence
  $(z_i^{(k)})_{k\in\bbN}$ on the geodesic ray from a basepoint $o\in\Hsp^n$ to $z_i$
  converging to $z_i$.  Let $\sigma_k$ be the geodesic simplex
  spanned by the vertices $z_0^{(k)},\dots, z_n^{(k)}$.  By
  Lemma~\ref{lem:evaluation by fund class - first step: visual
    measures},
  \begin{equation*}
	\euler(\Omega)=
    \frac{\vol(\Gamma\bs\Hsp^n)}{\vol(\sigma)}
    \int_{B^{n+1}}
\int_{\Gamma\bs G^0}\int_X
     \vol\bigl(\phi_x(gb_0),\dots,\phi_x(gb_n)\bigr)\,d\mu(x)\,dg
    d\nu_\sigma.
\end{equation*}
  We now let $k$ go to $\infty$. Note that the left hand side does not
  depend on $k$.  First of all, the volumes $\vol(\sigma_k)$ converge
  to $\vol(\sigma)=v_{\mathrm{max}}$.
  By Lemma~\ref{lem:Lebesgue_differentiation},
  \begin{equation*}
 	\lim_{k\to \infty}\nu_{\sigma_k}
	\bigl\{(b_0,\dots,b_n)\mid d(\phi_x(gz_i),\phi_x(gb_i))<\epsilon\bigr\}= 1
\end{equation*}
  for every $\epsilon>0$ and a.e.~$(x,g)\in X\times G$.

It is shown in \cite{ratcliffe}*{Theorem~11.4.2 on p.~541} that the volume, $\vol$, is
a continuous function on the open set $B^{(n+1)}$ of all $(n+1)$-tuples in general position
(see Definition~\ref{def:gen-pos}).
Thus, by Lemma~\ref{L:gen-pos}, $\vol$ is continuous
	at a.e. $(\phi_x(gz_0),\dots,\phi_x(gz_n))$, and therefore
  \begin{equation*}
    \lim_{k\to\infty}\int_{B^{n+1}}
    \vol\bigl(\phi_x(gb_0),\dots,\phi_x(gb_n)\bigr)
    \,d\nu_{\sigma_k}=
    \vol\bigl(\phi_x(g z_0),\dots,\phi_x(g z_n)\bigr),
\end{equation*}
  for a.e.~$(x,g)\in X\times G$,
  which finally yields Theorem~\ref{thm:evaluation by fundamental class} by the dominated
  convergence theorem.
  The proof of Theorem~\ref{thm:bounded volume cocycle to volume coycle} is even easier
  since it does not require
Lemma~\ref{lem:Lebesgue_differentiation}.
  One obtains from Lemma~\ref{lem:image of fund class under Patterson-Sullivan map} that
  \begin{multline*}
    \langle \comp^n(\dvol_b), [\Gamma]\rangle=
    \frac{\vol(\Gamma\bs\Hsp^n)}{\vol(\sigma)}
    \int_{B^{n+1}}
\int_{\Gamma\bs G^0}\int_X
    \vol\bigl(gb_0,\dots,gb_n\bigr)\,d\mu(x)\,dg
    d\nu_\sigma,
\end{multline*}
   which converges for $k\to\infty$ to $\vol(\Gamma\bs\Hsp^n)$ by continuity
   of the function $\vol$ at a.e.~point $(\phi_x(gb_0),\dots,\phi_x(gb_n))\in B^{n+1}$
   (Lemma~\ref{L:gen-pos})
	and the weak convergence of $\nu_{z_i^{(k)}}$
   to the point measure at $z_i$ for every $i\in\{0,\dots,n\}$.
\end{proof}

\subsection{Adding integrability assumption}
\label{ssub:maximality_of_the_euler_number_provided_omega_is_integrable}

In this subsection we appeal to a general result from our companion
paper~\cite{sobolev}, which relies on the
integrability of the coupling.
We get that, in the presence of such an integrability assumption, the Milnor-Wood inequality
given in Corollary~\ref{cor:MW} becomes an equality, see Corollary~\ref{cor:maximality} below.

\begin{theorem}[\cite{sobolev}*{Theorem~5.12} and~\cite{sobolev}*{Corollary~1.11}]\label{thm:main result about induction in cohomology}
	Let $M$ and $N$ be closed, oriented, negatively curved manifolds of
	dimension $n$.
	Let $(\Omega,\mu)$ be
	an ergodic, integrable ME-coupling $(\Omega,\mu)$ of the fundamental groups
	$\Gamma=\pi_1(M)$ and $\Lambda=\pi_1(N)$,
        and set $c= \frac{\mu({\Lambda}\bs\Omega)}{\mu({\Gamma}\bs\Omega)}$.
	Suppose that $x_\Gamma^b\in\rmHb^n(\Gamma,\bbR)$ is an element that maps
    to the cohomological fundamental class
    $x_\Gamma\in\rmH^n(\Gamma,\bbR)\cong \rmH^n(M,\bbR)$ of $M$
    under the comparison map. Define
	$x_\Lambda\in\rmH^n(\Lambda,\bbR)$ analogously.
	Then the composition
	\begin{multline}\label{eq:composition induction cohomological}
		\rmHb^n(\Gamma, \bbR)\xrightarrow{\rmHb^n(j^\bullet)} \rmHb^n(\Gamma,\rmL^\infty(\Lambda\bs\Omega))
	    \xrightarrow{\rmHb^n(\Omega)}\rmHb^n(\Lambda,\rmL^\infty(\Gamma\bs\Omega))\\
     	\xrightarrow{\rmHb^n(\rmI^\bullet)} \rmHb^n(\Lambda,\bbR)\xrightarrow{\comp^n} \rmH^n(\Lambda,\bbR)
	\end{multline}
    sends $x_\Gamma^b$ to $\pm c\cdot x_\Lambda$. 
	Furthermore, if $\Gamma\cong\Lambda$, then $c=1$.
\end{theorem}

\begin{cor}[Maximality of the Euler class] \label{cor:maximality}
Retain the setting of Theorem~\ref{thm:evaluation by fundamental class}. 
If, in addition, the coupling $\Omega$ is integrable, then
\begin{equation}\label{eq:eval against fundamental class}
\euler(\Omega)=\pm\vol\bigl(\Gamma\bs\Hsp^n\bigr).
\end{equation}
\end{cor}

\begin{proof}
We apply Theorem~\ref{thm:main result about induction in cohomology} to 
$M=N=\Gamma\bs\Hsp^n$ and $\Lambda=\Gamma$.
One has 
\[
	\dvol=\vol(M)\cdot x_{\Gamma}
\]
because the top degree cohomology is one-dimensional, 
and the evaluation against the homological fundamental class
gives the equality.
By Theorem~\ref{thm:bounded volume cocycle to volume coycle}
\[
	\dvol=\comp^n(\dvol_b).
\]
Thus, $x_\Gamma^b=x_\Lambda^b=\dvol_b/\vol(M)$ satisfy the conditions of the theorem.
Since in this case $c=1$,
we conclude that
$\dvol_b$ is mapped to $\pm\dvol$
under~\eqref{eq:composition induction cohomological}.
Equation~(\ref{eq:euler}) of Definition~\ref{eq:Euler number}
gives
\begin{align*} 
	\euler(\Omega)&=\bigl\langle \comp^n\circ\;\rmHb^n(\rmI^\bullet)\circ\rmHb^n(\Omega)\circ
	\rmHb^n(j^\bullet)(\dvol_b), [\Gamma]\bigr\rangle\\
	&=\bigl\langle \pm\dvol, [\Gamma]\bigr\rangle
	=\pm\vol\bigl(\Gamma\bs\Hsp^n\bigr).\qedhere
\end{align*}
\end{proof}


Recall that $\Sigma^{\rm reg}_+$ (resp. $\Sigma^{\rm reg}_-$) denotes the set of
positively (resp. negatively) oriented regular ideal simplices 
(see Subsection~\ref{sub:preserving maximal simplices}).
We think of $\Sigma^{\rm reg}_+$ and $\Sigma^{\rm reg}_-$ as subsets of $B^{n+1}$
- the $(n+1)$-tuples of points on the boundary $B=\partial\Hsp^n$.
Since $\Sigma^{\rm reg}_+$ (resp.~$\Sigma^{\rm reg}_-$) is a single $G^0$-orbit, terms like  \emph{a.e. point on} $\Sigma^{\rm reg}_+$ refer to the Haar measure on $G^0$.

\begin{cor} \label{cor:euler-boundary}
  	Retain the setting of Theorem~\ref{thm:evaluation by fundamental class}. 
	If, in addition, the coupling $\Omega$ is integrable, then the
	Borel map 
	\[
		\phi_x^{n+1}=\phi_x\times\cdots\times\phi_x:B^{n+1}\to B^{n+1}
	\] 
	either maps a.e.~point in $\Sigma^{\rm reg}_+$ into $\Sigma^{\rm reg}_+$ for $\mu$-a.e $x\in X$, 
	or a.e. point of $\Sigma^{\rm reg}_+$ is mapped into $\Sigma^{\rm reg}_-$ for $\mu$-a.e $x\in X$.
\end{cor}

\begin{proof}
Fix $(z_0,\dots,z_n)\in \Sigma^{\rm reg}_+$.
Combining Corollary~\ref{cor:maximality}
with
Theorem~\ref{thm:evaluation by fundamental class} we get
\begin{equation*}
\int_{\Gamma\bs G^0}\int_X
     \vol\bigl(\phi_x(gz_0),\dots,\phi_x(gz_n)\bigr)\,d\mu(x)\,dg=\pm  v_{\mathrm{max}}.
\end{equation*}
By Lemma~\ref{lem:key facts from Mostow} (2), an ideal simplex has an oriented volume
$v_{\mathrm{max}}$ iff it is in $\Sigma^{\rm reg}_+$ and $-v_{\mathrm{max}}$ iff it is in $\Sigma^{\rm reg}_-$.
Combining this with the fact that
the absolute value of the integrand on the left hand side in the above formula is a priori at most
$v_{\mathrm{max}}$ implies 
that either
for a.e. $(g,x)\in G^0\times X$,
$\bigl(\phi_x(gz_0),\dots,\phi_x(gz_n)\bigr)\in \Sigma^{\rm reg}_+$
or
for a.e. $(g,x)\in G^0\times X$,
$\bigl(\phi_x(gz_0),\dots,\phi_x(gz_n)\bigr)\in \Sigma^{\rm reg}_-$.
But
by Lemma~\ref{lem:key facts from Mostow} (1), 
$G^0$ acts simply transitive on $\Sigma^{\rm reg}_+$,
thus for the set of $g\in G^0$ satisfying the above generic condition, the set of
ideal simplices of the form $(gz_0,\dots,gz_n)$ is of full measure in $\Sigma^{\rm reg}_+$.
The proof now follows by an application of Fubini's theorem.
\end{proof}


\section{Proofs of the main results} 
\label{sec:proofs_of_the_main_results}


\subsection{Proof of Theorems~\ref{T:SOn1-1-taut} and \ref{thm:taut relative homeo}}

\begin{proof}[Proof of  Theorem~\ref{T:SOn1-1-taut}]\hfill{}\\
We aim to show that the group $G=\isom(\Hsp^n)$ is $1$-taut for any $n\ge 3$.
Fix a cocompact torsion-free lattice $\Gamma<G^0$ in the connected component of $e\in G$.
By Proposition~\ref{exa:examples of strongly icc groups} $G$ is strongly ICC,
thus Proposition~\ref{P:taut-lattice} applies and it is enough to show that $\Gamma$ is 1-taut
relative to $G$ (note that a cocompact lattice is integrable).
By Lemma~\ref{lem:ergodic dec} (applied to $\Gamma$) it is enough to show that every integrable ergodic $(\Gamma,\Gamma)$-coupling is taut relative to $G$.

Let $(\Omega,m)$ be an integrable ergodic $(\Gamma,\Gamma)$-coupling. 
We adopt the Setup~\ref{basic setup} and consider the boundary map~(\ref{eq:boundary map}) $\phi:X\times B\to B$. 
Let $\Sigma^{\rm reg}_+,\Sigma^{\rm reg}_-\in B^{n+1}$
denote the sets of positively and negatively oriented regular ideal simplices,
as defined in Subsection~\ref{sub:preserving maximal simplices}.
Then Corollary~\ref{cor:euler-boundary} implies that
either for $\mu$-a.e $x\in X$,
$\phi_x^{n+1}=\phi_x\times\cdots\times\phi_x$ maps a.e.~point
in $\Sigma^{\rm reg}_+$ into $\Sigma^{\rm reg}_+$,
or
for $\mu$-a.e $x\in X$,
$\phi_x^{n+1}$ maps a.e.~point
in $\Sigma^{\rm reg}_+$ into $\Sigma^{\rm reg}_-$.

We now use the assumption that $n\geq 3$, and 
apply Lemma~\ref{lem:key lemma in mostow rigidity}
to deduce that for a.e $x\in X$ there exists a unique $g_x\in G$
with $\phi_x(b)=g_x b$ for a.e.~$b\in B$.	
Proposition~\ref{prop:reduction} applied to $\calG$ being the image of $G$ in $\Homeo(B)$
yields that $\Omega$ is taut with respect to $G$.
\end{proof}

We now set the stage for the proof of Theorem~\ref{thm:taut relative homeo} 
which deals with the case $n=2$. If we normalize the 
volume cocycle~\eqref{eq: volume cocycle} by the volume $v_{\mathrm{max}}$ 
of a non-degenerate positively oriented ideal $2$-simplex in $\Hsp^2\cup\bar\Hsp^2$ -- 
they all have the same volume -- we obtain the \emph{orientation cocycle} $c$ 
defined on triples of points on the circle $S^1=B=\partial\Hsp^2$ by
\[
	c(b_0,b_1,b_3)=v_{\mathrm{max}}^{-1}\cdot \vol(b_0,b_1,b_2).
\]
It takes values in $\{-1,0,1\}$ with $c (b_0,b_1,b_2)=1$ if the triple $(b_0,b_1,b_2)$ consists of distinct points
in the positive orientation/cyclic order, $c=-1$ if the cyclic order is reversed, and $c=0$ if the triple is degenerate.
Let $\nu$ denote a probability measure in the Lebesgue class, and suppose that
$\phi:(S^1,\nu)\to S^1$ is a measurable map so that for $\nu^3$-a.e. $(b_0,b_1,b_2)$:
\[
	c\bigl(\phi(b_0), \phi(b_1), \phi(b_2)\bigr)=c(b_0,b_1,b_2).
\]
It follows from~\cite{iozzi}*{Proposition~5.5} that
the following conditions on such measurable orientation preserving $\phi:(S^1,\nu)\to S^1$ are equivalent:
\begin{enumerate}
	\item The push-forward measure $\phi_*\nu$ has full support;
	\item $\phi$ agrees a.e. with a homeomorphism $f\in \Homeo(S^1)$.
\end{enumerate}
Let $\Gamma<G=\isom(\Hsp^2)$ be a lattice. 
Let $\alpha:\Gamma\times X\to\Gamma$ be the ME-cocycle associated with an ergodic
$(\Gamma,\Gamma)$-coupling $(\Omega,m)$ and an identification $i:\Gamma\times X\to\Omega$.
Let $\phi_x\colon (S^1,\nu)\to S^1$, $x\in X$, be the boundary 
map~\ref{eq:boundary map} associated
to $\alpha$ as in
Section~\ref{s:boundary}. 

\begin{proposition}\label{prop:orientation cocycle maximal}
    If the orientation cocycle is preserved by $\phi_x$ a.e., that is,
	\[
		c\bigl(\phi_x(b_0),\phi_x(b_1),\phi_x(b_2)\bigr)=c(b_0,b_1,b_2)\quad\text{$\nu^3$-a.e}
	\]
    for a.e.~$x\in X$, then the map
    $\phi_x$ agree a.e. with a homeomorphism $f_x\in \Homeo(S^1)$ for a.e. $x\in X$.
\end{proposition}

\begin{proof}
	We prove that the measurable family of open sets $U_x=S^1\setminus \supp(\phi_x\nu)$
	satisfies a.e. $U_x=\emptyset$.
	The fact that $\nu$ is $\Gamma$-quasi-invariant and the identity
	\[
		\phi_{\gamma.x}(\gamma b)=\alpha(\gamma,x)\phi_x(b)
	\]
	imply the following \emph{a priori} equivariance of $\{U_x \mid x\in X\}$
	\begin{equation}\label{e:open-inv-set}
		U_{\gamma.x}=\alpha(\gamma,x)\,U_x.
	\end{equation}
	Since $U_x\neq S^1$ for every $x\in X$, the proposition
	is implied by Lemma~\ref{L:furstenberg} and the fact that
	the action of $G=\rmPSL_2(\bbR)$ and of its lattices on the
	circle $S^1$ is minimal and strongly
	proximal~\cite{furstenberg-bourbaki}*{Propositions~4.2 and~4.4}.
\end{proof}

\begin{proof}[Proof of Theorem~\ref{thm:taut relative homeo}]\hfill\\
	We fix a cocompact torsion-free lattice $\Gamma<G^0$ (a surface group) 
	in the connected component of $G=\isom(\Hsp^2)$. We identify $S^1$ as 
	$B=\partial\Hsp^2$ and embed $G$ in the Polish group $\calG:=\Homeo(S^1)$ 
	accordingly. 
	Exactly as at the start of the proof 
	of Theorem~\ref{T:SOn1-1-taut} ($n\ge 3$ was not needed for that) one 
	sees that it suffices to show that $\Gamma$ is $1$-taut relative to $\calG$. 
	
By Lemma~\ref{lem:ergodic dec} it is enough to show that every integrable ergodic 
$(\Gamma,\Gamma)$-coupling is taut relative to $\calG$.
Let $(\Omega,m)$ be such a coupling. We adopt the Setup~\ref{basic setup} and consider the boundary map~(\ref{eq:boundary map}) $\phi:X\times B\to B$. 

Corollary~\ref{cor:euler-boundary} implies
that there is $\sigma\in\{1,-1\}$ such that
for $\mu$-a.e $x\in X$ and a.e. triple $(b_1,b_2,b_3)\in (S^1)^3$:
\[
	c\bigl(\phi_x(b_1),\phi_x(b_2),\phi_x(b_3)\bigr)=\sigma\cdot c(b_1,b_2,b_3). 
\]
That is, either a.e. $\phi_x$ preserves the cyclic order of a.e.~triple,
or a.e $\phi_x$ reverses the cyclic order of a.e.~triple.
In either case, by Proposition~\ref{prop:orientation cocycle maximal} we conclude
that for a.e $x\in X$, $\phi_x$ agree a.e. with a homeomorphism $f_x\in \Homeo(S^1)$.
It follows with Proposition~\ref{prop:reduction} that the ergodic integrable
$(\Gamma,\Gamma)$-coupling $\Omega$ is taut relative to $\calG=\Homeo(S^1)$.
\end{proof}


\subsection{Measure equivalence rigidity: Theorem~\ref{T:ME-rigidity}} 
\label{sub:measure_equivalence_rigidity_for_nge3_}\hfill{}\\
Let $G=\isom(\Hsp^n)$, $n\ge 3$.
Let $\Gamma<G$ be a lattice, and
$\Lambda$ a finitely generated group which admits an integrable $(\Gamma,\Lambda)$-coupling $(\Omega,m)$.
By Lemma~\ref{lem:composition of lp couplings} the $(\Gamma,\Gamma)$-coupling $\Omega\times_\Lambda\dual{\Omega}$ is integrable.

By Theorem~\ref{T:SOn1-1-taut} and Proposition~\ref{P:taut-lattice} the lattice $\Gamma$
is $1$-taut relative to the inclusion $\Gamma<G$. Hence
the coupling $\Omega\times_\Lambda\dual{\Omega}$ is taut. By
Proposition~\ref{exa:examples of strongly icc groups} the group $G$ is strongly ICC relative
to $\Gamma<G$.
Applying Theorem~\ref{T:split+hom} we obtain a continuous homomorphism $\rho:\Lambda\to G$
with finite kernel $F$, image $\bar{\Lambda}=\rho(\Lambda)$ being discrete in $G$,
and a measurable $\id_\Gamma\times\rho$-equivariant map
$\Psi:\Omega\to G$.

To complete the proof of statement (1) of Theorem~\ref{T:ME-rigidity} and case $n\ge 3$ of
Theorem~\ref{T:ME-rigidity2and3} it remains to show that $\bar\Lambda$
is not merely discrete, but is actually a lattice in $G$.
This can be deduced from the application of Ratner's theorem below which is needed
for the precise description of the push-forward measure $\Psi_*m$ on $G$
as stated in part (2) of Theorem~\ref{T:ME-rigidity}.
Let us also give the following direct argument which relies only on the strong ICC property of $G$.

\medskip

Consider the composition $(G,\Lambda)$-coupling $\widetilde\Omega=G\times_\Gamma\Omega$, and the
$(G,G)$-coupling $\widetilde\Omega\times_\Lambda\dual{\widetilde\Omega}$.
Since $\Gamma$ is an integrable lattice in $G$
(Theorem~\ref{thm:Shalom and lattices}) by Lemma~\ref{lem:composition of lp couplings}
both $\widetilde\Omega$ and $\widetilde\Omega\times_\Lambda\dual{\widetilde\Omega}$ are
integrable couplings.
Theorem~\ref{T:SOn1-1-taut} provides a unique tautening map
\[
	\widetilde\Phi:\widetilde\Omega\times_\Lambda\dual{\widetilde\Omega}\to G.
\]
Applying Theorem~\ref{T:reconstruction} (a special case of Theorem~\ref{T:split+hom} with $\calG=G$),
we obtain a homomorphism $\widetilde\rho:\Lambda\to G$ with finite kernel and image being a lattice in $G$.
There is also a ${\rm Id}_G\times \widetilde\rho$-equivariant measurable map
\[
	\widetilde\Psi:\widetilde\Omega=G\times_\Gamma\Omega \to G.
\]
We claim that $\rho, \widetilde\rho:\Lambda\to G$ are conjugate representations.
To see this observe that since $G$ is strongly ICC, there is only one tautening map
$\widetilde\Omega\times_\Lambda\dual{\widetilde\Omega}\to G$.
This implies the a.e. identity
\[
	\widetilde\Psi([g_1,\omega_1])\widetilde\Psi([g_2,\omega_2])^{-1}=g_1 \Psi(\omega_1)\Psi(\omega_2)^{-1} g_2^{-1}.
\]
Equivalently, we have a.e. identity
\[
	\Psi(\omega_1)^{-1}g_1^{-1}\widetilde\Psi([g_1,\omega_1])
	=\Psi(\omega_2)^{-1}g_2^{-1}\widetilde\Psi([g_2,\omega_2]).
\]
Hence the value of both sides are a.e. equal to a constant $g_0\in G$.
It follows that for a.e. $g\in G$ and $\omega\in \Omega$
\[
	g^{-1}\widetilde\Psi([g,\omega])=\Psi(\omega)g_0.
\]
Finally, the fact that $\Psi$, $\widetilde\Psi$ are $\rho$-, $\widetilde\rho$- equivariant respectively, implies:
\[
	\widetilde\rho(\lambda)=g_0 \rho(\lambda) g_0^{-1}\qquad(\lambda\in\Lambda).
\]
In particular, $\bar\Lambda=g_0^{-1}\widetilde\rho(\Lambda)g_0$ is a lattice in $G$.

\medskip

We proceed with the proof of statement (2): given the ${\rm Id}_\Gamma\times\rho$-equivariant
measurable map $\Psi:\Omega\to G$ we shall describe the pushforward $\Psi_*m$ on $G$.
(We shall use the discreteness of $\bar\Lambda=\rho(\Lambda)$, but the fact that it is
a lattice will not be needed; in fact, it will follow from the application of Ratner's theorem.)
Recall that the measure $\Psi_*m$ is invariant under the action
$x\mapsto \gamma x \rho(\lambda)^{-1}$,
and descends to a finite $\Gamma$-invariant measure $\mu$ on $G/\bar{\Lambda}$
and to a finite $\bar{\Lambda}$-invariant measure $\nu$ on $\Gamma\bs G$.
Assuming $m$ was $\Gamma\times\Lambda$-ergodic, $\mu$ and $\nu$ are ergodic under the $\Gamma$-
and $\bar\Lambda$-action, respectively.
One can now apply Ratner's theorem \cite{ratnerICM} to describe $\mu$, and thereby $\Psi_*m$,
as in~\cite{FurmanME}*{Lemma 4.6}.
For the reader's convenience we sketch the arguments.

Let $\Lambda^0=\bar{\Lambda}\cap G^0$; so either $\Lambda^0=\bar{\Lambda}$ or
$[\bar{\Lambda}:\Lambda^0]= 2$.
In the first case we set $\mu'=\mu$, in the
latter case let $\mu'$ denote the $2$-to-$1$ lift of $\mu$ to $G/\Lambda^0$.
Let $\Gamma^0=\Gamma\cap G^0$, and let $\mu^0$ be an ergodic component of $\mu'$ supported
on $G^0/\Lambda^0$.
We consider the homogeneous space
$Z=G^0/\Gamma^0\times G^0/\Lambda^0$ which is endowed with the
following probability measure
\[
	\tilde\mu^0=\int_{G^0/\Gamma^0} \delta_{g\Gamma^0}\times g_*\mu^0\,dm_{G^0/\Gamma^0}.
\]
Observe that $\tilde\mu^0$ well defined because $\mu^0$ is $\Gamma^0$-invariant.
Moreover, $\tilde\mu^0$ is invariant and ergodic for the action of the diagonal
$\Delta(G^0)\subset G^0\times G^0$ on $Z$.
Since $G^0$ is a connected group generated by unipotent elements,
Ratner's theorem shows that $\tilde\mu^0$ is \emph{homogeneous}.
This means that there is a connected Lie subgroup $L<G^0\times G^0$ containing $\Delta(G^0)$
and
a point $z\in Z$ such that the stabilizer $L_z$ of $z$
is a lattice in $L$ and $\tilde\mu^0$ is the push-forward of the normalized Haar
measure $m_{L/L_z}$ to the $L$-orbit $Lz\subset Z$.
Since $G^0$ is a simple group, there are only two possibilities for $L$: either
(i) $L=G^0\times G^0$ or (ii) $L=\Delta(G^0)$.

In case (i), $\tilde\mu^0$ is the Haar measure on
$G^0/\Gamma^0\times G^0/\Lambda^0$,
and $\mu^0$ is the Haar measure on $G^0/\Lambda^0$.
(In particular, $\Lambda^0$ is a lattice in $G^0$,
and $\Lambda$ is a lattice in $G$). The original measure $\mu$
may be either the $G$-invariant measure $m_{G/\bar{\Lambda}}$, or a $G^0$-invariant measure
on $G/\bar{\Lambda}$. In the latter case, by possibly multiplying $\Phi$ and
conjugating $\rho$
with some $x\in G\setminus G^0$, we may assume that
$\mu$ is the $G^0$-invariant probability measure on $G^0/\bar\Lambda$.

In case (ii), the fact that $L_z$ is lattice in $L=\Delta(G^0)$, implies that
$\mu^0$ and the original measure $\mu$ are atomic.
Since $\Gamma$ acts ergodically on $(G/\bar{\Lambda},\mu)$,
this atomic measure is necessarily supported and equidistributed on a finite $\Gamma$-orbit
of some $g_0\bar\Lambda\in G/\bar{\Lambda}$.
It follows that $\Gamma\cap g_0^{-1}\bar\Lambda g_0$ has finite index in $\Gamma$.
(This also implies that $\bar\Lambda$ is a lattice in $G$).
Upon multiplying $\Psi$ and conjugating $\rho$ by $g_0\in G$,
we may assume that $\Phi_*m$ is equidistributed on the double coset $\Gamma e \bar\Lambda$
and that $\Gamma$, $\bar\Lambda$ are commensurable lattices.
This completes the proof of Theorem~\ref{T:ME-rigidity}.

\subsection{Convergence actions on the circle: case $n=2$ of Theorem~\ref{T:ME-rigidity2and3}} 
\label{sub:actions_on_the_circle}\hfill{}\\
Let $\Gamma$ be a uniform lattice in
$G=\isom(\Hsp^2)\cong \rmPGL_2(\bbR)$. The group $G$ is a subgroup
of $\Homeo(S^1)$ by the natural action of $\rmPGL_2(\bbR)$ on
$S^1\cong\Psp^1$.
Consider
a compactly generated unimodular group $H$ that is $\rmL^1$-measure equivalent
to $\Gamma$. We will prove a more general statement than in
Theorem~\ref{T:ME-rigidity2and3}, which is formulated for discrete $H=\Lambda$.
Since $\Gamma$ is uniform, hence integrable in $G$, we can induce any
integrable $(\Gamma,H)$-coupling to an integrable
$(G,H)$-coupling (Lemma~\ref{lem:composition of lp couplings}). Let
$(\Omega,m)$ be an integrable $(G,H)$-coupling $(\Omega,m)$.

From Theorem~\ref{T:split+hom}
we obtain a continuous homomorphism $\rho:H\to\Homeo(S^1)$
with compact kernel and closed image $\bar{H}<\Homeo(S^1)$ and,
by pushing forward $m$,
a measure $\bar{m}$ on $\Homeo(S^1)$
that is invariant under all bilateral translations on $f\mapsto g f \rho(h)^{-1}$
with $g\in G$ and $h\in H$
and descends to a finite $\bar H$-invariant
measure $\mu$ on $G\bs\Homeo(S^1)$ and a finite $G$-invariant measure
$\nu$ on $\Homeo(S^1)/\bar{H}$.

The next step is to show that $\bar{H}$ can be conjugated into $G$.
To this end,
we shall use the existence of the finite $\bar{H}$-invariant measure $\mu$ on $G\bs \Homeo(S^1)$,
which may be normalized to a probability measure. We need the following
theorem which we prove relying on the deep work by
Gabai~\cite{Gabai} and Casson-Jungreis~\cite{Casson} on the determination of
convergence groups as Fuchsian groups.

\begin{theorem}\label{T:intoPGL2}
Let $\mu$ be a Borel probability measure on $G\bs\Homeo(S^1)$.
Then the stabilizer $H_\mu=\{ f\in \Homeo(S^1) \mid f_*\mu=\mu\}$ for the action by the right translations is conjugate to a closed subgroup of $G$.
\end{theorem}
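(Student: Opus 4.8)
The plan is to reduce the statement to the assertion that $H:=H_\mu$ acts on the circle $S^1=\partial\Hsp^2$ as a \emph{convergence group} --- i.e.\ that the diagonal $H$-action on the space $\Theta$ of ordered triples of distinct points of $S^1$ is proper --- and then to quote the convergence-group theorem of Gabai and Casson--Jungreis. First I would record that, writing $Z=G\bs\Homeo(S^1)$, the right-translation action of $\Homeo(S^1)$ on $Z$ is transitive with point-stabilizer $G$, so $Z\cong\Homeo(S^1)/G$ and $H$ is a closed subgroup preserving the probability measure $\mu$. Granting the convergence property, $H$ acts properly on the locally compact space $\Theta$, hence is locally compact, hence (acting faithfully on a $1$-manifold) a Lie group; if $H$ is discrete, Gabai / Casson--Jungreis conjugate it in $\Homeo(S^1)$ onto a discrete --- hence closed --- subgroup of $G=\rmPGL_2(\bbR)$, while if $H$ is non-discrete its identity component $H^0$ is a connected Lie convergence group, so by Ghys' classification of connected subgroups of $\Homeo^+(S^1)$ --- among which only $\rmSO(2)$, the affine subgroup of $\rmPSL_2(\bbR)$ and $\rmPSL_2(\bbR)$ itself are convergence groups (the $k$-fold covers of $\rmPSL_2(\bbR)$, $k\ge 2$, are not, since a hyperbolic one-parameter subgroup there collapses onto $k$ points rather than one) --- $H^0$ is conjugate into $G$; after conjugating, $H$ normalizes $H^0$, and since the normalizer in $\Homeo(S^1)$ of each of these three subgroups lies in $\rmPGL_2(\bbR)$ (for $\rmPSL_2(\bbR)$ by minimality and strong proximality of its circle action, cf.\ Lemma~\ref{L:sICCinHomeo}), we conclude $H\subseteq G$. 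The index-two complication coming from $\Homeo^+(S^1)$ is routine.

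The heart of the matter is thus to extract the convergence property from the invariant measure $\mu$. Here I would use the sharp $3$-transitivity of $\rmPGL_2(\bbR)$ on $S^1=\bbP^1(\bbR)$: the cross-ratio defines a continuous map
\[
	\beta\colon Z\times\Theta^{(4)}\to\mathrm{CR},\qquad \beta(Gf,(x_1,\dots,x_4))=\mathrm{cr}\bigl(f(x_1),f(x_2),f(x_3),f(x_4)\bigr),
\]
where $\Theta^{(4)}$ is the space of ordered quadruples of distinct points and $\mathrm{CR}=\bbP^1(\bbR)\setminus\{0,1,\infty\}$; it is well defined because $G$ preserves cross-ratios, and it satisfies $\beta(Gf\cdot h,\vec x)=\beta(Gf,h\vec x)$ for $h\in\Homeo(S^1)$. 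Pushing $\mu$ forward, the measures $\nu_{\vec x}:=\beta(\,\cdot\,,\vec x)_*\mu\in\Prob(\mathrm{CR})$ satisfy $\nu_{h\vec x}=\nu_{\vec x}$ for every $h\in H$, by $\mu$-invariance. The two competing facts are: each $\nu_{\vec x}$ is supported in the \emph{open} set $\mathrm{CR}$; yet as two coordinates of $\vec x$ coalesce, $\nu_{\vec x}$ converges to a Dirac mass at one of the boundary values $0,1,\infty$.

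To run the argument, suppose the $H$-action on $\Theta$ is not proper: there are a compact $C\subseteq\Theta$, triples $\vec t_n,h_n\vec t_n\in C$, and $h_n\in H$ with $\{h_n\}$ not precompact in $\Homeo(S^1)$ (equivalently in $H$, which is closed). By Helly selection I may pass to a subsequence along which the $h_n$ converge pointwise to a monotone degree-one self-map $\psi$ of $S^1$ which, by non-precompactness, is not a homeomorphism; an elementary analysis of monotone maps then shows that, after a further subsequence and possibly replacing $h_n$ by $h_n^{-1}$, either (i) the $h_n$ collapse some fixed proper closed arc $J$ onto a single point $c$, or (ii) the $h_n$ converge to a constant $a$ uniformly on compact subsets of $S^1\setminus\{b\}$. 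In case (i) I would choose $u\neq v$ in the interior of $J$ and $t_3\neq t_4$ in $S^1\setminus J$, continuity points of $\psi$, with $\psi(t_3),\psi(t_4)$ distinct and different from $c$; then for $\vec y=(u,v,t_3,t_4)\in\Theta^{(4)}$ the quadruples $h_n\vec y$ converge to $(c,c,\psi(t_3),\psi(t_4))$, forcing $\beta(\xi,h_n\vec y)\to 1$ for every $\xi$ and hence $\nu_{h_n\vec y}\to\delta_1$; but $\nu_{h_n\vec y}=\nu_{\vec y}$ is one fixed measure on $\mathrm{CR}$, which is absurd. In case (ii), at most one of the limiting points of $\vec t_n$ equals $b$, so two coordinates of $h_n\vec t_n$ both converge to $a$, whence $h_n\vec t_n$ escapes every compact subset of $\Theta$, contradicting $h_n\vec t_n\in C$.

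The step I expect to be the main obstacle is precisely this last one: converting ``$\{h_n\}$ not precompact'' into the clean dichotomy (i)/(ii). One must control how a non-equicontinuous sequence of circle homeomorphisms can degenerate --- flat arcs versus jumps of the pointwise monotone limit, with the jumps of $\psi$ corresponding to flat arcs of the limit of $h_n^{-1}$ --- and one must be careful that only degenerations in which \emph{exactly two} of the four points coalesce are useful, since triple or total collisions leave the cross-ratio indeterminate; this is why the auxiliary points $t_3,t_4$ are taken outside the collapsing arc. Once the dichotomy is in place the measure-theoretic contradiction is immediate, and what remains is the by-now standard passage through convergence groups outlined in the first paragraph.
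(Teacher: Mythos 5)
Your proposal is correct in substance and has the same overall skeleton as the paper's proof --- reduce everything to showing that $H_\mu$ is a convergence group on the circle, then invoke Gabai and Casson--Jungreis in the discrete case and the classification of locally compact subgroups of $\Homeo(S^1)$ with the convergence property in the non-discrete case --- but the implementation of the key step is genuinely different. The paper averages the cross-ratio \emph{functions} against $\mu$: for a triple $\mathbf z$ it forms $\bar F_{\mathbf z}(x)=\int_{G\bs\Homeo(S^1)}[f(x),f(z_1);f(z_2),f(z_3)]\,d\mu(Gf)$, establishes two quantitative properties (a uniform inverse modulus of continuity on a set $A_n$ of large $\mu$-measure, and equicontinuity in $\mathbf z$ on a set $B_n$), and deduces the Cauchy-type criterion of Lemma~\ref{L:Hmu}: two elements of $H_\mu$ carrying nearby triples of a compact $K\subset\rmTpl$ to nearby triples of $K$ are uniformly close on all of $S^1$ (with a separate long-arc/short-arc analysis); properness then follows by extracting Cauchy subsequences. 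You instead push $\mu$ forward under cross-ratio evaluation to get $H_\mu$-invariant probability measures $\nu_{\vec x}$ on $\bbP^1(\bbR)\setminus\{0,1,\infty\}$, and derive properness by contradiction: a non-precompact sequence in $H_\mu$ degenerates (Helly selection for monotone circle maps, possibly after inverting) either by collapsing an arc or in north--south fashion, and in the first case the constant sequence $\nu_{h_n\vec y}=\nu_{\vec y}$ would converge weak-$\ast$ to a Dirac mass at a boundary cross-ratio value, impossible since $\nu_{\vec y}$ charges only the open set. Your qualitative route trades the paper's $\epsilon$--$\delta$ bookkeeping for the degeneration dichotomy; the paper's lemma yields slightly more (a uniform Cauchy criterion on all of $S^1$), but for the theorem both suffice, and the final passage through Gabai--Casson--Jungreis and the non-discrete classification is the same in both arguments (your normalizer sketch is at the same level of detail as the paper's citation of Ghys; the statement actually needed, and true, is that a locally compact closed convergence subgroup is conjugate \emph{into} $\rmPGL_2(\bbR)$ --- one-parameter parabolic and hyperbolic subgroups are also convergence groups, so your three-item list should not be read as exhaustive).

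One step of your case (i) needs a small repair. You require continuity points $t_3\neq t_4$ outside the collapsed arc with $\psi(t_3),\psi(t_4)$ \emph{distinct} and different from $c$. If the monotone limit $\psi$ takes only two values off its jump set --- constant $c$ on an arc and constant $c'\neq c$ on the complementary arc, a configuration that does arise in non-properness scenarios (choose the triples $\vec t_n$ with points at the jump locations) --- then no such pair exists and the step cannot be executed as written. But the distinctness is unnecessary: take $J$ to be the maximal arc $\psi^{-1}(c)$ and any two continuity points $t_3,t_4$ in its complement; automatically $\psi(t_3),\psi(t_4)\neq c$, and the cross-ratio of a quadruple tending to $(c,c,p,q)$ with $p,q\neq c$ tends to $1$ even when $p=q$, so the contradiction $\nu_{\vec y}=\delta_1$ persists. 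With that adjustment, together with the observation you already make that a limit with no nondegenerate flat must have a jump, so that after replacing $h_n$ by $h_n^{-1}$ one is back in case (i), the dichotomy is exhaustive and the argument is complete.
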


\begin{proof}
We fix a metric $d$ on the circle, say $d(x,y)=\measuredangle(x,y)$.
Let $\rmTpl\subset S^1\times S^1\times S^1$ be the space of distinct triples on the circle. The group $\Homeo(S^1)$ acts diagonally on $\rmTpl$.
We denote elements in $\rmTpl$ by bold letters $\mathbf x\in\rmTpl$; the
coordinates of $\mathbf x\in\rmTpl$ or $\mathbf y\in\rmTpl$
will be denoted by $x_i$ or $y_i$ where $i\in\{1,2,3\}$, respectively.
For $f\in\Homeo(S^1)$ we write $f(\mathbf x)$ for $(f(x_1),f(x_2),f(x_3))$.
We equip $\rmTpl$ with the metric, also denoted by $d$, given by
\[
	d(\mathbf{x},\mathbf{y})=\max_{i\in\{1,2,3\}}d(x_i,y_i).
\]
The following lemma will eventually allow us to apply the work of Gabai-Casson-Jungreis.
\renewcommand{\qedsymbol}{}
\end{proof}
\begin{lemma}\label{L:Hmu}
For every compact subset $K\subset \rmTpl$ and every $\epsilon>0$ there is $\delta>0$
so that for all $h,h'\in H_\mu$ and $\mathbf{y}\in K\cap h^{-1}K$ and
$\mathbf{y}'\in K\cap h'^{-1}K$
one has the implication:
\[
	d(\mathbf{y},\mathbf{y}')<\delta~\text{ and }~d(h(\mathbf{y}),h'(\mathbf{y}'))<\delta
 	~\Longrightarrow~
	\sup_{x\in S^1} d(h(x),h'(x))<\epsilon
\]
\end{lemma}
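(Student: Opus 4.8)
The plan is to argue by contradiction, and to substitute for the equicontinuity that the family $H_\mu\subset\Homeo(S^1)$ does \emph{not} possess a compactness argument built on an $H_\mu$-invariant continuous kernel manufactured from the finite measure $\mu$. Suppose the lemma fails: there are a compact $K\subset\rmTpl$, an $\epsilon_0>0$, and sequences $h_n,h_n'\in H_\mu$, $\mathbf{y}_n\in K\cap h_n^{-1}K$, $\mathbf{y}_n'\in K\cap h_n'^{-1}K$ with $d(\mathbf{y}_n,\mathbf{y}_n')\to0$ and $d(h_n(\mathbf{y}_n),h_n'(\mathbf{y}_n'))\to0$, yet $\sup_{x\in S^1}d(h_n(x),h_n'(x))\ge\epsilon_0$; pick $x_n\in S^1$ with $d(h_n(x_n),h_n'(x_n))\ge\epsilon_0$. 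Using compactness of $S^1$ (hence of $(S^1)^3$) and of $K$, pass to a subsequence with $\mathbf{y}_n\to\mathbf{y}_\infty$, $\mathbf{y}_n'\to\mathbf{y}_\infty$, $h_n(\mathbf{y}_n)\to\mathbf{z}$, $h_n'(\mathbf{y}_n')\to\mathbf{z}$, $x_n\to x_\ast$, $h_n(x_n)\to p$, $h_n'(x_n)\to p'$. Since $K$ is a compact subset of the open set $\rmTpl$, the limiting triples $\mathbf{y}_\infty,\mathbf{z}$ have pairwise distinct entries, and $d(p,p')\ge\epsilon_0$, so $p\ne p'$.

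Next I would build the kernel, and this is where sharp $3$-transitivity of $G=\rmPGL_2(\bbR)$ on $\rmTpl$ (i.e. of the projective linear group on the projective line $\Psp^1\cong S^1$) is used. Fix once and for all a reference triple $\mathbf{o}\in\rmTpl$. For $f\in\Homeo(S^1)$ and $\mathbf{t}\in\rmTpl$, let $g_{f,\mathbf{t}}\in G$ be the unique M\"obius transformation with $g_{f,\mathbf{t}}(\mathbf{o})=f(\mathbf{t})$, and set $\Xi(f,\mathbf{t},x)=g_{f,\mathbf{t}}^{-1}(f(x))\in S^1$. A routine verification shows: $\Xi$ depends on $f$ only through the coset $Gf\in Z:=G\bs\Homeo(S^1)$; it is continuous on $Z\times\rmTpl\times S^1$; for fixed $Gf$ and $\mathbf{t}$ the map $x\mapsto\Xi(Gf,\mathbf{t},x)$ is a homeomorphism of $S^1$ carrying the ordered triple $\mathbf{t}$ to $\mathbf{o}$; and the intertwining identity $\Xi(Gf{\cdot}h,\mathbf{t},x)=\Xi(Gf,h\mathbf{t},hx)$ holds for all $h\in\Homeo(S^1)$. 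Pushing $\mu$ forward, put $\Lambda(\mathbf{t},x):=\bigl(Gf\mapsto\Xi(Gf,\mathbf{t},x)\bigr)_\ast\mu\in\Prob(S^1)$; dominated convergence makes $\Lambda$ continuous from $\rmTpl\times S^1$ into $\Prob(S^1)$ with the weak-$\ast$ topology, and the intertwining identity together with invariance of $\mu$ under right translation by elements of $H_\mu$ gives the key relation $\Lambda(h\mathbf{t},hx)=\Lambda(\mathbf{t},x)$ for every $h\in H_\mu$ and all $(\mathbf{t},x)$.

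Now I apply this to the sequences. Because $h_n,h_n'\in H_\mu$, we have $\Lambda(h_n\mathbf{y}_n,h_nx_n)=\Lambda(\mathbf{y}_n,x_n)$ and $\Lambda(h_n'\mathbf{y}_n',h_n'x_n)=\Lambda(\mathbf{y}_n',x_n)$; letting $n\to\infty$, continuity of $\Lambda$ turns the left-hand sides into $\Lambda(\mathbf{z},p)$ and $\Lambda(\mathbf{z},p')$, while both right-hand sides converge to $\Lambda(\mathbf{y}_\infty,x_\ast)$, so $\Lambda(\mathbf{z},p)=\Lambda(\mathbf{z},p')$ although $p\ne p'$. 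To finish I must show $x\mapsto\Lambda(\mathbf{z},x)$ is injective. All homeomorphisms $\Xi(c,\mathbf{z},{\cdot})$, $c\in Z$, carry the \emph{same} ordered triple $\mathbf{z}$ to the \emph{same} ordered triple $\mathbf{o}$, hence all have the same orientation type; so there is a single bijection between the three arcs of $S^1$ determined by $\mathbf{z}$ and the three arcs determined by $\mathbf{o}$, and a single choice of orientation on each, such that every $\Xi(c,\mathbf{z},{\cdot})$ maps each arc of $\mathbf{z}$ monotonically (in the prescribed direction) onto the corresponding arc of $\mathbf{o}$. Consequently, for $x\ne x'$ in $S^1$, either $\Xi(c,\mathbf{z},x)$ and $\Xi(c,\mathbf{z},x')$ lie in distinct members of the fixed finite partition of $S^1$ into the three arcs of $\mathbf{o}$ and the three points of $\mathbf{o}$, for every $c$, or they lie in one common arc $J$ of $\mathbf{o}$ with $\Xi(c,\mathbf{z},x)$ strictly on a fixed side of $\Xi(c,\mathbf{z},x')$ for every $c$; in the first case a continuous function adapted to the partition, in the second a continuous function strictly monotone along $J$, separates $\Lambda(\mathbf{z},x)$ from $\Lambda(\mathbf{z},x')$ after integrating against the probability measure $\mu$. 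This contradicts $\Lambda(\mathbf{z},p)=\Lambda(\mathbf{z},p')$, proving the lemma.

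I expect the substantive part to be exactly Step~2 and the injectivity statement in Step~3. The naive attempt — to bound $\sup_x d(h(x),h'(x))$ directly from $\delta$-agreement of $h$ and $h'$ on one triple — is circular, since it presupposes an equicontinuity of elements of $H_\mu$ near $K$ that is not available; so the finiteness and invariance of $\mu$ must genuinely be used, and the device that converts "$\mu$ is $H_\mu$-invariant" into usable uniform information is precisely the continuous $\Prob(S^1)$-valued intertwiner $\Lambda$, whose injectivity in the last variable is what ultimately forces $h$ and $h'$ to be uniformly close.
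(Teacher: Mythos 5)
Your argument is correct, and while it shares the paper's germ --- manufacturing from $\mu$ an $H_\mu$-invariant, jointly continuous kernel by normalizing $f(x)$ with the unique M\"obius transformation matching the triple $f(\mathbf{t})$, which is exactly what the cross-ratio does --- the execution is genuinely different. The paper averages the scalar cross-ratio $F_{\mathbf{z},Gf}(x)$ over $\mu$ to obtain $\bar F_{\mathbf{z}}$, proves two explicit uniformity statements (an inverse-continuity estimate via a truncation set $A_n$ with $\mu(A_n)>1/2$, and equicontinuity in the triple via $B_n$ with $\mu(B_n)>1-\eta/2$), and then controls every $x\in S^1$ by splitting the circle into three long and three short arcs determined by the two nearby triples, handling the short arcs through their endpoints. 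You instead retain the whole distribution: $\Lambda(\mathbf{t},x)$ is the push-forward of $\mu$ under $c\mapsto\Xi(c,\mathbf{t},x)$, the identity $\Lambda(h\mathbf{t},hx)=\Lambda(\mathbf{t},x)$ together with weak-$\ast$ continuity replaces the paper's quantitative lemmas, and a sequential-compactness contradiction with witnesses $x_n$ replaces the long/short-arc bookkeeping; the role of the paper's estimate (i) is played by injectivity of $x\mapsto\Lambda(\mathbf{z},x)$, which you correctly derive from the fact that all $\Xi(c,\mathbf{z},\cdot)$ send $\mathbf{z}$ to $\mathbf{o}$ and hence have the same orientation type and the same arc-to-arc correspondence. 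The trade-off: your route is less quantitative (no explicit $\delta(\epsilon,K)$) but avoids the truncations and the endpoint analysis. One small patch in your injectivity step: in the first case a continuous function literally constant on two open arcs of the partition by $\mathbf{o}$ cannot exist when they share an endpoint; but weak-$\ast$ equality of Borel probability measures on the compact metric space $S^1$ is genuine equality of measures, so it suffices to test the disjoint Borel pieces themselves ($\Lambda(\mathbf{z},x)$ gives mass $1$ to the piece receiving $x$, while $\Lambda(\mathbf{z},x')$ gives it mass $0$); in the second case your strictly monotone test function works as stated, since the pointwise difference of the integrands is strictly positive $\mu$-a.e.
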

\begin{proof}
For an arbitrary triple $\mathbf{z}\in \rmTpl$ and $x\in S^1\setminus\{z_3\}$ consider the real valued \emph{cross-ratio}
\begin{equation*}
	[x,z_1;z_2,z_3]=\frac{(x-z_1)(z_2-z_3)}{(x-z_3)(z_2-z_1)}.
\end{equation*}
In this formula we view the circle as the one-point compactification of the real line.
Denote by $[z_1,z_2]_{z_3}$ the circle arc from $z_1$ to $z_2$ not including $z_3$.
As a function in the first variable,  $[\_,z_1;z_2,z_3]$
is a monotone homeomorphism between the closed arc $[z_1,z_2]_{z_3}$
and the interval $[0,1]$.
For $f\in\Homeo(S^1)$ and $\mathbf{z}\in\rmTpl$ we define the function
\[
	F_{\mathbf{z},f}:[z_1,z_2]_{z_3}\to[0,1],~~ F_{\mathbf{z},f}(x)=[f(x),f(z_1);f(z_2),f(z_3)].
\]
Since the cross-ratio is invariant under $G$~\cite{ratcliffe}*{Theorem~4.3.1 on p.~116},
we have
$F_{\mathbf{z},gf}(x)=F_{\mathbf{z},f}(x)$ for any $g\in G$.
Hence we may and will use the notation $F_{\mathbf{z},Gf}(x)$.
We now average $F_{\mathbf{z},Gf}(x)$ with regard to the measure
$\mu$ and obtain the function
$\bar{F}_{\mathbf{z}}:[z_1,z_2]_{z_3}\to[0,1]$ with
\[
	\bar{F}_{\mathbf{z}}(x)=\!\int\limits_{G\bs\Homeo(S^1)} \!\!\!F_{\mathbf{z},Gf}(x)\,d\mu(Gf).
\]
The $H_\mu$-invariance of $\mu$
implies that
\begin{equation}\label{eq: H_mu equivariance}
	\bar{F}_{h(\mathbf{z})}(h(x))=\bar{F}_{\mathbf{z}}(x)
\end{equation}
for every $h\in H_\mu$ and every $x\in [z_1,z_2]_{z_3}$. Let us introduce the
following notation: Whenever $K\subset\rmTpl$ is a subset, we denote by $\widetilde K$
the subset
\[
	\widetilde{K}=\bigl\{ (x,\mathbf{z}) \mid \mathbf{z}\in K,~ x\in [z_1,z_2]_{z_3}\bigr\}
	\subset S^1\times S^1\times S^1\times S^1.
\]
Next let us establish the following continuity properties:
\begin{enumerate}
	\item \label{i:barFinv}
	For every compact $K\subset\rmTpl$ and every $\epsilon>0$ there is
	$\eta>0$ such that:
	\[
		\forall_{(s,\mathbf{z}),(t,\mathbf z)\in\widetilde{K}}~~\Bigl( |\bar{F}_{\mathbf{z}}(t)-\bar{F}_{\mathbf{z}}(s)|<\eta
		~\Rightarrow~ d(t,s)<\frac{\epsilon}{5}\Bigr)\]
	\item \label{i:equiF}
	For every compact $K\subset\rmTpl$ and every $\eta>0$ there is
	$\delta>0$ such that:
	\[
		\forall_{(t,\mathbf{y}),(t,\mathbf{z})\in \widetilde{K}}~~\Bigl(
		d(\mathbf{y}, \mathbf{z})<\delta
		~\Rightarrow~
		|\bar{F}_{\mathbf{y}}(t)-\bar{F}_{\mathbf{z}}(t)|<\frac{\eta}{2}\Bigr)
	\]
\end{enumerate}
\emph{Proof of~(\ref{i:barFinv}):}
Let $K\subset\rmTpl$ be compact and $\epsilon>0$. Let $f\in\Homeo(S^1)$.
The family of homeomorphisms
$\bar{F}_{\mathbf{z},Gf}\colon [z_1,z_2]_{z_3}\to[0,1]$ depends
continuously on $\mathbf{z}\in\rmTpl$.
The inverses of these functions are equicontinuous
when $\mathbf{z}$ ranges in a compact subset.
Hence there exists
$\theta(Gf)>0$ such
that for every $\mathbf{z}\in K$ and all $t,s\in [z_1,z_2]_{z_3}$ we have the implication
\[
	\abs{F_{\mathbf{z},Gf}(t)-F_{\mathbf{z},Gf}(s)}<\theta(Gf)
		~\Rightarrow~d(t,s)<\frac{\epsilon}{5}.
\]
The set $G\bs\Homeo(S^1)$ is the union of an increasing sequence of measurable sets
\[
	A_n=\bigl\{Gf\in G\bs H \mid \theta(Gf)>\frac{1}{n} \bigr\}.
\]
Fix $n$ large enough so that $\mu(A_n)>1/2$.
We claim that $\eta=(2n)^{-1}$ satisfies (\ref{i:barFinv}).
Suppose that $\mathbf{z}\in K$ and $t,s\in [z_1,z_2]_{z_3}$
satisfy $d(t,s)>\epsilon/5$.
Up to exchanging $t$ and $s$, we may assume that $[s,z_1;z_2,z_3]\ge [t,z_1;z_2,z_3]$.
Then $F_{\mathbf{z},Gf}(s)\ge F_{\mathbf{z},Gf}(t)$ for all $f\in\Homeo(S^1)$, and
\[
	\bar{F}_{\mathbf{z}}(s)-\bar{F}_{\mathbf{z}}(t)\ge
	\int_{A_n} (F_{\mathbf{z},Gf}(s)-F_{\mathbf{z},Gf}(t))\,d\mu>\mu(A_n)\cdot \frac{1}{n}>\eta.
\]
\emph{Proof of (\ref{i:equiF}):} Let $K\subset\rmTpl$ be compact, and let $\eta>0$.
Let $f\in\Homeo(S^1)$. Since $\widetilde K$ is compact, $\bar{F}_\mathbf{z}(x)$ as a function
on $\widetilde K$ is equicontinuous. Hence
there is $\delta(Gf)>0$ such that for all $(x,\mathbf y)\in \widetilde K$ and
$(x,\mathbf z)\in \widetilde K$ with $d(\mathbf{y}, \mathbf{z})<\delta(Gf)$ we have
\[
	\abs{F_{\mathbf{y},Gf}(x)-F_{\mathbf{z},Gf}(x)}<\frac{\eta}{2}.
\]
The set $G\bs\Homeo(S^1)$ is the union of an increasing sequence of measurable sets
\[
	B_n=\bigl\{Gf\in G\bs H \mid \delta(Gf)>\frac{1}{n}\bigr\}.
\]
We choose $n\in\bbN$ with $\mu(B_n)>1-\eta/2$ and set $\delta=n^{-1}$.
Then for $(x,\mathbf y)\in \widetilde K$ and
$(x,\mathbf z)\in \widetilde K$ with $d(\mathbf{y}, \mathbf{z})<\delta$ we have
\[
	|\bar{F}_{\mathbf{y}}(x)-\bar{F}_{\mathbf{z}}(x)|\le
	\int_{B_n}|F_{\mathbf{y},Gf}(x)-F_{\mathbf{z},Gf}(x)|\,d\mu(Gf) + \frac{\eta}{2}
	<\eta,
\]
proving (\ref{i:equiF}).

\smallskip

We can now complete the proof of the lemma. Let $K\subset \rmTpl$ be a compact subset. Let  $\epsilon>0$. We can choose $r>0$ such that
\[
	K\subset\bigl\{\mathbf{x}\in\rmTpl \mid d(x_1, x_2), d(x_2,x_3), d(x_3,x_1)\ge r\bigl\}.
\]
For the given $\epsilon$ and $K$ let $\eta>0$ be as in~(\ref{i:barFinv}).
For the given $\epsilon$ and $K$ and this $\eta$ let $\delta>0$ be as in~(\ref{i:equiF}).
We may also assume that
\[
	\delta<\frac{\epsilon}{5}<\frac{r}{3}.
\]
Consider $h,h'\in H_\mu$ and $\mathbf{y}, \mathbf{y}'\in K$ where $\mathbf{z}=h(\mathbf{y})$,
$\mathbf{z}'=h'(\mathbf{y}')$ are also in $KÅ$, and assume that
$d(\mathbf{y}, \mathbf{y}')<\delta$ and $d(\mathbf{z}, \mathbf{z}')<\delta$.
There are several possibilities for the cyclic order of the points $\{y_1, y_1', y_2, y_2', y_3, y_3'\}$,
but since the pairs $\{y_i, y_i'\}$ of corresponding points in the triples $\mathbf{y}, \mathbf{y}'$
are closer ($d(y_i,y_i')<\delta<r/3$)  than the separation between the points in the triples
($d(y_i,y_j), d(y_i', y_j')\ge r$),
these points define a  partition of the circle into three long arcs $L_{ij}$
separated by three short arcs $S_k$ (possibly degenerating into points) in the following
cyclic order
\[
	S^1=L_{12} \cup S_2 \cup L_{23} \cup S_3 \cup L_{31} \cup S_1.
\]
The end points of the arc $S_i$ are $\{y_i, y_i'\}$; and if $(i,j,k)=(1,2,3)$ up to a cyclic permutation, then
\[
	L_{ij}=[y_i,y_j]_{y_k}\cap[y_i',y_j']_{y_k'}.
\]
Note that for any $x\in L_{ij}$ we have
\[
	h(x), h'(x)\in [z_i, z_j]_{z_k}\cap [z_i', z_j']_{z_k'}.
\]
Using~(\ref{i:equiF}) and~\eqref{eq: H_mu equivariance} we obtain
\begin{align*}
	|\bar{F}_{\mathbf{z}}(h(x))-\bar{F}_{\mathbf{z}}(h'(x))| &\le
	 |\bar{F}_{\mathbf{z}}(h(x))-\bar{F}_{\mathbf{z}'}(h'(x))|
		+|\bar{F}_{\mathbf{z}'}(h'(x))-\bar{F}_{\mathbf{z}}(h'(x))|\\
	&\le |\bar{F}_{\mathbf{z}}(h(x))-\bar{F}_{\mathbf{z}'}(h'(x))|+\frac{\eta}{2}\\
	&=|\bar{F}_{\mathbf{y}}(x)-\bar{F}_{\mathbf{y}'}(x)|+\frac{\eta}{2}
	<\eta.
\end{align*}
By (\ref{i:barFinv}) it follows that $d(h(x),h'(x))<\epsilon/5$
for every $x\in L_{12}\cup L_{23}\cup L_{31}$.
It remains to consider points $x\in S_i$, $i=1,2,3$, which can be controlled via
the behavior of the endpoints $y_i, y_i'$ of the short arc $S_i$.

First observe that the image $h(S_i)$ of $S_i$ is the short arc
defined by $h(y_i), h(y_i')$. Indeed, on one hand the two points are close:
\[
	d(h(y_i), h(y_i'))\le d(h(y_i), h'(y_i'))+d(h'(y_i'),h(y_i'))<\delta+\frac{\epsilon}{5}<\frac{2}{5}\epsilon.
\]
On the other hand, $S^1\setminus S_i$ of $S_i$ contains a point $y_j$ with
$j\in \{1,2,3\}\setminus \{i\}$; therefore $h(y_j)\notin h(S_i)$.
Since $h(\mathbf{y})\in K$ we have
\[d(h(y_i),h(y_j))\ge r>2\epsilon/5.\]
Hence $h(S_i)$ is the short arc defined by $2\epsilon/3$-close points $h(y_i), h(y_i')$,
implying
\[
	d(h(x), h(y_i))<\frac{2}{5}\epsilon\qquad (x\in S_i).
\]
Similarly, $h'(S_i)$ is the short arc defined by $2\epsilon/5$-close points $h'(y_i), h'(y_i')$,
and
\[
	d(h'(x), h'(y_i))<\frac{2}{5}\epsilon\qquad (x\in S_i).
\]
Since $y_i\in L_{ij}$, $d(h(y_i), h'(y_i))<\epsilon/5$. Therefore for any $x\in S_i$
\[
	d(h(x),h'(x))\le d(h(x), h(y_i))+d(h(y_i),h'(y_i))+d(h'(x), h'(y_i))<\epsilon.\qedhere
\]
\end{proof}

\begin{proof}[Continuation of the proof of Theorem~\ref{T:intoPGL2}]\hfill{}\\
We claim that $H_\mu<\Homeo(S^1)$ is a \emph{convergence group}, i.e., for any
compact subset $K\subset \rmTpl$ the set
\[
	H(\mu,K)=\{ h \in H_\mu \mid h^{-1} K\cap K\ne\emptyset \}
\]
is compact. In particular, the Polish group
$H_\mu$ is locally compact. Let us fix a compact subset $K\subset\rmTpl$.
Since $H(\mu,K)$ is a closed subset in the Polish group $\Homeo(S^1)$, it suffices to show that any sequence
$\{h_n\}_{n=1}^\infty$ in $H(\mu,K)$ contains a Cauchy subsequence.
Choose triples $\mathbf{y}_n\in h_n^{-1} K\cap K$.
Upon passing to a subsequence, we may assume
that the points $\mathbf y_n$ converge to some $\mathbf y\in K$ and
the points $\mathbf z_n=h_{n}(\mathbf{y}_{n})$ converge to some
$\mathbf z\in K$.
Let $\epsilon>0$. For the given $\epsilon$ and $K$
let $\delta>0$ be as in Lemma~\ref{L:Hmu}. Choose
$N\in \mathbf{N}$ be large enough to ensure that $d(\mathbf{y}_{n},\mathbf{y}_m)<\delta$ and
$d(\mathbf{z}_{n}, \mathbf{z}_m)<\delta$ for all $n,m>N$.
It follows from Lemma~\ref{L:Hmu} that $h_n$ and $h_m$ are $\epsilon$-close whenever $n,m>N$.
This proves that $H_\mu$ is a convergence group on the circle.

Finally, it follows that $H_\mu$ is conjugate to a closed subgroup of $G$.
For discrete groups this is a well known results of Gabai \cite{Gabai} and Casson -- Jungreis \cite{Casson}.
The case of non-discrete convergence group $H_\mu<\Homeo(S^1)$ can be argued more directly.
The closed convergence group $H_\mu$ is a locally compact subgroup of $\Homeo(S^1)$;
the classification of all such groups is well known, and the only ones with convergence property
are conjugate to
$\rmPGL_2(\bbR)$~\citelist{\cite{ghys}*{pp.~345--348}\cite{locally-compact}*{pp.~51--54}}.
\end{proof}

We return to the proof of Theorem~\ref{T:ME-rigidity2and3} in case of $n=2$.
Starting from an integrable $(G,H)$-coupling $(\Omega,m)$ between $G=\rmPGL_2(\bbR)$
and an unknown compactly generated unimodular group $H$ a continuous
representation $\rho:H\to \Homeo(S^1)$ with compact kernel and closed image
was constructed.
Theorem~\ref{T:intoPGL2} implies that, up to conjugation, we may assume that
\[
	\bar{H}=\rho(H)<G=\rmPGL_2(\bbR).
\]
Since $\bar{H}$ is measure equivalent to $G=\rmPGL_2(\bbR)$, it is non-amenable.

Case (1): $\bar{H}<G=\rmPGL_2(\bbR)$ is non-discrete.
(This does not occur in the original formulation of Theorem~\ref{T:ME-rigidity2and3},
but is included in the broader context of lcsc $H$ adapted in this proof).
There are only two non-discrete non-amenable closed subgroups of $G$:
the whole group $G$ and its index two subgroup $G^0=\rmPSL_2(\bbR)$.
Both of these groups may appear as $\bar{H}$; in fact,
direct products of the form $H\cong G\times K$ or $H\cong G^0\times K$
with compact $K$ and certain almost direct products
$G'\times K'/C$ as in~\cite{locally-compact}*{Theorem~A} give rise to
an integrable measure equivalence between $H$ and
$G$ (cf.~\cite{locally-compact}*{Theorem~C}).

Case (2): $\bar{H}$ is discrete. We claim that such $\bar{H}$ is a cocompact lattice
in $G$. Indeed, every finitely generated discrete non-amenable subgroup of $G$
is either cocompact or is virtually a free group $\bbF_2$.
The latter possibility is ruled out by the following.
\begin{lemma}
The free group $\bbF_2$ is not $\rmL^1$-measure equivalent to $G$.
\end{lemma}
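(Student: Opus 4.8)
The plan is to argue by contradiction: run the machinery of the preceding pages on an $\mathrm{L}^1$-integrable coupling, and then read off from its output a non-uniform lattice in $\rmPGL_2(\bbR)$ that is forced to be $\mathrm{L}^1$-integrable --- contradicting a classical fact. So suppose $\mathbf{F}_2$ were $\mathrm{L}^1$-measure equivalent to $G=\isom(\Hsp^2)\cong\rmPGL_2(\bbR)$, and let $(\Omega,m)$ be an $\mathrm{L}^1$-integrable $(G,\mathbf{F}_2)$-coupling; passing to an ergodic component, which remains $\mathrm{L}^1$-integrable by the Fubini argument of \S\ref{ssub:reduction_to_cocycles_of_lattices} together with \cite{FurmanME}*{Lemma~2.2}, we may assume it is ergodic. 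Applying Theorem~\ref{T:split+hom} with $\calG=\Homeo(S^1)$ and $\pi\colon G\hookrightarrow\Homeo(S^1)$ the standard embedding --- which is faithful, has closed image, and with respect to which $\Homeo(S^1)$ is strongly ICC relative to $G$ by Lemma~\ref{L:sICCinHomeo} --- produces a continuous homomorphism $\rho\colon\mathbf{F}_2\to\Homeo(S^1)$ with compact, hence trivial, kernel (as $\mathbf{F}_2$ is discrete and torsion-free), closed image $\bar H=\rho(\mathbf{F}_2)$, and a $(\pi,\rho)$-equivariant measurable map $\Psi\colon\Omega\to\Homeo(S^1)$ whose pushforward of $m$ descends to finite $G$- and $\bar H$-invariant measures on $\Homeo(S^1)/\bar H$ and $G\bs\Homeo(S^1)$. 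By Theorem~\ref{T:intoPGL2}, invoked for the finite $\bar H$-invariant measure on $G\bs\Homeo(S^1)$, we may conjugate so that $\bar H<G=\rmPGL_2(\bbR)$; since $\rho$ is proper, $\bar H$ is discrete, and since ergodicity forces the finite $G$-invariant measure on $\Homeo(S^1)/\bar H$ onto a single orbit $G/\bar H$, the subgroup $\bar H$ is in fact a lattice in $G$.

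The heart of the argument is then to check that $\Psi$ exhibits the tautological lattice coupling $(G,m_G)$ of Example~\ref{exmp:lattices} as an $\mathrm{L}^1$-integrable quotient of $(\Omega,m)$. After a further conjugation --- and, if the relevant coset representative does not lie in $G$, after replacing $\bar H$ by a finite-index subgroup (still free of finite rank $\ge 2$) --- one arranges that $\Psi$ takes values in $G$, is $(\id_G,\rho)$-equivariant, and pushes $m$ to a positive multiple of the Haar measure $m_G$. Thus $\Psi\colon(\Omega,m)\to(G,m_G)$ is a $G\times\mathbf{F}_2$-equivariant, scalar-measure-preserving surjection onto the natural $(G,\bar H)$-coupling realizing $\bar H$ (with $\mathbf{F}_2$ acting on $G$ through $\rho$ by right translations). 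Integrability now descends: the $\Psi$-preimage of a measurable $\bar H$-fundamental domain in $G$ is an $\mathbf{F}_2$-fundamental domain in $\Omega$, $\Psi$ intertwines the two ME-cocycles $G\times X\to\mathbf{F}_2$ and $G\times(\bar H\bs G)\to\bar H$ through the isomorphism $\rho$, and $\rho$, being a group isomorphism, is bi-Lipschitz for the word metrics; hence the cocycle $G\times(\bar H\bs G)\to\bar H$ is $\mathrm{L}^1$-integrable, which is to say that $\bar H<\rmPGL_2(\bbR)$ is an $\mathrm{L}^1$-integrable lattice.

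It remains to derive the contradiction. As $\bar H\cong\mathbf{F}_2$ (or a finite-index subgroup thereof) is free, it is not virtually a closed-surface group, hence not a cocompact lattice in $\rmPGL_2(\bbR)$; so $\bar H$ is a \emph{non-uniform} lattice. But it is classical that a non-uniform lattice $\Lambda<\rmPGL_2(\bbR)$ is not $\mathrm{L}^1$-integrable --- the cusp of $\Lambda\bs\Hsp^2$ obstructs it, and in fact only $\mathrm{L}^{1-\epsilon}$-integrability holds (cf.\ \cite{shalom} and the discussion following Theorem~\ref{thm:Shalom and lattices}). This contradicts the previous paragraph and proves the lemma. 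I expect the only genuinely delicate point to be the descent of $\mathrm{L}^1$-integrability along $\Psi$ --- matching fundamental domains and cocycles across $\Psi$ and across $\rho$, and handling a coset representative possibly outside $G$; everything else is either routine bookkeeping with the results already established, or the classical non-integrability of non-uniform rank-one lattices in dimension~$2$.
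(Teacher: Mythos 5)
Your reduction to ``$\bar H$ is an $\rmL^1$-integrable non-uniform lattice'' breaks down at its central step, not at the step you flag as delicate. After Theorem~\ref{T:split+hom} and Theorem~\ref{T:intoPGL2} you have $\bar H<G$ discrete and a finite $G$-invariant ergodic measure on $\Homeo(S^1)/\bar H$, but your claim that ``ergodicity forces this measure onto a single orbit $G/\bar H$'' is unjustified: an ergodic invariant measure need not be supported on one orbit, and there is no tool to show it here. In the $n\ge 3$ case the analogous statements (that $\bar\Lambda$ is a lattice and that $\Psi_*m$ is Haar or atomic, Theorem~\ref{T:ME-rigidity}(2a)/(2b)) are obtained from Ratner's theorem applied inside the Lie group $G$, which is possible only because there $\Psi$ takes values in $G$ itself; for $n=2$ the map $\Psi$ lands in $\Homeo(S^1)$, no Ratner-type theorem is available, and nothing in the paper (or in your argument) rules out, say, $\Psi_*m$ spread over many $G$-$\bar H$ double cosets with $\bar H$ a discrete free subgroup of infinite covolume. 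The subsequent arrangement ``$\Psi$ takes values in $G$ and pushes $m$ to a multiple of $m_G$'' is therefore not a bookkeeping matter but a claim strictly stronger than anything established in the $n=2$ setting; it is precisely to avoid needing such a statement that the paper argues differently: it embeds $\bbF_2$ as a cocompact lattice in $H=\operatorname{Aut}(\mathrm{Tree}_4)$, composes couplings to get an integrable $(G,H)$-coupling, applies Theorems~\ref{thm:taut relative homeo} and~\ref{T:split+hom} to get a continuous homomorphism $H\to\Homeo(S^1)$ with compact kernel and closed image, and derives a contradiction from $H$ being totally disconnected and virtually simple while $\Homeo(S^1)$ has no non-discrete totally disconnected subgroups.

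A second gap is your final ingredient: that a non-uniform lattice in $\rmPGL_2(\bbR)$ is not $\rmL^1$-integrable. This statement is true, but it is not ``classical'' in the sense of being quotable from the sources you point to: Shalom (Theorem~\ref{thm:Shalom and lattices} and the discussion after it) proves only the positive direction, $\rmL^{1-\epsilon}$-integrability for $n=2$, and nothing in this paper asserts the failure at the exponent $1$. Establishing it requires an argument of its own --- either a cusp-excursion computation showing the borderline divergence of $\int|c(g,\cdot)|_\Gamma\,d\mu$, or an appeal to the paper's machinery (if a non-uniform free lattice were $\rmL^1$-integrable, Theorem~\ref{thm:taut relative homeo} together with Proposition~\ref{P:taut-lattice} and Lemma~\ref{L:taut-MR} would realize every automorphism of $F_n$, $n\ge 3$, by a conjugation in $\Homeo(S^1)$, which is false). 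As written, then, both pillars of your contradiction --- the lattice/Haar structure of $\Psi_*m$ and the non-integrability of non-uniform lattices --- are missing, and the first appears genuinely out of reach by this route.
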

Note that these groups are measure equivalent since $\bbF_2$ forms a lattice in $G$.
\begin{proof}
Assuming $\bbF_2$ is $\rmL^1$-measure equivalent to $G$,
one can construct an integrable measure equivalence between $G$ and the automorphism
group $H={\rm Aut}({\rm Tree}_4)$ of the $4$-regular tree, which contains $\bbF_2$
as a cocompact lattice. By Theorems~\ref{thm:taut relative homeo} and~\ref{T:split+hom}
this would yield a continuous homomorphism
$H\to \Homeo(S^1)$ with closed image.
This leads to a contradiction, because $H$ is totally disconnected
and virtually simple~\cite{tits-trees}*{Th\'eor\`eme~4.5},
while $\Homeo(S^1)$ has no non-discrete totally disconnected
subgroups~\cite{ghys}*{Theorem~4.7 on p.~345}.
\end{proof}


\begin{appendix}

\section{Measure equivalence} 
\label{sec:appendix_measure_equivalence}
The appendix contains some general facts related to measure equivalence (Definition~\ref{D:ME}),
the strong ICC property (Definition~\ref{def:strongy ICC definition}),
and the notions of taut couplings and groups (Definition~\ref{D:M-rigidity}).

\subsection{The category of couplings}
\label{subs:composition}\hfill{}\\
Measure equivalence is an equivalence relation on unimodular lcsc groups.
Let us describe explicitly the constructions which show
reflexivity, symmetry and transitivity of measure equivalence.

\subsubsection{\textbf{Tautological coupling}} 
\label{ssub:tautological_coupling}
The tautological coupling is the $(G\times G)$-coupling
$(G,m_G)$ given by
$(g_1,g_2):g\mapsto g_1 g g_2^{-1}$. It demonstrates reflexivity of measure equivalence.
\subsubsection{\textbf{Duality}} 
\label{ssub:duality}
Symmetry is implied by the following:
Given a $(G,H)$-coupling $(\Omega,m)$ the dual $(\dual{\Omega},\dual{m})$ is the $(H,G)$-coupling $\dual{\Omega}$ with the same underlying measure space $(\Omega,m)$ and
the $H\times G$-action $(h,g):\dual{\omega}\mapsto (g,h)\dual{\omega}$.

\subsubsection{\textbf{Composition of couplings}} 
\label{ssub:composition_of_couplings}
Compositions defined below shows that measure equivalence is a transitive relation.
Let $G_1,H, G_2$ be unimodular lcsc groups, and $(\Omega_i,m_i)$ be a
$(G_i,H)$-coupling for $i\in\{1,2\}$. We describe the $(G_1,G_2)$-coupling 
$\Omega_1\times_H\dual{\Omega}_2$ modeled on
the space of $H$-orbits on $(\Omega_1\times\Omega_2,m_1\times m_2)$ 
with respect to the diagonal $H$-action. Consider measure isomorphisms for $(\Omega_i,mi)$
as in~\eqref{e:ij-fd}: For $i\in\{1,2\}$ there are
finite measure spaces $(X_i,\mu_i)$ and $(Y_i,\nu_i)$,
measure-preserving actions $G_i\acts (X_i,\mu_i)$ and $H\acts (Y_i,\nu_i)$,
measurable cocycles $\alpha_i:G_i\times X_i\to H$ and $\beta_i:H\times Y_i\to G_i$,
and measure space isomorphisms $G_i\times Y_i\cong \Omega_i\cong H\times X_i$
with respect to which the $G_i\times H$-actions are given by
\begin{align*}
	(g_i,h)\ :\ & (h',x)\mapsto (h h' \alpha_i(g_i,x)^{-1},\, g_i.x),\\
	(g_i,h)\ :\ & (g',y)\mapsto (g_i g' \beta(h,y)^{-1},\,h.y).
\end{align*}
The space $\Omega_1\times_H\dual{\Omega}_2$ with its natural $G_1\times G_2$-action
is equivariantly isomorphic to $(X_1\times X_2\times H,\mu_1\times\mu_2\times m_H)$
endowed with the $G_1\times G_2$-action
\[
	(g_1,g_2): (x_1,x_2,h)\mapsto (g_1.x_1,\, g_2.x_2,\, \alpha_1(g_1,x_1) h \alpha_2(g_2,x_2)^{-1}).
\]
To see that it is a $(G_1,G_2)$-coupling, we identify this space with $Z\times G_1$
equipped with the action
\[
	(g_1,g_2): (g',z)\mapsto (g_1 g' c(g_2,z)^{-1}, g_2.z)\qquad(g'\in G_1,\ z\in Z)
\]
where $Z=X_2\times Y_1$, while the action $G_2\acts Z$ and the cocycle $c:G_2\times Z\to G_1$
are given by
\begin{equation}\label{e:composition-coc}
		\begin{aligned}
		&g_2:(x,y)\mapsto (g_2.x,\alpha_2(g_2,x).y),\\
		&c(g_2,(x,y))=\beta_1(\alpha_2(g_2,x), y).
	\end{aligned}
\end{equation}
Similarly, $\Omega_1\times_H\dual{\Omega}_2\cong W\times G_2$, for $W\cong X_1\times Y_2$.

\subsubsection{\textbf{Morphisms}} 
\label{ssub:morphisms}
Let $(\Omega_i,m_i)$, $i\in\{1,2\}$, be two $(G,H)$-couplings.
Let $F:\Omega_1\to\Omega_2$ be a measurable map such that for $m_1$-a.e. 
$\omega\in\Omega_1$ and every $g\in G$ and every $h\in H$
\[
	F((g,h)\omega)=(g,h)F(\omega).
\]
Such maps are called \emph{quotient maps} or \emph{morphisms}.

%

\subsubsection{\textbf{Compact kernels}} 
\label{ssub:compact kernels}
Let $(\Omega,m)$ be a $(G,H)$-coupling, and let
\[\{1\}\to K\to G\to \bar{G}\to\{1\}\]
be a short exact sequence where $K$ is compact. Then the
natural quotient space $(\bar\Omega,\bar{m})=(\Omega,m)/K$
is a $(\bar{G},H)$-coupling, and the natural map $F:\Omega\to\bar{\Omega}$,
$F:\omega\mapsto K\omega$, is equivariant in the sense of $F((g,h)\omega)=(\bar{g},h) F(\omega)$.
This may be considered as an \emph{isomorphism of couplings up to compact kernel}.

\subsubsection{\textbf{Passage to lattices}} 
\label{ssub:passage to lattices}
Let $(\Omega,m)$ be a $(G,H)$-coupling, and let $\Gamma<G$ be a lattice.
By restricting the $G\times H$-action on $(\Omega,m)$ to $\Gamma\times H$
we obtain a $(\Gamma,H)$-coupling.
Formally, this follows by considering $(G,m_G)$ as a $\Gamma\times G$-coupling
and considering the composition $G\times_G \Omega$ as $\Omega$ with the
$\Gamma\times H$-action.

\subsection{$\rmL^p$-integrability conditions}
\label{sub:Lp-integrability} 
Let $G$ and $H$ be compactly generated unimodular lcsc groups
equipped with proper norms $|\cdot |_G$ and $|\cdot |_H$.
Let $c:G\times X\to H$ be a measurable cocycle,
and fix some $p\in [1,\infty)$.
For $g\in G$ we define
\[
	\|g\|_{c,p}=\Bigl(\int_X |c(g,x)|^p_H\,d\mu(x)\Bigr)^{1/p}.
\]
For $p=\infty$ we use the essential supremum.
Assume that $\|g\|_{c,p}<\infty$ for a.e.~$g\in G$.
We claim that there are constants $a,A>0$ so that for every $g\in G$
\begin{equation}\label{e:sublinear}
	\|g\|_{c,p}\le A\cdot |g|_G+a.
\end{equation}
Hence $c$ is $\rmL^p$-integrable in the sense of Definition~\ref{D:Lp-ME}.
The key observation here is that $\|-\|_{c,p}$ is subadditive. Indeed, by the cocycle identity,
subadditivity of the norm $|-|_H$, and the Minkowski inequality, for any $g_1,g_2\in G$ we get
\begin{align*}
	\|g_2 g_1\|_{c,p}&\le\Bigl(\int_X
		\bigl(|c(g_2,g_1. x)|_H+
		|c(g_1, x)|_H\bigr)^p\,d\mu(x)
		\Bigr)^{1/p}\\
	&\le \Bigl(\int_X |c(g_2,-)|^p_H\,d\mu\Bigr)^{1/p}
	+\Bigl(\int_X |c(g_1,-)|^p_H\,d\mu\Bigr)^{1/p}\\
	&= \|g_2\|_{c,p}+\|g_1\|_{c,p}.
\end{align*}
For $t>0$ denote $E_t=\{g\in G : \|g\|_{c,p}<t\}$.
We have $E_t\cdot E_s\subseteq E_{s+t}$ for any $t,s>0$.
Fix $t$ large enough so that $m_G(E_t)>0$.
By~\cite{doran+fell}*{Corollary~12.4 on p.~235},
$E_{2t}\supseteq E_t\cdot E_t$ has a non-empty interior. Hence any compact subset of $G$
can be covered by finitely many translates of $E_{2t}$.
The subadditivity implies that $\|g\|_{c,p}$ is bounded on compact sets.
This gives (\ref{e:sublinear}).

\begin{lemma}\label{L:composition-of-lp-coc}
	Let $G$,$H$,$L$ be compactly generated groups,
	$G\acts (X,\mu)$, $H\acts (Y,\nu)$ be finite measure-preserving actions,
	and $\alpha:G\times X\to H$ and $\beta:H\times Y\to L$ be $\rmL^p$-integrable
	cocycles for some $1\le p\le \infty$.
	Consider $Z=X\times Y$ and $G\acts Z$ by $g:(x,y)\mapsto (g.x, \alpha(g,x).y)$.
	Then the cocycle $\gamma:G\times Z\to L$ given by
	\[
		\gamma(g,(x,y))=\beta(\alpha(g,x),y).
	\]
	is $\rmL^p$-integrable.
\end{lemma}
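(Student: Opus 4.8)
The plan is to reduce everything to the sublinear growth estimate~(\ref{e:sublinear}) proved in this appendix, applied to the second cocycle $\beta$, combined with Tonelli's theorem and Minkowski's inequality. First I would record that $\gamma$ really is a measurable cocycle over the skew-product action $G\acts(Z,\zeta)$ with $\zeta=\mu\times\nu$: the action preserves $\zeta$ because $\mu$ is $G$-invariant and each fibre map $y\mapsto\alpha(g,x).y$ preserves the $H$-invariant measure $\nu$, and the cocycle identity for $\gamma$ follows from those of $\alpha$ and $\beta$ by a one-line substitution (using $\alpha(g_2g_1,x)=\alpha(g_2,g_1.x)\alpha(g_1,x)$ inside $\beta$); measurability of $\gamma$ is clear as it is a composition of measurable maps.

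Next, since $\beta$ is $\rmL^p$-integrable, $\|h\|_{\beta,p}<\infty$ for a.e.\ $h\in H$, so~(\ref{e:sublinear}), applied with the roles $G\rightsquigarrow H$, $X\rightsquigarrow Y$, $H\rightsquigarrow L$, yields constants $A,a>0$ with $\|h\|_{\beta,p}\le A\abs{h}_H+a$ for \emph{every} $h\in H$. For $p\in[1,\infty)$ and a.e.\ $g\in G$, Tonelli's theorem gives
\[
	\int_Z\abs{\gamma(g,z)}_L^p\,d\zeta(z)
	=\int_X\Bigl(\int_Y\abs{\beta(\alpha(g,x),y)}_L^p\,d\nu(y)\Bigr)d\mu(x)
	=\int_X\|\alpha(g,x)\|_{\beta,p}^p\,d\mu(x),
\]
and applying Minkowski's inequality in $\rmL^p(X,\mu)$ to $x\mapsto\|\alpha(g,x)\|_{\beta,p}$ together with $\|\alpha(g,x)\|_{\beta,p}\le A\abs{\alpha(g,x)}_H+a$ produces
\[
	\|g\|_{\gamma,p}\ \le\ A\,\|g\|_{\alpha,p}+a\,\mu(X)^{1/p}.
\]
Since $\alpha$ is $\rmL^p$-integrable, $\|g\|_{\alpha,p}<\infty$ for a.e.\ $g$, hence $\|g\|_{\gamma,p}<\infty$ for a.e.\ $g$, which is the assertion. (Feeding in~(\ref{e:sublinear}) for $\alpha$ as well, one even sees that $\|g\|_{\gamma,p}$ is sublinear in $\abs{g}_G$.)

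For $p=\infty$ I would run the same argument with essential suprema: for a.e.\ $g$ the bound $\abs{\beta(\alpha(g,x),y)}_L\le A\abs{\alpha(g,x)}_H+a$ holds for $\nu$-a.e.\ $y$ and every $x$, so by Fubini (a subset of $X\times Y$ all of whose $x$-slices are $\nu$-null is $\zeta$-null) it holds for $\zeta$-a.e.\ $(x,y)$; taking the ess sup over $Z$ gives $\|g\|_{\gamma,\infty}\le A\|g\|_{\alpha,\infty}+a<\infty$. The computation is entirely routine; the one place deserving attention is the passage from "$\|h\|_{\beta,p}$ finite a.e." to the uniform bound $\|h\|_{\beta,p}\le A\abs{h}_H+a$ valid for \emph{all} $h\in H$ (needed because $\alpha(g,x)$ may a priori wander into a null set of $H$), and this is precisely what~(\ref{e:sublinear}) supplies.
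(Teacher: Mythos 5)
Your proof is correct and follows essentially the same route as the paper: both rest on the uniform sublinear bound~(\ref{e:sublinear}) for $\beta$ (which handles the fact that $\alpha(g,x)$ may land in a null set of $H$), combined with Fubini/Tonelli and an estimate of $\|g\|_{\gamma,p}$ in terms of $\|g\|_{\alpha,p}$. The only difference is cosmetic: you close the estimate with Minkowski's inequality, whereas the paper uses a cruder pointwise bound $(B\,t+b)^p\le\max(B,b)^p(\cdot)$, and the paper dismisses $p=\infty$ as obvious where you spell it out.
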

\begin{proof}
For $p=\infty$ the claim is obvious. Assume $p<\infty$.
Let $A,a,B,b$ be constants such that $\|h\|_{\beta,p}\le B\cdot |h|_H+b$ and
$\|g\|_{\alpha,p}\le A\cdot |g|_G+a$. Then
\begin{align*}	
	\|g\|_{\gamma,p}^p &= \int_{X\times Y} |\beta(\alpha(g,x),y)|_\rmL^p\,d\mu(x)\,d\nu(y)\\
		&\le  \int_X (B\cdot |\alpha(g,x)|_H+b)^p\,d\mu(x)\\
		&\le \max(B,b)^p\cdot \|g\|_{\alpha,p}^p\le (C\cdot |g|_G+c)^p
\end{align*}
for appropriate constants $c>0$ and $C>0$.
\end{proof}

\medskip

\begin{lemma}\label{lem:composition of lp couplings}
Let $G_1,H,G_2$ be compactly generated unimodular lcsc groups. For $i\in\{1,2\}$ let
$(\Omega_i,m_i)$ be an $\rmL^p$-integrable
$(G_i,H)$-coupling. Then $\Omega_1\times_H \dual{\Omega}_2$ is an $\rmL^p$-integrable
$(G_1,G_2)$-coupling.	
\end{lemma}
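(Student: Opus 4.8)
The plan is to reduce the statement directly to Lemma~\ref{L:composition-of-lp-coc}, using the explicit model of the composed coupling recorded in Appendix~\ref{ssub:composition_of_couplings}. First I would unwind the hypothesis: since $(\Omega_i,m_i)$ is an $\rmL^p$-integrable $(G_i,H)$-coupling, by definition there exist measure isomorphisms as in~(\ref{e:ij-fd}), finite measure spaces $(X_i,\mu_i)$ and $(Y_i,\nu_i)$ with measure-preserving actions $G_i\acts(X_i,\mu_i)$ and $H\acts(Y_i,\nu_i)$, and associated ME-cocycles $\alpha_i\colon G_i\times X_i\to H$ and $\beta_i\colon H\times Y_i\to G_i$ that are all $\rmL^p$-integrable. (Recall that $\rmL^p$-integrability of a coupling is a symmetric condition, so for each $i\in\{1,2\}$ we obtain integrability of \emph{both} $\alpha_i$ and $\beta_i$.)

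Next I would quote from Appendix~\ref{ssub:composition_of_couplings} the fact that, with respect to these choices, the $(G_1,G_2)$-coupling $\Omega_1\times_H\dual{\Omega}_2$ is equivariantly isomorphic to $Z\times G_1$ with $Z=X_2\times Y_1$, where the $G_2$-action on $Z$ and the cocycle $c\colon G_2\times Z\to G_1$ are given by~(\ref{e:composition-coc}): namely $g_2\colon(x,y)\mapsto(g_2.x,\alpha_2(g_2,x).y)$ and $c(g_2,(x,y))=\beta_1(\alpha_2(g_2,x),y)$. In particular, $Z$ serves as a $G_1$-fundamental domain in $\Omega_1\times_H\dual{\Omega}_2$ and $c$ is precisely the ME-cocycle recording the $G_2$-action on the space of $G_1$-orbits. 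This $c$ is exactly of the shape treated in Lemma~\ref{L:composition-of-lp-coc}, applied with the triple $(G_2,H,G_1)$ and the $\rmL^p$-integrable cocycles $\alpha_2\colon G_2\times X_2\to H$ and $\beta_1\colon H\times Y_1\to G_1$; hence $c$ is $\rmL^p$-integrable. For the other half of the integrability condition for couplings I would invoke the symmetric description $\Omega_1\times_H\dual{\Omega}_2\cong W\times G_2$ with $W=X_1\times Y_2$ (equivalently, pass to the dual coupling $\Omega_2\times_H\dual{\Omega}_1$): the corresponding ME-cocycle $G_1\times W\to G_2$ is $(g_1,(x,y))\mapsto\beta_2(\alpha_1(g_1,x),y)$, which is $\rmL^p$-integrable by the same application of Lemma~\ref{L:composition-of-lp-coc}, this time with the triple $(G_1,H,G_2)$. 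Having produced measure isomorphisms as in~(\ref{e:ij-fd}) both of whose associated ME-cocycles are $\rmL^p$-integrable, I conclude that $\Omega_1\times_H\dual{\Omega}_2$ is an $\rmL^p$-integrable $(G_1,G_2)$-coupling.

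I do not expect a genuine obstacle here: the content is entirely contained in Lemma~\ref{L:composition-of-lp-coc} together with the composition formula~(\ref{e:composition-coc}). The only points demanding care are bookkeeping — correctly matching the variables $(G,H,L,\alpha,\beta)$ of Lemma~\ref{L:composition-of-lp-coc} with the data $(G_2,H,G_1,\alpha_2,\beta_1)$ of the composition, and remembering that the definition of an $\rmL^p$-integrable coupling requires integrability of the ME-cocycles on \emph{both} sides, which is why one must run the argument once more for the dual/mirror cocycle $G_1\times W\to G_2$.
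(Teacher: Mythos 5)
Your proposal is correct and is essentially the paper's own argument: the paper also deduces the lemma directly from Lemma~\ref{L:composition-of-lp-coc} via the explicit cocycle description~(\ref{e:composition-coc}), and you have merely spelled out the bookkeeping (including the dual side with $W=X_1\times Y_2$) that the paper leaves implicit.
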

\begin{proof}
This follows from Lemma~\ref{L:composition-of-lp-coc} using the explicit description
(\ref{e:composition-coc}) of the cocycles for $\Omega_1\times_H \dual{\Omega}_2$.
\end{proof}

We conclude that for each $1\le p\le \infty$, $\rmL^p$-measure equivalence is an equivalence relation
between compactly generated unimodular lcsc groups.

\medskip

\subsection{Tautening maps} 
\label{sub:equivariant_tautening_maps}

%
%

\begin{lemma}\label{L:taut-MR}
Let $G$ be a lcsc group, $\Gamma$ a countable group and
$j_1,j_2:\Gamma\to G$ be homomorphisms with $\Gamma_i=j_i(\Gamma)$ being lattices
in $G$.
Assume that $G$ is taut (resp. $p$-taut and $\Gamma_i$ are $\rmL^p$-integrable).
Then there exists $g\in G$ so that
\[
	j_2(\gamma)=g\,j_1(\gamma)\,g^{-1}\qquad (\gamma\in\Gamma).
\]
If $\pi:G\to\calG$ is a continuous homomorphism into a Polish group
and $G$ is taut relative to $\pi:G\to\calG$ (resp. $G$ is $p$-taut relative to $\pi:G\to\calG$
and $\Gamma_i$ are $\rmL^p$-integrable) then there exists $y\in \calG$ with
\[
	\pi(j_2(\gamma))=y \pi(j_1(\gamma))y^{-1}\qquad (\gamma\in\Gamma).
\]
\end{lemma}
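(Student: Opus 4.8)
The plan is to build from $j_1$ and $j_2$ a single $(G,G)$-coupling on which $G\times G$ acts \emph{transitively}, apply the tautness hypothesis to it, and read off $g$ from the resulting (essentially unique, everywhere equivariant) tautening map.

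First I would set $\tau=j_2\circ j_1^{-1}\colon\Gamma_1\xrightarrow{\ \sim\ }\Gamma_2$, so that $j_2=\tau\circ j_1$, and form the discrete subgroup $\Delta_\tau=\{(\delta,\tau\delta)\mid\delta\in\Gamma_1\}\subset G\times G$ together with $\Omega=(G\times G)/\Delta_\tau$, on which $G\times G$ acts by left translations. The point is that $\Omega$ is a genuine $(G,G)$-coupling: the orbit space of the first $G$-factor is $(G\times\{e\})\backslash(G\times G)/\Delta_\tau\cong G/\Gamma_2$ and that of the second factor is $G/\Gamma_1$, both of finite measure since $\Gamma_1,\Gamma_2$ are lattices, and unwinding these identifications gives measure isomorphisms as in~(\ref{e:ij-fd}). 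In the $p$-taut case I would moreover observe that $\Omega$ is $\rmL^p$-integrable: it is the composition, in the sense of~\S\ref{ssub:composition_of_couplings}, of $(G,m_G)$ regarded as a $(G,\Gamma_1)$-coupling, the discrete coupling $\Gamma_1$ carrying the $(\Gamma_1,\Gamma_2)$-action $(\delta,\delta')\cdot x=\delta\,x\,\tau^{-1}(\delta')^{-1}$ (which is manifestly $\rmL^\infty$, its cocycles living over a one-point space), and $(G,m_G)$ regarded as a $(\Gamma_2,G)$-coupling; the outer two are $\rmL^p$-integrable exactly because $\Gamma_1$ and $\Gamma_2$ are $\rmL^p$-integrable lattices, so two applications of Lemma~\ref{lem:composition of lp couplings} finish this.

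Now tautness (resp.\ $p$-tautness) of $G$ supplies an essentially unique $\Phi\colon\Omega\to G$ with $\Phi((g_1,g_2)\omega)=g_1\Phi(\omega)g_2^{-1}$, which after the standard adjustment on a null set (cf.\ the footnote to Definition~\ref{D:M-rigidity}) may be taken to satisfy this for \emph{all} $g_1,g_2\in G$ and \emph{all} $\omega$. Because $G\times G$ acts transitively on $\Omega$ with $\Delta_\tau$ stabilizing the base coset $[e,e]$, putting $g_0=\Phi([e,e])$ forces $\Phi([x,y])=x g_0 y^{-1}$ for all $x,y\in G$; evaluating this formula on the two representatives $(x,y)$ and $(x\delta,y\,\tau\delta)$ of the same point of $\Omega$ and specializing to $x=y=e$ yields $\delta g_0(\tau\delta)^{-1}=g_0$, i.e.\ $\tau(\delta)=g_0^{-1}\delta g_0$ for every $\delta\in\Gamma_1$. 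Hence $g:=g_0^{-1}$ satisfies $j_2(\gamma)=\tau(j_1(\gamma))=g\,j_1(\gamma)\,g^{-1}$ for all $\gamma\in\Gamma$ (and is unique, $\Phi$ and hence $g_0$ being unique). For the relative statement I would run the identical argument using that $G$ is taut relative to $\pi\colon G\to\calG$: one gets $\Phi\colon\Omega\to\calG$ with $\Phi((g_1,g_2)\omega)=\pi(g_1)\Phi(\omega)\pi(g_2)^{-1}$, and with $y_0=\Phi([e,e])$ the same computation gives $\pi(\tau(\delta))=y_0^{-1}\pi(\delta)y_0$, so that $y:=y_0^{-1}$ works.

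The step requiring the most care — and the only one where the lattice hypothesis, and in the $\rmL^p$ setting the $\rmL^p$-integrability of the $\Gamma_i$, is actually used — is the verification that $\Omega$ is a bona fide (resp.\ $\rmL^p$-integrable) coupling: one must exhibit the finite-measure fundamental domains $G/\Gamma_1$, $G/\Gamma_2$ and, for integrability, recognize the ME-cocycles of $\Omega$ through the composition description. Once that is in place everything is formal, since $\Omega$ is a homogeneous $G\times G$-space and an everywhere-equivariant map out of a homogeneous space is completely determined by a single value.
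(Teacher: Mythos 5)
Your proof is correct and follows essentially the same route as the paper: both form the homogeneous $(G,G)$-coupling $G\times G/\Delta$ on the graph of the pair of homomorphisms, invoke (relative, resp.\ $\rmL^p$-) tautness, and use transitivity of the $G\times G$-action to pin the tautening map down from a single value, from which the conjugating element is read off. The only cosmetic differences are that the paper works directly with $\Delta=\{(j_1(\gamma),j_2(\gamma))\}$ (so injectivity of $j_1$ is never needed, and no $\tau$ is introduced) and merely asserts the $\rmL^p$-integrability of this coupling, which you justify in more detail via composition of couplings and Lemma~\ref{lem:composition of lp couplings}.
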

\begin{proof}
We prove the more general second statement. The group
\[
	\Delta=\{(j_1(\gamma),j_2(\gamma))\in G\times G \mid \gamma\in \Gamma\}
\]
is a closed discrete subgroup in $G\times G$.
The $G\times G$-space $\Omega=(G\times G)/\Delta$ equipped
with the $G\times G$-invariant measure is easily seen to be a $(G,G)$-coupling.
It will be $\rmL^p$-integrable if $\Gamma_1$ and $\Gamma_2$ are $\rmL^p$-integrable
lattices.
Let $\Phi:\Omega\to \calG$ be the tautening map.
There are $a,b\in G$ and $x\in\calG$ such that for all $g_1,g_2\in G$
\[
	\Phi((g_1a,g_2b)\Delta_f)=\pi(g_1) x \pi(g_2)^{-1}.
\]
Since $(a,b)$ and $(j_1(\gamma)^a \,a,\,j_2(\gamma)^b\,b)$ are in the same
$\Delta$-coset, where $g^h=hgh^{-1}$, we get for all $g_1,g_2\in G$ and every $\gamma\in \Gamma$
\[
	\pi(g_1) x \pi(g_2)^{-1}=\pi(g_1) \pi(j_1(\gamma)^a) x\pi(j_2(\gamma)^b)^{-1} \pi(g_2)^{-1}.
\]
This implies that $j_1$ and $j_2$ are conjugate homomorphisms.
\end{proof}

The following lemma relates tautening maps $\Phi:\Omega\to G$
and cocycle rigidity for ME-cocycles.

\begin{lemma}\label{L:coc-taut}
	Let $G$ be a unimodular lcsc group, $\calG$ be a Polish group, $\pi:G\to\calG$ a
	continuous homomorphism.
	Let $(\Omega,m)$ be a $(G,G)$-coupling and $\alpha:G\times X\to G$, $\beta:G\times Y\to G$
	be the corresponding ME-cocycles.
Then there is a tautening map $\Omega\to \calG$
	iff the $\calG$-valued cocycle $\pi\circ\alpha$ is conjugate to $\pi$, that is,
	\[
		\pi\circ\alpha(g,x)=f(g.x)^{-1} \pi(g) f(x).
	\]
Moreover, $\Omega$ is taut relative to $\pi$
	if such a measurable map $f:X\to \calG$ is unique.
	This is also equivalent to $\pi\circ\beta$ being uniquely conjugate to $\pi:G\to\calG$.
\end{lemma}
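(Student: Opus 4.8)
The plan is to set up the standard dictionary between $G\times G$-equivariant maps $\Omega\to\calG$ and conjugacies of the ME-cocycle, and to read off the equivalence directly from it. First I would fix a measure-space identification $j\colon (G,m_G)\times(X,\mu)\xrightarrow{\cong}(\Omega,m)$ as in~(\ref{e:ij-fd}), with respect to which the $G\times G$-action on $\Omega$ is the one given by~(\ref{e:me-via-coc}), namely $(g,h_0)\colon j(h,x)\mapsto j(h_0 h\,\alpha(g,x)^{-1},\,g.x)$; here the first copy of $G$ acts on the space $X$ of right-$G$-orbits through $x\mapsto g.x$ with cocycle $\alpha$, and the second copy by left translation in the $G$-coordinate. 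The point is that a tautening map is determined by, and can be reconstructed from, its restriction to the fundamental domain $j(\{e\}\times X)$.

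Concretely, given a tautening map $\Phi\colon\Omega\to\calG$ relative to $\pi$, I would set $f(x)=\Phi(j(e,x))$. Using $j(h,x)=(e,h)\,j(e,x)$ and equivariance, $\Phi(j(h,x))=f(x)\pi(h)^{-1}$ a.e.; then applying equivariance under $(g,e)$ to the point $j(h,x)$ — whose image is $j(h\alpha(g,x)^{-1},g.x)$ — and comparing the two expressions for $\Phi$ of that point, the factor $\pi(h)^{-1}$ cancels and one is left with $\pi(\alpha(g,x))=f(g.x)^{-1}\pi(g)f(x)$, i.e.\ $\pi\circ\alpha$ is conjugate to $\pi$ by $f$. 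Conversely, given a measurable $f$ with this conjugacy property, I would define $\Phi(j(h,x))=f(x)\pi(h)^{-1}$ and verify, by substituting into the action formula and using $f(g.x)\pi(\alpha(g,x))=\pi(g)f(x)$, that $\Phi$ is $G\times G$-equivariant a.e.\ (hence everywhere after the usual modification on a null set, cf.~\cite{zimmer-book}*{Appendix~B}). These two assignments are mutually inverse up to null sets, so a tautening map exists and is essentially unique iff a conjugating $f$ does, which is exactly the claimed equivalence for $\alpha$.

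For the statement about $\beta$, I would observe that $\beta$ is an ME-cocycle attached to the dual coupling $\dual\Omega$ — the $(G,G)$-coupling obtained by interchanging the two $G$-actions — and that $\Phi\mapsto(\omega\mapsto\Phi(\omega)^{-1})$ is an involutive bijection between tautening maps of $\Omega$ and of $\dual\Omega$ relative to $\pi$; hence $\Omega$ is taut relative to $\pi$ iff $\dual\Omega$ is, and applying the first part to $\dual\Omega$ yields the equivalence with ``$\pi\circ\beta$ is uniquely conjugate to $\pi$''. I expect no genuine difficulty here; the only thing to be careful about is the measurability and null-set bookkeeping — deducing measurability of $f$ from that of $\Phi$ (and vice versa) by Fubini, and passing from almost-everywhere to everywhere equivariance — which is routine but should be spelled out.
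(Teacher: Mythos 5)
Your proposal is correct and follows essentially the same route as the paper: the same explicit model $\Omega\cong G\times X$ with action twisted by $\alpha$, and the same mutually inverse dictionary $\Phi(j(h,x))=f(x)\pi(h)^{-1}$ between tautening maps and conjugating maps $f$, which the paper's proof sets up (extracting $f$ as the essential value of $\Phi(j(g,x))\pi(g)$ rather than restricting to the slice $\{e\}\times X$, a cosmetic difference given the null-set modification you cite). Your treatment of $\beta$ via the dual coupling and $\Phi\mapsto\Phi^{-1}$ is exactly the symmetry the paper leaves implicit.
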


\begin{proof}
Let $\alpha:G\times X\to G$
be the ME-cocycle associated to a measure space isomorphism
$i:(G,m_G)\times (X,\mu)\to (\Omega,m)$
as in (\ref{e:ij-fd}). In particular,
\[
	(g_1,g_2): i(g,x)\mapsto i(g_2 g\alpha(g_1,x)^{-1},\,g_1.x).
\]
We shall now establish a $1$-to-$1$ correspondence between Borel maps $f:X\to \calG$
with
\[
	\pi\circ\alpha(g,x)=f(g.x)^{-1}\pi(g) f(x)
\]
and tautening maps $\Phi:\Omega\to\calG$. Given $f$ as above one verifies that
\[
	\Phi:\Omega\to \calG,\qquad \Phi(i(g,x))=f(x)\pi(g)^{-1}
\]
is $G\times G$-equivariant.

For the converse direction, suppose $\Phi:\Omega\to G$ is a tautening map. Thus,
\[
	g_1 \Phi(g_0,x) g_2^{-1}=\Phi((g_1,g_2)(g_0,x))=\Phi(g_2 g_0 \alpha(g_1,x)^{-1},\, g_1.x).
\]
For $\mu$-a.e.~$x\in X$ and a.e.~$g\in G$
the value of $\Phi(g,x)g$ is constant $f(x)$. If we substitute $g_0=g_1=g$ and $g_2=g\alpha(g,x)g^{-1}$ in the above identity, then we obtain $\alpha(g,x)=f(g.x)^{-1}g f(x)$.
\end{proof}

%

\begin{lemma} \label{lem:ergodic dec}
	Let $G$ be a unimodular lcsc group, $\calG$ be a Polish group, $\pi:G\to\calG$ a
	continuous homomorphism.
Then $G$ is ($p$-)taut, that is every ($p$-integrable) $(G,G)$-coupling is taut relative to $\pi$, iff
every ergodic ($p$-integrable) $(G,G)$-coupling is taut relative to $\pi$.
\end{lemma}

\begin{proof}
We give the proof in the $p$-integrable case, the case without the integrability condition being simpler.
We assume that
every ergodic $p$-integrable $(G,G)$-coupling is taut relative to $\pi$
and let $(\Omega,m)$ be an arbitrary $p$-integrable coupling.
We fix a fundamental domain $(X,\mu)$ for the second $G$ action such that the associated
cocycle $\alpha:G\times X\to G$ is $p$-integrable.

Let $\mu=\int\mu_t d\eta(t)$ be the $G$-ergodic decomposition of $(X,\mu)$.
By
\cite{FurmanME}*{Lemma~2.2}
it corresponds to the $(G\times G)$-ergodic
decomposition of $(\Omega,m)$ into ergodic couplings $(\Omega,m_t)$,
so that
$m=\int m_td\eta(t)$.


Let $\abs{\_}\colon G\to\bbN$
be the length function associated to some word-metric on $G$. The
$p$-integrability of $\alpha$ means that 
\[
	\int\int_X\abs{\alpha(g,x)}^pd\mu_t(x)d\eta(t)=\int_X\abs{\alpha(g,x)}^pd\mu(x)<\infty
\]
for every $g\in G$. 
By Fubini's theorem $\int_X\abs{\alpha(g,x)}^pd\mu_t(x)<\infty$ for $\eta$-a.e.~$t$.
Hence $(\Omega,m_t)$ is $p$-integrable for $\eta$-a.e.~$m_t$, and in particular it is taut
relative to $\pi$, by our assumption.
It follows by Lemma~\ref{L:coc-taut} that
the cocycle $\pi\circ\alpha$ is conjugate to the constant cocycle $\pi$ over $\eta$-a.e $(X,\mu_t)$.
Then by~\cite{fisher-nonergodic}*{Corollary~3.6}\footnote{The target $\calG$ is assumed to be locally compact in this reference but the proof therein works the same for a Polish group $\calG$.}
$\pi\circ\alpha$ is conjugate to $\pi$ over $(X,\mu)$.
Again, by Lemma~\ref{L:coc-taut} we conclude that $(\Omega,m)$ is taut relative to $\pi$.
\end{proof}

\subsection{Strong ICC property} 
\label{sub:strong_icc_property}

\begin{lemma} \label{lem:ICC-lattices}
	Let $G$ be a unimodular lcsc group, $\calG$ a Polish group, $\pi:G\to\calG$ a continuous homomorphism.
Let $\Gamma<G$ be a lattice. Then
$\calG$
	is strongly ICC relative to $\pi(G)$
if and only if it
	is strongly ICC relative to $\pi(\Gamma)$.
\end{lemma}

\begin{proof}
Clearly if $\calG$
	is strongly ICC relative to $\pi(\Gamma)$ then it is also
strongly ICC relative to $\pi(G)$.
The reverse implication follows by averaging a $\pi(\Gamma)$ invariant measure over $G/\Gamma$.
\end{proof}

\begin{lemma}\label{L:uniq2sICC}
Let $G$ be a unimodular lcsc group, $\calG$ a Polish group, $\pi:G\to\calG$ a continuous homomorphism.
Suppose that $\calG$ is not strongly ICC relative to $\pi(G)$.
Then there is a $(G,G)$-coupling $(\Omega,m)$ with two distinct
tautening maps to $\calG$.
\end{lemma}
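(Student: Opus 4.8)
The plan is to turn a conjugation-invariant probability measure on $\calG\setminus\{e\}$ into two distinct tautening maps for a suitably chosen $(G,G)$-coupling. By hypothesis, since $\calG$ is not strongly ICC relative to $\pi(G)$, there is a Borel probability measure $\mu$ on $\calG\setminus\{e\}$ that is invariant under conjugation by every element of $\pi(G)$, i.e.\ $(\pi(g))_*\mu = \mu$ for all $g\in G$, where $\pi(g)$ acts by $x\mapsto \pi(g)x\pi(g)^{-1}$. The idea is that such a $\mu$ produces a nontrivial ``twisting'' that can be superposed onto the tautological structure.

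First I would take the tautological coupling $(\Omega, m) = (G, m_G)$ with the $G\times G$-action $(g_1,g_2)\colon g\mapsto g_1 g g_2^{-1}$; this is a $(G,G)$-coupling by the discussion after Definition~\ref{D:ME}. It always admits the constant tautening map $\Phi_0 = \pi\colon \Omega\to\calG$, since $\Phi_0((g_1,g_2)g) = \pi(g_1 g g_2^{-1}) = \pi(g_1)\pi(g)\pi(g_2)^{-1}$. To build a second, different tautening map, I would instead pass to the coupling $(\Omega',m') = (G\times \calG,\ m_G\times\mu)$ — or rather recognize that the tautological coupling already carries enough room: define $\Phi_1\colon G\to\calG$... but here one must be careful, because a deterministic map on $G$ alone cannot absorb the extra randomness. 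So the correct move is to enlarge the space. Consider $(\Omega',m') = (G\times\calG,\ m_G\times\mu)$ with the $G\times G$-action $(g_1,g_2)\colon (g,x)\mapsto (g_1 g g_2^{-1},\ \pi(g_1) x \pi(g_1)^{-1})$. The conjugation-invariance of $\mu$ guarantees $m'$ is preserved, and projecting to the first coordinate exhibits $(\Omega',m')$ as a coupling with the tautological one as a factor (in particular it still admits fundamental domains as required). On $(\Omega',m')$ one has the two maps $\Phi_0(g,x) = \pi(g)$ and $\Phi_1(g,x) = x\,\pi(g)$; both are $G\times G$-equivariant:
\[
\Phi_1((g_1,g_2)(g,x)) = \pi(g_1)x\pi(g_1)^{-1}\cdot\pi(g_1)\pi(g)\pi(g_2)^{-1} = \pi(g_1)\,\bigl(x\pi(g)\bigr)\,\pi(g_2)^{-1},
\]
and they are distinct on a positive measure set precisely because $\mu$ is not the Dirac mass at $e$.

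The remaining routine checks are: that $(\Omega',m')$ genuinely satisfies Definition~\ref{D:ME} (one needs measure isomorphisms $i\colon G\times Y\cong\Omega'$, $j\colon G\times X\cong\Omega'$; here $Y = X = \calG$ with $\mu$, and the isomorphisms are built from the product structure, using the fact that $\mu$ is a probability measure so both factors are finite), and that $\Phi_0\ne\Phi_1$ as classes mod null sets, which holds since $\{x : x\ne e\}$ has full $\mu$-measure. \textbf{The main obstacle} I anticipate is not conceptual but bookkeeping: making sure the second-coordinate action $(g,x)\mapsto \pi(g)x\pi(g)^{-1}$ is by measure-preserving Borel automorphisms and that it fits the coupling axioms with honest fundamental domains — this is exactly where conjugation-invariance of $\mu$ is used, and where one must confirm that enlarging by a \emph{finite} measure space keeps everything within Definition~\ref{D:ME}. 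One should also double-check that no strong-ICC-type uniqueness sneaks in to force $\Phi_0=\Phi_1$; it cannot, precisely because the hypothesis is the failure of strong ICC. I expect the whole argument to be short once the coupling $(\Omega',m')$ is written down correctly.
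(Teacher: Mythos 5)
Your construction is essentially the paper's own proof: the paper also takes $\Omega=G\times\calG$ with $m_G\times\mu$ for a conjugation-invariant $\mu$, twists the second coordinate by conjugation (there by $\pi(g_2)$ rather than $\pi(g_1)$, an immaterial left/right variant), and exhibits the two tautening maps $(g,x)\mapsto\pi(g)$ and $(g,x)\mapsto\pi(g)x$ (your $x\,\pi(g)$), which differ a.e.\ exactly because $\mu\neq\delta_e$. Your equivariance computation and the observation that the coupling axioms hold with $X=Y=\calG$ are correct, so the proposal is sound and matches the paper's argument.
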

\begin{proof}
	Let $\mu$ be a Borel probability measure on $\calG$
	invariant under conjugations by $\pi(G)$.
	Consider $\Omega=G\times \calG$ with the measure $m=m_G\times\mu$ where $m_G$
	denotes the Haar measure, and measure-preserving $G\times G$-action
	\[
		(g_1,g_2): (g,x)\mapsto (g_1 g g_2^{-1},\, \pi(g_2)\, x\, \pi(g_2)^{-1}).
	\]
	This is clearly a $(G,G)$-coupling and the following measurable maps
	$\Phi_i:\Omega\to \calG$, $i\in \{1,2\}$, are $G\times G$-equivariant:
	$\Phi_1(g,x)=\pi(g)$ and $\Phi_2(g,x)=\pi(g)\cdot x$.
	Note that $\Phi_1=\Phi_2$ on a conull set iff $\mu=\delta_e$.
\end{proof}

\begin{lemma}\label{L:sICC2uniq}	
Let $G$ be a unimodular lcsc group and $\calG$ a Polish group. Assume that
$\calG$ is strongly ICC relative to $\pi(G)$. Let $(\Omega,m)$ be a $(G,G)$-coupling. Then:
\begin{enumerate}
\item \label{i:unique}
	There is at most one tautening map $\Phi\colon\Omega\to \calG$.
\item \label{i:descent}
	Let $F:(\Omega,m)\to (\Omega_0,m_0)$ be a morphism of $(G,G)$-couplings
	and suppose that there exists a tautening map $\Phi:\Omega\to\calG$.
	Then it descends to $\Omega_0$, i.e., $\Phi=\Phi_0\circ F$
	for a unique tautening map $\Phi_0:\Omega_0\to\calG$.
\item \label{i:2lattice}
	If $\Gamma_1,\Gamma_2<G$ are lattices, then $\Phi:\Omega\to \calG$ is unique as a
	$\Gamma_1\times\Gamma_2$-equivariant map.
\item \label{i:extn}
	If $\Gamma_1,\Gamma_2<G$ are lattices, and $(\Omega,m)$ admits
	a $\Gamma_1\times\Gamma_2$-equivariant map $\Phi:\Omega\to\calG$, then $\Phi$ is $G\times G$-equivariant.
\item \label{i:Dirac}
	If $\eta:\Omega\to \Prob(\calG)$,
	$\omega\mapsto \eta_\omega$, is a measurable $G\times G$-equivariant map to the space
	of Borel probability measures on $\calG$ endowed with the weak topology,
	then it takes values in Dirac measures: We have $\eta_\omega=\delta_{\Phi(\omega)}$,
	where $\Phi:\Omega\to \calG$ is the unique tautening map.
\end{enumerate}
\end{lemma}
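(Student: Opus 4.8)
The plan is to prove the five assertions in the order (\ref{i:unique}), (\ref{i:Dirac}), (\ref{i:descent}), (\ref{i:extn}), (\ref{i:2lattice}), since each rests on the earlier ones. The engine behind everything is the following immediate consequence of the hypothesis: if $(S,\xi)$ is a probability $G$-space and $\Theta\colon S\to\Prob(\calG)$ is measurable with $\Theta_{g.s}$ equal to the push-forward of $\Theta_s$ under conjugation by $\pi(g)$ for a.e.\ $s$, then the barycenter $\int_S\Theta_s\,d\xi(s)$ is a probability measure on $\calG$ invariant under conjugation by $\pi(G)$, hence equals $\delta_e$ by the relative strong ICC property, and therefore $\Theta_s=\delta_e$ for a.e.\ $s$ (the mass at $e$ averages to $1$); specializing to $\Theta_s=\delta_{\Psi(s)}$ this says that a measurable $\Psi\colon S\to\calG$ with $\Psi(g.s)=\pi(g)\Psi(s)\pi(g)^{-1}$ a.e.\ is a.e.\ equal to $e$. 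Throughout write $(X,\mu)$ for the finite measure space underlying $\Omega$ as a coupling, on which the left copy $G_l$ of $G$ acts and which is the space of orbits of the right copy $G_r$ (so $\Omega\cong G_r\times X$ with $G_r$ acting by translation on the first coordinate); thus a $G_r$-invariant measurable map out of $\Omega$ descends to $X$, a probability $G_l$-space after normalization.

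For (\ref{i:unique}), given two tautening maps $\Phi_1,\Phi_2$, the ratio $\Psi=\Phi_1\cdot\Phi_2^{-1}\colon\Omega\to\calG$ is invariant under $G_r$ and satisfies $\Psi((g,e)\omega)=\pi(g)\Psi(\omega)\pi(g)^{-1}$; it descends to a conjugation-equivariant map on $X$, so $\Psi\equiv e$ by the engine, i.e.\ $\Phi_1=\Phi_2$. For (\ref{i:Dirac}), form the self-join $\omega\mapsto\eta_\omega\otimes\eta_\omega$ and push it forward under $(a,b)\mapsto ab^{-1}$ to get $\theta\colon\Omega\to\Prob(\calG)$; a direct computation shows $\theta_{(g_1,g_2)\omega}$ is the push-forward of $\theta_\omega$ under conjugation by $\pi(g_1)$, independent of $g_2$, so $\theta$ descends to a conjugation-equivariant $\Prob(\calG)$-valued map on $X$ and the engine gives $\theta_\omega=\delta_e$ a.e. This means $\eta_\omega\otimes\eta_\omega$ is supported on the diagonal, which forces $\eta_\omega$ to be a point mass $\delta_{\Phi(\omega)}$ for a measurable $\Phi\colon\Omega\to\calG$; the $G\times G$-equivariance of $\eta$ makes $\Phi$ a tautening map, and it is unique by (\ref{i:unique}).

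For (\ref{i:descent}), disintegrate $m=\int_{\Omega_0}m_{\omega_0}\,dm_0(\omega_0)$ along the morphism $F$ and set $\eta_{\omega_0}=\Phi_*m_{\omega_0}$. Equivariance of $F$ and $\Phi$ (together with $(g_1,g_2)_*m_{\omega_0}=m_{(g_1,g_2)\omega_0}$) makes $\eta\colon\Omega_0\to\Prob(\calG)$ a $G\times G$-equivariant map, so by (\ref{i:Dirac}) it equals $\delta_{\Phi_0}$ for a tautening map $\Phi_0\colon\Omega_0\to\calG$; since $\Phi_*m_{\omega_0}$ is then a point mass, $\Phi$ is $m_{\omega_0}$-a.e.\ constant equal to $\Phi_0(\omega_0)$ on the fibre $F^{-1}(\omega_0)$, whence $\Phi=\Phi_0\circ F$ a.e., with $\Phi_0$ unique by (\ref{i:unique}) applied to $\Omega_0$.

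The main work is (\ref{i:extn}), from which (\ref{i:2lattice}) is immediate: two $\Gamma_1\times\Gamma_2$-equivariant maps are, by (\ref{i:extn}), both $G\times G$-equivariant, hence equal by (\ref{i:unique}). Given a $\Gamma_1\times\Gamma_2$-equivariant $\Phi$, the idea is to average it over $\Gamma_1\bs G\times\Gamma_2\bs G$: the function $(g,h)\mapsto\pi(g)^{-1}\Phi((g,h)\omega)\pi(h)$ is, by the equivariance of $\Phi$, invariant under left multiplication of $(g,h)$ by $\Gamma_1\times\Gamma_2$, so it descends to $\Gamma_1\bs G\times\Gamma_2\bs G$, which carries invariant probability measures because $\Gamma_1,\Gamma_2$ are lattices; pushing these forward defines $\eta_\omega\in\Prob(\calG)$. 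Using the substitutions $g\mapsto gg_0$ and $h\mapsto hh_0$ together with right-invariance of the measures on $\Gamma_i\bs G$ (valid since $G$ is unimodular), one checks that $\eta\colon\Omega\to\Prob(\calG)$ is genuinely $G\times G$-equivariant; by (\ref{i:Dirac}) it equals $\delta_{\Phi'}$ for a tautening map $\Phi'$, and unravelling the definition of $\eta$ yields $\Phi((g,h)\omega)=\pi(g)\Phi'(\omega)\pi(h)^{-1}$ for a.e.\ $(g,h,\omega)$, which, compared with the $G\times G$-equivariance of $\Phi'$ and after letting a generic $(g,h)$ act, forces $\Phi=\Phi'$ a.e. I expect the delicate points to be the measurability and Fubini bookkeeping needed to make the averaging well defined and to move freely between ``a.e.\ coset in $\Gamma_i\bs G$'' and ``a.e.\ element of $G$'', and the observation in (\ref{i:Dirac}) that a probability measure on a Polish group whose square charges the diagonal fully must be a Dirac mass; both are routine but deserve care.
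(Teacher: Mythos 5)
Your proposal is correct and follows essentially the same route as the paper: the core is part (\ref{i:Dirac}), proved by pushing $\eta_\omega\otimes\eta_\omega$ forward under a difference map so that the result descends to a finite-measure quotient of $\Omega$, averaging, and invoking the relative strong ICC property, after which (\ref{i:descent}) follows by disintegration, (\ref{i:extn}) by taking the distribution of $(g_1,g_2)\mapsto\pi(g_1)^{-1}\Phi((g_1,g_2)\omega)\pi(g_2)$ over $\Gamma_1\backslash G\times\Gamma_2\backslash G$, and (\ref{i:2lattice}) from (\ref{i:extn}) and (\ref{i:unique}). The only (immaterial) deviation is that you prove (\ref{i:unique}) directly via the ratio $\Phi_1\Phi_2^{-1}$ rather than deducing it from (\ref{i:Dirac}) as the paper does.
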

\begin{proof}
We start from the last claim and deduce the other ones from it.

\noindent{(\ref{i:Dirac})}.
Given an equivariant map $\eta:\Omega\to \Prob(\calG)$	consider the convolution
\[
	\nu_\omega=\check\eta_\omega * \eta_\omega,
\]
namely the image of $\eta_\omega\times\eta_\omega$ under the map $(a,b)\mapsto a^{-1}\cdot b$.
Then
\[
	\nu_{(g,h)\omega}=\nu_\omega^{\pi(g)}\qquad (g,h\in G),
\]
where the latter denotes the push-forward of $\nu_\omega$ under the conjugation
\[
	a\mapsto a^{\pi(g)}=\pi(g)^{-1}a \pi(g).
\]
In particular, the map $\omega\mapsto \nu_\omega$ is invariant under the action of the second $G$-factor.
Therefore $\nu_\omega$ descends to a measurable map $\tilde\nu:\Omega/G\to \Prob(\calG)$, satisfying
\[
	\tilde\nu_{g.x}=\tilde\nu_x^{\pi(g)}\qquad (x\in X=\Omega/G,\ g\in G).
\]
Here we identify $\Omega/G$ with a finite measure space $(X,\mu)$ as in~\eqref{e:ij-fd}.
Consider the center of mass
\[
	\bar\nu=\frac{1}{\mu(X)}\int_{X}\tilde\nu_x \,d\mu(x).
\]
It is a probability measure on $\calG$, which is invariant under conjugations.
By the strong ICC property relative to $\pi(G)$ we get $\bar\nu=\delta_e$.
Since $\delta_e$ is an extremal point of $\Prob(\calG)$,
it follows that $m$-a.e. $\nu(\omega)=\delta_e$.
This implies that $\eta_\omega=\delta_{\Phi(\omega)}$ for
some measurable $\Phi:\Omega\to \calG$.
The latter is automatically $G\times G$-equivariant.
\smallskip

\noindent{(\ref{i:unique})}. If $\Phi_1,\Phi_2:\Omega\to \calG$ are tautening maps,
then $\eta_\omega=\frac{1}{2}(\delta_{\Phi_1(\omega)}+\delta_{\Phi_2(\omega)})$
is an equivariant map $\Omega\to\Prob(\calG)$.
By (\ref{i:Dirac}) it takes values in Dirac measures,
which is equivalent to the $m$-a.e. equality $\Phi_1=\Phi_2$.

\smallskip

\noindent{(\ref{i:descent})}.
Disintegration of $m$ with respect to $m_0$ gives a $G\times G$-equivariant measurable
map $\Omega_0\to\mathcal{M}(\Omega)$, $\omega\mapsto m_{\omega_0}$, to the space
of finite measures on $\Omega$.
Then the map $\eta:\Omega_0\to\Prob(\calG)$, given by
\[
	\eta_{\omega_0}=\|m_{\omega_0}\|^{-1}\cdot \Phi_* (m_{\omega_0}).
\]
is $G\times G$-equivariant. Hence by (\ref{i:Dirac}), $\eta_{\omega_0}=\delta_{\Phi_0(\omega_0)}$
for the unique tautening map $\Phi_0:\Omega_0\to\calG$.
The relation $\Phi=\Phi_0\circ F$ follows from the fact that Dirac measures are extremal.

\smallskip

\noindent{(\ref{i:2lattice})} follows from (\ref{i:extn}) and (\ref{i:unique}).

\smallskip

\noindent{(\ref{i:extn})}.
The claim is equivalent to:
For $m$-a.e. $\omega\in\Omega$ the map $F_\omega\colon G\times G\to \calG$ with
\[
	F_\omega(g_1,g_2)=\pi(g_1)^{-1}\,\Phi((g_1,g_2)\omega)\,\pi(g_2)
\]
is $m_G\times m_G$-a.e. constant $\Phi_0(\omega)$.
Note that the family $\{F_\omega\}$ has the following equivariance property:
For $g_1,g_2,h_1,h_2\in G$ we have
\begin{eqnarray*}
	F_{(h_1,h_2)\omega}(g_1,g_2)&=&\pi(g_1)^{-1}\Phi((g_1h_1,g_2h_2)\omega)\pi(g_2)\\
	&=&\pi(h)_1^{-1} F_\omega(g_1h_1, g_2h_2) \pi(h_2).
\end{eqnarray*}
Since $\Phi$ is $\Gamma_1\times\Gamma_2$-equivariant, for $m$-a.e. $\omega\in\Omega$ the map
$F_\omega$ descends to $G/\Gamma_1\times G/\Gamma_2$.
Let $\eta_\omega\in\Prob(\calG)$ denote the distribution of $F_\omega(\cdot,\cdot)$
over the probability space $G/\Gamma_1\times G/\Gamma_2$, that is, for
a Borel subset $E\subset\calG$
\[
	\eta_\omega(E)=m_{G/\Gamma_1}\times m_{G/\Gamma_2}\{ (g_1,g_2) \mid F_\omega(g_1,g_2)\in E\}.
\]
Since this measure is invariant under translations by $G\times G$, it follows
that $\eta_\omega$ is a $G\times G$-equivariant maps $\Omega\to \Prob(\calG)$.
By (\ref{i:Dirac}) one has $\eta_\omega=\delta_{f(\omega)}$ for some measurable $G\times G$-equivariant map
$f:\Omega\to \calG$.
Hence $F_\omega(g_1,g_2)=f(\omega)$ for a.e. $g_1,g_2\in G$; it follows that
\begin{equation}\label{e:phi-g1g2}
	\Phi((g_1,g_2)\omega)=\pi(g_1) \Phi(\omega) \pi(g_2)^{-1}
\end{equation}
holds for $m_G\times m_G\times m$-a.e. $(g_1,g_2,\omega)$.
\end{proof}

\begin{corollary}
	Let $\pi:G\to\calG$ be as above and assume that $\calG$
	is strongly ICC relative to $\pi(G)$. Then the collection of all
	$(G,G)$-couplings which are taut relative to $\pi:G\to\calG$
	is closed under the operations of taking the dual, compositions, quotients and extensions.
\end{corollary}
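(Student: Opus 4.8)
The plan is to exploit that, under the standing hypothesis that $\calG$ is strongly ICC relative to $\pi(G)$, a $(G,G)$-coupling is taut relative to $\pi$ if and only if it admits \emph{at least one} tautening map to $\calG$: uniqueness is then automatic by Lemma~\ref{L:sICC2uniq}(\ref{i:unique}). So in each of the four cases it suffices to produce a tautening map, and the strategy throughout is to write one down explicitly in terms of the tautening maps on the input couplings (or to invoke Lemma~\ref{L:sICC2uniq}).

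First the dual: if $\Phi\colon\Omega\to\calG$ is the tautening map of a $(G,G)$-coupling $(\Omega,m)$, I would set $\Phi'(\dual{\omega})=\Phi(\omega)^{-1}$. Since the $G\times G$-action on $\dual{\Omega}$ is $(g_1,g_2)\colon\dual{\omega}\mapsto(g_2,g_1)\omega$, the identity $\Phi((g_1,g_2)\omega)=\pi(g_1)\Phi(\omega)\pi(g_2)^{-1}$ yields $\Phi'((g_1,g_2)\dual{\omega})=\pi(g_1)\Phi'(\dual{\omega})\pi(g_2)^{-1}$, so $\Phi'$ is a tautening map for $\dual{\Omega}$. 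For a composition of two couplings $\Omega_1,\Omega_2$ taut relative to $\pi$, with tautening maps $\Phi_1,\Phi_2$, I would consider the map $\Omega_1\times\Omega_2\to\calG$ sending $(\omega_1,\omega_2)$ to $\Phi_1(\omega_1)\Phi_2(\omega_2)^{-1}$. Writing each $\Omega_i$ as a $(G_i,H)$-coupling as in Appendix~\ref{ssub:composition_of_couplings}, with $H$ the amalgamating copy of $G$, the tautening-map conditions say that $\Phi_i((g,h)\omega_i)=\pi(g)\Phi_i(\omega_i)\pi(h)^{-1}$ for $g\in G_i$, $h\in H$. Hence under the diagonal $H$-action the two factors $\pi(h)^{-1}$ and $(\,\cdot\,)\pi(h)^{-1}$ cancel, so the map is $H$-invariant and descends to $\Omega_1\times_H\dual{\Omega}_2$; the residual $G_1$ intertwines it by $\pi(\,\cdot\,)$ on the left and $G_2$ by $\pi(\,\cdot\,)^{-1}$ on the right, so the descended map is a tautening map for the composition. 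For quotients, i.e.\ a morphism $F\colon\Omega\to\Omega_0$ of $(G,G)$-couplings with $\Omega$ taut relative to $\pi$, this is precisely Lemma~\ref{L:sICC2uniq}(\ref{i:descent}): the tautening map of $\Omega$ descends through $F$. For extensions, i.e.\ a morphism $F\colon\Omega\to\Omega_0$ with $\Omega_0$ taut relative to $\pi$, one pulls back: $\Phi_0\circ F$ is $G\times G$-equivariant because $F$ is, hence is a tautening map for $\Omega$.

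The only points requiring care are bookkeeping ones: tracking which copy of $G$ plays the role of the amalgamating group $H$ in the composition construction, and checking measurability of the descended map on the quotient space — both routine — and, in the quotient case, that the hypotheses of Lemma~\ref{L:sICC2uniq}(\ref{i:descent}) apply, which is immediate since a morphism of couplings is by definition a measure-compatible $G\times G$-equivariant map. I do not expect a genuinely hard step; the substance is entirely in having uniqueness of tautening maps available for free from the strong ICC hypothesis, so that ``taut relative to $\pi$'' reduces to the existence of a tautening map and the operations above manifestly preserve that.
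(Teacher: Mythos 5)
Your proposal is correct and follows essentially the same route as the paper: reduce tautness to the existence of a tautening map (uniqueness being automatic from the strong ICC hypothesis via Lemma~\ref{L:sICC2uniq}(\ref{i:unique})), invert it for the dual, multiply the two maps and check middle-$G$ invariance for compositions, and use Lemma~\ref{L:sICC2uniq}(\ref{i:descent}) respectively precomposition with the morphism for quotients and extensions. The only difference is cosmetic: the paper writes the composed tautening map as $\Phi_1(\omega_1)\cdot\Phi_2(\omega_2)$ on $\Omega_1\times_G\Omega_2$, whereas you write $\Phi_1(\omega_1)\Phi_2(\omega_2)^{-1}$ after dualizing the second factor as in Appendix~\ref{ssub:composition_of_couplings} — the same map under the two equivalent conventions.
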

\begin{proof}
The uniqueness of tautening maps follow from the relative strong ICC property (Lemma~\ref{L:sICC2uniq}.(\ref{i:unique})).
Hence we focus on the existence of such maps.
 	
Let $\Phi:\Omega\to\calG$ be a tautening map. Then $\Psi(\dual{\omega})=\Phi(\omega)^{-1}$
is a tautening map 	$\dual{\Omega}\to\calG$.

Let $\Phi_i:\Omega_i\to\calG$, $i=1,2$, be tautening maps. Then $\Psi([\omega_1,\omega_2])=\Phi(\omega_1)\cdot\Phi(\omega_2)$
is a tautening map 	$\Omega_1\times_G\Omega_2\to\calG$.

If $F:(\Omega_1,m_1)\to(\Omega_2,m_2)$ is a quotient map and $\Phi_1:\Omega_1\to\calG$ is a tautening map,
then, by Lemma~\ref{L:sICC2uniq}.(\ref{i:descent}), $\Phi_1$ factors as $\Phi_1=\Phi_2\circ F$ for a tautening
map $\Phi_2:\Omega_2\to\calG$.
On the other hand, given a tautening map $\Phi_2:\Omega_2\to\calG$, the map $\Phi_1=\Phi_2\circ F$
is tautening for $\Omega_1$.
\end{proof}


\section{Bounded cohomology} 
\label{sec:cohomological tools}

Our background references for bounded cohomology, especially for the
functorial approach to it, are~\cites{burger+monod, monod-book}.
We summarize what we need from Burger-Monod's theory of bounded
cohomology. Since we restrict to discrete groups, some results we
quote from this theory already go back to Ivanov~\cite{ivanov}.

\subsection{Banach modules} 
\label{sub:banach_modules}

All Banach spaces are over the field $\bbR$ of real numbers. By the
dual of a Banach space we understand the normed topological dual. The
dual of a Banach space $E$ is denoted by $E^\ast$.
Let $\Gamma$ be a discrete and countable group. A \emph{Banach
  $\Gamma$-module} is a Banach space $E$ endowed with a group
homomorphism $\pi$ from $\Gamma$ into the group of isometric linear
automorphisms of $E$.  We use the module notation $\gamma\cdot
e=\pi(\gamma)(e)$ or just $\gamma e=\pi(\gamma)(e)$ for
$\gamma\in\Gamma$ and $e\in E$ whenever the action is clear from the
context.  The submodule of $\Gamma$-invariant elements is denoted by
$E^\Gamma$.  Note that $E^\Gamma\subset E$ is closed.

If $E$ and $F$ are Banach $\Gamma$-modules, a \emph{$\Gamma$-morphism}
$E\rightarrow F$ is a $\Gamma$-equivariant continuous linear map. The
space $\calB(E,F)$ of continuous, linear maps $E\rightarrow F$ is
endowed with a natural Banach $\Gamma$-module structure via
\begin{equation}\label{eq:G action on space of bounded maps}
  (\gamma\cdot f)(e)=\gamma f(\gamma^{-1}e).
\end{equation}
The \emph{contragredient} Banach $\Gamma$-module structure $E^\sharp$
associated to $E$ is by definition $\calB(E,\bbR)=E^\ast$ with the
$\Gamma$-action~\eqref{eq:G action on space of bounded maps}.  A
\emph{coefficient $\Gamma$-module} is a Banach $\Gamma$-module $E$
contragredient to some separable continuous Banach $\Gamma$-module
denoted by $E^\flat$. The choice of $E^\flat$ is part of the data. The
specific choice of $E^\flat$ defines a \mbox{weak-$\ast$} topology on
$E$. The only examples that appear in this paper are
$E=\rmL^\infty(X,\mu)$ with $E^\flat=L^1(X,\mu)$ and $E=E^\flat=\bbR$.

For a coefficient $\Gamma$-module $E$ let $\rmCb^k(\Gamma,E)$ be the
Banach $\Gamma$-module $\rmL^\infty(\Gamma^{k+1},E)$ consisting of
bounded maps from $\Gamma^{k+1}$ to $E$ endowed with the supremum norm
and the $\Gamma$-action:
\begin{equation}\label{eq:action on cochains}
  (\gamma\cdot f)(\gamma_0,\dots,\gamma_k)=\gamma\cdot f(\gamma^{-1}\gamma_0,\dots,\gamma^{-1}\gamma_k).
\end{equation}

For a coefficient $\Gamma$-module $E$ and a standard Borel $\Gamma$-space $S$
with quasi-invariant measure let
$\rmLweak^\infty(S, E)$ be the space of \mbox{weak-$\ast$}-measurable
essentially bounded maps from $S$ to $E$, where maps are identified if
they only differ on a null set.  The space $\rmLweak^\infty(S, E)$ is
endowed with the essential supremum norm and the
$\Gamma$-action~\eqref{eq:action on cochains}. For a measurable space
$X$ the Banach space $\calB^\infty(X, E)$ is the space of
\mbox{weak-$\ast$}-measurable bounded maps from $X$ to $E$ endowed
with supremum norm~\cite{burger+iozzi}*{Section~2} and the
$\Gamma$-action~\eqref{eq:action on cochains}.


\subsection{Injective resolutions} 
\label{sub:bounded_cohomology}

Let $\Gamma$ be a discrete group and $E$ be a Banach $\Gamma$-module.
The sequence of Banach $\Gamma$-modules $\rmCb^k(\Gamma, E)$, $k\ge
0$, becomes a chain complex of Banach $\Gamma$-modules via the
standard homogeneous coboundary operator
\begin{equation}\label{eq:homogeneous differential}
  d(f)(\gamma_0,\dots,\gamma_k)=\sum_{i\ge 0}^k(-1)^i
  f(\gamma_0,\dots, \hat{\gamma_i},\dots,\gamma_k).
\end{equation}

The \emph{bounded cohomology} $\rmHb^\bullet(\Gamma,E)$ of $\Gamma$ with
coefficients $E$ is the cohomology of the complex of
$\Gamma$-invariants $\rmCb^\bullet(\Gamma, E)^\Gamma$.  The bounded cohomology
$\rmHb^\bullet(\Gamma,E)$ inherits a semi-norm from $\rmCb^\bullet(\Gamma,
E)$: The \emph{(semi-)norm} of an element $x\in \rmHb^k(\Gamma, E)$ is
the infimum of the norms of all cocycles in the cohomology class $x$.

Next we briefly recall the functorial approach to bounded cohomology
as introduced by Ivanov~\cite{ivanov} for discrete groups and further
developed by Burger-Monod~\cite{burger+monod, monod-book}. We refer
for the definition of \emph{relative injectivity} of a Banach
$\Gamma$-module to~\cite{monod-book}*{Definition~4.1.2 on p.~32}. A
\emph{strong resolution} $E^\bullet$ of $E$ is a resolution, i.e.~an
acyclic complex,
\[
0\to E\to E^0\to E^1\to E^2\to\dots
\]
of Banach $\Gamma$-modules that has a chain contraction which is
contracting with respect to the Banach norms. The key to the
functorial definition of bounded cohomology are the following two
theorems:

\begin{theorem}[\cite{burger+monod}*{Proposition~1.5.2}]\label{thm:main homological theorem}
  Let $E$ and $F$ be Banach $\Gamma$-modules.  Let $E^\bullet$ be a
  strong resolution of $E$.  Let $F^\bullet$ be a resolution $F$ by
  relatively injective Banach $\Gamma$-modules. Then any
  $\Gamma$-morphism $E\to F$ extends to a $\Gamma$-morphism of
  resolutions $E^\bullet\to F^\bullet$ which is unique up to
  $\Gamma$-homotopy. Hence $E\to F$ induces functorially continuous
  linear maps $\rmH^\bullet({E^\bullet}^\Gamma)\to\rmH^\bullet({F^\bullet}^\Gamma)$.
\end{theorem}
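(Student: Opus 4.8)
The plan is to run the classical \emph{fundamental lemma of homological algebra} in the relative (Banach) setting: one builds the chain map $E^\bullet\to F^\bullet$ one degree at a time, using relative injectivity of each $F^n$ to perform the extensions, and the contracting homotopy of the strong resolution $E^\bullet$ to supply, at each stage, an admissible monomorphism to which relative injectivity applies. Write $\epsilon_E\colon E\to E^0$ and $\epsilon_F\colon F\to F^0$ for the augmentations, $d$ for all differentials, and let $(h^k)$ be the contracting homotopy of $E^\bullet$, so that $h^0\epsilon_E=\mathrm{id}_E$ and $dh+hd=\mathrm{id}$ on $E^k$ in the relevant range. The first point to check is that for each $n$ the inclusion of the closed submodule $\im(d^n_E)=\ker(d^{n+1}_E)$ into $E^{n+1}$ is an \emph{admissible monomorphism} of Banach $\Gamma$-modules, i.e.\ one admitting a bounded linear (not necessarily $\Gamma$-equivariant) left inverse: indeed $d^n_E h^{n+1}\colon E^{n+1}\to E^{n+1}$ has image $\im(d^n_E)$ and restricts to the identity there, by the homotopy identity. (For $n=-1$ one uses $h^0$ as a left inverse of $\epsilon_E$.)

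For existence, I would first apply relative injectivity of $F^0$ to the admissible monomorphism $\epsilon_E$ and the $\Gamma$-morphism $\epsilon_F\circ\phi\colon E\to F^0$, producing $\phi^0\colon E^0\to F^0$ with $\phi^0\epsilon_E=\epsilon_F\phi$. Then, inductively: given $\phi^0,\dots,\phi^n$ forming a chain map in degrees $\le n$, the composite $d^n_F\circ\phi^n\colon E^n\to F^{n+1}$ kills $\ker(d^n_E)=\im(d^{n-1}_E)$ — because $d^n_Fd^{n-1}_F=0$ and $\phi^\bullet$ is a chain map below degree $n$ — hence factors as a bounded $\Gamma$-morphism $\psi\colon\im(d^n_E)\to F^{n+1}$ through the Banach $\Gamma$-module isomorphism $E^n/\ker(d^n_E)\cong\im(d^n_E)$. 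Applying relative injectivity of $F^{n+1}$ to the admissible monomorphism $\im(d^n_E)\hookrightarrow E^{n+1}$ extends $\psi$ to $\phi^{n+1}\colon E^{n+1}\to F^{n+1}$ with $\phi^{n+1}d^n_E=d^n_F\phi^n$, completing the induction.

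For uniqueness up to $\Gamma$-homotopy, given two chain maps $\phi^\bullet,\psi^\bullet$ over $\phi$, their difference $\delta^\bullet=\phi^\bullet-\psi^\bullet$ is a chain map over the zero morphism $E\to F$, so it suffices to contract $\delta^\bullet$ to zero by a $\Gamma$-homotopy $k^n\colon E^n\to F^{n-1}$ (with $F^{-1}:=F$, $d^{-1}:=\epsilon_F$, $k^0:=0$). This uses exactly the same induction: $\delta^0$ kills $\im(\epsilon_E)=\ker(d^0_E)$, so it factors through $\im(d^0_E)$ and, by relative injectivity of $F^0$ along $\im(d^0_E)\hookrightarrow E^1$, extends to $k^1$ with $k^1d^0_E=\delta^0$; and assuming $\delta^j=d_Fk^j+k^{j+1}d_E$ for $j<n$, a one-line computation gives $(\delta^n-d^{n-1}_Fk^n)\circ d^{n-1}_E=0$, so $\delta^n-d^{n-1}_Fk^n$ factors through $\im(d^n_E)$ and extends along $\im(d^n_E)\hookrightarrow E^{n+1}$ to the desired $k^{n+1}$.

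Finally, functoriality: taking $\Gamma$-invariants turns a chain map over $\phi$ into a chain map $(E^\bullet)^\Gamma\to(F^\bullet)^\Gamma$ and a $\Gamma$-homotopy into an honest homotopy between such, so the induced map on $\rmH^\bullet\bigl((\cdot)^\Gamma\bigr)$ is well defined and independent of all choices; applying existence and uniqueness to $\mathrm{id}_E$ and to composites $\psi\circ\phi$ yields functoriality, and the boundedness of the $\phi^n$ (with norm control coming from relative injectivity) makes the induced maps continuous. I expect the only genuinely delicate step to be the first one — verifying at each stage that one really has an \emph{admissible} monomorphism with the norm bound that relative injectivity demands; once that is secured, the remainder is a routine diagram chase, the sole subtlety being careful bookkeeping of which arrows are $\Gamma$-equivariant and which are merely bounded linear.
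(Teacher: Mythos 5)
The paper never proves this statement: it is imported verbatim from Burger--Monod (Proposition~1.5.2, proved in Monod's book, \S 7.2), so there is no in-paper argument to compare yours against; what you have written is the classical fundamental lemma of relative homological algebra in the Banach setting, and in substance it is correct. The contracting homotopy does show that $d^n_E h^{n+1}$ is a bounded projection of $E^{n+1}$ onto $\im(d^n_E)$, so that image is closed, the inclusion splits boundedly, the factorization through $E^n/\ker(d^n_E)\cong\im(d^n_E)$ is legitimate (open mapping theorem, using that closedness), and your two inductions for existence and for the homotopy are exactly the standard ones; passing to $\Gamma$-invariants then gives the functorial continuous maps on cohomology as you say. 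The one place where your argument, as written, does not match the cited source's conventions is the word ``admissible'': in Burger--Monod/Monod an admissible injection is required to have a linear left inverse of norm at most $1$, whereas your splitting $d^n_E h^{n+1}$ only has norm at most $\lVert d^n_E\rVert$ (the homotopy is contracting, the differentials are not). This is a cosmetic wrinkle rather than a gap, but you should say how to remove it: for instance, rescale the norm on the submodule $\im(d^n_E)$ by $\lVert d^n_E h^{n+1}\rVert^{-1}$ (the $\Gamma$-action stays isometric, the splitting becomes a contraction, and only the norm of the inclusion grows, which is irrelevant since the theorem claims no norm bound on the extension). Relatedly, instead of invoking the open mapping theorem for boundedness of the factored map $\psi$, you can note that on $\im(d^n_E)$ it coincides with the globally defined bounded map $d^n_F\phi^n h^{n+1}$, which also yields explicit norm control; the same remark applies in the homotopy construction. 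With that point addressed, your proof is a faithful reconstruction of the argument behind the cited result.
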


\begin{theorem}[\cite{monod-book}*{Corollary~7.4.7 on p.~80}]\label{thm:the definining complex is strong injective resolution}
  Let $E$ be a Banach $\Gamma$-module.  The complex
  $E\to\rmCb^\bullet(\Gamma,E)$ with $E\to\rmCb^0(\Gamma,E)$ being the
  inclusion of constant functions is a strong, relatively injective
  resolution.
\end{theorem}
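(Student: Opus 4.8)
The plan is to verify the three defining features separately: that the augmented complex $0\to E\to\rmCb^0(\Gamma,E)\to\rmCb^1(\Gamma,E)\to\cdots$ is exact, that it is \emph{strong}, and that each term $\rmCb^n(\Gamma,E)$ is relatively injective in the sense of~\cite{monod-book}*{Definition~4.1.2}.

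\textbf{Exactness and strongness.} I would handle these together via the classical bar-resolution contraction. Fix the identity $e\in\Gamma$, let $\varepsilon\colon E\to\rmCb^0(\Gamma,E)$ be the inclusion of constants, set $h^0\colon\rmCb^0(\Gamma,E)\to E$, $h^0(f)=f(e)$, and for $n\ge 1$ define $h^n\colon\rmCb^n(\Gamma,E)\to\rmCb^{n-1}(\Gamma,E)$ by
\[
	(h^nf)(\gamma_0,\dots,\gamma_{n-1})=f(e,\gamma_0,\dots,\gamma_{n-1}).
\]
Each $h^n$ has operator norm $\le 1$ for the supremum norms. A direct computation with the homogeneous coboundary~\eqref{eq:homogeneous differential} — expanding $d^{n-1}h^nf+h^{n+1}d^nf$ and telescoping the alternating sums, and similarly in degree $0$ with the convention $d^{-1}=\varepsilon$ — gives $h^0\varepsilon=\id_E$ and $d^{n-1}\circ h^n+h^{n+1}\circ d^n=\id$ on $\rmCb^n(\Gamma,E)$ for every $n\ge 0$. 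This is the only computational step and is entirely routine. The existence of such a chain contraction by bounded maps of norm $\le 1$ immediately yields both acyclicity of the augmented complex and strongness of the resolution; that the $h^n$ fail to be $\Gamma$-equivariant (plugging in the fixed point $e$ destroys equivariance) is harmless, since strongness asks only for a bounded contraction.

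\textbf{Relative injectivity.} The key is to recognize every term as a coinduced module. Singling out the first $\Gamma$-coordinate identifies $\rmCb^n(\Gamma,E)=\rmL^\infty(\Gamma^{n+1},E)$ isometrically with $\rmL^\infty(\Gamma,V)$ where $V=\rmCb^{n-1}(\Gamma,E)$ (and $V=E$ for $n=0$), and under this identification the action~\eqref{eq:action on cochains} becomes the coinduced action $(\gamma\cdot F)(g)=\gamma\cdot\bigl(F(\gamma^{-1}g)\bigr)$. So it suffices to show that for an arbitrary Banach $\Gamma$-module $V$ (with isometric action), $\rmL^\infty(\Gamma,V)$ with the coinduced action is relatively injective. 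First untwist: $F\mapsto\bigl(g\mapsto g^{-1}F(g)\bigr)$ is an isometric $\Gamma$-isomorphism from $\rmL^\infty(\Gamma,V)$ onto $\rmL^\infty(\Gamma,V_{\mathrm{triv}})$, where $V_{\mathrm{triv}}$ is $V$ with the trivial action and the target carries the left-translation action $(\gamma\star\tilde F)(g)=\tilde F(\gamma^{-1}g)$; so we may assume $V$ is a trivial module. Now take an admissible injection, i.e.\ an injective $\Gamma$-morphism $\iota\colon A\hookrightarrow B$ with a (not necessarily equivariant) left inverse $\sigma\colon B\to A$ of norm $\le 1$, together with a $\Gamma$-morphism $\alpha\colon A\to\rmL^\infty(\Gamma,V)$. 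Equivariance forces $\alpha(a)(g)=\phi(g^{-1}a)$, where $\phi(a)=\alpha(a)(e)$ and $\|\phi\|\le\|\alpha\|$. Define $\beta\colon B\to\rmL^\infty(\Gamma,V)$ by $\beta(b)(g)=\phi\bigl(\sigma(g^{-1}b)\bigr)$. The shape of the formula makes $\beta$ a $\Gamma$-morphism; one has $\|\beta\|\le\|\phi\|\,\|\sigma\|\le\|\alpha\|$ using that $\Gamma$ acts isometrically on $B$; and $\beta\circ\iota=\alpha$ because $\iota$ is equivariant and $\sigma\iota=\id_A$. This is precisely the extension property required, so $\rmL^\infty(\Gamma,V)$, and hence each $\rmCb^n(\Gamma,E)$, is relatively injective.

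Combining the two parts shows $E\to\rmCb^\bullet(\Gamma,E)$ is a strong resolution by relatively injective Banach $\Gamma$-modules, as asserted. The step I expect to require the most care is the relative-injectivity argument — specifically, keeping the norm estimate $\|\beta\|\le\|\alpha\|$ intact through the untwisting isomorphism and the adjunction, and checking that the untwisting map really is a $\Gamma$-morphism for the two actions in play. The homotopy computation, though a little fiddly with signs, is the standard bar-resolution identity and presents no genuine obstacle.
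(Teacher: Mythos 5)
Your proof is correct. The paper does not actually prove this statement — it is quoted from Monod's book (Corollary~7.4.7), whose standard argument (going back to Ivanov for discrete groups) is exactly what you give: the norm-one contracting homotopy obtained by inserting the identity element, and relative injectivity of the coinduced modules $\rmL^\infty(\Gamma,V)$ via the untwisting isomorphism together with the extension $\beta(b)(g)=\phi\bigl(\sigma(g^{-1}b)\bigr)$; the verifications you flag (the identity $dh+hd=\id$, equivariance of the untwisting map, and the estimate $\norm{\beta}\le\norm{\alpha}$) all go through as you describe, with no measurability issues since $\Gamma$ is discrete and countable.
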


For a coefficient $\Gamma$-module, a measurable space $X$ with
measurable $\Gamma$-action, and a standard Borel $\Gamma$-space $S$
with quasi-invariant measure
we obtain
chain complexes $\calB^\infty(X^{\bullet+1},E)$ and $\rmLweak(S^{\bullet+1},
E)$ of Banach $\Gamma$-modules via the standard homogeneous coboundary
operators (similar as in~\eqref{eq:homogeneous differential}).

The following result is important for expressing induced maps in
bounded cohomology in terms of boundary maps~\cite{burger+iozzi}.

\begin{proposition}[\cite{burger+iozzi}*{Proposition~2.1}]\label{prop:B-complex strong resolution}
  Let $E$ be a coefficient $\Gamma$-module. Let $X$ be a measurable
  space with measurable $\Gamma$-action. The complex
  $E\to\calB^\infty(X^{\bullet+1},E)$ with $E\to\calB^\infty(X,E)$ being
  the inclusion of constant functions is a strong resolution of $E$.
\end{proposition}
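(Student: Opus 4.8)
The plan is to write down the augmented complex
\[
0\to E\xrightarrow{\epsilon}\calB^\infty(X,E)\xrightarrow{d^0}\calB^\infty(X^2,E)\xrightarrow{d^1}\calB^\infty(X^3,E)\to\cdots
\]
and to produce an explicit chain contraction that is contracting in each degree. Recall that a strong resolution only has to be acyclic and to admit such a contraction by bounded operators of norm $\le 1$; crucially, the contraction is \emph{not} required to be $\Gamma$-equivariant, which is what makes the argument elementary. First I would record the structural facts: each $\calB^\infty(X^{k+1},E)$ is a Banach space — a uniform limit of weak-$\ast$ measurable bounded maps $X^{k+1}\to E$ is again weak-$\ast$ measurable, since weak-$\ast$ measurability is tested against the fixed separable predual $E^\flat$ and is preserved under pointwise limits — and $\Gamma$ acts on it by isometries via the diagonal action on $X^{k+1}$ together with the given action on $E$, as in~\eqref{eq:action on cochains}; the homogeneous coboundary maps $d^k$ from~\eqref{eq:homogeneous differential} and the augmentation $\epsilon$ (inclusion of constants) are then $\Gamma$-morphisms. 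So $\calB^\infty(X^{\bullet+1},E)$ is a complex of Banach $\Gamma$-modules augmented over $E$.

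Next I would construct the contraction. Fix a point $x_0\in X$ and define, for $k\ge 1$,
\[
h^k\colon\calB^\infty(X^{k+1},E)\to\calB^\infty(X^{k},E),\qquad (h^kf)(x_1,\dots,x_k)=f(x_0,x_1,\dots,x_k),
\]
and $h^0\colon\calB^\infty(X,E)\to E$ by $h^0f=f(x_0)$. Each $h^k$ is well defined because $(x_1,\dots,x_k)\mapsto(x_0,x_1,\dots,x_k)$ is a measurable map $X^k\to X^{k+1}$, so $h^kf$ is again weak-$\ast$ measurable, and visibly $\|h^kf\|_\infty\le\|f\|_\infty$. The usual prepend-a-vertex computation for the homogeneous bar complex — substituting $x_0$ into the leading slot and telescoping the alternating sum of face maps — yields the contracting homotopy identities
\[
d^{k-1}h^k+h^{k+1}d^k=\id\quad(k\ge 0),\qquad h^0\epsilon=\id_E,
\]
with the convention $d^{-1}=\epsilon$. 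Hence the augmented complex is exact and $\{h^k\}$ is a chain contraction of norm $\le 1$, which is exactly the assertion that $E\to\calB^\infty(X^{\bullet+1},E)$ is a strong resolution.

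This argument is entirely parallel to the classical proof that $E\to\rmCb^\bullet(\Gamma,E)$ is a strong resolution (Theorem~\ref{thm:the definining complex is strong injective resolution}), and there is no serious obstacle. The only point deserving care is the measurability bookkeeping: one must check that restricting a weak-$\ast$ measurable map on $X^{k+1}$ to the slice $\{x_0\}\times X^k$ yields a weak-$\ast$ measurable map on $X^k$, which is immediate from functoriality of weak-$\ast$ measurability under measurable maps of the source. Working with $\calB^\infty$ instead of $\rmL^\infty$ (Burger--Iozzi's device, recalled in the remark after Proposition~\ref{prop:induction reciprocity}) is precisely what keeps this clean: the cochains are honest functions, so no modification on null sets — and hence no hypothesis on a measure on $X$ — is ever needed.
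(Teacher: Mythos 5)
Your proposal is correct: the paper itself gives no proof, deferring to \cite{burger+iozzi}*{Proposition~2.1}, and your argument — the cone contraction $h^kf=f(x_0,\cdot)$ at a fixed basepoint, with the norm bound $\le 1$ and the observation that weak-$\ast$ measurability survives restriction to a slice precisely because $\calB^\infty$ consists of honest functions with no measure class involved — is exactly the standard argument behind that citation. The homotopy identities and the non-equivariance remark are all in order, so there is nothing to add.
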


The next theorem is one of the main results of the functorial approach to
bounded cohomology by Burger-Monod:

\begin{theorem}[\citelist{\cite{burger+monod}*{Corollary~2.3.2}\cite{monod-book}*{Theorem~7.5.3 on p.~83}}]\label{thm:boundary resolutions by Burger-Monod}
  Let $S$ be a regular $\Gamma$-space and be $E$ a coefficient
  $\Gamma$-module. Then $E\to\rmLweak(S^{\bullet+1}, E)$ with
  $E\to\rmLweak(S^{\bullet+1},E)$ being the inclusion of constant
  functions is a strong resolution. If, in addition, $S$ is amenable
  in the sense of Zimmer~\cite{monod-book}*{Definition~5.3.1}, then
  each $\rmLweak(S^{k+1},E)$ is relatively injective, and according to
  Theorem~\ref{thm:main homological theorem} the cohomology groups
  $\rmH^\bullet(\rmLweak(S^{\bullet+1},E)^\Gamma)$ are canonically isomorphic
  to $\rmHb^\bullet(\Gamma,E)$.
\end{theorem}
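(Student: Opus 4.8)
The plan is to follow the classical functorial-cohomology argument of Ivanov~\cite{ivanov} and Burger--Monod~\cite{burger+monod}; since the result is standard I would only record its structure, which breaks into three independent points: (a) for any regular $\Gamma$-space $S$ the augmented complex $0\to E\to\rmLweak(S^{\bullet+1},E)$ is a strong resolution of $E$; (b) when the $\Gamma$-action on $S$ is amenable in the sense of Zimmer, each $\rmLweak(S^{k+1},E)$ is relatively injective; (c) the asserted isomorphism with $\rmHb^\bullet(\Gamma,E)$ then follows formally.

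For (a) I would fix a probability measure $\mu$ on $S$ in the given quasi-invariant measure class and define contracting operators $h^0\colon\rmLweak(S,E)\to E$ by $h^0(f)=\int_S f\,d\mu$, and, for $k\ge1$, $h^k\colon\rmLweak(S^{k+1},E)\to\rmLweak(S^k,E)$ by $h^k(f)(s_0,\dots,s_{k-1})=\int_S f(s,s_0,\dots,s_{k-1})\,d\mu(s)$. A routine simplicial computation then gives $dh^{k+1}+h^kd=\mathrm{id}$ on $\rmLweak(S^{k+1},E)$, with $h^0$ splitting the augmentation; since $\mu$ is a probability measure, $\|h^k\|\le1$, which is exactly the strongness requirement. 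The only point needing attention is that $h^k(f)$ remains weak-$\ast$ measurable and essentially bounded --- this follows from Fubini together with the separability of $E^\flat$ --- and I would emphasize that the $h^k$ need not be $\Gamma$-equivariant, which is permitted for a strong resolution.

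For (b) I would use the identification $\rmLweak(S^{k+1},E)\cong\rmLweak\bigl(S,\rmLweak(S^k,E)\bigr)$ of coefficient $\Gamma$-modules, together with the characterization of Zimmer amenability as the statement that $\rmLweak(S,F)$ is relatively injective for every coefficient $\Gamma$-module $F$ (see~\cite{monod-book}); equivalently, amenability supplies a norm-one $\Gamma$-equivariant conditional expectation that solves the extension problem defining relative injectivity. Combining (a) and (b), $\rmLweak(S^{\bullet+1},E)$ is a strong resolution of $E$ by relatively injective $\Gamma$-modules, and by Theorem~\ref{thm:the definining complex is strong injective resolution} so is $\rmCb^\bullet(\Gamma,E)$. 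Invoking Theorem~\ref{thm:main homological theorem} in both directions yields $\Gamma$-morphisms of resolutions between the two complexes, mutually inverse up to $\Gamma$-homotopy; after passing to $\Gamma$-invariants and cohomology this produces the canonical isomorphism $\rmH^\bullet\bigl(\rmLweak(S^{\bullet+1},E)^\Gamma\bigr)\cong\rmHb^\bullet(\Gamma,E)$.

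The main obstacle --- and the only place where amenability is genuinely used --- is (b): deriving relative injectivity from Zimmer amenability is the substantive input, whereas part (a) holds for an arbitrary regular $\Gamma$-space. In the applications in this paper the space $S$ is a boundary-type $\Gamma$-space whose amenability is already known, so I would simply cite this fact and use (b) as a black box.
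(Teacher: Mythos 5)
Your outline is correct, and it is essentially the argument of the cited sources: the paper does not prove this statement but quotes it from Burger--Monod~\cite{burger+monod} and Monod's book~\cite{monod-book}. Your three steps --- the norm-one contracting homotopy obtained by integrating the first variable against a probability measure in the given class (no equivariance needed) for strongness, relative injectivity of $\rmLweak^\infty(S^{k+1},E)\cong\rmLweak^\infty\bigl(S,\rmLweak^\infty(S^k,E)\bigr)$ from Zimmer amenability, and the formal comparison with $\rmCb^\bullet(\Gamma,E)$ via Theorem~\ref{thm:main homological theorem} --- are exactly how the result is established there.
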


\begin{definition}\label{def:poisson transform}
  Let $S$ be a standard Borel $\Gamma$-space with a quasi-invariant
  probability measure $\mu$. Let $E$ be a coefficient $\Gamma$-module.
  The \emph{Poisson transform} $\rmPT^\bullet:
  \rmLweak(S^{\bullet+1},E)\to\rmCb^\bullet(\Gamma, E)$ is the
  $\Gamma$-morphism of chain complexes defined by
  \[
  \rmPT^k(f)(\gamma_0,\dots,\gamma_k)= \int_{S^{k+1}}
  f(\gamma_0 s_0,\dots,\gamma_k s_k)d\mu(s_0)\dots d\mu(s_k).
  \]
\end{definition}

  If $S$ is amenable, then the Poisson transform induces a canonical isomorphism in
cohomology (Theorem~\ref{thm:boundary resolutions by Burger-Monod}). By the same theorem this
isomorphism does not depend on the choice of $\mu$ within a given measure class.
 \end{appendix}

\begin{bibdiv}
\begin{biblist}


\bib{sobolev}{article}{
	   author={Bader, U.},
	   author={Furman, A.},
	   author={Sauer, R.},	
	   title={Efficient subdivision in hyperbolic groups and applications},
	 journal={Groups Geom. Dyn.}
	   date={2013},
	   pages={to appear},
	   eprint={arxiv:math/1003.1562},
	}

\bib{bridson}{article}{
	   author={Bridson, M. R.},
	   author={Tweedale, M.},
	   author={Wilton, H.},
	   title={Limit groups, positive-genus towers and measure-equivalence},
	   journal={Ergodic Theory Dynam. Systems},
	   volume={27},
	   date={2007},
	   number={3},
	   pages={703--712},
	}
\bib{burger+bucher+iozzi}{article}{
	   author = {Bucher, M.},
	   author = {Burger, M.},
	   author = {Iozzi, A.},
	    title = {A Dual Interpretation of the Gromov--Thurston Proof of Mostow Rigidity and Volume Rigidity for Representations of Hyperbolic Lattices},
	   note = {arXiv: 1205.1018},
	     year = {2012},
}

\bib{burger+iozzi}{article}{
   author={Burger, M.},
   author={Iozzi, A.},
   title={Boundary maps in bounded cohomology. Appendix to: "Continuous
   bounded cohomology and applications to rigidity theory" [Geom.\ Funct.\
   Anal.\ {\bf 12} (2002), no.\ 2, 219--280] by
   Burger and N. Monod},
   journal={Geom. Funct. Anal.},
   volume={12},
   date={2002},
   number={2},
   pages={281--292},
}

\bib{burger+iozzi2}{article}{
   author={Burger, M.},
   author={Iozzi, A.},
   title={A useful formula in bounded cohomology},
   status={to appear in Seminaires et Congres, nr.~18},
   eprint={http://www.math.ethz.ch/∼iozzi/grenoble.ps},
}


\bib{burger+monod}{article}{
   author={Burger, M.},
   author={Monod, N.},
   title={Continuous bounded cohomology and applications to rigidity theory},
   journal={Geom. Funct. Anal.},
   volume={12},
   date={2002},
   number={2},
   pages={219--280},
}
\bib{burger-mozes}{article}{
   author={Burger, M.},
   author={Mozes, S.},
   title={${\rm CAT}$(-$1$)-spaces, divergence groups and their
   commensurators},
   journal={J. Amer. Math. Soc.},
   volume={9},
   date={1996},
   number={1},
   pages={57--93},
}
\bib{Casson}{article}{
   author={Casson, A.},
   author={Jungreis, D.},
   title={Convergence groups and Seifert fibered $3$-manifolds},
   journal={Invent. Math.},
   volume={118},
   date={1994},
   number={3},
   pages={441--456},
}


\bib{corlette}{article}{
   author={Corlette, K.},
   title={Archimedean superrigidity and hyperbolic geometry},
   journal={Ann. of Math. (2)},
   volume={135},
   date={1992},
   number={1},
   pages={165--182},
}

\bib{corlette+zimmer}{article}{
   author={Corlette, K.},
   author={Zimmer, R. J.},
   title={Superrigidity for cocycles and hyperbolic geometry},
   journal={Internat. J. Math.},
   volume={5},
   date={1994},
   number={3},
   pages={273--290},
}
\bib{doran+fell}{book}{
   author={Fell, J. M. G.},
   author={Doran, R. S.},
   title={Representations of $^*$-algebras, locally compact groups, and
   Banach $^*$-algebraic bundles. Vol. 1},
   series={Pure and Applied Mathematics},
   volume={125},
   note={Basic representation theory of groups and algebras},
   publisher={Academic Press Inc.},
   date={1988},
   pages={xviii+746},
}

\bib{fisher}{article}{
   author={Fisher, D.},
   author={Hitchman, T.},
   title={Cocycle superrigidity and harmonic maps with infinite-dimensional
   targets},
   journal={Int. Math. Res. Not.},
   date={2006},
}
\bib{fisher-nonergodic}{article}{
   author={Fisher, David},
   author={Morris, Dave Witte},
   author={Whyte, Kevin},
   title={Nonergodic actions, cocycles and superrigidity},
   journal={New York J. Math.},
   volume={10},
   date={2004},
   pages={249--269 (electronic)},
}

\bib{FurmanOEw}{article}{
   author={Furman, A.},
   title={Orbit equivalence rigidity},
   journal={Ann. of Math. (2)},
   volume={150},
   date={1999},
   number={3},
   pages={1083--1108},
}

\bib{FurmanME}{article}{
   author={Furman, A.},
   title={Gromov's measure equivalence and rigidity of higher rank lattices},
   journal={Ann. of Math. (2)},
   volume={150},
   date={1999},
   number={3},
   pages={1059--1081},
}
\bib{locally-compact}{article}{
   author={Furman, A.},
   title={Mostow-Margulis rigidity with locally compact targets},
   journal={Geom. Funct. Anal.},
   volume={11},
   date={2001},
   number={1},
   pages={30--59},
}

\bib{Furman:MGT}{article}{
   author={Furman, A.},
   title={A survey of measured group theory},
   conference={
      title={Geometry, rigidity, and group actions},
   },
   book={
      series={Chicago Lectures in Math.},
      publisher={Univ. Chicago Press},
      place={Chicago, IL},
   },
   date={2011},
   pages={296--374},
} 

\bib{Furstenberg}{article}{
   author={Furstenberg, H.},
   title={Boundary theory and stochastic processes on homogeneous spaces},
   conference={
      title={Harmonic analysis on homogeneous spaces (Proc. Sympos. Pure
      Math., Vol. XXVI, Williams Coll., Williamstown, Mass., 1972)},
   },
   book={
      publisher={Amer. Math. Soc.},
      place={Providence, R.I.},
   },
   date={1973},
   pages={193--229},
}
\bib{furstenberg-bourbaki}{article}{
   author={Furstenberg, H.},
   title={Rigidity and cocycles for ergodic actions of semisimple Lie groups
   (after G. A. Margulis and R. Zimmer)},
   conference={
      title={Bourbaki Seminar, Vol. 1979/80},
   },
   book={
      series={Lecture Notes in Math.},
      volume={842},
      publisher={Springer},
      place={Berlin},
   },
   date={1981},
   pages={273--292},
}
\bib{Gabai}{article}{
   author={Gabai, D.},
   title={Convergence groups are Fuchsian groups},
   journal={Ann. of Math. (2)},
   volume={136},
   date={1992},
   number={3},
   pages={447--510},
}

\bib{Gaboriau-cost}{article}{
   author={Gaboriau, D.},
   title={Co\^ut des relations d'\'equivalence et des groupes},
   journal={Invent. Math.},
   volume={139},
   date={2000},
   number={1},
   pages={41--98},
}
\bib{gaboriau-l2}{article}{
   author={Gaboriau, D.},
   title={Invariants $l\sp 2$ de relations d'\'equivalence et de groupes},
   language={French},
   journal={Publ. Math. Inst. Hautes \'Etudes Sci.},
   number={95},
   date={2002},
   pages={93--150},
}

\bib{Gaboriau:2005exmps}{article}{
   author={Gaboriau, D.},
   title={Examples of groups that are measure equivalent to the free group},
   journal={Ergodic Theory Dynam. Systems},
   volume={25},
   date={2005},
   number={6},
   pages={1809\ndash 1827},
}
\bib{ghys}{article}{
   author={Ghys, {\'E}.},
   title={Groups acting on the circle},
   journal={Enseign. Math. (2)},
   volume={47},
   date={2001},
   number={3-4},
   pages={329--407},
}
\bib{gromov-invariants}{article}{
   author={Gromov, M.},
   title={Asymptotic invariants of infinite groups},
   conference={
      title={Geometric group theory, Vol.\ 2},
      address={Sussex},
      date={1991},
   },
   book={
      series={London Math. Soc. Lecture Note Ser.},
      volume={182},
      publisher={Cambridge Univ. Press},
      place={Cambridge},
   },
   date={1993},
   pages={1--295},
}

\bib{hjorth}{article}{
   author={Hjorth, G.},
   title={A converse to Dye's theorem},
   journal={Trans. Amer. Math. Soc.},
   volume={357},
   date={2005},
   number={8},
   pages={3083--3103 (electronic)},
}


\bib{haagerup}{article}{
   author={Haagerup, U.},
   author={Munkholm, H. J.},
   title={Simplices of maximal volume in hyperbolic $n$-space},
   journal={Acta Math.},
   volume={147},
   date={1981},
   number={1-2},
   pages={1--11},
}
\bib{Ioana+Peterson+Popa}{article}{
   author={Ioana, A.},
   author={Peterson, J.},
   author={Popa, S.},
   title={Amalgamated free products of weakly rigid factors and calculation
   of their symmetry groups},
   journal={Acta Math.},
   volume={200},
   date={2008},
   number={1},
   pages={85--153},
}
\bib{ioana}{article}{
   author={Ioana, Adrian},
   title={Cocycle superrigidity for profinite actions of property (T)
   groups},
   journal={Duke Math. J.},
   volume={157},
   date={2011},
   number={2},
   pages={337--367},
}

\bib{iozzi}{article}{
   author={Iozzi, Alessandra},
   title={Bounded cohomology, boundary maps, and rigidity of representations
   into ${\rm Homeo}_+(\bold S^1)$ and ${\rm SU}(1,n)$},
   conference={
      title={Rigidity in dynamics and geometry},
      address={Cambridge},
      date={2000},
   },
   book={
      publisher={Springer},
   },
   date={2002},
   pages={237--260},
}

\bib{ivanov}{article}{
   author={Ivanov, N. V.},
   title={Foundations of the theory of bounded cohomology},
   language={Russian, with English summary},
   journal={Zap. Nauchn. Sem. Leningrad. Otdel. Mat. Inst. Steklov. (LOMI)},
   volume={143},
   date={1985},
   pages={69--109, 177--178},
}
\bib{kechris}{book}{
   author={Kechris, Alexander S.},
   title={Classical descriptive set theory},
   series={Graduate Texts in Mathematics},
   volume={156},
   publisher={Springer-Verlag},
   date={1995},
}

\bib{Kida:thesis}{article}{
   author={Kida, Y.},
   title={The mapping class group from the viewpoint of measure equivalence theory},
   journal={Mem. Amer. Math. Soc.},
   volume={196},
   date={2008},
   number={916},
   pages={viii+190},
}
\bib{Kida:ME}{article}{
   author={Kida, Yoshikata},
   title={Measure equivalence rigidity of the mapping class group},
   journal={Ann. of Math. (2)},
   volume={171},
   date={2010},
   number={3},
   pages={1851--1901},
}
\bib{Kida:OE}{article}{
   author={Kida, Yoshikata},
   title={Orbit equivalence rigidity for ergodic actions of the mapping
   class group},
   journal={Geom. Dedicata},
   volume={131},
   date={2008},
   pages={99--109},
}

\bib{Kida:amalgamated}{article}{
   author={Kida, Yoshikata},
   title={Rigidity of amalgamated free products in measure equivalence},
   journal={J. Topol.},
   volume={4},
   date={2011},
   number={3},
   pages={687--735},
}
%
\bib{loeh}{article}{
   author={L{\"o}h, C.},
   title={Measure homology and singular homology are isometrically
   isomorphic},
   journal={Math.~Z.},
   volume={253},
   date={2006},
   number={1},
   pages={197--218},
}
%

\bib{Margulis-Mostow}{article}{
   author={Margulis, G. A.},
   title={Non-uniform lattices in semisimple algebraic groups},
   conference={
      title={Lie groups and their representations (Proc. Summer School on
      Group Representations of the Bolyai J\'anos Math. Soc., Budapest,
      1971)},
   },
   book={
      publisher={Halsted, New York},
   },
   date={1975},
   pages={371--553},
}
\bib{Margulis:1974:ICM}{article}{
   author={Margulis, G. A.},
   title={Discrete groups of motions of manifolds of nonpositive curvature},
   language={Russian},
   conference={
      title={Proceedings of the International Congress of Mathematicians
      (Vancouver, B.C., 1974), Vol. 2},
   },
   book={
      publisher={Canad. Math. Congress, Montreal, Que.},
   },
   date={1975},
   pages={21--34},
}

\bib{Margulis:book}{book}{
      author={Margulis, G. A.},
       title={Discrete subgroups of semisimple {L}ie groups},
      series={Ergebnisse der Mathematik und ihrer Grenzgebiete (3) [Results in
  Mathematics and Related Areas (3)]},
   publisher={Springer-Verlag},
     address={Berlin},
        date={1991},
      volume={17},
}

\bib{monod-book}{book}{
   author={Monod, N.},
   title={Continuous bounded cohomology of locally compact groups},
   series={Lecture Notes in Mathematics},
   volume={1758},
   publisher={Springer-Verlag},
   place={Berlin},
   date={2001},
   pages={x+214},
   isbn={3-540-42054-1},
}

\bib{MonodShalom}{article}{
   author={Monod, N.},
   author={Shalom, Y.},
   title={Orbit equivalence rigidity and bounded cohomology},
   journal={Ann. of Math. (2)},
   volume={164},
   date={2006},
   number={3},
   pages={825--878},
}
\bib{MonodShalom-cocycle}{article}{
   author={Monod, Nicolas},
   author={Shalom, Yehuda},
   title={Cocycle superrigidity and bounded cohomology for negatively curved
   spaces},
   journal={J. Differential Geom.},
   volume={67},
   date={2004},
   number={3},
   pages={395--455},
}

\bib{Mostow:1973book}{book}{
   author={Mostow, G. D.},
   title={Strong rigidity of locally symmetric spaces},
   note={Annals of Mathematics Studies, No. 78},
   publisher={Princeton University Press},
   place={Princeton, N.J.},
   date={1973},
   pages={v+195},
}
\bib{pansu+zimmer}{article}{
   author={Pansu, P.},
   author={Zimmer, R. J.},
   title={Rigidity of locally homogeneous metrics of negative curvature on
   the leaves of a foliation},
   journal={Israel J. Math.},
   volume={68},
   date={1989},
   number={1},
   pages={56--62},
}

\bib{popa-betti}{article}{
   author={Popa, S.},
   title={On a class of type ${\rm II}\sb 1$ factors with Betti numbers
   invariants},
   journal={Ann. of Math. (2)},
   volume={163},
   date={2006},
   number={3},
   pages={809--899},
}
\bib{popa:wI}{article}{
      author={Popa, S.},
       title={Strong rigidity of {$\rm II\sb 1$} factors arising from malleable
  actions of {$w$}-rigid groups. {I}},
        date={2006},
     journal={Invent. Math.},
      volume={165},
      number={2},
       pages={369\ndash 408},
}
\bib{popa:wII}{article}{
      author={Popa, S.},
       title={Strong rigidity of {$\rm II\sb 1$} factors arising from malleable
  actions of {$w$}-rigid groups. {II}},
        date={2006},
     journal={Invent. Math.},
      volume={165},
      number={2},
       pages={409\ndash 451},
}
\bib{popa-cocycle}{article}{
   author={Popa, S.},
   title={Cocycle and orbit equivalence superrigidity for malleable actions
   of $w$-rigid groups},
   journal={Invent. Math.},
   volume={170},
   date={2007},
   number={2},
   pages={243--295},
}
\bib{Popa:ICM}{article}{
   author={Popa, S.},
   title={Deformation and rigidity for group actions and von Neumann
   algebras},
   conference={
      title={International Congress of Mathematicians. Vol. I},
   },
   book={
      publisher={Eur. Math. Soc., Z\"urich},
   },
   date={2007},
   pages={445--477},
}
\bib{popa-spectralgap}{article}{
   author={Popa, S.},
   title={On the superrigidity of malleable actions with spectral gap},
   journal={J. Amer. Math. Soc.},
   volume={21},
   date={2008},
   number={4},
   pages={981--1000},
}
\bib{prasad}{article}{
   author={Prasad, G.},
   title={Strong rigidity of ${\bf Q}$-rank $1$ lattices},
   journal={Invent. Math.},
   volume={21},
   date={1973},
   pages={255--286},
}
\bib{ratcliffe}{book}{
   author={Ratcliffe, J. G.},
   title={Foundations of hyperbolic manifolds},
   series={Graduate Texts in Mathematics},
   volume={149},
   publisher={Springer-Verlag},
   place={New York},
   date={1994},
}

\bib{ratnerICM}{article}{
   author={Ratner, M.},
   title={Interactions between ergodic theory, Lie groups, and number
   theory},
   conference={
      title={ 2},
      address={Z\"urich},
      date={1994},
   },
   book={
      publisher={Birkh\"auser},
      place={Basel},
   },
   date={1995},
   pages={157--182},
}

\bib{shalom}{article}{
   author={Shalom, Y.},
   title={Rigidity, unitary representations of semisimple groups, and
   fundamental groups of manifolds with rank one transformation group},
   journal={Ann. of Math. (2)},
   volume={152},
   date={2000},
   number={1},
   pages={113--182},
}

\bib{Shalom:2005ECM}{article}{
   author={Shalom, Y.},
   title={Measurable group theory},
   conference={
      title={European Congress of Mathematics},
   },
   book={
      publisher={Eur. Math. Soc., Z\"urich},
   },
   date={2005},
   pages={391--423},
}

\bib{stroppel}{book}{
   author={Stroppel, Markus},
   title={Locally compact groups},
   series={EMS Textbooks in Mathematics},
   publisher={European Mathematical Society (EMS), Z\"urich},
   date={2006},
   pages={x+302},
}

\bib{tits-trees}{article}{
   author={Tits, Jacques},
   title={Sur le groupe des automorphismes d'un arbre},
   conference={
      title={Essays on topology and related topics (M\'emoires d\'edi\'es
      \`a Georges de Rham)},
   },
   book={
      publisher={Springer},
   },
   date={1970},
}

\bib{thurston}{article}{
  author={Thurston, W. P.},
  title={The Geometry and Topology of Three-Manifolds},
  date={1978},
  eprint={http://www.msri.org/publications/books/gt3m},
}
%

\bib{zastrow}{article}{
   author={Zastrow, A.},
   title={On the (non)-coincidence of Milnor-Thurston homology theory with
   singular homology theory},
   journal={Pacific J. Math.},
   volume={186},
   date={1998},
   number={2},
   pages={369--396},
}

\bib{zimmer-csr}{article}{
   author={Zimmer, R. J.},
   title={Strong rigidity for ergodic actions of semisimple Lie groups},
   journal={Ann. of Math. (2)},
   volume={112},
   date={1980},
   number={3},
   pages={511--529},
}

\bib{zimmer-foliatedmostow}{article}{
   author={Zimmer, R. J.},
   title={On the Mostow rigidity theorem and measurable foliations by
   hyperbolic space},
   journal={Israel J. Math.},
   volume={43},
   date={1982},
   number={4},
   pages={281--290},
}

\bib{zimmer-book}{book}{
   author={Zimmer, R. J.},
   title={Ergodic theory and semisimple groups},
   series={Monographs in Mathematics},
   volume={81},
   publisher={Birkh\"auser Verlag},
   place={Basel},
   date={1984},
   pages={x+209},
}


\end{biblist}
\end{bibdiv}

\end{document}